\numberwithin{equation}{section}
\newtheorem{lemma}{Lemma}[section]
\newtheorem{definition}[lemma]{Definition}
\newtheorem{remark}[lemma]{Remark}
\newtheorem{proposition}[lemma]{Proposition}
\newtheorem{theorem}[lemma]{Theorem}
\DeclareMathOperator{\Rp}{Re}
\DeclareMathOperator{\Ip}{Im}
\def\al{\alpha}
\def\eps{\varepsilon}
\def\R{\mathbb R}
\def\RR{\mathbb R}
\def\CC{\mathbb C}
\def\PP{\mathcal P}
\def\LL{\mathcal L}
\def\EE{\mathcal E}
\def\cD{\mathcal D}
\def\NNN{\mathcal N}
\def\ZZZ{\mathcal Z}
\def\FF{\mathcal F}
\def\GG{\mathcal G}
\def\II{\mathcal I}
\def\XX{\mathcal X}
\def\YY{\mathcal Y}
\def\RRR{\mathcal R}
\def\OO{\mathcal O}
\def\D{\mathcal D}
\def\~{\tilde}
\def\ga{\gamma}
\def\de{\delta}
\def\rr{\rho}
\def\wt{\widetilde}
\def\ol{\overline}
\def\wh{\widehat}
\def\kk{\kappa}
\def\eps{\varepsilon}
\def\pa{\partial}
\def\out{\mathrm{out}}
\def\inn{\mathrm{in}}
\def\exp{\mathrm{exp}}
\def\aux{\mathrm{aux}}
\def\match{\mathrm{match}}
\def\max{\mathrm{max}}
\def\Nin{\mathcal{J}^{\inn}}
\def\Nch{\mathcal{J}^{\match}}
\def\Gmatch{\mathcal{F}^{\match}}
\def\Ghmatch{\wh{\mathcal{F}}^{\match}}
\def\Gindif{\wt{\mathcal{G}}^\inn}
\newcommand{\us}{\star}
\newcommand{\uns}{\mathrm{u}}
\newcommand{\sta}{\mathrm{s}}
\newcommand{\rplus}{\rho_{+}}
\newcommand{\rminus}{\rho_{-}}
\title[Homoclinic orbits arising near a saddle-center point]{On a countable sequence of homoclinic orbits arising near a saddle-center point}
\author{Inmaculada Baldom\'a}
\address{Departament de Matem\`atiques \& IMTECH, Universitat Polit\`ecnica de Catalunya, Diagonal 647, 08028 Barcelona, Spain and Centre de Recerca Matem\`atica, Campus de Bellaterra, Edifici C, 08193 Barcelona, Spain}
\email{immaculada.baldoma@upc.edu}
\author{Marcel Guardia}
\address{Departament de Matem\`atiques i Inform\`atica, Universitat de Barcelona, Gran Via, 585, 08007 Barcelona, Spain and Centre de Recerca Matem\`atica, Campus de Bellaterra, Edifici C, 08193 Barcelona, Spain}
\email{guardia@ub.edu}
\author{Dmitry E. Pelinovsky}
\address{Department of Mathematics and Statistics, McMaster University, Hamilton, Ontario, Canada, L8S 4K1}
\email{dmpeli@math.mcmaster.ca}
\begin{document}

\maketitle

\begin{abstract}
Exponential small splitting of separatrices in the singular perturbation theory leads generally to nonvanishing oscillations near a saddle--center point and to nonexistence of a true homoclinic orbit. It was conjectured long ago that the oscillations may vanish at a countable set of small parameter values if there exist a quadruplet of singularities in the complex analytic extension of the limiting homoclinic orbit. The present paper gives a rigorous proof of this conjecture for a particular fourth-order equation relevant to the traveling wave reduction of the modified Korteweg--de Vries equation with the fifth-order dispersion term. 
\end{abstract}


\tableofcontents

\section{Introduction}

Homoclinic orbits arise in dynamical systems at the intersections of stable and unstable manifolds 
(also known as the separatrices) associated to a saddle equilibrium point. They represent spatial profiles of traveling solitary waves in nonlinear dispersive wave equations from which spatial dynamical systems are obtained in the traveling reference frame. Existence of a homoclinic orbit connected at a saddle point is a generic phenomena in a planar  Hamiltonian system if there exists a center point near the saddle point. 

The phase space of many spatial dynamical systems has the dimension higher than two, in which case the equilibrium point may admit a center manifold in addition to the stable and unstable manifolds. For such a saddle-center 
point, intersection of the separatrices is not generic 
and homoclinic orbits do not generally exist. The corresponding traveling solitary waves are not fully decaying since their spatial profiles approach the oscillatory tails spanned by orbits along the center manifold.

It is rather common in analysis of solitary waves to consider an asymptotic limit when a higher-dimensional spatial dynamical system with a saddle-center point formally reduces to the planar Hamiltonian dynamical system with a homoclinic orbit. This leads to the main question of the singular perturbation theory if the homoclinic orbit persists under the perturbation. The standard answer to this question is negative because the exponentially small splitting of the separatrices generally occurs due to the singular perturbations. 

First examples of the exponentially small (beyond-all-order) phenomena 
and the relevant asymptotic analysis can be found in~\cite{Chapman1,Daalhuis1,Gel1,GrimshawJoshi,Kruskal,PGR}. Rigorous mathematical analysis and the proof of the existence of oscillatory tails near the saddle-center point in four-dimensional spatial dynamical systems was later developed in \cite{Lombardi,Tovbis1}. The oscillatory tails are present 
if a certain constant (called the Stokes constant) is nonzero, the proof of which usually relies on numerical computations. The numerical data in \cite{Tovbis2} for a particular model of the fifth-order Korteweg--de Vries (KdV) equation suggest that the Stokes constant is generally nonzero but may vanish along bifurcations of co-dimension one if another parameter is present in the spatial dynamical system. 
 
Compared to the standard setting of the non-vanishing oscillatory tails in the beyond-all-order expansions, a rather novel mechanism of obtaining a countable number of true homoclinic orbits was proposed in \cite{prl-alfimov}. The mechanism is related to the location of singularities of the truncated homoclinic orbit in a complex plane. 
If there is only one symmetric pair of singularities in the complex plane nearest to the real line, then the Stokes constant is generally nonzero and no true homoclinic orbit persists in the singular perturbation theory.  
However, if there exist a quadruplet with two symmetric pairs of singularities at the same distance from the real line, then the singular perturbation theory 
exhibits a countable set of true homoclinic orbits as the small parameter goes to zero. 

The theory from \cite{prl-alfimov} was illustrated on a number of other mathematical models involving nonlocal integral equations \cite{Alf1}, lattice advance-delay equations \cite{Alf2,Lustri2}, and differential advance-delay equations for traveling waves in lattices \cite{Deng2,Deng1,Lustri1,Porter}. The spatial profiles of solitary waves in such models must generally exhibit oscillatory tails  (in which case, they are usually called generalized solitary waves or nanoptera), see analysis in \cite{Faver1,Faver2} and numerical results in \cite{Tim,Porter,Vain}. However, the tails miraculously vanish along a countable set of bifurcation points if the singular limit admits a real analytic solution with a quadruplet of complex singularities nearest to the real line. A similar idea for homoclinic orbits in symplectic discrete maps has been discussed in \cite{Simo} some time before \cite{prl-alfimov}. 

Despite a number of examples supporting the conjecture from \cite{prl-alfimov}, no mathematically rigorous proof was developed in the literature. The purpose of this paper is to give a proof of this conjecture for the simplest four-dimensional dynamical system with a saddle-center equilibrium point. 

\subsection{Main model}

Let $\gamma, \eps \in \R$ be parameters and consider the fourth-order equation for some $u \in C^{\infty}(\mathbb{R},\mathbb{R})$,
\begin{equation}\label{eq:mainfourthorder}
\eps^2 u''''+(1-\eps^2)u''-u+u^2+2\gamma u^3=0.
\end{equation}
If $\eps$ is a small parameter, then the formal limit $\eps \to 0$ yields the second-order equation
\begin{equation}\label{eq:mainsecondorder}
u''-u+u^2+2\gamma u^3=0
\end{equation}
with $(0,0)$ being a saddle point of the planar Hamiltonian system 
\begin{equation}\label{eq:planarHam}
\left\{ \begin{array}{l} 
u' = w, \\
w' = u-u^2-2\gamma u^3.
\end{array} \right.
\end{equation}

The second-order equation~\eqref{eq:mainsecondorder} appears in the traveling wave reduction of the modified Korteweg--de Vries (KdV) equation 
\begin{equation}
    \label{mKdV}
    \frac{\partial \eta}{\partial t} + 2 \eta \frac{\partial \eta}{\partial x} + 6 \beta \eta^2 \frac{\partial \eta}{\partial x} + \frac{\partial^3 \eta}{\partial x^3} = 0,
\end{equation}
where $\eta = \eta(x,t)$ is real and $\beta$ is a parameter. 
Traveling waves of the modified KdV equation~\eqref{mKdV} correspond to the form $\eta(x,t) = \eta_c(x-ct)$ with the wave speed $c$ and the wave profile $\eta_c$ found from the third-order equation 
\begin{equation}
    \label{KdV-ode}
\eta_c'''(x) - c \eta_c'(x) + 2 \eta_c \eta_c'(x) + 6 \beta \eta_c^2 \eta_c'(x) = 0.
\end{equation}
If $c > 0$, the scaling transformation $\eta_c(x) = c u(\sqrt{c} x)$ and integration of~\eqref{KdV-ode} with zero integration constant for solitary wave solutions yields equation~\eqref{eq:mainsecondorder} with $\gamma := \beta c$. 

If $\gamma > 0$, there exist two families of periodic solutions and two solitary wave solutions of equation~\eqref{eq:mainsecondorder}, see, e.g., \cite{ChenP,LeP}. If $\gamma < 0$, there exists only one family of periodic solutions and only one solitary wave solution of equation~\eqref{eq:mainsecondorder}, see, e.g., \cite{MucaP}. This also follows from the phase portraits for the dynamical system~\eqref{eq:planarHam} on the phase plane $(u,w)$ shown 
in Figure~\ref{fig:plane} for $\gamma = 1$ (left) and $\gamma = -0.1$ (right). 

\begin{figure}[htb!]	
	\centering
		\includegraphics[width=6.5cm,height=5.5cm]{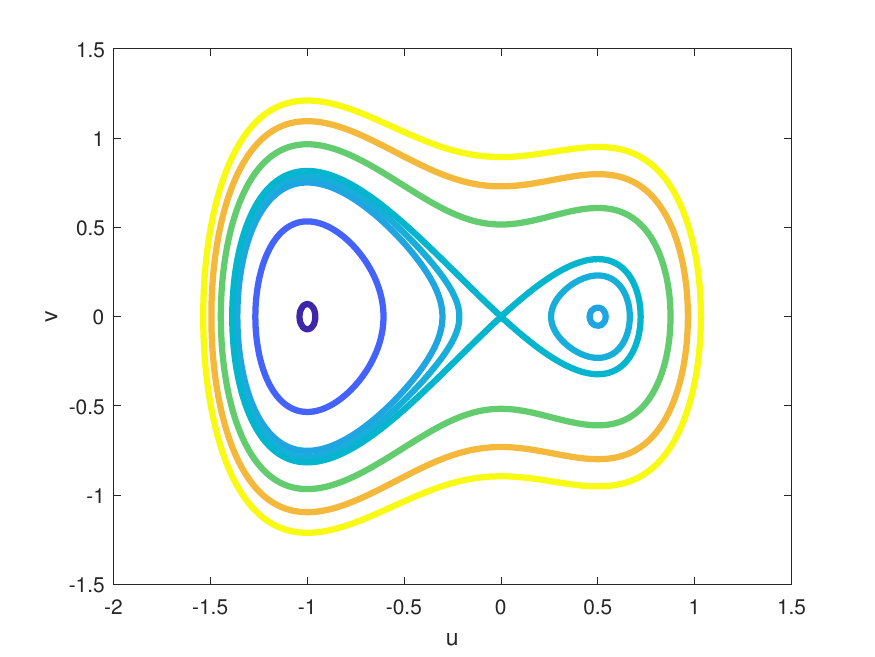}
		\includegraphics[width=6.5cm,height=5.5cm]{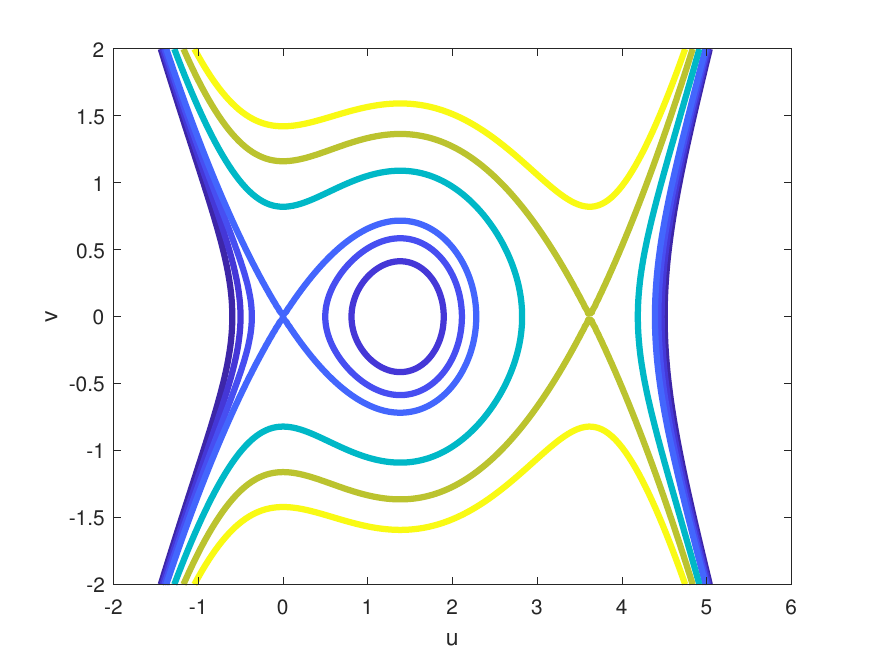}
	\caption{Phase portraits of~\eqref{eq:planarHam} for $\gamma = 1$ (left) and $\gamma = -0.1$ (right). }	
	\label{fig:plane}
\end{figure} 

The fourth-order equation~\eqref{eq:mainfourthorder} is the traveling wave reduction of the modified KdV equation with the fifth-order dispersion term, also known as the Kawahara equation~\cite{Kawahara},
\begin{equation}
    \label{Kawahara}
    \frac{\partial \eta}{\partial t} + 2 \eta \frac{\partial \eta}{\partial x} + 6 \beta \eta^2 \frac{\partial \eta}{\partial x} + \frac{\partial^3 \eta}{\partial x^3} + \alpha \frac{\partial^5 \eta}{\partial x^5} = 0,
\end{equation}
where $\alpha$ is another parameter. Traveling waves of the form $\eta(x,t) = \eta_c(x-ct)$ satisfy the fifth-order equation, which can be integrated once with the zero integration constant. The scaling transformation 
$\eta_c(x) = c u(\sqrt{c(1-\varepsilon^2)} x)$ yields~\eqref{eq:mainfourthorder} with $\gamma = \beta c$ and 
$\varepsilon^2$ found from the equation 
$$
\frac{\varepsilon^2}{(1-\varepsilon^2)^2} = \alpha c.
$$
This is always possible for small $\varepsilon$ if $\alpha c$ is small.

For $\beta = 0$, the Kawahara equation~\eqref{Kawahara} has been one of the main toy model of the shallow water wave theory to study periodic oscillations arising at the exponential tails of the solitary wave profiles, see recent works~\cite{Deconinck,Sprenger,Bridges}. Since the true homoclinic orbits are known not to exist for $\beta = 0$ \cite{GrimshawJoshi,PGR}, the main motivation for our study is to show the existence of a sequence of true homoclinic orbits in the modified Kawahara equation for $\beta \neq 0$.

The homoclinic orbit of the second-order system~\eqref{eq:planarHam} with $\gamma = 0$ is known in the exact analytical form:
\[
u_0(x) = \frac{3}{2} {\rm sech}^2\left(\frac{x}{2}\right). 
\]
It has double poles on the imaginary axis with the nearest singularities at $x = \pm \pi i$. If $\gamma \neq 0$, the double poles split into pairs of simple poles and the splitting is different for $\gamma > 0$ and $\gamma < 0$. 
The homoclinic orbit for $\gamma = 0$ is continued in the exact analytical form for every $1 + 9 \ga > 0$ as 
\begin{equation}\label{def:soliton}
u_0(x)=\frac{3}{\sqrt{1+ 9\ga}\cosh(x)+1}.
\end{equation}
For $\gamma > 0$, the double poles nearest to the real axis split along the imaginary axis as simple poles at 
\[
x = \pm i \pi \pm i \arccos\left(\frac{1}{\sqrt{1+9\ga}}\right),
\]
with four independent choices of signs. For $\ga \in (-\frac{1}{9},0)$ 
the double poles split off the imaginary axis as simple poles at 
\begin{equation}\label{def:singularities:intro}
x = \pm i \pi\pm \cosh^{-1}\frac{1}{\sqrt{1+9\ga}},
\end{equation}
again with four independent choices of signs. This is precisely the case which fits the theory from \cite{prl-alfimov} and coincides with Example 1 in \cite{prl-alfimov}. The numerical data on Figure 1 in \cite{prl-alfimov} already provide a convining evidence of the existence of a countable sequence 
$\{ \eps_n(\ga) \}_{n \in \mathbb{N}}$ for every $\ga \in (-\frac{1}{9},0)$ 
such that $\eps_n(\ga) \to 0$ as $n \to \infty$ with the homoclinic orbits persisting in the full equation~\eqref{eq:mainfourthorder} for $\eps = \eps_n(\ga)$ and with $u(x)$ being close to $u_0(x)$ in~\eqref{def:soliton}. 

Hence, in what follows we are only interested in the case $\ga \in (-\frac{1}{9},0)$, when the only homoclinic orbit with the profile $u_0$ is available in the form~\eqref{def:soliton}. For completeness, we mention that another homoclinic orbit exists for $\ga > 0$, see the left panel of Figure~\ref{fig:plane}, and its (negative) profile is given by 
\[
\tilde u_0(x)=-\frac{3}{\sqrt{1+ 9\ga}\cosh(x)-1}. 
\]
The simple poles of $\tilde{u}_0$ are located at the imaginary axis at 
\[
x = \pm i \arccos\left(\frac{1}{\sqrt{1+9\ga}}\right) + 2\pi i n, \quad n \in \mathbb{Z}.
\]
For $\gamma \leq 0$, $\tilde{u}_0$ is singular on real line and hence 
is neglected.

\subsection{Main result and the method of proof}

The main result of this paper is the following.
\begin{theorem}\label{thm:main:0}
For any $\gamma\in \left (-\frac{1}{9},0 \right )$, 
there exists $N_0\in\mathbb{N}$ large enough and a sequence $\{\eps_n\}_{n\geq N_0} $ of the form 
\begin{equation}
\label{eps-n-distribution}
\eps_n = \frac{\al}{n\pi} \left[ 1 + \frac{1}{n} \mathcal{O}\left(\frac{1}{\log n}\right) \right],\qquad \text{where}\quad \al=\cosh^{-1}\frac{1}{\sqrt{1+9\ga}},
\end{equation}
such that equation~\eqref{eq:mainfourthorder} with $\eps=\eps_n$ has a homoclinic orbit to the origin in $\R^4$.
\end{theorem}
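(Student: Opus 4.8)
The plan is to set up the standard beyond-all-order machinery for the fourth-order equation~\eqref{eq:mainfourthorder}, tracking the exponentially small splitting of the separatrices near the saddle-center point, and then to exploit the quadruplet structure of the singularities in~\eqref{def:singularities:intro} to extract a countable sequence of zeros of the splitting function. Concretely, I would first rescale so that $\eps$ becomes the natural singular-perturbation parameter, and fix a large parameter $n\in\NN$ together with a trial value $\eps\approx \al/(n\pi)$. For such $\eps$ the center manifold is spanned by oscillations of frequency $\sim 1/\eps$, and the obstruction to having a true homoclinic orbit is a scalar (or, using the Hamiltonian symmetry, essentially one-dimensional) quantity $\Upsilon(\eps)$ measuring the amplitude of the oscillatory tail. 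I would construct, by a contraction-mapping argument in suitable exponentially weighted analytic norms, a solution $u$ that is homoclinic on one side (say $x\to+\infty$) and analyze its continuation, so that $\Upsilon(\eps)$ is well-defined and real-analytic in $\eps$ for $\eps\in(0,\eps_0)$.

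The core of the argument is the asymptotic formula for $\Upsilon(\eps)$. Because the truncated homoclinic $u_0$ in~\eqref{def:soliton} has its nearest singularities at the quadruplet $x=\pm i\pi\pm\al$ (with $\al=\cosh^{-1}(1+9\ga)^{-1/2}$), the inner equation near each singularity is governed by the same model equation, and the Stokes phenomenon contributes a term of the form $\Theta\,\eps^{-p} e^{-\pi/\eps}$ from the pair on $\Ip x=+\pi$ — but now with two singularities at $\Rp x=\pm\al$ on that line, the two inner contributions interfere. Summing them produces, to leading order,
\begin{equation}\label{eq:ups-asympt}
\Upsilon(\eps) = C\,\eps^{-p} e^{-\pi/\eps}\Big[\cos\!\Big(\frac{\al}{\eps}+\psi(\eps)\Big) + \text{(smaller)}\Big],
\end{equation}
where $C\neq 0$ is (a multiple of) the nonzero Stokes constant of the inner equation and $\psi(\eps)=\mathcal{O}(\eps\log(1/\eps))$ collects the subleading phase corrections. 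The prefactor $\eps^{-p}e^{-\pi/\eps}$ never vanishes, so zeros of $\Upsilon$ come precisely from zeros of the bracket, i.e. from $\al/\eps+\psi(\eps)\equiv \pi/2 \pmod \pi$.

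The final step is to solve $\al/\eps + \psi(\eps) = \pi(n+\tfrac12)$ for $\eps$ near $\al/(n\pi)$. Writing $\eps = \al/(n\pi)(1+\delta)$ and using $|\psi(\eps)|\lesssim \eps\log(1/\eps)\sim \frac1n\log n$, an intermediate-value / implicit-function argument on the real-analytic function $\Upsilon$ gives, for each $n\geq N_0$ with $N_0$ large, a solution $\eps_n$ with $\delta = \mathcal{O}(n^{-1}\log n)$; a slightly more careful expansion upgrades this to the stated $\eps_n = \frac{\al}{n\pi}[1+\frac1n\mathcal{O}(\frac1{\log n})]$, since the leading phase correction in $\psi$ is $\mathcal{O}(\eps)$ rather than $\mathcal{O}(\eps\log(1/\eps))$ once the logarithmic terms are resummed into the amplitude. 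At such $\eps_n$ the oscillatory tail vanishes, and the one-sided homoclinic solution constructed above is in fact homoclinic to the origin on both sides, completing the proof.

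The main obstacle — and the bulk of the technical work — is establishing~\eqref{eq:ups-asympt} with an \emph{explicit} nonzero constant and controlled remainder: this requires (i) the inner/matching analysis at the complex singularities $\pm i\pi\pm\al$, including a proof that the relevant Stokes constant of the limiting inner equation is nonzero (here we are fortunate that the inner equation is the same one already understood for the $\ga=0$/Kawahara case, where nonvanishing is known), and (ii) showing that the two inner contributions add coherently with the correct relative phase $2\al/\eps$ rather than cancelling or being dominated by error terms. Controlling the $\eps\log(1/\eps)$ phase corrections uniformly in $n$ so that the root-finding in the last step is legitimate is the delicate quantitative point; everything else (the contraction mapping for the one-sided parametrization, the real-analyticity of $\Upsilon$, the final implicit-function step) is comparatively routine once the asymptotics of $\Upsilon$ are in hand.
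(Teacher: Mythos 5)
Your overall scheme---use reversibility and the first integral to reduce the homoclinic intersection to a single scalar obstruction, obtain exponentially small asymptotics for it via inner/matching analysis at the complex singularities, and then extract a sequence $\eps_n$ from the zeros of the resulting trigonometric factor---is the same as the paper's. However, two points in your proposal need to be corrected, and the second one is a genuine missing idea.

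First, the trigonometric factor is $\sin(\al/\eps)$, not $\cos(\al/\eps+\psi(\eps))$ with $\psi\to0$, and this is not cosmetic. In the reversible setup, the unstable solution is parameterized so that $(u^\uns)'(0)=0$, and reversibility then forces $u^\uns(0)=u^\sta(0)$ and $v^\uns(0)=v^\sta(0)$ automatically; the only remaining condition for a symmetric homoclinic is $(v^\uns)'(0)=0$. In the inner variables this quantity involves $\tfrac{i}{\eps}(C_1^\uns-C_2^\uns)$, and since $C_1^\uns\sim e^{i\al/\eps-\pi/\eps}$, $C_2^\uns\sim e^{-i\al/\eps-\pi/\eps}$ (see Proposition~\ref{prop:CsFormula}), the difference produces $\sin(\al/\eps)$. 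The $\cos(\al/\eps)$ contribution sits in $v^\uns(0)-v^\sta(0)$, which vanishes identically. As written, your condition $\cos(\al/\eps+\psi)=0$ with $\psi=\OO(\eps\log(1/\eps))$ yields $\al/\eps\approx(n+\tfrac12)\pi$, hence $\eps_n=\frac{\al}{n\pi}\left(1-\frac{1}{2n}+\OO\!\left(\tfrac{1}{n\log n}\right)\right)$, a genuine $1/n$ correction; the claimed $\frac{\al}{n\pi}\big[1+\tfrac1n\OO(\tfrac1{\log n})\big]$ does not follow from the formula you wrote.

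Second and more important: you treat the construction and analytic extension of the one-sided parametrization as ``comparatively routine,'' but this is precisely where the standard Lazutkin machinery breaks down for this problem, and it is the paper's main contribution. With two singularities $x_\pm=\pm\al+i\pi$ on the same horizontal line, the unstable manifold can be analytically continued to a slanted domain reaching an $\OO(\kappa\eps)$-neighborhood of $x_-$, but not of $x_+$, and symmetrically for the stable one. Therefore the difference $\xi^\uns-\xi^\sta$ cannot be evaluated near \emph{both} singularities by the usual scheme, and the coherent superposition you invoke in your step (ii) has no a priori meaning. The paper resolves this by introducing an auxiliary solution $(\xi^\aux,\eta^\aux)$ of~\eqref{eq:perturb}, chosen on the same $\wt G$-level and defined on a lozenge-shaped domain $D^\aux_\kappa$ reaching all four singularities (Theorem~\ref{thm:aux:intro}), and then splitting $\Delta=(\xi^\uns-\xi^\aux)+(\xi^\aux-\xi^\sta)$, with each piece analyzed near the appropriate singularity. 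Your proposal omits this device (or any substitute for it); without it, the exponentially small estimates and the matching that are the technical core of the argument cannot be set up, so the ``routine'' contraction-mapping step is in fact the place where the new input is needed.
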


We prove  this result by analyzing the stable and unstable invariant manifolds of the origin in $\R^4$ and measuring their distance at a suitable cross-section of $\R^4$. To this end, we rewrite the fourth-order equation \eqref{eq:mainfourthorder} as two second-order equations. By introducing
\begin{equation}\label{def:f}
f(u) := u^2 + 2 \ga u^3\quad\text{ and }\quad v := u''-u+f(u),
\end{equation}
 equation \eqref{eq:mainfourthorder} becomes the system
\begin{equation}\label{eq:system}
\left\{ \begin{array}{l}
  u''=u+v-f(u)\\
  v''=-\frac{1}{\eps^2}v+f'(u) (u+v-f(u))+f''(u)(u')^2.
 \end{array} \right.
\end{equation}
The phase space of system~\eqref{eq:system} is written in the variables $(u,u',v,v')\in\RR^4$. Moreover, this system has the first integral
\begin{equation}\label{def:firstintegral:2}
\begin{split}
G(u,u',v,v')=&(1-\eps^2)\frac{(u')^2}{2}-\frac{u^2}{2}+F(u)\\
&+\eps^2\left[u'(v'+u'-f'(u)u')-\frac{(u+v-f(u))^2}{2}\right],
\end{split}
\end{equation}
with 
\begin{equation*}
F(u)=\int_{0}^u f(v)dv =\frac{u^3}{3}+ \frac{\gamma u^4}{2}.
\end{equation*} 

We notice that the origin in $\R^4$ is a saddle-center equilibrium point of the second-order system \eqref{eq:system} with associated eigenvalues $\big \{-1,1, i \eps^{-2}, -i \eps^{-2}\big \}$ which are of different scales. Therefore, the stable and unstable manifold associated to the origin have dimension one and, thus, they are just trajectories of~\eqref{eq:system}.  

Since system~\eqref{eq:system} is autonomous, in order to find homoclinic connections, it is necessary that there exists a time parameterization of the stable and unstable invariant manifolds, denoted by 
\[
\big (u^{\star}(x), (u^{\star})'(x), v^{\star}(x), (v^{\star})'(x) \big ),
\quad \star=\uns,\sta
\]
(which also depend on the parameters $\eps$ and $\gamma$),  such that
\begin{equation*}
\big (u^{\uns}(0), (u^{\uns})'(0), v^{\uns}(0), (v^{\uns})'(0) \big )=  \big (u^{\sta}(0), (u^{\sta})'(0), v^{\sta}(0), (v^{\sta})'(0) \big).
\end{equation*} 

In a general setting two curves do not intersect in a four dimensional space, however system~\eqref{eq:system} is reversible with respect to the involution
\begin{equation}\label{def:involution}
\Psi:(u,u',v,v')\to (u,-u',v,-v')
\end{equation}
whose symmetry plane is 
\begin{equation}\label{def:symmetryaxis}
\Pi=\{ (u,u',v,v') \in \mathbb{R}^4 : \quad u'=0, \; v'=0\}.
\end{equation}
In other words, if $(u(x),u'(x),v(x),v'(x))$ is a solution of system~\eqref{eq:system}, then the function defined by $\Psi(u(-x),u'(-x),v(-x),v'(-x))$ is also a solution. In particular  
\[
u^{\sta}(x)=u^\uns(-x), \qquad v^{\sta}(x) = v^{\uns}(-x)
\]
and therefore $u^{\sta}(0)= u^{\uns}(0)$ and $v^{\sta}(0) = v^{\uns}(0)$.

As a consequence, a homoclinic orbit exists if the unstable curve to $(0,0,0,0)$ as $x \to -\infty$ intersects the symmetry plane $\Pi$. Indeed, if such intersection occurs, then the unstable curve to $(0,0,0,0)$ as $x \to -\infty$ is reflected by the involution to the stable curve to $(0,0,0,0)$ as $x \to +\infty$. 

It can be seen that the perturbed invariant manifolds can be approximated by the homoclinic orbit for the unperturbed problem~\eqref{eq:mainsecondorder},  $$
(u(x),u'(x),v(x),v'(x))=(u_0(x),u'_0(x), 0,0)
$$ 
with $u_0$ given in~\eqref{def:soliton}. 
Then, we define the section
\begin{equation}\label{def:section}
\Sigma=\{(u,u',v,v')\in\RR^4 : \quad u'=0\}.
\end{equation}
We observe that the homoclinic orbit  $(u(x),u'(x))=(u_0(x),0)$ 
of the second-order system~\eqref{eq:mainsecondorder} with $u_0$ computed in~\eqref{def:soliton}, satisfies $u_0'(0)=0$ and it intersects transversally the section $\Sigma$ with $(v,v') = (0,0)$.

Next theorem gives an asymptotic formula for the distance between the stable and unstable manifolds of the origin in $\R^4$ at $\Sigma$.

\begin{theorem}\label{thm:main}
There exist two unique solutions $(u^\uns, v^\uns)$ and $(u^\sta, v^\sta)$ of system~\eqref{eq:system} such that $(u^\uns)'(0)=(u^\sta)'(0)=0$ and 
\[
\lim_{x\to -\infty}(u^\uns(x), v^\uns(x))=0,\qquad \lim_{x\to +\infty}(u^\sta(x), v^\sta(x))=0.
\]
Moreover, there exists a constant $\Theta\in\RR$, $\Theta\neq 0$, such that
\[
\begin{split}
u^\uns(0)-u^\sta(0)&= 0\\
v^\uns(0)-v^\sta(0)&= 0\\
(v^\uns)'(0)-(v^\sta)'(0)&=-\frac{4\Theta  }{\sqrt{|\gamma|}\eps^3}e^{-\frac{\pi}{\eps}} \left(\sin\left(\frac{\alpha}{\eps}\right)   +\OO\left(\frac{1}{|\log\eps|}\right)\right).
\end{split}
\]
\end{theorem}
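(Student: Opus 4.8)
The plan is to construct the stable and unstable manifolds as perturbations of the limiting homoclinic $(u_0,u_0',0,0)$ and to track the leading oscillatory defect through a complex-analytic (exponential asymptotics) argument. \textbf{Step 1: Parametrization and first two identities.} The reversibility \eqref{def:involution}--\eqref{def:symmetryaxis} forces $u^{\uns}(x)=u^{\sta}(-x)$, $v^{\uns}(x)=v^{\sta}(-x)$, which gives $u^{\uns}(0)-u^{\sta}(0)=0$ and $v^{\uns}(0)-v^{\sta}(0)=0$ immediately, and also forces $(u^{\uns})'(0)=(v^{\uns})'(0)=0$ being equivalent to $(u^{\sta})'(0)=(v^{\sta})'(0)=0$; so the only nonzero component of the distance vector at $\Sigma$ is $(v^{\uns})'(0)-(v^{\sta})'(0)$, and by the same symmetry this equals $2(v^{\uns})'(0)$. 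Hence the whole problem reduces to computing $(v^{\uns})'(0)$ for the unstable solution. Existence and uniqueness of $(u^{\uns},v^{\uns})$ with the stated limit and with $(u^{\uns})'(0)=0$ is obtained by a standard contraction/fixed-point argument on a weighted space of functions decaying like $e^{x}$ as $x\to-\infty$, writing \eqref{eq:system} as an integral equation with the appropriate Green's functions for $(\partial_x^2-1)$ and $(\partial_x^2+\eps^{-2})$; the $v$-equation is the singular one, and one uses that the forcing $f'(u)(u+v-f(u))+f''(u)(u')^2$ is quadratically small along $u\approx u_0$.

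\textbf{Step 2: Inner/outer matching near the singularities.} The outer solution is $v^{\uns}\approx 0$, $u^{\uns}\approx u_0$, but this breaks down in disks of radius $\mathcal O(\eps)$ around the singularities of $u_0$ closest to the real axis, which by \eqref{def:singularities:intro} form the quadruplet $x=\pm i\pi\pm\alpha$ with $\alpha=\cosh^{-1}(1+9\gamma)^{-1/2}$. Near each singularity I would introduce the inner variable and derive the inner equation, whose relevant solution is governed by a linear fourth-order equation whose homogeneous solutions include the oscillatory modes $e^{\pm i x/\eps}$; the coefficient with which the genuinely oscillatory mode is excited is the Stokes constant $\Theta$, defined intrinsically from the inner problem and independent of $\eps$. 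The nonvanishing $\Theta\neq 0$ is the analytic heart of the matter: one shows $\Theta$ is the limiting value of a Borel-type transform / the coefficient in the asymptotic behavior of the inner solution, and one must prove it does not vanish — this is exactly where the quadruplet structure matters, because the two pairs of singularities contribute with phases $e^{\pm i\alpha/\eps}$ that combine into the $\sin(\alpha/\eps)$ factor rather than cancelling for all $\eps$.

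\textbf{Step 3: Assembling the distance.} Each of the four singularities of the quadruplet contributes to $(v^{\uns})'(0)$ a term of size $\eps^{-3}e^{-\pi/\eps}$ (the $e^{-\pi/\eps}$ from $\mathrm{dist}(\pm i\pi\pm\alpha,\R)=\pi$, the power $\eps^{-3}$ from the order of the pole / number of derivatives), carrying a phase that depends on the real part $\pm\alpha$ of the singularity through $e^{\pm i\alpha/\eps}$, and on $\gamma$ through the residue-type constant $\propto 1/\sqrt{|\gamma|}$ coming from the local expansion of $u_0$ near its simple poles. Summing the conjugate pairs produces $\cos$ and $\sin$ terms; the reversibility/reality of $v^{\uns}$ and the oddness forced on $(v^{\uns})'(0)$ kill the cosine and leave $-\tfrac{4\Theta}{\sqrt{|\gamma|}\,\eps^3}e^{-\pi/\eps}\sin(\alpha/\eps)$. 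The error term $\mathcal O(1/|\log\eps|)$ is the standard loss in exponential-asymptotics matching: it comes from controlling the remainder after optimal truncation of the outer asymptotic series and from the matching region estimates, and it is additive inside the parentheses because it multiplies the same exponentially small prefactor.

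\textbf{Main obstacle.} The hard part is Step 2: rigorously defining the Stokes constant $\Theta$ via the inner equation and proving $\Theta\neq0$. This requires a careful study of the analytic continuation of $v^{\uns}$ into a complex neighborhood of the real axis up to distance $\pi-\mathcal O(\eps)$, sharp control of the solution on the boundary of the inner disks so that inner and outer expansions match to the required order, and a non-vanishing argument for $\Theta$ — typically done by reducing $\Theta$ to an explicitly computable constant for this specific nonlinearity $f(u)=u^2+2\gamma u^3$, or by a monotonicity/positivity argument on the associated Borel transform; unlike the examples in \cite{prl-alfimov,Tovbis2} where $\Theta$ is only known numerically, here one must extract enough structure from \eqref{def:soliton} to pin down $\Theta\neq0$ analytically. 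Once $\Theta\neq0$ is in hand, Theorem~\ref{thm:main:0} follows from Theorem~\ref{thm:main} by the implicit function theorem: $(v^{\uns})'(0)=0$ forces $\sin(\alpha/\eps)=\mathcal O(1/|\log\eps|)$, whose solutions near $\eps=\alpha/(n\pi)$ exist for all large $n$ and obey \eqref{eps-n-distribution}, and at such $\eps=\eps_n$ the full distance vector at $\Sigma$ vanishes, yielding by reversibility a genuine homoclinic orbit.
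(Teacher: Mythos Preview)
Your outline captures the general framework of exponentially small splitting (outer solution, inner equation near singularities, Stokes constant, matching), and Step~1 is correct. However, you have glossed over the actual technical crux of this paper and misidentified the main obstacle.

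The standard Lazutkin approach you sketch requires analytically continuing the parametrization of the unstable manifold into a complex strip up to within $\mathcal{O}(\eps)$ of the relevant singularities. Here the unstable solution $(u^{\uns},v^{\uns})$, which decays as $\Re x\to-\infty$, can be extended in this way only to the \emph{left} pair $x_-,\ol{x_-}$ (with $\Re x_\pm=\pm\alpha$); the oscillatory integrals defining the right inverse of $\LL_2$ lose control if one tries to push the domain past $x_-$ toward $x_+$. Symmetrically, the stable solution reaches only $x_+,\ol{x_+}$. Thus the two manifolds are never simultaneously defined $\eps$-close to the same singularity, and your Step~3 claim that ``each of the four singularities contributes to $(v^{\uns})'(0)$'' has no analytic footing: $v^{\uns}$ is not controlled near $x_+$, and $v^{\sta}$ is not controlled near $x_-$. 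This is precisely the obstruction that makes the two-singularities-per-line case genuinely new; the paper states explicitly that no rigorous proof existed previously in this configuration.

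The paper's resolution is to construct an \emph{auxiliary} solution $(\xi^{\aux},\eta^{\aux})$ of the same equation, on the same energy level $\wt G=0$, defined in a lozenge-shaped domain $D^{\aux}_\kappa$ that reaches all four singularities. One then writes $\Delta\eta=\Delta\eta^{\uns}+\Delta\eta^{\sta}$ with $\Delta\eta^{\uns}=\eta^{\uns}-\eta^{\aux}$ analyzed near $x_-,\ol{x_-}$ and $\Delta\eta^{\sta}=\eta^{\aux}-\eta^{\sta}$ analyzed near $x_+,\ol{x_+}$; each piece is accessible because both functions involved are defined near the relevant pair. The factor $\sin(\alpha/\eps)$ emerges from combining the phases $e^{\pm i\alpha/\eps}$ attached to $x_-$ and $x_+$ through this auxiliary bridge, not from a direct sum of four contributions to a single manifold.

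Finally, you have the hierarchy of difficulties reversed. Proving $\Theta\neq 0$ is comparatively routine here: the inner equation admits a formal odd-power series whose coefficients satisfy a recurrence forcing $|a_n|\geq (2n)!$, so $\Phi^{\uns}\equiv\Phi^{\sta}$ would make both analytic at $z=\infty$, contradicting this factorial growth. The genuine work is the auxiliary-solution construction and the matching of \emph{both} the unstable and the auxiliary solutions to the appropriate inner solutions near $x_-$ (and symmetrically near $x_+$), which your proposal does not address.
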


Theorem \ref{thm:main:0} is a direct consequence of Theorem \ref{thm:main}.

\begin{proof}[Proof of Theorem \ref{thm:main:0}]
Since the system~\eqref{eq:system} is reversible it is enough to obtain a point in the unstable manifold which intersects the symmetry plane $\Pi$ in \eqref{def:symmetryaxis}. Since 
\[(u^\uns(0),(u^\uns)'(0),v^\uns(0),(v^\uns)'(0))\in\Sigma
\]
it is enough to look for values of $\eps$ such that $(v^\uns)'(0)=0$.

By reversibility, 
\[
(u^\uns(0),(u^\uns)'(0),v^\uns(0),(v^\uns)'(0))=(u^\sta(0),-(u^\sta)'(0),v^\sta(0),-(v^\sta)'(0)).
\]
and therefore
\[
2 (v^\uns)'(0)=(v^\uns)'(0)-(v^\sta)'(0)=-\frac{4\Theta  }{\sqrt{|\gamma|}\eps^3}e^{-\frac{\pi}{\eps}} \left(\sin\left(\frac{\alpha}{\eps}\right)   +\OO\left(\frac{1}{|\log\eps|}\right)\right).
\]
Since $\Theta \neq 0$, the values of $\eps_n$  are found from roots of 
\[
\sin\left(\frac{\alpha}{\eps}\right)   +\OO\left(\frac{1}{|\log\eps|}\right) = 0,
\]
which yields \eqref{eps-n-distribution}.
\end{proof}

The main steps in the proof of Theorem \ref{thm:main} are explained in Section \ref{sec:outlineproof}. The proof of each step is deferred to Sections \ref{sec:outer}--\ref{sec:difference} and Appendices \ref{app-A}--\ref{app:g1:outer}.

\subsection{Exponentially small splitting of separatrices}\label{sec:splitting}

Theorem \ref{thm:main} fits into what is usually called \emph{exponentially small splitting of separatrices}. This phenomenon occurs in dynamical systems which have a hyperbolic behavior whose invariant manifolds are exponentially close with respect to a small parameter of the system. Here we review the literature on the topic and explain the main tools to deal with the exponentially small phenomenon.

The  exponentially small splitting of separatrices was first pointed out by Poincar\'e (see \cite{POI90}) and nowadays it is well known  that  appear in many analytic models with  multiple time scales and a conservative structure  (Hamiltonian, volume preserving) or reversibility.
The first rigorous analysis of this phenomenon was not achieved until the 80's in the seminal work by Lazutkin  on the standard map~\cite{Lazutkin84russian}, who proposed a scheme to prove the exponentially small transversality of the invariant manifolds of the saddle equilibrium point this map possesses. A full proof of this fact was obtained in 1999 by Gelfreich \cite{Gelfreich99}. 

The approach proposed by Lazutkin (detailed below in this section) has been implemented in multiple settings in the past decades such as area preserving maps \cite{DelshamsR98, MartinSS11a, MartinSS11b} and integrable Hamiltonian systems with a fast periodic or quasiperiodic forcing \cite{DelshamsS92, Gelfreich00, Sauzin01,BFGS12}. Note that the approach is extremely sensitive on the analyticity properties of the model and therefore ``implementing'' it in different settings is, by no means, straightforward. Strongly related to the present paper are those dealing with volume preserving or Hamiltonian Hopf-zero bifurcations. This was first addressed in \cite{BaldomaS06, BaldomaS08, BaldomaCS13, BaldomaCS18, BaldomaCS18b},  and has later been applied to the breakdown of breathers in the Klein-Gordon equation (which can be seen as an infinite dimensional Hopf-zero bifurcation) and in the invariant manifolds of $L_3$ in the restricted planar 3 body problem \cite{BaldomaGG22,BaldomaGG23}. Note that the exponentially small splitting of separatrices phenomena can be analyzed by other methods such as the so-called continuous averaging method \cite{Treshev97}.

Let us explain the main steps of the approach proposed by Lazutkin applied to Hopf-zero bifurcations. Note first that the unperturbed separatrix is analytic in a complex strip centered at the real line. Then, in all the mentioned works and in the approach explained below, one makes the strong assumption that, at each of the boundary lines of the strip, the separatrix has only one singularity. Then, an asymptotic formula for the distance between the perturbed invariant manifolds can be obtained following these steps.

\begin{enumerate}
\item Choose coordinates which  capture the slow-fast dynamics of the model so that it becomes a (fast) oscillator weakly coupled to an integrable system with a saddle point and a separatrix associated to it.
\item Prove the existence of the analytic continuation of suitable parametrizations of the perturbed invariant manifolds in appropriate complex domains. These domains contain a segment of
the real line and intersect a neighborhood sufficiently close to the singularities of the  separatrix.
	
\item Derive the inner equation, which gives the first order of the original system close to the singularities of the separatrix. This equation is independent of the perturbation parameter.
	
	\item Study two special solutions of the inner equation which are approximations of the perturbed invariant manifolds near the singularities and  provide an asymptotic formula for the difference between these two solutions of the inner equation.
\item By using complex matching techniques, compare the solutions of the inner equation with the parametrizations of the perturbed invariant manifolds.
\item Finally,  prove that the dominant term of the difference between manifolds is given by the term obtained from the difference of the solutions of the inner equation.
\end{enumerate}

This approach and all the aforementioned references rely on several hypotheses one has to assume on the model. In particular, as already said, one must assume that, at each of the boundary lines of its analyticity strip, the time-parameterization of the unperturbed separatrix has only one singularity. This assumption is rather strong and it is known to be non-generic (see \cite{prl-alfimov,Simo}). In particular, the model \eqref{eq:mainfourthorder} 
with $\gamma \in \left(-\frac{1}{9},0\right)$ we consider in this paper does not satisfy this hypothesis since  the separatrix has two singularities at each of these lines. 

As far as the authors know, no proof of exponentially small splitting of separatrices for  separatrices with multiple singularities with the same imaginary part existed until now. The reason is that to analytically extend the invariant manifolds to complex domains one needs to estimate quite sharply certain oscillatory integrals and this is not so straightforward when one has several singularities with the same imaginary part. In the present paper we propose a new approach which relies on considering ``auxiliary orbits'' of the model. The approach is rather flexible and we expect to be applicable to a wide set of models with\emph{any number} of singularities with the same imaginary part (see Section \ref{sec:furtherdirections} below).

Let us explain the main steps in the proof of Theorem \ref{thm:main}, comparing them with the classical Lazutkin's approach explained above. The singularities of the unperturbed separatrix closest to the real axis are those given in \eqref{def:singularities:intro}.

\begin{enumerate}
\item Choose coordinates which  capture the slow-fast dynamics of the model. In the present paper the coordinates in \eqref{eq:system} suffice. Note that this system possesses a first integral (see \eqref{def:firstintegral:2}).
\item Prove the existence of the analytic continuation of the time-parametrization of the perturbed unstable invariant manifolds
in  an appropriate complex domain (see \eqref{def:defdomainouter}). This domain contains a segment of
the real line and intersects a neighborhood sufficiently close to the  singularities of the  separatrix with negative real part (see \eqref{def:singularities:intro}). Analogously, extend the perturbed stable invariant manifold up to the singularities with positive real part. This is done in Theorem \ref{thm:outer:intro}.

\item Consider an auxiliary solution of \eqref{eq:system} which belongs to the same level  of the first integral and that can be defined in a  lozenge  shaped complex domain which contains a segment of the real line and domains $\eps$-close to all four singularities of the unperturbed separatrix (see \eqref{def:domainaux}). This is done in Theorem \ref{thm:aux:intro}. Note that this solution does not belong to neither the stable nor the unstable invariant manifold. Instead of measuring the distance between the stable and unstable invariant manifolds at a given section, we will measure the distance between the unstable manifold and the auxiliary solution and between the auxiliary solution and the stable manifold.
	
\item Derive the inner equation (see \eqref{eq:inner}), which gives the first order of the original system close to the singularities of the separatrix. Note that the same inner equation appears close to all four singularities in \eqref{def:singularities:intro}.
	
\item Study two special solutions of the inner equation and  provide an asymptotic formula for the difference between these two solutions of the inner equation. This is done in Theorem \ref{thm:inner}.
\item Close to the singularities with negative real part, by using complex matching techniques, compare the solutions of the inner equation with the parametrization of the perturbed unstable invariant manifold and the auxiliary solution (analogously close to the singularities with positive real part and the auxiliary solution and the parameterization of the stable invariant manifold). This is done in Theorem \ref{thm:matching:intro}.
\item Prove that the dominant term of the difference between the unstable manifold and the auxiliary solution is given by the term obtained from the difference of the solutions of the inner equation close to the singularities with negative real part (analogously for the stable manifold and the auxiliary solution close to the rightmost singularities). This is done in Propositions \ref{prop:difference:remainder} and \ref{prop:CsFormula}. Joining the two asymptotic formulas provides the difference between the stable and unstable invariant manifolds.
\end{enumerate}

\subsection{Further directions and applications}\label{sec:furtherdirections}

Although we have addressed a very particular model, the fourth-order equation \eqref{eq:mainfourthorder}, which is relevant for traveling waves of the modified Kawahara equation~\eqref{Kawahara}, the statement and proof of Theorem~\ref{thm:main} can be extended to other dynamical systems with the saddle-center points. 

One example where a sequence of homoclinic orbits appears in the singular perturbation theory was considered in \cite{Alf2}. The limiting second-order equation is given by 
\begin{equation}
    \label{NLS-sat}
    u'' - u + \frac{u^3}{1 + \gamma u^2} = 0,
\end{equation}
with a parameter $\gamma > 0$ and it appears as the standing wave reduction of the focusing nonlinear Schr\"{o}dinger (NLS) equation with a saturation term. If $\gamma = 0$, the homoclinic orbit is given by $u_0(x) = \sqrt{2} {\rm sech}(x)$
with the simple pole singularities along the imaginary axis at 
$$
x = \frac{i \pi (2n + 1)}{2}, \quad n \in \mathbb{Z}. 
$$
However, for every $\gamma > 0$ it was proven in \cite[Theorem 2.2]{Alf2} that 
the nearest singularities to the real line appear as a quadruplet in the complex plane. Hence, the numerical approximations in \cite[Section 3]{Alf2} showed the existence of a countable sequence of true homoclinic orbits when the limiting second-order equation~\eqref{NLS-sat} is perturbed by the fourth-order derivative term. 

This example is rather striking since the term $u^3/(1+\gamma u^2)$ with $\gamma > 0$ does not change the number and types of the critical points in the dynamical system on the real line, but only change the number and types of singularities in the complex plane. 

Another example appears in the cubic--quintic NLS equation 
\begin{equation}
\label{NLS-cubic-quintic}
    u'' - u + u^3 (1 + 3 \gamma u^2) = 0
\end{equation}
with another parameter $\gamma \in \mathbb{R}$. The homoclinic orbit is given by 
\[
u_0(x) = \frac{2}{\sqrt{1 + \sqrt{1 + 16 \gamma} \cosh(2x)}}.
\]
The simple pole singularity for $\gamma = 0$ at $x = \frac{i\pi}{2}$ splits vertically along the imaginary axis for $\gamma > 0$ and horizontally for $\gamma < 0$ with the square root branch point singularities. In the latter case, we have a quadruplet of square root singularities in the complex plane which lead to a sequence of homoclinic orbit if the second-order equation~\eqref{NLS-cubic-quintic} is perturbed by the fourth-order derivative term. 

For both models~\eqref{NLS-sat} and~\eqref{NLS-cubic-quintic}, the singularities in the complex plane are more complicated than poles and involve  branching points, see \cite{Alf2}.

The analytical proof of Theorem~\ref{thm:main} can be extended from fourth-order dynamical systems to other finite-dimensional dynamical systems. It is nevertheless an open direction to extend the analysis to the infinite-dimensional dynamical systems such as the differential advance-delay equations. Such situations with the saddle-center points and the quadruplets of singularities in the complex plane are well-known in the context of traveling solitary waves in diatomic Fermi--Pasta--Ulam (FPU) systems \cite{Deng2,Lustri1}. If the center manifold is still two-dimensional and the stable and unstable manifolds are infinite-dimensional, we conjecture that a similar sequence of true homoclinic orbits exist in the singular limit of the diatomic FPU system, in agreement with the numerical results in \cite{Tim,Porter,Vain}. However, the proof of this conjecture is left for further studies.

\vspace{0.25cm}

{\bf Acknowledgements.} This work is part of the grant
PID-2021-122954NB-100 funded by MCIN/AEI/10.13039/501100011033 and ``ERDF A
way of making Europe''.
M. Guardia has been supported by the European Research Council (ERC) under the
European Union’s Horizon 2020 research and innovation programme (grant agreement
No. 757802). M. Guardia is also supported by the Catalan Institution for Research and
Advanced Studies via an ICREA Academia Prizes 2018 and 2023. 
This work is also supported by the Spanish State Research Agency, through the
Severo Ochoa and Mar\'ia de Maeztu Program for Centers and Units of Excellence in
R\&D (CEX2020-001084-M). 

The authors have no relevant financial or non-financial interests to disclose.

Data sharing not applicable to this article as no datasets were generated or analyzed during the current study.

\section{Details of the proof}\label{sec:outlineproof}

We devote this section to prove Theorem \ref{thm:main}. First in Section \ref{sec:homo} we provide analytic properties of the unperturbed solution  \eqref{def:soliton}. Then, in Section \ref{sec:outer:description} we study the analytic continuation of the perturbed solutions in suitable complex domains and we also analyze the auxiliary solution. In Section~\ref{sec:expsmallestimates} we give exponential upper bounds for the difference between two solutions for the stable and unstable invariant manifolds at a given transverse cross-section. To provide an asymptotic formula for this difference we analyze the first order of the perturbed solutions close to the singularities of the unperturbed solution. This is done in Section \ref{sec:innerscale} by means of an inner equation and complex matching techniques. Finally, in Section \ref{sec:asympformula} we obtain the asymptotic formula for the difference between two solutions for the stable and unstable invariant manifolds.

We will use the notation $'$ and $\partial_x $ to indicate the derivative with respect to $x$. In addition, when defining functional operators, we usually omit the dependence of some known functions such as $u_0$ on $x$.

\subsection{Properties of the unperturbed solution}\label{sec:homo}

The first step in the proof of Theorem \ref{thm:main} is to analyze the analytic properties of the unperturbed solution $u_0$ introduced in \eqref{def:soliton}. This is contained in the following lemma, the proof of which can be found in Appendix \ref{app-A}.

\begin{lemma}\label{lemma:propertiesu0}
For $\ga \in (-\frac{1}{9},0)$, the function $u_0$  in \eqref{def:soliton} has the following properties:
\begin{itemize}
\item At the line $\Im x= \pi$ $u_0$ has exactly two singularities at
\begin{equation}\label{def:alpha}
x_\pm=\pm \al+\pi i, \quad\al=\cosh^{-1}\frac{1}{\sqrt{1+9\ga}} 
\end{equation}
and at $\Im x= -\pi$ $u_0$ has singularities at the conjugate points $\ol{x_\pm}$
\item $u_0$ is real analytic in $ \mathbb{C} \backslash\{x_\pm+i2k\pi, \ol{x_\pm}-i2k\pi\}_{k\in \mathbb{N}}$. 
\item In a neighborhood of $x_\pm $, $u_0$ satisfies
\[
u_0(x) = \frac{c_{\pm 1}}{x-x_\pm} + \mathcal{O}(1) \quad \mbox{\rm as} \;\; x \to x_\pm,
\]
with
\begin{equation}\label{def:cpm1}
c_{\pm1}= \mp\frac{1}{\sqrt{|\gamma|}}.
\end{equation}
\item The second derivative of $u_0$ has exactly eight zeros, $x_j^{\pm}$, $j=1,2,3,4$ with $|\Im x_j^{\pm}| \leq \pi$ of the form 
$$
x_1^{\pm} = \pm i b, \qquad x_2^{\pm}= \pm a, \qquad x_3^{\pm}= \pm \mathbf{a}+ i \pi, \qquad x_4^{\pm} = \pm \mathbf{a}- i \pi
$$ 
with 
$$
b\in \left (\frac{\pi}{2}, \pi\right ), \qquad a > \alpha , \qquad \mathbf{a} \in (0,\alpha).
$$
\end{itemize}
\end{lemma}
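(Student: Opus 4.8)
The plan is to work directly from the closed formula \eqref{def:soliton}, namely $u_0(x) = 3/(\sqrt{1+9\gamma}\cosh(x)+1)$, and treat each bullet in turn. Throughout write $\mu := \sqrt{1+9\gamma} \in (0,1)$ for $\gamma \in (-\tfrac19,0)$, so the denominator is $D(x) := \mu\cosh(x)+1$.

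First, the singularities. Since $\cosh$ is entire, the only singularities of $u_0$ are the zeros of $D$, i.e.\ the solutions of $\cosh(x) = -1/\mu$. Because $-1/\mu < -1$, there is no real solution; writing $x = \sigma + i\tau$ and using $\cosh(\sigma+i\tau) = \cosh\sigma\cos\tau + i\sinh\sigma\sin\tau$, the imaginary part vanishes iff $\sinh\sigma\sin\tau = 0$. The branch $\sigma = 0$ forces $\cos\tau = -1/\mu < -1$, impossible; so $\sin\tau = 0$, i.e.\ $\tau = k\pi$, and then $\cosh\sigma\cos(k\pi) = -1/\mu$, which for odd $k$ gives $\cosh\sigma = 1/\mu$, i.e.\ $\sigma = \pm\alpha$ with $\alpha = \cosh^{-1}(1/\mu)$, and for even $k$ gives $\cosh\sigma = -1/\mu$, impossible. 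Hence the singular set is exactly $\{\pm\alpha + i\pi(2k+1): k\in\mathbb Z\}$; restricting to $|\Im x|\le \pi$ and to the line $\Im x = \pi$ leaves precisely $x_\pm = \pm\alpha + i\pi$, with the conjugate pair on $\Im x = -\pi$. Since $D$ is entire with simple zeros there ($D'(x_\pm) = \mu\sinh(x_\pm) = \mu\sinh(\pm\alpha)\cos\pi = \mp\mu\sinh\alpha \neq 0$, using $\sinh\alpha = \sqrt{1/\mu^2-1}\neq 0$), $u_0$ is meromorphic on $\mathbb C$ with simple poles exactly at these points, which gives the second bullet. For the residue, expanding $D(x) = D'(x_\pm)(x-x_\pm) + O((x-x_\pm)^2)$ gives $c_{\pm1} = 3/D'(x_\pm) = 3/(\mp\mu\sinh\alpha) = \mp 3/(\mu\sinh\alpha)$; since $\mu\sinh\alpha = \mu\sqrt{1/\mu^2-1} = \sqrt{1-\mu^2} = \sqrt{-9\gamma} = 3\sqrt{|\gamma|}$, this is $\mp 1/\sqrt{|\gamma|}$, matching \eqref{def:cpm1}.

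Next, the eight zeros of $u_0''$. From $u_0 = 3/D$ one computes $u_0' = -3D'/D^2$ and $u_0'' = -3(D''D - 2(D')^2)/D^3 = -3(D D'' - 2(D')^2)/D^3$; since $D'' = D - 1$ (as $D = \mu\cosh x + 1$), the numerator is $N(x) := D(D-1) - 2(D')^2 = D^2 - D - 2\mu^2\sinh^2 x = D^2 - D - 2\mu^2(\cosh^2 x - 1)$. Writing $c := \cosh x$ this becomes a quadratic-in-$D$, hence a polynomial identity $N = (\mu c+1)^2 - (\mu c + 1) - 2\mu^2(c^2-1) = -\mu^2 c^2 + \mu c + 2\mu^2 = -\mu^2 c^2 + \mu c + 2\mu^2$. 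So $u_0''(x) = 0$ iff $\cosh(x)$ is a root of $\mu^2 c^2 - \mu c - 2\mu^2 = 0$, i.e.\ $c_{\pm} = (1 \pm \sqrt{1 + 8\mu^2})/(2\mu)$. One checks $c_+ > 1$ (so $\cosh x = c_+$ has two real solutions $x = \pm a$, and $\pm a \pm 2\pi i$, etc., which on $|\Im x|\le\pi$ modulo the even-$\pi$ shift are just $\pm a$ and — wait, $\cosh$ is $2\pi i$-periodic, so on the strip the solutions of $\cosh x = c_+$ with $|\Im x|\le\pi$ are $\pm a$ only), and $c_- \in (-1, 0)$ (using $0 < \mu < 1$, so that $1 - \sqrt{1+8\mu^2} \in (-2\mu,0)$), so $\cosh x = c_-$ has solutions with $\cosh\sigma\cos\tau = c_- < 0$ and $\sinh\sigma\sin\tau = 0$: either $\sigma = 0, \cos\tau = c_-$ giving $\tau = \pm b$ with $b\in(\pi/2,\pi)$, or $\tau = \pm\pi, \cosh\sigma = -c_- \in (0,1)$, impossible unless we allow — no: $-c_- < 1$ would need $\cosh\sigma < 1$, impossible. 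So I must be more careful: the four "off-axis" zeros $x_2^\pm, x_3^\pm, x_4^\pm$ come from revisiting which root of the quadratic. I would re-derive $N$ keeping track of $\sinh^2 = \cosh^2-1$ carefully and match to the claimed forms $\pm a$, $\pm\mathbf a + i\pi$, $\pm\mathbf a - i\pi$, $\pm ib$; the location claims ($b\in(\pi/2,\pi)$, $a>\alpha$, $\mathbf a\in(0,\alpha)$) then follow from monotonicity of $\cosh^{-1}$ and $\arccos$ together with the explicit values of $c_\pm$ compared against $\pm1/\mu$.

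The main obstacle is precisely this last bullet: getting the \emph{sign pattern and strip placement} of the eight critical points right, and verifying the strict inequalities $\tfrac\pi2 < b < \pi$, $a > \alpha$, $0 < \mathbf a < \alpha$. The singularities and residue are routine once the formula is in hand; the critical-point count reduces to a quadratic in $\cosh x$, but then one must solve $\cosh(\sigma + i\tau) = c_\pm$ in the strip $|\tau|\le\pi$, carefully separating the cases $\tau = 0$, $\tau = \pm\pi$, and $\sigma = 0$, and then compare the resulting $a, \mathbf a, b$ with $\alpha = \cosh^{-1}(1/\mu)$ by estimating $c_\pm$ against $1/\mu$ and $-1/\mu$ as functions of $\mu\in(0,1)$. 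I would handle that comparison by noting $c_+ c_- = -2$ and $c_+ + c_- = 1/\mu$, which pins down both roots cleanly in terms of $\mu$ and makes the inequalities a one-variable calculus check.
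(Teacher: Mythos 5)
Your treatment of the first three bullets is correct and is what the paper implicitly assumes: Appendix~A of the paper addresses only the fourth bullet, so you are supplying the omitted details (and correctly: the poles are exactly at $\pm\alpha + (2k+1)\pi i$, they are simple since $D'(x_\pm) = \mp\mu\sinh\alpha \neq 0$, and $\mu\sinh\alpha = \sqrt{1-\mu^2} = 3\sqrt{|\gamma|}$ gives $c_{\pm1} = \mp 1/\sqrt{|\gamma|}$).

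For the fourth bullet, your reduction of $u_0''=0$ to the quadratic $\mu c^2 - c - 2\mu = 0$ with $c=\cosh x$ is exactly the paper's reduction (there written $\beta\cosh^2 x - \cosh x - 2\beta = 0$ with $\beta=\mu$), and your computation of $N$ is correct — you should stop second-guessing it. The obstruction you hit is real, not an artifact of a sign error on your side: for $\mu\in(0,1)$ one has $c_+ = \frac{1+\sqrt{1+8\mu^2}}{2\mu} > 1/\mu > 1$ and $c_- = \frac{1-\sqrt{1+8\mu^2}}{2\mu}\in(-1,0)$, and on $\Im x=\pm\pi$ the equation becomes $\cosh\sigma = -c_\pm$, but $-c_+<0$ and $-c_- = \frac{\sqrt{1+8\mu^2}-1}{2\mu}\in(0,1)$, so neither is attained by $\cosh\sigma\geq 1$. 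Hence there are exactly four zeros of $u_0''$ in the closed strip $|\Im x|\leq\pi$, namely $\pm a$ with $a=\operatorname{acosh}(c_+)>\alpha$ and $\pm ib$ with $b=\arccos(c_-)\in(\pi/2,\pi)$; the claimed points $\pm\mathbf{a}\pm i\pi$ do not exist for $\gamma\in(-\tfrac19,0)$. In fact the paper's Appendix~A, at the final display, only records $\cosh\mathbf{a}=\frac{1}{2\beta}(\sqrt{1+8\beta^2}-1)<1/\beta$ and never checks the lower bound $\cosh\mathbf{a}\geq 1$, which fails precisely when $\beta<1$ (it would hold for $\beta>1$, i.e.\ $\gamma>0$, suggesting the eight-zero claim was carried over from that regime). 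So the gap is in the lemma statement and the appendix, not in your argument; the right move is to flag the discrepancy rather than keep re-deriving $N$ hunting for an error you did not make.
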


\subsection{The outer scale}\label{sec:outer:description}

The second step in the proof of Theorem \ref{thm:main} is to look for parameterizations of the one-dimensional stable and unstable invariant manifolds in the system \eqref{eq:system}. We parameterize them as solutions of equation \eqref{eq:system} by fixing the initial condition at $\Sigma$ defined in  \eqref{def:section}.

We analyze the invariant manifolds by a perturbative approach close to $(u_0,0)$ where $u_0$ is the solution of~\eqref{eq:mainsecondorder}  introduced in~\eqref{def:soliton} that satisfies $u_0'(0) = 0$. To this end, we write 
\[
 u=u_0+\xi,\quad v=\eta,
\]
which yields the following system 
\begin{equation}\label{eq:perturb}
\left\{ \begin{array}{l} 
\LL_1\xi =\FF_1  [\xi,\eta], \\
\LL_2\eta =\FF_2  [\xi,\eta], 
\end{array} \right.
\end{equation}
where the linear operators are defined by
\begin{equation}\label{def:diffoperators}  
\left\{ \begin{array}{l} 
\LL_1 =-\pa_x^2+1-2u_0(x)-6\ga u_0^2(x),\\
\LL_2 =\pa_x^2+\frac{1}{\eps^2},
\end{array} \right.
\end{equation}
and
\begin{equation}\label{eq:outer:RHSoperator} 
\left\{ \begin{array}{l} 
\FF_1[\xi,\eta] =-\eta+(1+6\ga u_0) \xi^2 + 2 \ga \xi^3,\\
\FF_2[\xi,\eta] =f'(u_0+\xi)\left(u_0+\xi+\eta-f(u_0+\xi)\right)+f''(u_0+\xi)(u_0'+\xi')^2,
\end{array} \right.
\end{equation}
with $f$ defined in~\eqref{def:f}. 
Now, since
\[
\eta'=u'''-u'+f'(u)u',
\]
the first integral \eqref{def:firstintegral:2} becomes
\begin{align}\label{defGfirstintegral}
\wt G(\xi,\xi',\eta,\eta',x) =& \frac{1}{2} (1-\eps^2)\left[ (u_0')^2 + 2 u_0'\xi' + (\xi')^2 \right] - \frac{1}{2} \left[  u_0^2 - 2 u_0 \xi - \xi^2 \right] + F(u_0+\xi)\notag\\
&+\eps^2\Big[\left(u_0'+\xi'\right)\left(\eta'+u_0'+\xi'-f'(u_0+\xi)(u_0'+\xi')\right)\\
&-\frac{1}{2} (\eta+u_0+\xi-f(u_0+\xi))^2 \Big], \notag 
\end{align}
which is constant along solutions of \eqref{eq:perturb}.

The following theorem, whose  proof is given in Section~\ref{sec:outer}, provides two solutions of~\eqref{eq:perturb} which decay exponentially as $\Re x\to+ \infty$ and $\Re x\to- \infty$ respectively. They correspond to the parameterizations of the invariant manifolds. Moreover, we prove that they can be analytically extended to the so-called outer domains defined as 
\begin{equation}\label{def:defdomainouter} 
\begin{split}
D^{\out,\uns}_{\kappa}=&\,\left\{ x\in\CC: \quad  \left|\Ip(x)\right|\leq -\tan\theta\Rp(x-x_-)+ \Ip x_--\kappa\eps \right\},\\
D^{\out,\sta}_{\kappa} = & \,\left\{ x\in\CC: \quad \left|\Ip(x)\right|\leq \tan\theta\Rp(x-x_+) + \Ip x_+-\kappa\eps \right\},
\end{split}
\end{equation}
where $0<\theta<\mathrm{atan} \left (\frac{\pi}{3\alpha}\right )$, with $\alpha$ defined in~\eqref{def:alpha}, is a fixed angle independent of $\eps$ and $\kappa\geq 1$ (see Figure~\ref{fig:outer}).  Observe that $D^{\out,\us}_{\kappa}$, $\us=\uns,\sta$, reach domains at a $\kappa\eps$--distance of the singularities $x=x_-$ and $x=x_+$ of $u_0$ respectively.

\begin{figure}[hbt!]	
	\centering
	\begin{overpic}[width=10cm]{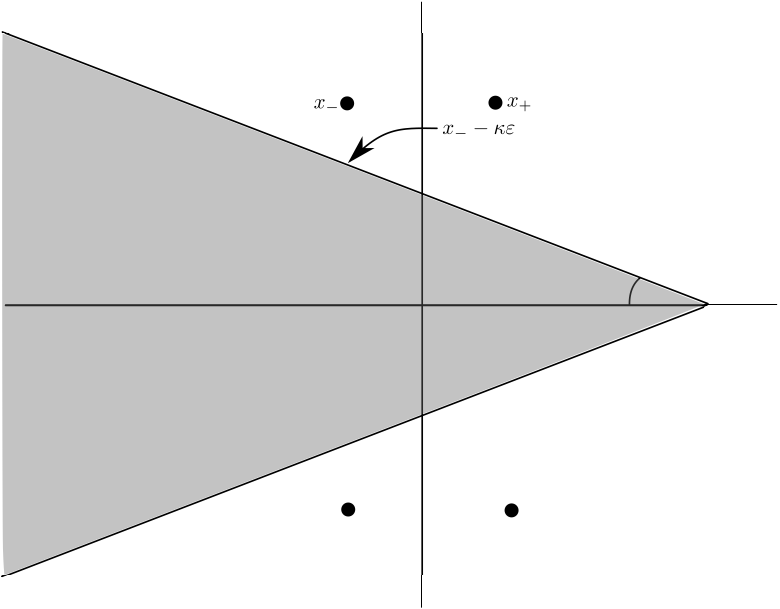}
		\end{overpic}
	\caption{The outer domain $D^{\out,\uns}_{\kappa}$ introduced in \eqref{def:defdomainouter}. }	
	\label{fig:outer}
\end{figure} 

\begin{theorem}\label{thm:outer:intro} Fix $0<\theta<\mathrm{atan} \left (\frac{\pi}{3\alpha}\right )$. 
 There exists $\kappa_0, \eps_0>0$, such that, if $\eps\in (0, \eps_0)$ and $\kappa> \kappa_0$, then there exist 
 real-analytic functions  $(\xi^\star,\eta^\star)$, $\star=\uns,\sta$, defined  in the domain $D^{\out,\us}_{\kappa}$ which  are solutions of \eqref{eq:perturb} satisfying 
\[
\begin{split}
\lim_{\Re x\to -\infty}(\xi^{\uns},\eta^{\uns})=(0,0),\qquad
\lim_{\Re x\to \infty}(\xi^{\sta},\eta^{\sta})=(0,0)
\end{split}
\]
 and
\[
\partial_x \xi^\star(0)=0,\qquad \wt G(\xi^\star,\partial_x \xi^\star,\eta^\star,\partial_x \eta^\star,x)=0,
\]
where $\wt G$ is the first integral introduced in~\eqref{defGfirstintegral}. 

Moreover, there exists $M_1>0$, depending only on $\theta,\kappa_0,\eps_0$, such that $\xi^\us$ and $\eta^\us$, $\us=\uns,\sta$, satisfy the following estimates.
\begin{itemize}
\item For $x\in D^{\out,\star}_{\kappa}\cap\{|\Rp(y)|\geq 2\alpha\}$, 
\[
 |\xi^\star(x)|\leq\, M_1\eps^2 e^{-|\Re x|},\qquad    |\eta^\star(x)|\leq \, M_1\eps^2 e^{-|\Re x|}
 \]
 and
\[
 |\pa_x\xi^\star(x)|\leq\, M_1\eps^2 e^{-|\Re x|},\qquad  |\pa_x\eta^\star(x)|\leq \, M_1\eps^2 e^{-|\Re x|}.
 \]
\item  For  $x\in D^{\out,\star}_{\kappa}\cap\{|\Rp(y)|\leq  2\alpha \}$,
\[
\begin{aligned}
  |\xi^\star(x)|&\leq\, \frac{M_1\eps^2}{|x-x_-|^3|x-\ol x_-|^3|x-x_+|^3|x-\ol x_+|^3},\\
  |\eta^\star(x)|&\leq \, \frac{M_1\eps^2}{|x-x_-|^5|x-\ol x_-|^5|x-x_+|^5|x-\ol x_+|^5},\\
  |\pa_x\xi^\star(x)|&\leq\, \frac{M_1\eps^2}{|x-x_-|^4|x-\ol x_-|^4|x-x_+|^4|x-\ol x_+|^4},\\
  |\pa_x\eta^\star(x)|&\leq \, \frac{M_1\eps}{|x-x_-|^5|x-\ol x_-|^5|x-x_+|^5|x-\ol x_+|^5}.
\end{aligned}
\]
\end{itemize}
Finally, 
\[
\xi^\sta(x)=\xi^\uns(-x), \qquad \eta^\sta(x) = \eta^\uns(-x)
\]   
or, in other words, the unstable curve is reflected by the involution $\Psi$ in~\eqref{def:involution} to the stable one. 
\end{theorem}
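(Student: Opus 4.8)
\emph{Strategy.} The plan is to realize the two invariant curves as fixed points of a contraction, on a space of analytic functions on the outer domains, of the coupled system \eqref{eq:perturb} for the perturbation $(\xi^\star,\eta^\star)$ of the unperturbed homoclinic $(u_0,0)$. By the reversibility \eqref{def:involution} and the fact that $u_0$ is even, it is enough to treat the unstable case $\star=\uns$: the pair $(\xi^\uns(-x),\eta^\uns(-x))$ solves the problem that defines $(\xi^\sta,\eta^\sta)$ on $D^{\out,\sta}_{\kappa}=-D^{\out,\uns}_{\kappa}$, so uniqueness of the fixed point will give $\xi^\sta(x)=\xi^\uns(-x)$, $\eta^\sta(x)=\eta^\uns(-x)$; and since $D^{\out,\uns}_{\kappa}$ is symmetric under complex conjugation and \eqref{eq:perturb} has real coefficients, it suffices to solve on $D^{\out,\uns}_{\kappa}\cap\{\Im x\ge 0\}$ and extend by Schwarz reflection, which also yields real analyticity for free. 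I would work in a Banach space with weighted sup-norms mirroring the conclusions of the theorem: an exponential weight $e^{|\Re x|}$ on $\{|\Re x|\ge 2\alpha\}$ and, on $\{|\Re x|\le 2\alpha\}$, the algebraic weights $\prod_{\bullet}|x-\bullet|^{m}$ over $\bullet\in\{x_-,\bar x_-,x_+,\bar x_+\}$, with $m=3$ for $\xi$, $m=4$ for $\partial_x\xi$, $m=5$ for $\eta$ and $m=5$ for $\partial_x\eta$ (the last component allowed the larger size $\eps$ rather than $\eps^2$), and seek the fixed point in a ball of radius $\OO(\eps^2)$.

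\emph{Step 1: the right inverse $\GG_1$ and the $\xi$--equation.} The operator $\LL_1=-\partial_x^2+1-2u_0-6\gamma u_0^2$ from \eqref{def:diffoperators} is the linearization of \eqref{eq:mainsecondorder} along $u_0$, so $\LL_1u_0'=0$; reduction of order gives a second independent solution $\phi_2$ with constant Wronskian with $u_0'$, growing like $e^{|\Re x|}$, and by Lemma~\ref{lemma:propertiesu0} together with the evenness of $u_0$ both $u_0'$ and $\phi_2$ extend analytically to $D^{\out,\uns}_{\kappa}$, with $u_0'$ having a double pole and $\phi_2$ a triple zero at each of $x_\pm,\bar x_\pm$. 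Using variation of constants with the endpoint attached to $u_0'$ taken at a fixed point and the one attached to $\phi_2$ taken at $\Re x\to-\infty$ inside the domain, one obtains a particular solution decaying there; adding the multiple $-(\partial_x\GG_1 h(0)/u_0''(0))\,u_0'$ of the decaying kernel element (legitimate since $u_0''(0)\neq0$) produces a right inverse with $\partial_x(\GG_1 h)(0)=0$. With the integration contours chosen horizontal on $\{|\Re x|\ge 2\alpha\}$ and along the boundary of the lozenge near $x_-$, one checks in the weighted norms that $\GG_1$ creates the decay $e^{-|\Re x|}$ far out and lowers the pole order at the singularities by two. Then, for fixed $\eta$, the map $\xi\mapsto\GG_1\FF_1[\xi,\eta]=\GG_1(-\eta)+\GG_1\big((1+6\gamma u_0)\xi^2+2\gamma\xi^3\big)$ from \eqref{eq:outer:RHSoperator} is a contraction in the $\xi$--norm (its quadratic and cubic parts gain a factor $\kappa^{-1}$), which yields $\xi=\xi[\eta]$, with $\|\xi[\eta]\|\lesssim\|\eta\|$ and Lipschitz dependence on $\eta$.

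\emph{Step 2 (the main obstacle): the right inverse $\GG_2$.} The genuinely delicate step is the construction and \emph{global} control of a right inverse $\GG_2$ of the fast oscillator $\LL_2=\partial_x^2+\eps^{-2}$ on the unbounded lozenge $D^{\out,\uns}_{\kappa}$. With the homogeneous solutions $e^{\pm ix/\eps}$ one writes
\[
\GG_2 h(x)=\frac{\eps}{2i}\Big(e^{ix/\eps}\!\!\int^{x}\!\! e^{-is/\eps}h(s)\,ds-e^{-ix/\eps}\!\!\int^{x}\!\! e^{is/\eps}h(s)\,ds\Big),
\]
and the difficulty is that on $\{\Im x\ge0\}$ the factor $e^{-ix/\eps}$ is unbounded — of size $e^{\pi/\eps}$ near $\Im x=\pi$, and far larger in the part of the domain where $\Im x$ grows linearly with $|\Re x|$ — so the companion oscillatory integral in the second term has to be estimated with exponential precision. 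This forces a careful choice of integration contours adapted to the lozenge geometry: the first integral is taken from $\Re x\to-\infty$, where $h=\FF_2[\xi[\eta],\eta]$ decays, and the second from a point high in the domain (on the upper boundary, at height $\pi-\kappa\eps$), where $|e^{is/\eps}|\lesssim e^{-\pi/\eps}$; one then needs sharp bounds on these oscillatory integrals along such contours, exploiting the decay and the algebraic weights of $h$. This is precisely the kind of estimate that the introduction flags as problematic when several singularities share an imaginary part; here, however, the outer domain keeps distance $\kappa\eps$ from all of them, so only the pair $x_-,\bar x_-$ is active. Once $\GG_2$ is shown bounded in the weighted norms, the $\eps^2$--gain is elementary: since $|x-x_\pm|\gtrsim\kappa\eps\gg\eps$ on $D^{\out,\uns}_{\kappa}$, integration by parts gives $\GG_2\approx\eps^2\sum_{k\ge0}(-\eps^2\partial_x^2)^k$, so $\eta=\GG_2h$ inherits the pole order $5$ of its source with prefactor $\eps^2$, while $\partial_x\GG_2 h=\tfrac12\big(e^{ix/\eps}\int^x e^{-is/\eps}h+e^{-ix/\eps}\int^x e^{is/\eps}h\big)$ costs a power of $\eps$ near the singularities, which is why the $\partial_x\eta$--bound carries $\eps$ rather than $\eps^2$.

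\emph{Step 3: closing the fixed point and the remaining properties.} Substituting $\xi=\xi[\eta]$ into the $\eta$--equation gives $\eta=\GG_2\FF_2[\xi[\eta],\eta]=:\mathcal{T}(\eta)$. Since $u_0$ solves \eqref{eq:mainsecondorder}, the source term equals $\FF_2[0,0]=\partial_x^2\big(f(u_0)\big)$ with $f$ as in \eqref{def:f}, so $\mathcal{T}(0)=\GG_2[\partial_x^2 f(u_0)]=\OO(\eps^2)$ in the $\eta$--norm; and the Lipschitz constant of $\mathcal{T}$ is $\OO(\eps^2)+\OO(\kappa^{-1})$, because $\GG_2$ gains $\eps^2$ over the variation of $\FF_2$ and the nonlinear pieces gain $\kappa^{-1}$, so all contributions — in particular the one entering through $\xi'[\eta]$ in the chain rule, which is $\OO(1)$ before composing with $\GG_2$ — are small. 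Hence $\mathcal{T}$ is a contraction on the ball of radius $\OO(\eps^2)$ for $\eps$ small and $\kappa$ large depending only on $\theta$, which produces the unique $(\xi^\uns,\eta^\uns)=(\xi[\eta^\uns],\eta^\uns)$; the stated estimates are the weighted-norm bounds, with the behavior on $\{|\Re x|\ge 2\alpha\}$ recovered from the exponential decay of $u_0$ and $u_0'$. The limit $(\xi^\uns,\eta^\uns)\to(0,0)$ as $\Re x\to-\infty$ is encoded in the norm, $\partial_x\xi^\uns(0)=0$ is encoded in $\GG_1$, and $\wt G\equiv0$ (with $\wt G$ from \eqref{defGfirstintegral}) follows because $\wt G$ is a first integral of \eqref{eq:perturb} that vanishes in the limit $\Re x\to-\infty$. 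Finally, $(\xi^\sta,\eta^\sta):=(\xi^\uns(-\cdot),\eta^\uns(-\cdot))$ has all the required properties by the reversibility explained above, which is the concluding assertion of the theorem.
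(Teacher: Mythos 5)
Your overall framework — fixed point in weighted sup-norm spaces, right inverses of $\LL_1$ and $\LL_2$ adapted to boundary conditions at $0$ and at $\Re x\to-\infty$, a sequential/triangular resolution of the two-component contraction, and reversibility to deduce the stable branch — matches the paper's strategy. Steps~1 and~3 are essentially the paper's argument with minor cosmetic variants (e.g.\ you normalize $\GG_1$ by adding a multiple of $u_0'$ to kill $\partial_x\GG_1 h(0)$, while the paper achieves this automatically by choosing the second fundamental solution $\zeta_2$ even, so $\zeta_2'(0)=0$; your two-stage $\xi[\eta]$-then-$\eta$ scheme is the same idea as the paper's Theorem~\ref{thm:fixedpoint}). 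The real-analyticity via Schwarz reflection is a legitimate alternative to working directly in a space of real-analytic functions.

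The genuine gap is in Step~2, and it comes from a misdiagnosis of where the difficulty lies. You treat the growth of $e^{-ix/\eps}$ on $\{\Im x>0\}$ as a fundamental obstruction, and propose to take the second integral $\int e^{is/\eps}h\,ds$ from a fixed base point on the upper boundary ``at height $\pi-\kappa\eps$'', i.e.\ near $\rho_-=x_--i\kappa\eps$. But $D^{\out,\uns}_{\kappa}$ is an \emph{unbounded} wedge: as $\Re x\to-\infty$, the allowed $|\Im x|$ grows linearly and in particular exceeds $\pi-\kappa\eps$. For such $x$ one has $\big|e^{-ix/\eps}\big|\cdot\big|e^{i s_0/\eps}\big|=e^{(\Im x-(\pi-\kappa\eps))/\eps}\gg 1$, so the product does not stay bounded along any path from $s_0$ to $x$, and the proposed estimate fails. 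The paper sidesteps this entirely by taking \emph{both} integrals in $\GG_2^\out$ horizontally from $-\infty$ (see~\eqref{def:intoperators-2}): on a horizontal ray $\Im s=\Im x$, one has $|e^{\pm ix/\eps}e^{\mp is/\eps}|\equiv1$, so no exponential precision is required at all. The actual subtlety for $\GG_2^\out$ is elsewhere: near $x_-$ the naive absolute-value bound on the horizontal integral loses a factor $\kappa$ relative to the target $\eps^2|x-x_-|^{-\ell}|x-\bar x_-|^{-\ell}$, and one must use the oscillation via integration by parts (or the sharp oscillatory-integral estimates the paper imports from the references) to recover the clean $\eps^2$ gain. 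You gesture at this with ``$\GG_2\approx\eps^2\sum(-\eps^2\partial_x^2)^k$'', but it is attached to the wrong choice of contour. Note also that the base-point-at-the-vertex construction you describe is precisely what the paper uses for $\GG_2^\aux$ on the \emph{bounded} lozenge $D^\aux_\kappa$ (see~\eqref{def:intoperators-aux}); the unbounded outer domain needs the horizontal contours.

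Two smaller points. First, ``$\phi_2$ a triple zero at each of $x_\pm,\bar x_\pm$'' is not right: the normalized even second solution $\zeta_2$ has \emph{double poles} there (Lemma~\ref{lem:zeta2:welldefined}); it is the auxiliary solutions $\zeta_\pm$ of Lemma~\ref{lemma:G1:appendix} that have triple zeros at their respective base singularity (and double poles at the other one). Second, the Lipschitz constant of the $\eta$-map is $\OO(\kappa^{-2})$, coming from $\GG_2$'s $\eps^2$ gain multiplied by the $(\kappa\eps)^{-2}$ size of $D\FF_2$; stating it as ``$\OO(\eps^2)+\OO(\kappa^{-1})$'' is an accounting slip, though it does not affect smallness.
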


To prove Theorem \ref{thm:main}, we analyze the difference 
\begin{equation}\label{def:differenceDelta}
\Delta=(\Delta\xi,\Delta\eta)=(\xi^{\uns}-\xi^{\sta}, \eta^{\uns}-\eta^{\sta}).
\end{equation}
However, since its difference is exponentially small, to obtain an asymptotic formula, we would need to analyze this difference in $\eps$-neighborhoods of the singularities $x=x_\pm$. Note that Theorem~\ref{thm:outer:intro} does not provide the analytic continuation of $(\xi^{\sta}, \eta^{\sta})$ to points $\kappa\eps$-close to $x_-$ (and same happens for $(\xi^{\uns}, \eta^{\uns})$ and $x_+$). 

Instead of performing the analytic extension of the invariant manifolds in the $\kappa \eps$-neighborhood of the points $x_{\pm}$, we rely on auxiliary functions $(\xi^\aux, \eta^\aux)$. These functions will be solutions of the same equation \eqref{eq:perturb} and will also belong to the same energy level with respect to $\wt G$ as $(\xi^{\uns,\sta}, \partial_x \xi^{\uns,\sta}, \eta^{\uns,\sta}, \partial_x \eta^{\uns,\sta})$. Then, the analysis of the difference \eqref{def:differenceDelta} will be deduced by the differences 
\begin{equation}\label{def:differencesDeltaaux}
\begin{split}
\Delta^{\uns}&=(\Delta\xi^{\uns},\Delta\eta^{\uns})=(\xi^{\uns}-\xi^\aux, \eta^{\uns}-\eta^\aux),\\
\Delta^{\sta}&=(\Delta\xi^{\sta},\Delta\eta^{\sta})=(\xi^\aux-\xi^{\sta}, \eta^\aux-\eta^{\sta}).
\end{split}
\end{equation}

The following theorem, whose proof is given in Section \ref{sec:aux}, provides the existence of the functions $(\xi^\aux, \eta^\aux)$ in the domain
\begin{equation}\label{def:domainaux} 
\begin{split}
 D^\aux_\kk  =&\left\{ x\in\CC : \quad \left|\Ip(x)\right|\leq \tan\theta\Rp(x-x_-)+\pi-\kappa\eps \right\}\\
& \cap \left\{ x\in\CC : \quad  \left|\Ip(x)\right|\leq -\tan\theta\Rp(x-x_+)+\pi-\kappa\eps \right\}
\end{split}
\end{equation}
with $\kappa,\theta>0$. The domain is shown in Figure~\ref{fig:aux}.

\begin{figure}[htb!]	
	\centering
	\begin{overpic}[width=10cm]{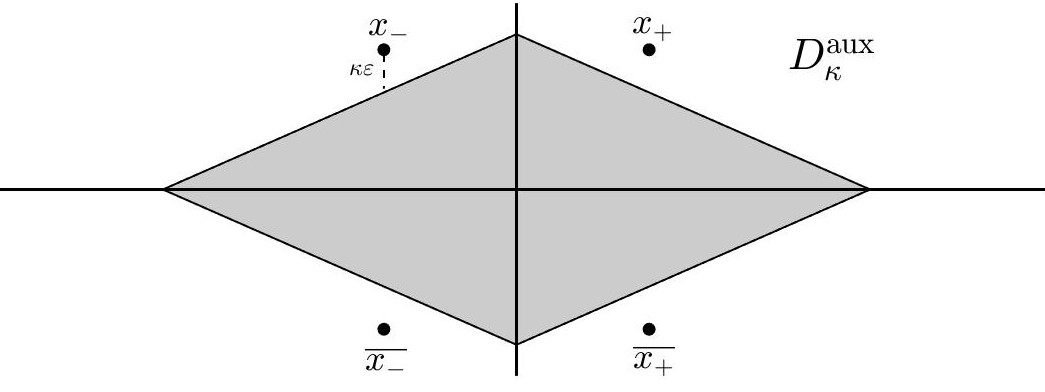}
	\end{overpic}
	\caption{The auxiliary domain $D^{\aux}_{\kappa}$ introduced in \eqref{def:domainaux}.}	
	\label{fig:aux}
\end{figure}

\begin{theorem}\label{thm:aux:intro} Let $0<\theta<\mathrm{arctan}\left (\frac{\pi}{\alpha} \right )$. 
 	There exists $\kappa_0, \eps_0>0$, such that, if $\eps\in (0, \eps_0)$ and $\kappa> \kappa_0$, then there exist 
 real-analytic functions  $(\xi^\aux,\eta^\aux)$ defined  in the domain $D^\aux_\kk$ which  are a solution of \eqref{eq:perturb} and satisfy
\[
\partial_x \xi^\aux(0)=0\qquad \text{and}\qquad \wt G(\xi^\aux,\partial \xi^\aux,\eta^\aux,\partial_x \eta^\aux,x)=0
\]
where $\wt G$ is the first integral introduced in~\eqref{defGfirstintegral}.

Moreover, there exists $M_2$, depending on $\theta,\kappa_0,\eps_0$ such that,  for $x\in D^\aux_\kk$,
\[
\begin{aligned}
  |\xi^\aux(x)|&\leq\, \frac{M_2\eps^2}{|x-x_-|^3|x-\ol x_-|^3|x-x_+|^3|x-\ol x_+|^3}\\
  |\eta^\aux(x)|&\leq \, \frac{M_2\eps^2}{|x-x_-|^5|x-\ol x_-|^5|x-x_+|^5|x-\ol x_+|^5}\\
  |\pa_x\xi^\aux(x)|&\leq\, \frac{M_2\eps^2}{|x-x_-|^4|x-\ol x_-|^4|x-x_+|^4|x-\ol x_+|^4}\\
  |\pa_x\eta^\aux(x)|&\leq \, \frac{M_2\eps}{|x-x_-|^5|x-\ol x_-|^5|x-x_+|^5|x-\ol x_+|^5}
\end{aligned}
\]
In addition $(\xi^\aux(x),\eta^\aux(x))= (\xi^\aux(-x),\eta^\aux(-x)$. 
\end{theorem}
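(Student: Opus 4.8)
\textbf{Proof strategy for Theorem~\ref{thm:aux:intro}.}
The plan is to construct $(\xi^\aux,\eta^\aux)$ directly on the lozenge $D^\aux_\kk$ as the fixed point of an integral operator, following the same scheme as in the proof of Theorem~\ref{thm:outer:intro} but replacing the decay conditions at $\Re x\to\pm\infty$ by the requirements of analyticity on the (larger) domain $D^\aux_\kk$ and of evenness in $x$. I would recast \eqref{eq:perturb} as the fixed point problem
\[
\xi=\GG_1\!\left(\FF_1[\xi,\eta]\right),\qquad \eta=\GG_2\!\left(\FF_2[\xi,\eta]\right),
\]
where $\GG_1=\LL_1^{-1}$ is the right inverse of $\LL_1$ obtained by variation of parameters from $u_0'\in\ker\LL_1$ and an even companion solution $\psi$ of $\LL_1\psi=0$ (normalized by $\psi(0)=1$), and $\GG_2=\LL_2^{-1}$ is realized through the kernel $\eps\sin\!\big(\tfrac{x-s}{\eps}\big)$; in both cases I would put the base point of all integrations at $x=0$ and choose the contours inside $D^\aux_\kk$ symmetric under $x\mapsto-x$ and $x\mapsto\ol x$. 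The only forcing term is $\FF_2[0,0]$, and using $u_0''=u_0-f(u_0)$ from \eqref{eq:mainsecondorder} together with \eqref{eq:outer:RHSoperator} one checks that $\FF_2[0,0]=\big(f(u_0)\big)''$. By Lemma~\ref{lemma:propertiesu0}, $u_0$ has a simple pole (with residue $c_{\pm1}$ as in \eqref{def:cpm1}) at each of $x_\pm,\ol x_\pm$, so $f(u_0)$ has a triple pole and $\big(f(u_0)\big)''$ a pole of order $5$ there; since $\LL_2^{-1}$ gains a factor $\eps^2$ on smooth inputs (integrating by parts twice, $\LL_2^{-1}g=\eps^2g-\eps^4g''+\cdots$), the leading part of the solution is $\eta^\aux\approx\eps^2\big(f(u_0)\big)''$ and $\xi^\aux\approx\GG_1(-\eta^\aux)$, which already produces the weights claimed in the statement.

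The iteration is carried out in a product of Banach spaces of functions analytic on $D^\aux_\kk$ with weighted sup norms $\|h\|_\nu=\sup_{x\in D^\aux_\kk}|h(x)|\,|x-x_-|^\nu|x-\ol x_-|^\nu|x-x_+|^\nu|x-\ol x_+|^\nu$, taking $\nu=3$ for $\xi$, $\nu=4$ for $\pa_x\xi$, and $\nu=5$ for $\eta$ and $\pa_x\eta$, on balls of radius $\OO(\eps^2)$ (resp.\ $\OO(\eps)$ for $\pa_x\eta$). Two facts make the operator a contraction. First, near $x_\pm$ one has $\LL_1=-\pa_x^2+6(x-x_\pm)^{-2}+\OO\big((x-x_\pm)^{-1}\big)$, because $\gamma c_{\pm1}^2=-1$ turns $-6\gamma u_0^2$ into $6(x-x_\pm)^{-2}$; the indicial exponents of this model operator are $\{3,-2\}$, so $\GG_1$ maps the $\nu=5$ space into the $\nu=3$ space with $\OO(1)$ norm (and correspondingly $\pa_x\GG_1$ into the $\nu=4$ space). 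Second, apart from the linear term $-\eta$ in $\FF_1$, every term of $\FF_1$ and of $\FF_2-\FF_2[0,0]$ is either at least quadratic in $(\xi,\eta)$ or carries a coefficient ($1+6\gamma u_0$, $f'(u_0)$, $f''(u_0)u_0'$, etc.) with a pole of order $\ge1$ at the $x_\pm,\ol x_\pm$; combined with the $\eps^2$ gain of $\GG_2$ and the smallness of the fixed point itself, each such contribution comes with a small factor (near the singularities $\eps^2|x-x_\pm|^{-2}\le\kappa^{-2}$, away from them $\eps^2$ against bounded coefficients). A suitably weighted product norm then makes the full map a contraction with constant $o(1)$ as $\eps\to0$ and $\kappa\to\infty$, which yields the stated bounds; by symmetry of $D^\aux_\kk$, of $\big(f(u_0)\big)''$, and of $\GG_1,\GG_2$, the fixed point is real analytic and even, so in particular $\pa_x\xi^\aux(0)=0$.

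It remains to impose $\wt G\equiv0$. Since $\wt G$ in \eqref{defGfirstintegral} is constant along any solution of \eqref{eq:perturb}, it suffices to control its value at $x=0$; evaluating there with $\pa_x\xi^\aux(0)=\pa_x\eta^\aux(0)=0$ and $\xi^\aux(0),\eta^\aux(0)=\OO(\eps^2)$, and using that the unperturbed homoclinic has zero energy, one gets $\wt G\equiv c$ with $c=\big(u_0(0)+f(u_0(0))\big)\xi^\aux(0)+\OO(\eps^2)=\OO(\eps^2)$. I would then add $\lambda\psi$ (even, in $\ker\LL_1$, hence preserving all symmetries) to $\xi$, re-solve the fixed point for small $\lambda$, note that $\tfrac{d}{d\lambda}\wt G\big|_{\lambda=0}=\big(u_0(0)+f(u_0(0))\big)\psi(0)+\OO(\eps^2)\neq0$ since $u_0(0)+f(u_0(0))\neq0$ for $\gamma\in(-\tfrac19,0)$ and $\psi(0)=1$, and invoke the implicit function theorem to select the value of $\lambda$ for which $\wt G\equiv0$.

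The main obstacle is the estimate for $\GG_2=\LL_2^{-1}$ on the complex domain. Its kernel $\eps\sin\!\big(\tfrac{x-s}{\eps}\big)$ has modulus of order $\eps\,e^{|\Im(x-s)|/\eps}$, which is exponentially large on $D^\aux_\kk$ because $|\Im x|$ reaches values close to $\pi$ there, and the expansion $\eps^2g-\eps^4g''+\cdots$ above is only asymptotic, so it cannot on its own control the remainder. This is precisely the difficulty that does not occur in the classical single-singularity Lazutkin-type setting reviewed in Section~\ref{sec:splitting}. The way around it is to integrate along horizontal segments $\Im s\equiv\Im x$ (on which the kernel is bounded), making local detours of size $\OO(\eps)$ around each of the four singularities $x_\pm,\ol x_\pm$ — each detour costing only a bounded factor $e^{\OO(1)}$ — and integrating by parts twice to peel off the non-oscillatory leading term $\eps^2g$ and an $\eps^3$-small oscillatory remainder along these contours. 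Arranging these contours consistently near all four singularities at once, within $D^\aux_\kk$ and compatibly with the $x\mapsto\ol x$ symmetry, and keeping the four weights simultaneously under control along them, is the technical heart of the argument and the new ingredient compared with the schemes of Section~\ref{sec:splitting}; the estimates for $\GG_1$, by contrast, are routine once the indicial analysis above is in place.
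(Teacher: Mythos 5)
Your overall architecture is the paper's: you set up the same weighted Banach spaces $\mathcal{Y}_\ell$ on $D^\aux_\kk$, build $\GG_1$ by variation of parameters from $\zeta_1=u_0'$ and an even companion solution, identify the indicial exponents $\{3,-2\}$ of $\LL_1$ near the singularities (which is exactly what Lemma~\ref{lemma:G1:appendix} encodes through $\zeta_\pm$), diagnose $\FF_2[0,0]=(f(u_0))''$ as the pole-of-order-$5$ forcing, and select the $\wt G=0$ level by an implicit function theorem applied to a free parameter multiplying $\zeta_2$. The paper also does the level-set selection by IFT, but wires it directly into the fixed point map as the term $-\zeta_2\cdot\II[\eta]$ in $\FF^\aux_1$ (Lemma~\ref{lemma:initialcondition}), whereas you propose to solve a one-parameter family of fixed point problems in $\lambda$ and select $\lambda$ afterwards; the two are equivalent, yours being slightly more cumbersome. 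A small slip: the linear-in-$\xi_0$ coefficient of $\wt G(\xi_0,0,\eta_0,\eta_0',0)$ is $-u_0''(0)$, not $u_0(0)+f(u_0(0))$ (you inherited a sign typo in~\eqref{defGfirstintegral}); both are nonzero by Lemma~\ref{lemma:propertiesu0}, so the conclusion survives.

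The genuine gap is in your construction of $\GG_2=\LL_2^{-1}$, which you yourself flag as the crux. Anchoring both integrations at $x=0$ and integrating along ``horizontal segments $\Im s\equiv\Im x$'' is not internally consistent: the base point $0$ sits on $\Im s=0$, so some vertical piece connecting $\Im s=0$ to $\Im s=\Im x$ is unavoidable, and on that piece the kernel $\eps\sin\big(\tfrac{x-s}{\eps}\big)$ grows like $\eps\, e^{|\Im x|/\eps}\sim\eps\,e^{\pi/\eps}$, which integration by parts cannot absorb since each boundary term carries the same exponential. Your picture of ``$\OO(\eps)$ detours around the four singularities'' also misreads the geometry: $D^\aux_\kk$ already stays at distance $\geq\kappa\eps$ from $x_\pm,\ol x_\pm$, so there is nothing to detour around; what does occur is that horizontal segments near $\Im x=\pm(\pi-\kappa\eps)$ pass at distance $\kappa\eps$ under the singularities, but that is handled by the weighted norm, not by contour deformation. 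The paper's resolution is different and cleaner: in $\GG_2^\aux$ (definition~\eqref{def:intoperators-aux}), each of the two oscillatory integrals is anchored at the opposite vertex of the lozenge, $\mp i\rho$ with $\rho\approx\pi$, so that along the chosen paths (a diagonal leg $x+te^{-i\vartheta}$ glued to a vertical leg $it$, as in the proof of Lemma~\ref{lemma:operators:aux}) the corresponding exponential $e^{\pm i(x-s)/\eps}$ is uniformly bounded by $1$ and in fact decays, giving the clean $\eps^2$ gain on the full domain with no special handling near the corners. This vertex-anchoring of $\GG_2^\aux$ is precisely the ``new ingredient'' of the auxiliary-orbit construction that your sketch identifies but does not actually supply.
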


\subsection{Exponentially small estimates}\label{sec:expsmallestimates}

The next step in the proof of Theorem  \ref{thm:main} is to 
analyze the differences $\Delta^{\uns}$,  $\Delta^{\sta}$ defined in \eqref{def:differencesDeltaaux}. Since $(\xi^\star,\eta^\star)$, $\star=\uns,\sta,\aux$ are all solutions of \eqref{eq:perturb}, we can conclude in the following lemma that the differences $\Delta^\star$ are solutions of a linear system in the following domains
\begin{equation}\label{def:domainsintersectionouter}
\begin{split}
E^{\out,\uns}_{\kappa}=&\,\left\{ x\in\CC : \quad  \left|\Ip(x)\right|\leq -\tan\theta\Rp(x-x_-)+ \Ip x_--\kappa\eps, \Re x\geq \Re x_- \right\},\\
E^{\out,\sta}_{\kappa} = & \,\left\{ x\in\CC : \quad  \left|\Ip(x)\right|\leq \tan\theta \Rp(x-x_+) + \Ip x_+-\kappa\eps, \Re x\leq \Re x_- \right\},
\end{split}
\end{equation}
(see Figure \ref{fig:intersection}).  Note that these domains, with $\theta$ such that 
$0 <\theta< \mathrm{atan} \left (\frac{\pi}{3\alpha}\right )$, satisfy $E^{\out,\star}_{\kappa}\subset D^{\out,\star}_{\kappa}\cap D^{\aux}_{\kappa}$, $\star=\uns,\sta$.

\begin{figure}[htb!]	
	\centering
	\begin{overpic}[width=10cm]{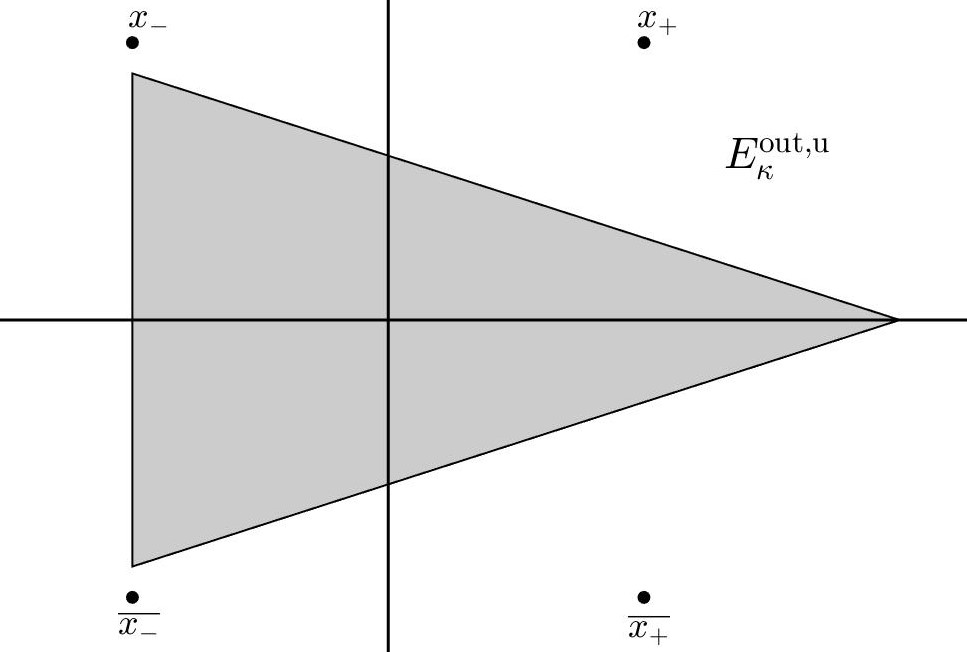}
		\end{overpic}
	\caption{The intersection domain $E^{\mathrm{out},\mathrm{u}}_{\kappa}$ introduced in \eqref{def:domainsintersectionouter}.}	
	\label{fig:intersection}
\end{figure}

\begin{lemma}\label{lemma:differenceequation}
The functions $\Delta^\star=(\Delta\xi^\star,\Delta\eta^\star)$, $\star=\uns,\sta$, in \eqref{def:differencesDeltaaux} are defined in the domains $E^{\out,\star}_{\kappa}$  in~\eqref{def:domainsintersectionouter} and are solutions of the linear system
\begin{equation}\label{eq:lineardiff}
\left\{ \begin{array}{l} 
 \LL_1\Delta\xi =\NNN_1[\Delta\xi,\Delta\eta], \\   
 \LL_2\Delta\eta =\NNN_2[\Delta\xi,\Delta\xi',\Delta\eta],
\end{array} \right.
\end{equation}
where 
\begin{equation}\label{eq:lineardiff:RHS}
\left\{ \begin{array}{l} 
\NNN_1[\Delta\xi,\Delta\eta](x) =-\Delta\eta(x)+a(x)\Delta\xi(x),\\   \NNN_2[\Delta\xi,\Delta\xi',\Delta\eta](x) = b(x)\Delta\xi(x)+c(x)\Delta\xi'+d(x)\Delta\eta(x),
\end{array} \right.
\end{equation}
for some functions $a$, $b$, $c$ and $d$, which  satisfy that, for $x\in E^{\out,\star}_{\kappa}$, 
\[
\begin{aligned}
  |a(x)|&\leq\, \frac{M_3\eps^2}{|x-x_-|^4|x-\ol x_-|^4|x-x_+|^4|x-\ol x_+|^4},\\
  |b(x)|&\leq \, \frac{M_3}{|x-x_-|^4|x-\ol x_-|^4|x-x_+|^4|x-\ol x_+|^4},\\
  |c(x)|&\leq\, \frac{M_3}{|x-x_-|^3|x-\ol x_-|^3|x-x_+|^3|x-\ol x_+|^3},\\
  |d(x)|&\leq \, \frac{M_3}{|x-x_-|^2|x-\ol x_-|^2|x-x_+|^2|x-\ol x_+|^2},
\end{aligned}
\]
for some constant $M_3$ independent of $\eps$ and $\kappa$.
\end{lemma}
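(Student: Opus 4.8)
The plan is to subtract, for $\star\in\{\uns,\sta\}$, the two copies of system~\eqref{eq:perturb} whose difference defines $\Delta^\star$ in~\eqref{def:differencesDeltaaux}, and to linearise the nonlinearities $\FF_1,\FF_2$ along the segment joining the two solutions. Write $(\xi^1,\eta^1)=(\xi^\uns,\eta^\uns)$, $(\xi^2,\eta^2)=(\xi^\aux,\eta^\aux)$ when $\star=\uns$, and $(\xi^1,\eta^1)=(\xi^\aux,\eta^\aux)$, $(\xi^2,\eta^2)=(\xi^\sta,\eta^\sta)$ when $\star=\sta$. By Theorems~\ref{thm:outer:intro} and~\ref{thm:aux:intro} both pairs solve~\eqref{eq:perturb}, and since $E^{\out,\star}_{\kappa}\subset D^{\out,\star}_{\kappa}\cap D^{\aux}_{\kappa}$ the function $\Delta^\star=(\xi^1-\xi^2,\eta^1-\eta^2)$ is real-analytic on $E^{\out,\star}_{\kappa}$. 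Using linearity of $\LL_1,\LL_2$ together with
\[
\FF_i[\xi^1,\eta^1]-\FF_i[\xi^2,\eta^2]=\int_0^1\tfrac{d}{ds}\,\FF_i\big[\xi^2+s\Delta\xi^\star,(\xi^2)'+s(\Delta\xi^\star)',\eta^2+s\Delta\eta^\star\big]\,ds,
\]
which is legitimate because $\FF_1,\FF_2$ are polynomials in $(\xi,\xi',\eta)$ with coefficients built from $u_0,u_0'$, and expanding by the chain rule produces exactly the structure~\eqref{eq:lineardiff:RHS}. Since $\partial_\eta\FF_1\equiv-1$ and $\FF_1$ is independent of $\xi'$, the first equation becomes $\LL_1\Delta\xi^\star=-\Delta\eta^\star+a\,\Delta\xi^\star$ with $a=\int_0^1\partial_\xi\FF_1\,ds$, and the second becomes $\LL_2\Delta\eta^\star=b\,\Delta\xi^\star+c\,(\Delta\xi^\star)'+d\,\Delta\eta^\star$ with $b=\int_0^1\partial_\xi\FF_2\,ds$, $c=\int_0^1\partial_{\xi'}\FF_2\,ds$, $d=\int_0^1\partial_\eta\FF_2\,ds$, all partials evaluated at the intermediate point $\big(\xi^2+s\Delta\xi^\star,(\xi^2)'+s(\Delta\xi^\star)',\eta^2+s\Delta\eta^\star\big)$.

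It remains to bound $a,b,c,d$ on $E^{\out,\star}_{\kappa}$. The four integrands are explicit polynomials in $u_0$, $u_0'$ and in the three intermediate arguments, and for the latter one uses $|\xi^2+s\Delta\xi^\star|\le\max(|\xi^1|,|\xi^2|)$ (and likewise for the derivative and for $\eta$), so the pointwise bounds of Theorems~\ref{thm:outer:intro} and~\ref{thm:aux:intro} apply to them verbatim. By Lemma~\ref{lemma:propertiesu0}, on $E^{\out,\uns}_{\kappa}$ (resp.\ $E^{\out,\sta}_{\kappa}$) $u_0$ is holomorphic except for simple poles at $x_-,\ol x_-$ (resp.\ $x_+,\ol x_+$), where $u_0'$ has double poles, and it stays at a fixed positive distance from the two remaining singularities; in particular $E^{\out,\star}_{\kappa}$ is bounded, so $|x-x_+|,|x-\ol x_+|$ (resp.\ $|x-x_-|,|x-\ol x_-|$) are bounded above and below on it. Computing $\partial_\xi\FF_1=2(1+6\ga u_0)\xi+6\ga\xi^2$ and computing $\partial_\xi\FF_2,\partial_{\xi'}\FF_2,\partial_\eta\FF_2$ from the product structure of $\FF_2$, then inserting the solution bounds together with the pole orders of $u_0,u_0'$ and absorbing the surplus powers of $\eps$ produced by products of the singular bounds against the inequality $|x-x_-|\gtrsim\kappa\eps$ valid on $E^{\out,\uns}_{\kappa}$ (similarly for $\ol x_-$) — the worst offender being the term $f''(u_0+\xi)\,\eta$ in $\partial_\xi\FF_2$, of size $\eps^2|x-x_-|^{-6}\lesssim|x-x_-|^{-4}$ — gives, near $x_-$ (and symmetrically near $\ol x_-$),
\[
|a|\lesssim\frac{\eps^2}{|x-x_-|^4},\qquad |b|\lesssim\frac{1}{|x-x_-|^4},\qquad |c|\lesssim\frac{1}{|x-x_-|^3},\qquad |d|\lesssim\frac{1}{|x-x_-|^2},
\]
while away from $x_-,\ol x_-$ the coefficients are bounded; integration in $s$ preserves all of this.

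Finally, since $E^{\out,\uns}_{\kappa}$ is bounded and stays at a fixed positive distance from $x_+,\ol x_+$, each such estimate (which has only $|x-x_-|$ and $|x-\ol x_-|$ in its denominator) can be rewritten with the full product $|x-x_-|\,|x-\ol x_-|\,|x-x_+|\,|x-\ol x_+|$ in the denominator, at the cost of a constant depending only on $\theta,\kappa_0,\eps_0$; this yields the symmetric bounds with $M_3$ independent of $\eps$ and $\kappa$, as stated. The case $\star=\sta$ is identical with $x_-$ replaced by $x_+$, and also follows from the reflection identities $\xi^\sta(x)=\xi^\uns(-x)$, $\eta^\sta(x)=\eta^\uns(-x)$, $\xi^\aux(x)=\xi^\aux(-x)$, $\eta^\aux(x)=\eta^\aux(-x)$ of Theorems~\ref{thm:outer:intro}--\ref{thm:aux:intro}. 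I expect no genuine analytic difficulty here: the only point needing care is carrying the pole orders of $u_0,u_0'$ together with the repeated use of $|x-x_\pm|\gtrsim\kappa\eps$ to absorb the extra $\eps$-powers, so that $M_3$ comes out independent of $\kappa$.
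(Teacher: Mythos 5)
Your proof is correct and is essentially the argument the paper leaves implicit (no explicit proof of this lemma appears in the text): subtract the two copies of~\eqref{eq:perturb}, use the integral form of the mean value theorem with the intermediate point $\big(\xi^2+s\Delta\xi^\star,(\xi^2)'+s(\Delta\xi^\star)',\eta^2+s\Delta\eta^\star\big)$, identify $a,b,c,d$ with the $s$-averaged partial derivatives of $\FF_1,\FF_2$, and bound them via the pole orders of $u_0,u_0'$ and the estimates of Theorems~\ref{thm:outer:intro} and~\ref{thm:aux:intro}, absorbing surplus factors of $\eps$ with $|x-x_\pm|,|x-\ol x_\pm|\gtrsim\kappa\eps$ and replacing the far-away singularity factors by constants on the bounded domain $E^{\out,\star}_\kappa$. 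The bookkeeping of the orders $(4,4,3,2)$ in the denominators is carried out correctly.
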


To obtain the exponentially small estimates for the differences $\Delta^{\star}$ ($\star = \uns,\sta$), we use the existence of the first integral $\wt G(\xi,\xi',\eta,\eta',x)$. The first integral gives us an extra relation for the components of the difference $\Delta^{\star}$, which allows us to get rid of analyzing $\Delta\xi^{\star}$.

The following lemma is straightforward taking into account Lemma~\ref{lemma:propertiesu0} and  Theorems \ref{thm:outer:intro} and \ref{thm:aux:intro}.

\begin{lemma}\label{lemma:differencefirstintegral}
The functions $\Delta^\star=(\Delta\xi^\star,\Delta\eta^\star)$, $\star = \uns,\sta$, defined in \eqref{def:differencesDeltaaux} satisfy
\[
\left(-u''_0(x)+m(x)\right)\Delta\xi+\left(u'_0(x)+n(x)\right)\Delta\xi'+p(x)\Delta\eta+q(x)\Delta \eta'=0
\]
for some functions $m$, $n$, $p$ and $q$, which  satisfy that, for $x\in E^{\out,\star}_{\kappa}$,
\[
\begin{aligned}
  |m(x)|&\leq\, \frac{M_4\eps^2}{|x-x_-|^5|x-\ol x_-|^5|x-x_+|^5|x-\ol x_+|^5},\\
  |n(x)|&\leq \, \frac{M_4\eps^2}{|x-x_-|^4|x-\ol x_-|^4|x-x_+|^4|x-\ol x_+|^4},\\
  |p(x)|&\leq\, \frac{M_4\eps^2}{|x-x_-|^3|x-\ol x_-|^3|x-x_+|^3|x-\ol x_+|^3},\\
  |q(x)|&\leq \, \frac{M_4\eps^2}{|x-x_-|^2|x-\ol x_-|^2|x-x_+|^2|x-\ol x_+|^2},
\end{aligned}
\]
with $M_4>0$ a constant independent of $\eps$ and $\kappa$.
\end{lemma}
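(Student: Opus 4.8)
The plan is to exploit that the three solutions $(\xi^\star,\eta^\star)$, $\star=\uns,\sta,\aux$, all lie on the \emph{same} level set $\{\wt G=0\}$ of the first integral \eqref{defGfirstintegral}. Fix $\star\in\{\uns,\sta\}$ and write $\Delta=(\Delta\xi,\Delta\eta)=\Delta^\star$ as in \eqref{def:differencesDeltaaux}. By Theorems~\ref{thm:outer:intro} and~\ref{thm:aux:intro}, for $x$ in the common domain $E^{\out,\star}_{\kappa}\subset D^{\out,\star}_\kappa\cap D^\aux_\kappa$ the phase points $(\xi,\xi',\eta,\eta')$ of the two solutions whose difference is $\Delta^\star$ both satisfy $\wt G(\,\cdot\,,x)=0$. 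Since $\wt G$ is a polynomial in $(\xi,\xi',\eta,\eta')$ whose $x$-dependence enters only through $u_0$ and $u_0'$, analytic on $E^{\out,\star}_\kappa$, subtracting the two identities and writing the increment of $\wt G$ as the integral of its gradient along the segment joining these phase points gives
\[
0=\wh m(x)\,\Delta\xi+\wh n(x)\,\Delta\xi'+\wh p(x)\,\Delta\eta+\wh q(x)\,\Delta\eta',
\]
with $\wh m=\int_0^1\partial_\xi\wt G$, $\wh n=\int_0^1\partial_{\xi'}\wt G$, $\wh p=\int_0^1\partial_\eta\wt G$, $\wh q=\int_0^1\partial_{\eta'}\wt G$, each evaluated along that segment. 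This already has the shape asserted in the statement.

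Next I would identify the leading terms of the coefficients. Differentiating \eqref{defGfirstintegral}, evaluating at the unperturbed point $(\xi,\xi',\eta,\eta')=(0,0,0,0)$ with $\eps=0$, and using the unperturbed equation $u_0''=u_0-f(u_0)$, one finds
\[
\partial_\xi\wt G\big|_{0}=-u_0+f(u_0)+\OO(\eps^2)=-u_0''+\OO(\eps^2),\qquad \partial_{\xi'}\wt G\big|_{0}=(1-\eps^2)u_0'+\OO(\eps^2)=u_0'+\OO(\eps^2),
\]
whereas $\partial_\eta\wt G$ and $\partial_{\eta'}\wt G$ carry an overall factor $\eps^2$, with leading terms $-\eps^2u_0''$ and $\eps^2u_0'$ respectively. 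Setting $m:=\wh m+u_0''$, $n:=\wh n-u_0'$, $p:=\wh p$, $q:=\wh q$ turns the relation above into exactly the one in the statement.

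It remains to bound $m,n,p,q$; this is the only computational step and follows from Lemma~\ref{lemma:propertiesu0} together with Theorems~\ref{thm:outer:intro}--\ref{thm:aux:intro}. Each coefficient splits into (i) the $\eps^2$-corrections already present in $\partial_\bullet\wt G$ at the unperturbed point, and (ii) the difference between $\partial_\bullet\wt G$ along the segment and at $(0,0,0,0)$. For (i) one differentiates the $\eps^2$-block of $\wt G$ and counts poles via Lemma~\ref{lemma:propertiesu0}: terms such as $f''(u_0)(u_0')^2$ and $u_0''\big(1-f'(u_0)\big)$ have poles of total order $5$ at each of $x_-,\ol x_-,x_+,\ol x_+$ (recall $u_0$ has simple poles there), which is exactly the order in the bound for $m$; the analogous counts produce orders $4$, $3$, $2$ for $n$, $p$, $q$. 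For (ii) one uses the mean-value form of the increment together with the estimates of Theorems~\ref{thm:outer:intro}--\ref{thm:aux:intro} for $\xi^\star-\xi^\aux$, $\partial_x(\xi^\star-\xi^\aux)$, $\eta^\star-\eta^\aux$ and $\partial_x(\eta^\star-\eta^\aux)$, and the fact that on $E^{\out,\star}_\kappa$ one has $|\xi^\star|,|\xi^\aux|\ll|u_0|$ near the singularities, so that $f'(u_0+t\xi)$ and $f''(u_0+t\xi)$ keep the pole orders of $f'(u_0)$ and $f''(u_0)$; one then checks that each term of (ii) carries, relative to the corresponding bound, a spare factor $\eps^2/|x-x_\pm|^2\le\kappa^{-2}$ (or $\eps/|x-x_\pm|\le\kappa^{-1}$ for the pieces built from $\partial_x\eta$), so (ii) is absorbed into the stated bounds with a constant $M_4$ independent of $\eps$ and $\kappa$. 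The case $\star=\sta$ is identical, or follows from $\star=\uns$ via the reversibility $\xi^\sta(x)=\xi^\uns(-x)$, $\eta^\sta(x)=\eta^\uns(-x)$, $\xi^\aux(x)=\xi^\aux(-x)$. I expect no conceptual difficulty here: the only delicate point is to carry out this pole bookkeeping uniformly over $E^{\out,\star}_\kappa$, including the regime where $|x-x_\pm|$ is as small as $\OO(\eps)$, and this is mechanical given the outer and auxiliary theorems.
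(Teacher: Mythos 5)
Your proposal is correct and follows exactly the route the paper has in mind when it declares the lemma "straightforward" from Lemma~\ref{lemma:propertiesu0} and Theorems~\ref{thm:outer:intro}--\ref{thm:aux:intro}: both the invariant manifold solution and the auxiliary solution lie on $\{\wt G=0\}$, so subtracting the two identities and expressing the increment via the fundamental theorem of calculus along the segment in $(\xi,\xi',\eta,\eta')$-space yields a linear relation of the claimed form, and the pole/$\eps$ bookkeeping you outline (leading term from $\partial_\bullet \wt G$ at the unperturbed point, with the $\eps^2$-block producing poles of orders $5,4,3,2$ via Lemma~\ref{lemma:propertiesu0}, and segment corrections absorbed using the $\eps^2/|x-x_\pm|^2\lesssim\kappa^{-2}$ smallness on $E^{\out,\star}_\kappa$) does close. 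This is essentially the argument the paper leaves implicit.
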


By using Lemma \ref{lemma:differencefirstintegral}, we reduce the system of two second-order equations \eqref{eq:lineardiff} to a third-order system imposed on $\Delta\zeta=\Delta \xi'$, $\Delta\eta$ and $\Delta\eta'$. The following lemma is obtained directly from Lemmas \ref{lemma:differenceequation} and \ref{lemma:differencefirstintegral}.

\begin{lemma}\label{lemma:differenceequation:2}
The functions  $\Delta\zeta^\star=\partial_x \Delta {\xi^\star}$, $\Delta\eta^\star$, $\star=\uns,\sta$,  are defined in $E^{\out,\star}$ in~\eqref{def:domainsintersectionouter} and are solutions of the linear equation
\begin{equation}\label{eq:lineardiff:2}
\left\{ \begin{array}{l} 
\wh \LL_1\Delta\zeta =\wh\NNN_1[\Delta\zeta,\Delta\eta, \Delta\eta'],\\   
 \LL_2\Delta\eta =\wh\NNN_2[\Delta\zeta,\Delta\eta,\Delta\eta'],
\end{array} \right.
\end{equation}
where 
\begin{equation}\label{def:L1hat}
\wh \LL_1=-\pa_x+\frac{u_0'''}{u_0''},
\end{equation}
and
\begin{equation*}
\left\{ \begin{array}{l} 
\wh\NNN_1[\Delta\zeta,\Delta\eta,\Delta\eta'] =-\Delta\eta+\wh r(x)\Delta\zeta+\wh s(x)\Delta\eta+\wh t(x)\Delta\eta', \vspace{0.2cm}\\   \wh\NNN_2[\Delta\zeta,\Delta\eta,\Delta\eta'] =\wh c(x)\Delta\zeta+\wh d(x)\Delta\eta+\wh e(x)\Delta\eta',
\end{array} \right.
\end{equation*}
for some functions $\wh r$, $\wh s$, $\wh t$, $\wh c$, $\wh d$ and $\wh e$, which  satisfy that, for $x\in E^{\out,\star}_{\kappa}$, 
\[
\begin{aligned}
|\wh r(x)|&\leq\, \frac{M_5\eps^2}{|x-x_-|^3|x-\ol x_-|^3|x-x_+|^3|x-\ol x_+|^3},\\
 |\wh s(x)|&\leq\, \frac{M_5\eps^2}{|x-x_-|^2|x-\ol x_-|^2|x-x_+|^2|x-\ol x_+|^2},\\
 |\wh t(x)|&\leq\, \frac{M_5\eps^2}{|x-x_-||x-\ol x_-||x-x_+||x-\ol x_+|},\\
  |\wh c(x)|&\leq\, \frac{M_5}{|x-x_-|^3|x-\ol x_-|^3|x-x_+|^3|x-\ol x_+|^3},\\
  |\wh d(x)|&\leq \, \frac{M_5}{|x-x_-|^2|x-\ol x_-|^2|x-x_+|^2|x-\ol x_+|^2},\\
  |\wh e(x)|&\leq \, \frac{M_5\eps^2}{|x-x_-|^3|x-\ol x_-|^3|x-x_+|^3|x-\ol x_+|^3},
\end{aligned}
\]
with $M_5$ a constant independent of $\eps$ and $\kappa$.
\end{lemma}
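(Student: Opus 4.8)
The plan is to use the pointwise relation of Lemma~\ref{lemma:differencefirstintegral} to eliminate $\Delta\xi^\star$ from the second-order system~\eqref{eq:lineardiff} and so close the remaining equations on the triple $(\Delta\zeta^\star,\Delta\eta^\star,\partial_x\Delta\eta^\star)$. Two elementary facts about $u_0$ are used. Differentiating the unperturbed equation $u_0''=u_0-f(u_0)$ gives $u_0'''=(1-f'(u_0))u_0'=(1-2u_0-6\ga u_0^2)u_0'$, so $u_0'\in\ker\LL_1$ and $u_0'''/u_0''=(1-2u_0-6\ga u_0^2)u_0'/u_0''$; and by Lemma~\ref{lemma:propertiesu0} the zeros of $u_0''$ are isolated and explicitly located while $u_0''$ has a pole of order three at each of $x_\pm,\ol x_\pm$, so that $u_0''-m$ is, away from the zeros of $u_0''$, a small analytic perturbation of $u_0''$ and can be inverted on $E^{\out,\star}_\kappa$ for $\eps_0$ small and $\kappa$ large.

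Given this, the derivation is a direct substitution. From Lemma~\ref{lemma:differencefirstintegral},
\[
\Delta\xi^\star=\frac{(u_0'+n)\,\Delta\zeta^\star+p\,\Delta\eta^\star+q\,\partial_x\Delta\eta^\star}{u_0''-m}.
\]
Rewriting the first equation of~\eqref{eq:lineardiff} as $\partial_x\Delta\zeta^\star=\partial_x^2\Delta\xi^\star=(1-2u_0-6\ga u_0^2-a)\,\Delta\xi^\star+\Delta\eta^\star$, substituting the expression above, and reorganizing so as to recognize the operator $\wh\LL_1$ of~\eqref{def:L1hat} (splitting the coefficient of $\Delta\zeta^\star$ into $u_0'''/u_0''$ plus a remainder and moving $\tfrac{u_0'''}{u_0''}\Delta\zeta^\star$ to the left-hand side), one obtains the first equation of~\eqref{eq:lineardiff:2} with
\[
\wh r=\frac{u_0'''}{u_0''}-\frac{(1-2u_0-6\ga u_0^2-a)(u_0'+n)}{u_0''-m},\quad
\wh s=-\frac{(1-2u_0-6\ga u_0^2-a)\,p}{u_0''-m},\quad
\wh t=-\frac{(1-2u_0-6\ga u_0^2-a)\,q}{u_0''-m}.
\]
Substituting the same formula for $\Delta\xi^\star$ (with $\Delta\xi'=\Delta\zeta^\star$) into the second equation of~\eqref{eq:lineardiff} yields its second equation with $\wh c=\tfrac{b(u_0'+n)}{u_0''-m}+c$, $\wh d=\tfrac{bp}{u_0''-m}+d$, and $\wh e=\tfrac{bq}{u_0''-m}$. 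Since $\Delta^\star$ is defined on $E^{\out,\star}_\kappa$ by Lemma~\ref{lemma:differenceequation} and $\Delta\zeta^\star=\partial_x\Delta\xi^\star$ is analytic there too, the first assertion follows.

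The bounds then come from inserting the estimates of Lemmas~\ref{lemma:differenceequation} and~\ref{lemma:differencefirstintegral} for $a,b,c,d,m,n,p,q$, the local behaviour of $u_0,u_0',u_0''$ near $x_\pm,\ol x_\pm$ (poles of orders $1,2,3$ respectively), and their exponential decay away from the poles. The one step worth noting is a cancellation in $\wh r$: using $u_0'''/u_0''=(1-2u_0-6\ga u_0^2)u_0'/u_0''$ and putting $\wh r$ over the common denominator $u_0''(u_0''-m)$, the leading products $u_0'u_0''$ cancel and
\[
\wh r=\frac{-(1-2u_0-6\ga u_0^2)\,u_0'\,m-(1-2u_0-6\ga u_0^2)\,n\,u_0''+a\,u_0''\,(u_0'+n)}{u_0''\,(u_0''-m)},
\]
so every summand carries a factor $\eps^2$ (from $m$, $n$ or $a$); counting powers of $|x-x_\pm|$ and $|x-\ol x_\pm|$ gives the claimed estimate for $\wh r$. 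For $\wh s$ and $\wh t$ one additionally uses that $(1-2u_0-6\ga u_0^2-a)/(u_0''-m)$ has a simple zero at each $x_\pm$ (an order-$2$ pole over an order-$3$ pole), which supplies the extra power of $|x-x_\pm|$ relative to $p$ and $q$; $\wh c$ reduces to $b\,u_0'/u_0''+c$, of order $|x-x_\pm|^{-3}$, and in $\wh d,\wh e$ the surplus factor $\eps^2$ is absorbed using $|x-x_\pm|\ge\kappa\eps$ on $E^{\out,\star}_\kappa$. I do not expect a genuine obstacle here: the substance is the algebraic elimination of $\Delta\xi^\star$ through the first integral, and once the identity $u_0'''=(1-2u_0-6\ga u_0^2)u_0'$ is exploited to produce the cancellations, the estimates reduce to routine bookkeeping of orders of vanishing and powers of $\eps$.
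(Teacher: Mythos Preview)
Your proposal is correct and follows exactly the approach the paper indicates: the paper simply states that the lemma ``is obtained directly from Lemmas~\ref{lemma:differenceequation} and~\ref{lemma:differencefirstintegral}'' without further detail, and your derivation---solving the first-integral relation for $\Delta\xi^\star$, substituting into~\eqref{eq:lineardiff}, and using $u_0'''=(1-2u_0-6\gamma u_0^2)u_0'$ to identify the operator $\wh\LL_1$ and produce the cancellation in $\wh r$---is precisely how one makes that ``directly'' explicit. Your explicit formulas for $\wh r,\wh s,\wh t,\wh c,\wh d,\wh e$ and the order-counting near the singularities $x_\pm,\ol x_\pm$ are correct.
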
  

By using Lemma \ref{lemma:differenceequation:2}, we provide an asymptotic formula for $\Delta^\star$ at $x=0$. Note that, by Theorem \ref{thm:outer:intro} and \ref{thm:aux:intro}, $\Delta\zeta^\star(0)= \partial_x \Delta{\xi}^\star(0)=0$ (and that $\Delta \xi(0)$ can be obtained by Lemma \ref{lemma:differencefirstintegral} once the other components are known). Therefore, in order to prove Theorem \ref{thm:main}, it is sufficient to look for an asymptotic formula for $\Delta\eta^\star(0)$ and $\partial_x \Delta{\eta^\star}(0)$.

Assume for a moment that $\Delta\eta^\star$ satisfy
\[
\LL_2\Delta\eta=0
\]
(that is, assume that $\wh c= \wh d=\wh e=0)$. Then, $\Delta\eta^\star$ would be of the form 
\begin{equation}\label{pre_exponentially_small}
\Delta\eta^\star(x)=C_1^\star e^{\frac{ix}{\eps}}+C_2^\star e^{-\frac{ix}{\eps}}.
\end{equation}
We introduce 
\begin{equation}\label{def:rho-}
\rminus = x_--i\kappa\eps\quad \text{ and }\quad {\rplus}=x_+-i\kappa\eps
\end{equation}
with $x_{\pm}= \pm \alpha + \pi i$ and $\alpha$ defined in Lemma~\ref{lemma:propertiesu0}. We observe that, by Theorems \ref{thm:outer:intro} and Theorem \ref{thm:aux:intro},  $\Delta \eta^\uns$ is defined at $\rminus, \ol{\rminus}$ and $\Delta \eta^\sta$ is defined at $\rplus$, $\ol{\rplus}$. Evaluating $\Delta \eta^\uns$ in~\eqref{pre_exponentially_small} at $x=\rminus$ and 
$x=\ol{\rminus}$, using that $e^{\frac{i\rminus}{\eps}}$ and $e^{-\frac{i\ol{\rminus}}{\eps}}$ are of size $e^{-\frac{\pi}{\eps}}$, one obtains that $C_1^{\uns}$ and $C_2^{\uns}$ must satisfy
\begin{equation}\label{formulaC12us}
C_1^{\uns}=\Delta\eta^{\uns}(\ol{\rminus})e^{-\frac{i\ol{\rminus}}{\eps}}+\text{h.o.t.} \qquad \text{and}\qquad C_2^{\uns}=\Delta\eta^{\uns}(\rminus)e^{\frac{i{\rminus}}{\eps}}+\text{h.o.t.}.
\end{equation}
An analogous formula follows for $C_{1,2}^{\sta}$ changing $\rminus$ by $\rplus$. 

Now, the equation for $\Delta\eta^\star$, $\star=\uns,\sta$, in \eqref{eq:lineardiff} has a right hand side \eqref{eq:lineardiff:RHS} 
with nonzero $\wh c$, $\wh d$, $\wh e$ and therefore one has to proceed more carefully than in the arguments above. The following proposition gives the needed result.

\begin{proposition}\label{prop:difference:remainder}
The functions $\Delta\eta^\star$, $\star=\uns,\sta$, introduced in \eqref{def:differencesDeltaaux} are defined in  $E^{\out,\star}$ given by \eqref{def:domainsintersectionouter} and are of the form 
\begin{equation}\label{eq:formulaDeltaEta}
\Delta\eta^\star(x)=C_1^\star e^{\frac{ix}{\eps}}+C_2^\star e^{-\frac{ix}{\eps}}+\RRR^\star(x)
\end{equation}
where 
\begin{itemize}
\item The constants $C_1^\star$ and $C_2^\star$ satisfy 
\begin{equation}\label{eq:linearsystemCs}
\begin{split}
\Delta\eta^{\uns}(\rminus)&=C_1^{\uns} e^{\frac{i\rminus}{\eps}}+C_2^{\uns} e^{-\frac{i\rminus}{\eps}}\\
\Delta\eta^{\uns}(\ol{\rminus})&=C_1^{\uns} e^{\frac{i\ol{\rminus}}{\eps}}+C_2^{\uns} e^{-\frac{i\ol{\rminus}}{\eps}}\\
\Delta\eta^{\sta}(\rplus)&=C_1^{\sta} e^{\frac{i\rplus}{\eps}}+C_2^{\sta} e^{-\frac{i\rplus}{\eps}}\\
\Delta\eta^{\sta}(\ol{\rplus})&=C_1^{\sta} e^{\frac{i\ol{\rplus}}{\eps}}+C_2^{\sta} e^{-\frac{i\ol{\rplus}}{\eps}}.
\end{split}
\end{equation}
\item The functions $\RRR^\star$ satisfy that
\begin{equation}\label{def:R:zeros}
\RRR^{\uns}(\rminus)=0, \qquad \RRR^{\uns}(\ol{\rminus})=0, \qquad \RRR^{\sta}(\rplus)=0,\qquad \RRR^{\sta}(\ol{\rplus})=0,
\end{equation}
and that, for $x\in E^{\out,\star}_{\kappa}$,
\begin{equation}\label{def:R:estimates}
\begin{split}
\left|\RRR^\star(x)\right|&\leq\frac{M_6}{\kappa}e^{\frac{1}{\eps}|\Im x|}\left(|C_1^{\uns}|+|C_2^{\uns}|\right)\\
\left|\pa_x\RRR^\star(x)\right|&\leq \frac{M_6}{\eps\kappa}e^{\frac{1}{\eps}|\Im x|}\left(|C_1^{\uns}|+|C_2^{\uns}|\right),
\end{split}
\end{equation}
for some constant independent $M_6>0$ independent of $\eps$ and $\kappa$.
\end{itemize}
\end{proposition}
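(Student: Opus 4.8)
The plan is to produce $\RRR^\star$ as the solution of a contraction equation obtained by inverting $\LL_2$ against its fundamental system $\{e^{ix/\eps},e^{-ix/\eps}\}$ and fixing the two free constants by the zero conditions~\eqref{def:R:zeros}. I treat $\star=\uns$ (the case $\star=\sta$ is dealt with at the end); throughout I use that the domain $E^{\out,\uns}_\kappa$ defined in~\eqref{def:domainsintersectionouter} is a bounded triangle with vertices $\rminus$, $\ol\rminus$ and a point of the real axis, which contains $0$ and stays at distance $\gtrsim\kappa\eps$ from each of $x_\pm,\ol x_\pm$, that distance being comparable to $\kappa\eps$ only near the vertices $\rminus$ and $\ol\rminus$. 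First, the $2\times2$ linear system~\eqref{eq:linearsystemCs} for $(C_1^\uns,C_2^\uns)$ has determinant $e^{i(\rminus-\ol\rminus)/\eps}-e^{-i(\rminus-\ol\rminus)/\eps}$; since $\rminus-\ol\rminus=2i(\pi-\kappa\eps)$ this is $e^{-2(\pi-\kappa\eps)/\eps}-e^{2(\pi-\kappa\eps)/\eps}\neq0$ of size $\sim e^{2\pi/\eps}$, so $(C_1^\uns,C_2^\uns)$ is uniquely fixed by $\Delta\eta^\uns(\rminus),\Delta\eta^\uns(\ol\rminus)$ with inverse of order $e^{-2\pi/\eps}$. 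Setting $\RRR^\uns:=\Delta\eta^\uns-C_1^\uns e^{ix/\eps}-C_2^\uns e^{-ix/\eps}$ on $E^{\out,\uns}_\kappa$, the conditions~\eqref{def:R:zeros} hold by construction, and since $\LL_2$ annihilates the exponential part, Lemma~\ref{lemma:differenceequation:2} gives $\LL_2\RRR^\uns=\wh\NNN_2[\Delta\zeta^\uns,\Delta\eta^\uns,\pa_x\Delta\eta^\uns]$.

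Next I close the equation in $\RRR^\uns$. From the first line of~\eqref{eq:lineardiff:2} together with $\Delta\zeta^\uns(0)=0$, I invert $\wh\LL_1=-\pa_x+u_0'''/u_0''$ by the integrating factor $1/u_0''$, that is, $\Delta\zeta^\uns(x)=-u_0''(x)\int_0^x(\wh\NNN_1/u_0'')\,ds$ along a path in $E^{\out,\uns}_\kappa$, the (logarithmically integrable) branch points at the zeros of $u_0''$ causing no harm; since $\wh\NNN_1$ involves $\Delta\zeta^\uns$ only through the $\OO(\eps^2)$ coefficient $\wh r$, a short Neumann argument expresses $\Delta\zeta^\uns$ as a bounded linear operator in $(\Delta\eta^\uns,\pa_x\Delta\eta^\uns)$. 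Substituting back, $\wh\NNN_2$ becomes a linear operator of $(\Delta\eta^\uns,\pa_x\Delta\eta^\uns)$ whose coefficients are of the singular-but-explicit type of Lemma~\ref{lemma:differenceequation:2}. I then build the Green operator $\GG_2$ of $\LL_2$ with vanishing data at $\rminus,\ol\rminus$: a particular-solution operator is written as contour integrals of $e^{-is/\eps}h(s)$ and $e^{is/\eps}h(s)$ issued from $\ol\rminus$ and $\rminus$ respectively (and paired with $e^{ix/\eps}$, $e^{-ix/\eps}$), then corrected by the unique homogeneous combination matching the residual boundary values, this correction carrying the damping of order $e^{-2\pi/\eps}$ coming from the determinant above. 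Since $\RRR^\uns$ solves the same boundary value problem, $\RRR^\uns=\GG_2[\wh\NNN_2[\,\cdot\,]]$, and inserting $\Delta\eta^\uns=C_1^\uns e^{ix/\eps}+C_2^\uns e^{-ix/\eps}+\RRR^\uns$ turns this into $\RRR^\uns=F_0+\TTT[\RRR^\uns]$, where $F_0$ is the image of the two exponentials and $\TTT$ is linear.

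The remaining, quantitative step is a fixed point in the space of functions on $E^{\out,\uns}_\kappa$ with weighted norm $\|f\|_\ast:=\sup_{E^{\out,\uns}_\kappa}|f(x)|\,e^{-|\Im x|/\eps}$, the derivative being controlled in the norm $\eps\|\cdot\|_\ast$ (the factor $\eps$ being the natural cost of one derivative of $e^{\pm ix/\eps}$, more cheaply read off from the explicit $\GG_2$ than from a Cauchy estimate, since the domain is only $\kappa\eps$-thick near $x_-$). The two exponentials have $\|\cdot\|_\ast\leq|C_1^\uns|+|C_2^\uns|$, and the whole point is that after the reduction the coefficients of $\wh\NNN_2$ are bounded on $E^{\out,\uns}_\kappa$ by $\mathrm{const}\cdot|x-x_\pm|^{-k}$ with $k\leq3$ (the $\OO(1)$ ones multiplying $\Delta\eta$ or $\Delta\zeta$, which carry $\eps$-powers through the reduction), so that integrating them along contours of $E^{\out,\uns}_\kappa$ staying at distance $\gtrsim\kappa\eps$ from $x_-$ and $\ol x_-$ gains a factor $\lesssim(\kappa\eps)^{-(k-1)}$; combined with the prefactor $\eps$ of $\GG_2$, the $\eps^2$'s in the coefficients and the $\eps^{-1}$ charged per derivative, both $F_0$ and $\TTT$ come out $\lesssim\kappa^{-1}$ times the relevant norm of their argument. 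Hence $\|F_0\|_\ast\lesssim\kappa^{-1}(|C_1^\uns|+|C_2^\uns|)$ and $\|\TTT\|\lesssim\kappa^{-1}<1$ for $\kappa$ large, so $\RRR^\uns$, which already belongs to the space and solves $\RRR=F_0+\TTT[\RRR]$, is the unique fixed point and obeys the bounds~\eqref{def:R:estimates}, the one on $\pa_x\RRR^\uns$ coming from differentiating the $\GG_2$-formula. The main obstacle I anticipate is exactly the construction of those contours inside the triangular wedge: along them the ``wrong'' exponential $|e^{\mp is/\eps}|=e^{\pm\Im s/\eps}$ must stay dominated by the weight $e^{|\Im s|/\eps}$, which forces $\Im s$ to be monotone along each contour, and this is precisely where the opening restriction $\theta<\mathrm{atan}(\pi/(3\alpha))$ built into $E^{\out,\uns}_\kappa$ enters.

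Finally, the case $\star=\sta$ follows either by repeating the construction verbatim with $\rplus,\ol\rplus$ in place of $\rminus,\ol\rminus$ and $E^{\out,\sta}_\kappa$ in place of $E^{\out,\uns}_\kappa$, or, more economically, from the reversibility relations $\eta^\sta(x)=\eta^\uns(-x)$ and $\eta^\aux(x)=\eta^\aux(-x)$ of Theorems~\ref{thm:outer:intro} and~\ref{thm:aux:intro}: they yield $\Delta\eta^\sta(x)=-\Delta\eta^\uns(-x)$, hence $C_1^\sta=-C_2^\uns$, $C_2^\sta=-C_1^\uns$ and $\RRR^\sta(x)=-\RRR^\uns(-x)$, so in particular $|C_1^\sta|+|C_2^\sta|=|C_1^\uns|+|C_2^\uns|$, which explains why the unstable constants appear on the right of~\eqref{def:R:estimates} in both cases.
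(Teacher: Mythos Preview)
Your proposal is correct and follows essentially the same strategy as the paper: build the Green operator for $\LL_2$ with vanishing conditions at $\rminus,\ol\rminus$ (the paper's $\wh\GG_2$ in Lemma~\ref{lemma:diff:operator:2}), work in the exponentially weighted norm $\sup e^{-|\Im x|/\eps}|\cdot|$, and close a contraction whose smallness $\sim\kappa^{-1}$ comes from the coefficient bounds of Lemma~\ref{lemma:differenceequation:2}. The only organizational difference is that you first eliminate $\Delta\zeta^\uns$ by a Neumann argument and then run a scalar fixed point in $\RRR^\uns$, whereas the paper keeps the pair $(\Delta\zeta^\uns,\Delta\eta^\uns)$ and restores contractivity by the composition trick $\wh\PP=(\PP_1[\,\cdot\,,\PP_2[\,\cdot\,]],\PP_2)$; your symmetry remark $\RRR^\sta(x)=-\RRR^\uns(-x)$ is also correct and indeed explains the appearance of $|C_1^\uns|+|C_2^\uns|$ on the right of~\eqref{def:R:estimates} for both $\star$.
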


Note that the properties of $C_j^\star$ are a direct consequence of evaluating~\eqref{eq:formulaDeltaEta} at $x=\rho^\pm$ and $x=\ol{\rho^\pm}$ and the properties of $\RRR^\star$. That is, to prove Proposition~\ref{prop:difference:remainder} boils down to prove the properties stated for the functions $\RRR^\star$. This is done in Section \ref{sec:difference}.

By Proposition~\ref{prop:difference:remainder}, proceeding as for \eqref{pre_exponentially_small}, we have that indeed, 
$C_{1,2}^{\uns}$ is of the form in~\eqref{formulaC12us} and analogous formula are also true for $C_{1,2}^{\sta}$. As a consequence, of this analysis and using also that, by Theorems~\ref{thm:outer:intro} and~\ref{thm:aux:intro}
$$
|\Delta \eta^{*} (\rho_\pm)| ,\, |\Delta \eta^* (\ol{\rho_{\pm}}) |\leq M \frac{1}{\kappa^5\eps^3} , 
$$
we have that 
$$
|C_{1,2}^\star |\leq M \frac{1}{\eps^{3}} e^{-\frac{\pi}{\eps}}. 
$$
However, in order to prove the asymptotic formula in Theorem~\ref{thm:main}, we need to perform a more accurate analysis of the functions $\eta^\star$ (and $\xi^\star$) around the points $\rho_\pm$ and $\ol{\rho_\pm}$. This is done in the following subsections by means of the inner equation (Theorem~\ref{thm:inner}) and complex matching techniques (Theorem~\ref{thm:matching:intro}).

\subsection{The inner scale}\label{sec:innerscale}

We perform the change of coordinates to the inner variables. We consider the new variables 
\begin{equation}\label{def:innervariable}
z=\eps^{-1}(x-x_{\pm})
\end{equation}
and, recalling the definition of $c_{\pm1}$ in \eqref{def:cpm1}, we define the functions
\begin{equation}\label{def:innerfunctions}
 \phi(z)=\frac{\eps}{c_{\pm 1}}\xi(x_\pm+\eps z),\qquad\psi(z)=\frac{\eps^3}{c_{\pm 1}}\eta(x_\pm+\eps z).
 \end{equation}
Recall that $\ga<0$ and therefore $c_{\pm1}^2\ga=-1$.
Applying the change of coordinates to equation \eqref{eq:perturb} and letting $\eps\to 0$ we obtain the  limiting inner equation,  
\begin{equation}\label{eq:inner}
\left\{ \begin{array}{l} 
\LL_1^\inn\phi =\Nin_1[\phi,\psi], \\
\LL_2^\inn\psi =\Nin_2[\phi,\psi],  
\end{array} \right.
\end{equation}
with
\begin{equation}\label{def:diffoperators:inner}
\left\{ \begin{array}{l} 
\LL_1^\inn =-\pa_z^2+\frac{6}{z^2},\\
\LL_2^\inn =\pa_z^2+1,
\end{array} \right.
\end{equation}
and
\begin{equation}\label{eq:inner:RHSoperator} 
\left\{ \begin{array}{l} 
\Nin_1[\phi,\psi] =-\psi-\frac{6}{z}\phi^2-2\phi^3,\\
\Nin_2[\phi,\psi] =-6\left(\frac{1}{z}+\phi\right)^2\left(\psi+2\left(\frac{1}{z}+\phi\right)^3\right) -12\left(\frac{1}{z}+\phi\right)\left(-\frac{1}{z^2}+\pa_z\phi\right)^2.
\end{array} \right.
\end{equation}
This equation is reversible with respect to the symmetry
\begin{equation}\label{def:symmetryinner}
(\phi,\psi) \to (-\phi,-\psi),\quad z\to -z.
\end{equation}
We analyze this equation in the \emph{inner domains} (see Figure~\ref{fig:inner})
\begin{equation}
\label{innerdomainsol}
\begin{array}{l}
\cD^{\uns,\inn}_{\theta,\kappa}=\{z\in\CC : \quad |\Im(z)|> \tan\theta \Re(z)+\kappa \},\vspace{0.2cm}\\
\cD^{\sta,\inn}_{\theta,\kappa}=\{z\in \CC : \quad -z\in \cD^{\uns,\inn}_{\theta,\kappa} \},
\end{array}
\end{equation}
for  $0<\theta<\pi/2$ and $\kappa>0$.

\begin{figure}[htb!]	
	\centering
	\begin{overpic}[width=10cm]{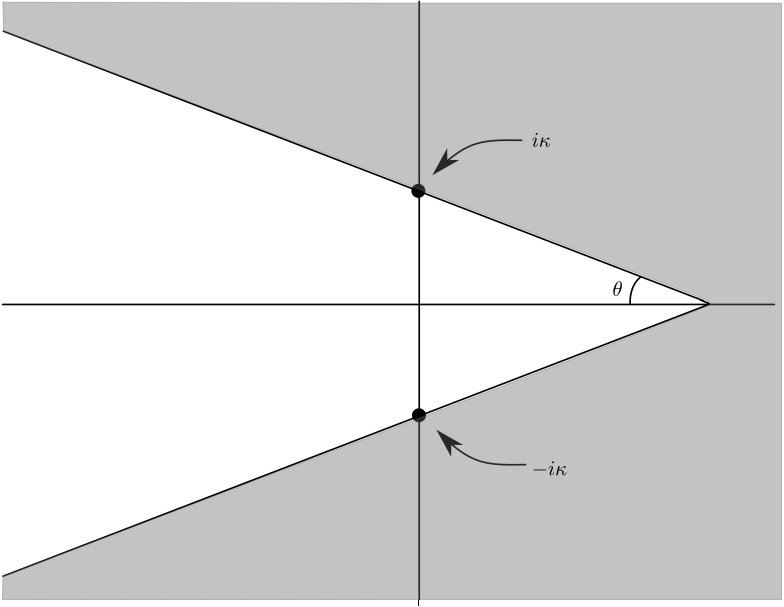}
		\end{overpic}
	\caption{The inner domain $D^{\uns,\inn}_{\theta,\kappa}$ introduced in \eqref{innerdomainsol}. }	
	\label{fig:inner}
\end{figure} 

The following theorem, which is proved in Section \ref{sec:inner}, provides an asymptotic formula for the difference between the two solutions of the inner equation.

\begin{theorem}\label{thm:inner}
Let $0<\theta<\frac{\pi}{2}$ be fixed. There exists $\kappa_0\geq 1$ big enough such that, for each $\kappa\geq\kappa_0$,
	\begin{enumerate} 
		\item\label{firstitem:thm:inner} Equation \eqref{eq:inner} has two real-analytic solutions $(\phi^{0,\star}, \psi^{0,\star}): \cD^{\star,\inn}_{\theta,\kappa}\rightarrow \CC^2$, $\star=\uns,\sta$, which, for every $z\in \cD^{\star,\inn}_{\theta,\kappa}$, satisfy
		\[
		\left|\phi^{0,\star}(z) \right|\leq \dfrac{M_7}{|z|^3}, \quad \left| \psi^{0,\star} (z)\right|\leq \dfrac{M_7}{|z|^5},  \]
   for some $M_7>0$ independent of $\kappa$.
   Moreover, they satisfy that, for $z\in \cD^{\uns,\inn}_{\theta,\kappa}$,
   \begin{equation}\label{def:symmetryinnersol}
(\phi^{0,\uns}(z),\psi^{0,\uns}(z))=(-\phi^{0,\sta}(-z),-\psi^{0,\sta}(-z)).
   \end{equation}
		\item\label{seconditem:thm:inner} The differences $\Delta\phi^0(z)= \phi^{0,\uns}(z)-\phi^{0,\sta}(z)$, $\Delta\psi^0(z)= \psi^{0,\uns}(z)-\psi^{0,\sta}(z)$ are given by 
		\begin{equation}\label{diffinnersol}
  \begin{split}
  \Delta\phi^0(z)&=\Theta e^{-i z}\left(-1+\chi_1(z)\right)\\
  \Delta\psi^0(z)&=\Theta e^{-i z}\left(1+ \chi_2(z)\right)\\
  \pa_z\Delta\phi^0(z)&=-i\Theta e^{-i z}\left(-1+\wh\chi_1(z)\right)\\
  \pa_z\Delta\psi^0(z)&=-i\Theta e^{-i z}\left(1+ \wh\chi_2(z)\right)
  \end{split}
  \end{equation}
  for $z\in \mathcal{R}^{\inn}_{\theta,\kappa}= \cD^{\uns,\inn}_{\theta,\kappa}\cap \cD^{\sta,\inn}_{\theta,\kappa}\cap \{z: i\RR, \Im z<0\}$,
		where $\Theta\in\RR$ is a constant, and $\chi_1$, $\chi_2$, $\wh\chi_1$, $\wh\chi_2$ are  analytic in $z$ and satisfy  that, for $z\in \mathcal{R}^{\inn}_{\theta,\kappa}$,
		\[
		|\chi_1(z)|\leq \dfrac{M_8}{|z|}, \quad |\chi_2(z)|\leq \dfrac{M_8}{|z|},\quad |\wh\chi_1(z)|\leq \dfrac{M_8}{|z|} \quad |\wh\chi_2(z)|\leq \dfrac{M_8}{|z|},\]
  for some $M_8>0$ independent of $\kk$.
 \item The constant $\Theta$ satisfies $\Theta\neq0$ if and only if there exists $z_0\in \mathcal{R}^{\inn}_{\theta,\kappa}$ such that $\Delta\phi^0(z_0)\neq 0$.
	\end{enumerate}
\end{theorem}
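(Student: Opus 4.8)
\textbf{Proof plan for Theorem~\ref{thm:inner}.}

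The plan is to construct the two inner solutions $(\phi^{0,\uns},\psi^{0,\uns})$ and $(\phi^{0,\sta},\psi^{0,\sta})$ as fixed points of an integral operator associated with the inner equation~\eqref{eq:inner}, and then to analyze their difference by exploiting the structure of the linearized operator at zero. For item~\eqref{firstitem:thm:inner}, I would first invert the linear operators $\LL_1^\inn = -\pa_z^2 + 6/z^2$ and $\LL_2^\inn = \pa_z^2 + 1$. The operator $\LL_1^\inn$ has the explicit homogeneous solutions $z^{-2}$ and $z^3$, so its Green's function on the inner domain $\cD^{\uns,\inn}_{\theta,\kappa}$ can be written down in closed form; the decay $|z|^{-3}$ for $\phi^{0,\uns}$ is dictated by matching the far-field behavior of the outer solution (which carries the pole $c_{\pm 1}/(x-x_\pm)$) and by choosing the integration path so that no growing $z^3$ contribution appears. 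For $\LL_2^\inn$, the homogeneous solutions are $e^{\pm iz}$, and one must choose the inversion that is bounded in the relevant half of $\cD^{\uns,\inn}_{\theta,\kappa}$; the gain $|z|^{-5}$ for $\psi^{0,\uns}$ comes from integrating the nonlinearity $\Nin_2$ (whose leading terms behave like $|z|^{-6}$ or worse in $1/z$) twice against bounded kernels. The fixed point is then set up in a Banach space of functions with weighted norms $\|\phi\|_3 = \sup |z|^3|\phi(z)|$, $\|\psi\|_5 = \sup |z|^5 |\psi(z)|$; for $\kappa$ large enough the nonlinear operator is a contraction because every nonlinear term gains at least one power of $1/\kappa$. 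The reversibility symmetry~\eqref{def:symmetryinnersol} follows because the operator commutes with $(\phi,\psi,z)\mapsto(-\phi,-\psi,-z)$, so $(-\phi^{0,\uns}(-z),-\psi^{0,\uns}(-z))$ solves the same problem on $\cD^{\sta,\inn}_{\theta,\kappa}$ with the same decay, and uniqueness forces it to equal $(\phi^{0,\sta},\psi^{0,\sta})$.

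For item~\eqref{seconditem:thm:inner}, I would observe that on the overlap $\RRR^\inn_{\theta,\kappa}$ the difference $(\Delta\phi^0,\Delta\psi^0)$ solves a \emph{linear} system obtained by subtracting the two copies of~\eqref{eq:inner}; schematically $\LL_1^\inn \Delta\phi^0 = -\Delta\psi^0 + (\text{coefficients } \OO(|z|^{-2}))\cdot\Delta\phi^0$ and $\LL_2^\inn \Delta\psi^0 = (\text{coefficients } \OO(|z|^{-2}))\cdot(\Delta\phi^0,\pa_z\Delta\phi^0,\Delta\psi^0)$, where the coefficients are averages of the two solutions and hence decay. The key point is that the dominant balance is $\LL_2^\inn \Delta\psi^0 \approx 0$, i.e. $\Delta\psi^0 \approx \Theta_1 e^{-iz} + \Theta_2 e^{iz}$, but only the $e^{-iz}$ branch is compatible with the decay required as $\Im z \to -\infty$ inside $\RRR^\inn_{\theta,\kappa}$ (the factor $e^{iz}$ blows up there), so $\Delta\psi^0 = \Theta e^{-iz}(1 + \chi_2(z))$ for a single constant $\Theta$. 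Feeding this into the $\LL_1^\inn$ equation and again selecting the decaying inversion gives $\Delta\phi^0 = \Theta e^{-iz}(-1 + \chi_1(z))$ — the relative sign $-1$ versus $+1$ comes from inverting $-\pa_z^2 + 6/z^2$ against $e^{-iz}$, since $(-\pa_z^2)e^{-iz} = e^{-iz}$ to leading order, while the $6/z^2$ term is lower order. The error bounds $|\chi_j| \le M_8/|z|$ are obtained by another fixed-point / iteration argument: writing $\Delta\phi^0 = \Theta e^{-iz}(-1 + \chi_1)$, $\Delta\psi^0 = \Theta e^{-iz}(1 + \chi_2)$ and deriving a contractive integral equation for $(\chi_1,\chi_2)$ in a space with norm $\sup |z|\,|\chi_j|$, using that each source term carries a factor $|z|^{-1}$ or better. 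The derivative estimates follow by differentiating the integral equations (or by a Cauchy-estimate argument on slightly shrunk domains). That $\Theta$ is real follows from the real-analyticity and the reversibility~\eqref{def:symmetryinnersol}: applying the symmetry to $\Delta\phi^0(z) = \phi^{0,\uns}(z) - \phi^{0,\sta}(z)$ shows $\Delta\phi^0(-z) = -(\phi^{0,\sta}(z)-\phi^{0,\uns}(z))\cdot(\ldots)$, which when combined with $\Delta\phi^0(\bar z) = \overline{\Delta\phi^0(z)}$ on the imaginary axis pins the constant in front of $e^{-iz}$ to be real.

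Item~(3) is then a formal consequence of the asymptotic formulas in item~\eqref{seconditem:thm:inner}: if $\Theta \ne 0$ then $\Delta\phi^0(z) = \Theta e^{-iz}(-1 + \chi_1(z))$ with $|\chi_1(z)| \le M_8/|z| < 1$ for $|z| \ge \kappa_0$ large, so $\Delta\phi^0(z) \ne 0$ for every $z \in \RRR^\inn_{\theta,\kappa}$; conversely, if $\Theta = 0$ then $\Delta\phi^0 \equiv 0$ on $\RRR^\inn_{\theta,\kappa}$ by~\eqref{diffinnersol}. (One also notes that $\Delta\phi^0 \equiv 0$ on a domain of uniqueness for the linearized difference equation forces $\Delta\phi^0 \equiv 0$ throughout, so the two characterizations are genuinely equivalent.)

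\textbf{Main obstacle.} The delicate part is item~\eqref{seconditem:thm:inner}: extracting the \emph{single} constant $\Theta$ and proving it controls both components with the stated relative signs and the $\OO(|z|^{-1})$ error. The difficulty is that $e^{-iz}$ is only exponentially small (not polynomially small) in part of the overlap domain, so one cannot simply treat the $e^{-iz}$ term as a perturbation inside the weighted polynomial norms used for existence; a separate ``matching'' argument is needed to show that the growing solution $e^{iz}$ of $\LL_2^\inn$ is genuinely absent, which requires carefully tracking the behavior of $\Delta\psi^0$ along rays going to $-i\infty$ and using the decay estimates from item~\eqref{firstitem:thm:inner} on both $\cD^{\uns,\inn}$ and $\cD^{\sta,\inn}$. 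Showing $\Theta$ is independent of $\theta$ and $\kappa$ (implicitly required for the later matching with the outer solution) also needs a short argument via analytic continuation of $\Delta\phi^0$ across overlapping inner domains.
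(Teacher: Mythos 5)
Your plan is essentially the paper's proof of Theorem~\ref{thm:inner}, and all the conceptual steps you identify are the right ones. For item~(1), the paper does exactly what you describe: weighted-polynomial Banach spaces $\mathcal{X}_\ell$ with $\|f\|_\ell = \sup|z^\ell f(z)|$, explicit right inverses $\mathcal{G}_1^\inn$ (using the homogeneous solutions $z^3$ and $z^{-2}$) and $\mathcal{G}_2^\inn$ (using $e^{\pm iz}$) with the integration taken from $-i\infty$ to enforce decay, a fixed point via an enhanced contraction (Theorem~\ref{thm:fixedpoint}, needed because the cross-coupling constant is $O(1)$ rather than $O(1/\kappa)$), and the symmetry~\eqref{def:symmetryinnersol} by uniqueness under the involution.

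For item~(2), your dominant-balance heuristic is implemented in the paper as a rigorous fixed-point argument for the affine system obtained by subtracting the two inner equations: the operator $\wt\LL_1^\inn=-\partial_z^2$ is used (the $6/z^2$ term is pushed into the right-hand side as $a_{11}\Delta\phi^0$), and the free constant $\Theta_0$ enters through the boundary datum at $z=-i\kappa$. The single ``implementation'' idea you flagged as a gap in your own plan — how to handle the fact that the difference is only exponentially small in the polynomial norms — is exactly where the paper introduces \emph{exponentially weighted} norms $\|f\|_{\ell,\exp}=\sup|z^\ell e^{iz} f(z)|$ on $\mathcal{R}^{\inn}_{\theta,\kappa}\subset i\RR$. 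Multiplying by $e^{iz}$ absorbs the $e^{-iz}$ decay, turns the problem into a contraction in $\mathcal{Z}_{\times,\exp}$, and automatically excludes the growing $e^{iz}$ branch (which would have unbounded exp-norm); no separate matching argument is needed. Your computation of the relative sign $(-1,+1)$ via $-\partial_z^2 e^{-iz}=e^{-iz}$ is the correct mechanism and matches the paper's computation of the leading term $\wh{\Delta\Phi^0_0}=(-\Theta_0 e^{-iz},\Theta_0 e^{-iz})$. The $\kappa$-independence issue you raise at the end is also real: the paper first extracts a $\kappa$-dependent $\Theta_0(\kappa)$, then defines the intrinsic constant $\Theta=\Theta_0-\int_{-i\kappa}^{-i\infty}\tfrac{e^{iz}}{2i}\PP_2[\Delta\phi^0,\Delta\psi^0]\,dz$ and identifies $\Theta=\lim_{\Im z\to-\infty,z\in i\RR}e^{iz}\Delta\psi^0(z)$, which is manifestly independent of $\kappa$; this limit representation is also what makes the reality of $\Theta$ (from reversibility plus real-analyticity on $i\RR$) immediate. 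Item~(3) then follows from the formula exactly as you argue.

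In short: same route, correct plan; the only concrete device you did not name is the exponential weight $e^{iz}$ in the norm, which is what makes your ``dominant balance + kill $e^{iz}$'' step rigorous rather than heuristic.
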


Theorem \ref{thm:inner} does not ensure that the first-order constant $\Theta$ is non-zero. This is stated in the next proposition, whose proof is deferred to Appendix \ref{app:nonvanishingStokes}.

\begin{proposition}\label{prop:nonvanishing}
The constant $\Theta\in\RR$ introduced  in Theorem \ref{thm:inner} satisfies
$\Theta\neq 0$.
\end{proposition}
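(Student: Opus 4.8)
My plan is to argue by contradiction, using part~(3) of Theorem~\ref{thm:inner}: it suffices to exclude the possibility that $\phi^{0,\uns}\equiv\phi^{0,\sta}$ on the overlap region $\mathcal{R}^{\inn}_{\theta,\kappa_0}$. Assume this identity holds. Since $\mathcal{R}^{\inn}_{\theta,\kappa_0}$ lies in the lower connected component of $\cD^{\uns,\inn}_{\theta,\kappa_0}\cap\cD^{\sta,\inn}_{\theta,\kappa_0}$, the vanishing of $\Delta\phi^0$ there does not by itself propagate to the upper component; but the reversibility~\eqref{def:symmetryinnersol} provides the bridge: if $z$ is in the upper component then $-z$ lies in the lower one, so $\phi^{0,\uns}(z)=-\phi^{0,\sta}(-z)=-\phi^{0,\uns}(-z)=\phi^{0,\sta}(z)$, and likewise for $\psi^0$. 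Hence $(\phi^0,\psi^0):=(\phi^{0,\uns},\psi^{0,\uns})=(\phi^{0,\sta},\psi^{0,\sta})$ is a single real-analytic, odd solution of~\eqref{eq:inner} defined on $\cD^{\uns,\inn}_{\theta,\kappa_0}\cup\cD^{\sta,\inn}_{\theta,\kappa_0}$, which is the complement of the bounded rhombus $\{\,|\Im z|+\tan\theta\,|\Re z|\le\kappa_0\,\}$, and there it obeys the decay bounds of Theorem~\ref{thm:inner}(1). Eliminating $\psi$ through the first equation of~\eqref{eq:inner} (which in terms of $w:=z^{-1}+\phi^0$ reads $\psi=w''-2w^3$), one checks that $w$ is an odd analytic solution of the scalar fourth-order equation
\[
w''''+w''-2w^3=0
\]
for all sufficiently large $|z|$, with $w(z)=z^{-1}+\mathcal{O}(z^{-3})$ as $z\to\infty$.

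Being analytic and bounded at infinity, $w$ admits there a convergent Laurent expansion $w(z)=\sum_{k\ge1}a_kz^{-k}$; the decay bounds force $a_1=1$ and $a_2=0$, and substituting the series into the ODE and matching the coefficient of $z^{-m}$ determines all the $a_k$ recursively. I would then extract the contradiction from the \emph{divergence} of this uniquely determined series. Separating off the top-index part $3a_1^2a_{m-2}=3a_{m-2}$ of the cubic convolution and using $(m-1)(m-2)-6=(m-4)(m+1)$, the recursion reads, for odd $m\ge5$,
\[
(m-4)(m+1)\,a_{m-2}=2R_m-(m-1)(m-2)(m-3)(m-4)\,a_{m-4},
\]
where $R_m=\sum a_ia_ja_l$ ranges over positive integer triples with $i+j+l=m$ and $\max(i,j,l)\le m-3$. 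One proves by strong induction on the odd index that $\mathrm{sign}(a_k)=(-1)^{(k-1)/2}$: since $w$ is odd, every term of $R_m$ is a product of three coefficients with \emph{odd} indices summing to $m$, so all of them carry the same sign $(-1)^{(m-3)/2}$ and no cancellation occurs inside $R_m$; moreover $a_{m-4}$ has sign $-(-1)^{(m-3)/2}$, so the two summands on the right-hand side \emph{reinforce} rather than compete. Because $(m-4)(m+1)>0$, this both closes the induction (the base cases $a_1=1$, $a_3=-4$ being immediate) and yields
\[
|a_{m-2}|\ \ge\ \frac{(m-1)(m-2)(m-3)}{m+1}\,|a_{m-4}|\ \ge\ \tfrac13\,(m-3)^2\,|a_{m-4}|,\qquad m\ge5.
\]
Iterating from $a_1=1$ gives $|a_{2j+1}|\ge(4/3)^j(j!)^2$, hence $\limsup_k|a_k|^{1/k}=\infty$, so $\sum_ka_kz^{-k}$ converges at no finite $z$ --- contradicting the analyticity of $w$ near $\infty$. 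Therefore $\Theta\ne0$.

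I expect the main difficulty to be the sign bookkeeping in the divergence step: one must verify that \emph{all} of the many contributions to $R_m$ (and to the elimination of $\psi$ that produces the scalar equation) genuinely line up, which hinges precisely on the oddness of $w$ forcing $a_{2j}=0$ and thereby killing exactly the terms that could spoil the sign pattern. A secondary, more routine point is to make the gluing in the first step fully rigorous --- i.e.\ to confirm that the solutions $(\phi^{0,\star},\psi^{0,\star})$ furnished by Theorem~\ref{thm:inner} are the (unique) restrictions of solutions defined on the full domains $\cD^{\star,\inn}_{\theta,\kappa_0}$, so that the analytic continuation argument via~\eqref{def:symmetryinnersol} across both connected components of the overlap is valid.
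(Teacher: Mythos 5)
Your argument is correct and follows essentially the same route as the paper's: reduce to the single unknown $\Phi=z^{-1}+\phi^0$ satisfying $\Phi''''+\Phi''=2\Phi^3$, glue the stable and unstable branches across the two components of the overlap so that $\Phi$ becomes analytic and bounded in a punctured neighborhood of $\infty$, and then derive a contradiction by showing that the Laurent coefficients, determined recursively by the ODE with the sign pattern $\mathrm{sign}(a_{2j+1})=(-1)^j$ forcing reinforcement rather than cancellation, grow faster than any geometric sequence. The only cosmetic difference is the bridging step: you propagate the coincidence to the upper component via the reversibility $(\phi^{0,\uns},\psi^{0,\uns})(z)=-(\phi^{0,\sta},\psi^{0,\sta})(-z)$, whereas the paper uses real-analyticity (conjugation $z\mapsto\bar z$); these are equivalent here since both symmetries are available.
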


Once we have obtained the solutions of the inner equation and analyzed their difference, the next step is to ``measure'' how well they approximate the functions obtained in Theorems \ref{thm:outer:intro} and \ref{thm:aux:intro}. This is done through what is usually called \emph{complex matching} techniques.

We first define the \emph{matching domains} where these differences are analyzed.
Let $0<\nu <1$ and  $0<\theta_2 < \theta <\theta_1<\frac{\pi}{2}$,  where $\theta$ is the angle introduced in \eqref{def:defdomainouter}. We denote
$$
\rminus= -i\kappa \eps + x_-, \qquad 
x_1^-=-i\kappa \eps - \eps^{\nu} e^{i \theta_1}+ x_-, \qquad x_2^- = -i\kappa \eps+ \eps^{\nu} e^{i\theta_2} +x_-.
$$
and 
$$
\rplus=-i\kappa \eps + x_+. \qquad x_1^+=-i\kappa \eps + \eps^{\nu} e^{-i \theta_1}+ x_+, \qquad x_2^+ = -i\kappa \eps- \eps^{\nu} e^{-i\theta_2} +x_+.
$$
Notice that $\rplus=-\overline{\rminus}$, $x_1^+=-\overline{x_1^-}$, $x_2^+ = -\overline{x_2^-}$, where we have denoted by 
$\overline{z}$ the complex conjugate of $z$. We define the  matching domains as 
\begin{equation}\label{def:domainsmatching}
D_{\theta_1,\theta_2, \nu}^{-, \match} = \widehat{\rminus,\, x_1^-\, ,x_2^-}, \qquad -D_{\theta_1,\theta_2,\nu}^{+, \match} = \widehat{\rplus,\, x_1^+\, ,x_2^+}
\end{equation}
that is, $D_{\theta_1,\theta_2,\nu}^{-,\match}$ 
as the triangle with vertexs $\rminus, x_1^-,x_2^-$ while $D_{\theta_1,\theta_2,\nu}^{-,\match}$ is the triangle with vertexs $-\rplus, x_1^+, x_2^+$ (see Figure~\ref{fig:matching}).

\begin{figure}[htb!]	
	\centering
	\begin{overpic}[width=10cm]{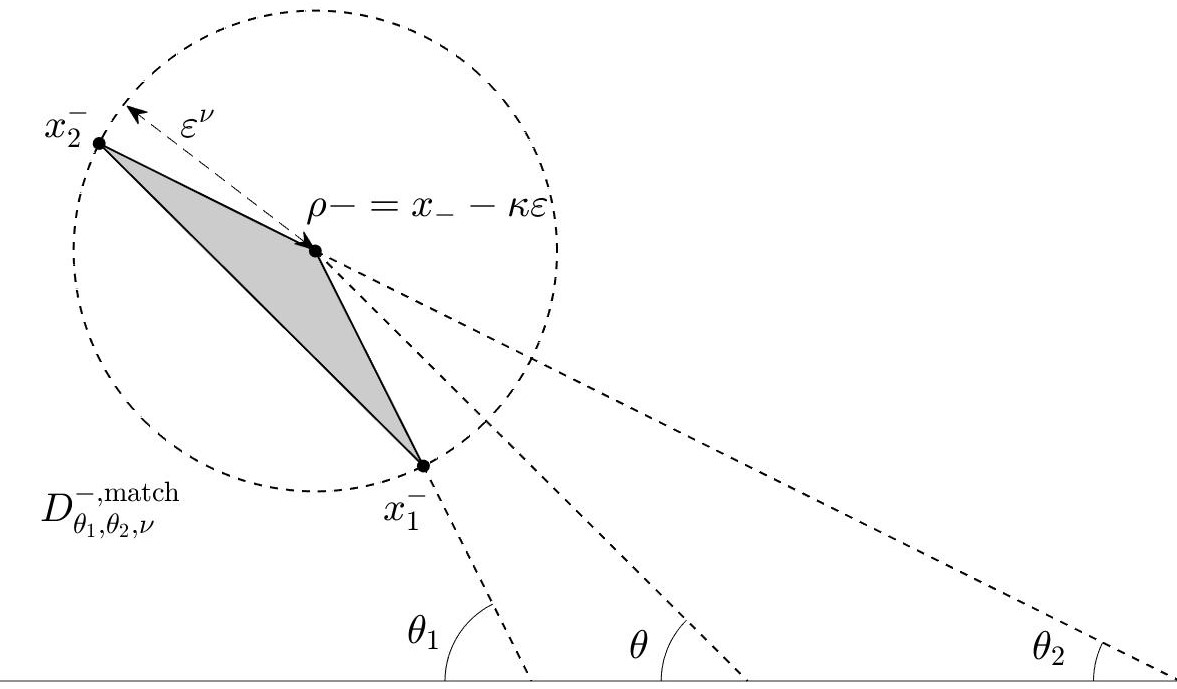}
		\end{overpic}
	\caption{The matching domain $D^{-,\match}_{\theta_1,\theta_2,\nu}$ introduced in \eqref{def:domainsmatching}. }	
	\label{fig:matching}
\end{figure} 

We also introduce
\begin{equation}\label{defxietaunspm}
\begin{aligned}
&\xi^{0,\uns}_-(x) =   \frac{c_{-1}}{\eps }\phi^{0,\uns}\big (\eps^{-1} (x-x_-) \big ), &\eta^{0,\uns}_-(x)=\frac{c_{-1}}{\eps^3} \psi^{0,\uns}\big (\eps^{-1} (x-x_-)\big ), \\
&\xi^{0,\sta}_+(x) = \frac{c_{1}}{\eps }\phi^{0,\sta}\big (\eps^{-1} (x-x_+ )\big ), &\eta^{0,\sta}_+ (x)=\frac{c_{1}}{\eps^3 }\psi^{0,\sta}\big (\eps^{-1} (x-x_+ )\big )
\end{aligned}
\end{equation}
and 
\begin{equation}\label{defxietaunspmaux}
\begin{aligned}
&\xi^{0,\aux}_-(x) = \frac{c_{-1}}{\eps } \phi^{0,\sta}\big (\eps^{-1} (x-x_-) \big ), &\eta^{0,\aux}_-(x)=\frac{c_{-1}}{\eps^3 } \psi^{0,\sta}\big (\eps^{-1} (x-x_- )\big ), \\
&\xi^{0,\aux}_+(x) = \frac{c_{1}}{\eps }\phi^{0,\uns}\big (\eps^{-1} (x-x_+ ) \big ), &\eta^{0,\aux}_+ (x)=\frac{c_{1}}{\eps^3 } \psi^{0,\uns}\big (\eps^{-1} (x-x_+  )\big ).
\end{aligned}
\end{equation}

The following theorem, which is proved in Section~\ref{sec:matching},
provides estimates between $(\xi^{0,*}_\pm,\eta^{0,*}_\pm)$ and $(\xi^*,\eta^*)$ with $\us=\uns,\sta,\aux$ in the corresponding matching domains.

\begin{theorem}\label{thm:matching:intro}
Let $\theta>0, \kappa_0$ be fixed as in Theorems~\ref{thm:inner}, \ref{thm:aux:intro} and $\theta$ as in Theorem~~\ref{thm:outer:intro}.
Take $0<\theta_2<\theta <\theta_1< \mathrm{atan}\left (\frac{\pi}{3\alpha}\right )$ and $\nu\in (0,1)$. 
 
We introduce the functions
\begin{align*}
    &\big (\delta \xi^{\uns}_-, \delta \eta^{\uns}_- \big )= \big (\xi^{\uns}- \xi^{0,\uns}_-, \eta^{\uns} - \eta^{0,\uns}_-\big ),  \\
 &\big (\delta \xi^{\sta}_+ , \delta \eta^{\sta}_+ \big )= \big (\xi^{\sta}- \xi^{0,\sta}_+, \eta^{\sta} - \eta^{0,\sta}_+\big ), \\ 
    & \big (\delta \xi_ \pm ^{\aux}, \delta \eta_\pm ^{\aux} \big )= \big (\xi^\aux - \xi^{0,\aux}_\pm, \eta^\aux - \eta^{0,\aux}_\pm \big ).
\end{align*}
Then there exist $\kappa_1\geq \kappa_0$ and a constant $M_9>0$ such that for all $\kappa \geq \kappa_1$ and $x \in D_{\theta_1,\theta_2, \nu}^{\pm, \match}$
\begin{align*}
& \big |\delta \xi^{\uns}_- (x)\big | ,\, 
\big | \delta \xi^{\sta}_+  (x)\big |, 
\big |\delta \xi^\aux _ \pm  (x) \big | \leq   M_9 |\log \eps|\frac{\eps^{2-\nu} }{|x-x_\pm|^2}, \\ & \big | \partial_ x \delta \xi^{\uns}_- (x)\big | ,\, 
\big | \partial_ x \delta \xi^{\sta}_+(x) \big |,\,
\big | \partial_ x\delta \xi^\aux _\pm (x) \big | \leq   M_9 |\log \eps|\frac{\eps^{2-\nu}  }{|x-x_\pm|^3} \\
& \big |\delta \eta^{\uns}_- (x)\big | ,\, 
\big |\delta \eta^{\sta}_+(x)(x) \big |,\,
\big |\delta \eta^\aux _\pm (x) \big | \leq   M_9 |\log \eps|\frac{\eps^{2-\nu}}{|x-x_\pm|^4}, \\ & \big | \partial_ x \delta \eta^{\uns}_- (x)\big | ,\, 
\big |\partial_x \delta \eta^{\sta}_+(x) \big |,\, 
\big | \partial_ x\delta \eta^\aux _\pm (x) \big | \leq   M_9 |\log \eps|\frac{\eps^{1-\nu}}{|x-x_\pm|^4}.
\end{align*}
\end{theorem}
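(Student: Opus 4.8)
\textbf{Proof plan for Theorem \ref{thm:matching:intro}.}

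The plan is to set up a fixed-point argument for the matching errors $(\delta\xi^\star_\pm,\delta\eta^\star_\pm)$ directly on the matching triangles $D^{\pm,\match}_{\theta_1,\theta_2,\nu}$, exploiting that both the outer/auxiliary functions and the rescaled inner solutions solve the \emph{same} equation \eqref{eq:perturb}, so that the error solves a linear-in-$(\delta\xi,\delta\eta)$ equation with an explicit inhomogeneous term coming from the difference between the full nonlinearity and its inner limit. I treat the unstable case near $x_-$ in detail; the stable case near $x_+$ and the two auxiliary cases are identical after applying the symmetries $\xi^\sta(x)=\xi^\uns(-x)$ (Theorem \ref{thm:outer:intro}), $\xi^\aux(x)=\xi^\aux(-x)$ (Theorem \ref{thm:aux:intro}), and \eqref{def:symmetryinnersol}, together with the observation that $x_1^\pm$ and $x_2^\pm$ are placed symmetrically so that $D^{-,\match}=-\overline{D^{+,\match}}$. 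The point of working on a triangle with one vertex at $\rminus=x_--i\kappa\eps$ (where Theorems \ref{thm:outer:intro}, \ref{thm:aux:intro} and \ref{thm:inner} all apply, $|x-x_-|\sim\kappa\eps$) and the opposite side at distance $\eps^\nu$ from $x_-$ (where the outer estimates are still valid and the inner rescaling $z=(x-x_-)/\eps$ has $|z|\sim\eps^{\nu-1}\to\infty$) is that on this whole region $|z|\geq\kappa$, so $(\phi^{0,\uns},\psi^{0,\uns})$ is well defined and the bounds $|\phi^{0,\uns}|\lesssim|z|^{-3}$, $|\psi^{0,\uns}|\lesssim|z|^{-5}$ hold, while simultaneously the outer bounds of Theorem \ref{thm:outer:intro} in the regime $|\Re y|\leq 2\alpha$ give $|\xi^\uns|\lesssim\eps^2|x-x_-|^{-3}$, etc.

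The key steps, in order: (i) Write the equation satisfied by $(\delta\xi^\uns_-,\delta\eta^\uns_-)$. Plugging $\xi^\uns=\xi^{0,\uns}_-+\delta\xi$, $\eta^\uns=\eta^{0,\uns}_-+\delta\eta$ into \eqref{eq:perturb} and subtracting the inner equation written back in the $x$ variable via \eqref{def:innerfunctions}, one gets $\LL_1\,\delta\xi = \big(\text{linear in }\delta\xi,\delta\eta\big)+h_1$ and $\LL_2\,\delta\eta=\big(\text{linear in }\delta\xi,\delta\xi',\delta\eta\big)+h_2$, where the inhomogeneities $h_{1,2}$ measure the error committed by replacing the full operators $\LL_j,\FF_j$ by $\LL_j^\inn,\Nin_j$ after rescaling: concretely $h_1$ collects $(u_0(x)-c_{-1}/(x-x_-))$-type terms and the $\eps^2$-correction in $\LL_2$ versus $\partial_x^2+\eps^{-2}$, and one checks using the Laurent expansion $u_0(x)=c_{-1}/(x-x_-)+\OO(1)$ near $x_-$ (Lemma \ref{lemma:propertiesu0}) that on $D^{-,\match}$ one has $|h_1|\lesssim\eps^{2-\nu}|x-x_-|^{-4}$ and $|h_2|\lesssim\eps^{2-\nu}|x-x_-|^{-6}$, with the extra $\eps^{-\nu}$ precisely the deficit between the formal inner order and the real size of the correction. (ii) Invert $\LL_1$ and $\LL_2$ on the triangle. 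For $\LL_2=\partial_x^2+\eps^{-2}$ the natural right inverse integrates twice from the vertex $\rminus$ (i.e. along the segment from $\rminus$ into the domain) against the kernel $\eps\sin(\eps^{-1}(x-s))$; the factor $e^{\eps^{-1}|\Im x|}$ growth is controlled because the triangle only moves $\OO(\eps^\nu)$ away from the horizontal line $\Im x=\Im\rminus$, so along the integration path $|\Im(x-s)|\lesssim\eps^\nu$ and $|e^{\pm i(x-s)/\eps}|\lesssim e^{\eps^{\nu-1}}$, which is beaten by any positive power of $\eps$ coming from the source — this is the reason the final bounds have no exponential factor. For $\LL_1=-\partial_x^2+1-2u_0-6\gamma u_0^2$ one uses that $u_0'$ (or rather the two independent solutions of $\LL_1 w=0$, one exponentially decaying, controlled via \eqref{def:soliton}) gives a Green's function with the expected polynomial behavior $|x-x_-|^{-1}$ at the singularity, so two integrations improve the source decay by $|x-x_-|^{2}$. (iii) Define $\FF$ on a ball in the weighted norm $\|\delta\xi\|=\sup|x-x_-|^2\eps^{\nu-2}|\delta\xi|$, $\|\delta\eta\|=\sup|x-x_-|^4\eps^{\nu-2}|\delta\eta|$ (and matching norms for derivatives, with one more power of $|x-x_-|$, and the stated $\eps^{1-\nu}$ rather than $\eps^{2-\nu}$ weight for $\partial_x\delta\eta$, which simply reflects that differentiating the $\LL_2^{-1}$ kernel costs a factor $\eps^{-1}$), and check contractivity: the linear coefficients multiplying $\delta\xi,\delta\xi',\delta\eta$ in (i) are all $\OO(\eps^2|x-x_-|^{-k})$ or smaller on $D^{-,\match}$ by Theorems \ref{thm:outer:intro}/\ref{thm:inner}, and combined with the gain from $\LL_j^{-1}$ they produce a Lipschitz constant $\OO(\eps^{2-2\nu}|\log\eps|)+\OO(\kappa^{-1})\ll 1$ for $\eps$ small and $\kappa\geq\kappa_1$; the $|\log\eps|$ in the statement is the price of one integration of $|x-x_-|^{-1}$ across the triangle whose diameter is $\OO(\eps^\nu)$ and whose nearest approach to $x_-$ is $\OO(\kappa\eps)$, giving $\log(\eps^\nu/\kappa\eps)=\OO(|\log\eps|)$. (iv) Conclude existence and uniqueness of the fixed point and read off the claimed bounds; then transfer to the $x_+$ and auxiliary statements by the reversal symmetries.

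The main obstacle I expect is step (ii)–(iii) on the $\LL_2$ component: one must choose the integration contour for $\LL_2^{-1}$ so that (a) it stays inside $D^{-,\match}$ — dictating that the base point be the vertex $\rminus$ and the contour a straight segment, which is why the matching triangle is taken with that specific vertex — and (b) the two boundary conditions one imposes there are \emph{consistent} with the fact that $\delta\eta$ must vanish to the right order as $|z|\to\infty$ along the far side $[x_1^-,x_2^-]$; reconciling ``integrate from $\rminus$'' with ``decay at the outer side'' forces a careful decomposition of $\eta^{0,\uns}_-$ into its $e^{\pm iz}$-free part (which is what the polynomial bounds of Theorem \ref{thm:inner}(1) control) versus the exponentially small oscillatory remainder, and a corresponding split of the fixed-point space, so that the oscillatory exponentials $e^{\pm i x/\eps}$ — which on the triangle are merely $\OO(e^{-\pi/\eps}e^{\eps^{\nu-1}})$, still exponentially small — are not accidentally amplified by the $\LL_2^{-1}$ double integral. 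Handling this split cleanly, while keeping all constants independent of $\kappa$ (so that Step (iii) of the overall program in Section \ref{sec:asympformula} can take $\kappa$ large at the end), is the delicate bookkeeping at the heart of this theorem; the rest is the standard Banach fixed-point machinery used already in Theorems \ref{thm:outer:intro}, \ref{thm:aux:intro} and \ref{thm:inner}.
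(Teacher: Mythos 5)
Your outline captures the right general architecture — linearize the difference via the mean value theorem to get a linear equation with an explicit inhomogeneity, invert $\LL_1,\LL_2$ by variation of constants, and conclude by a contraction or Neumann-series argument — and you correctly identify that the inhomogeneity $\mathcal{A}$ measures the discrepancy between the rescaled full equation and the limiting inner equation. But there is a genuine gap at the heart of your step (ii): you anchor the integral operators at the near vertex $\rminus = x_- - i\kappa\eps$, and this \emph{cannot} yield the stated $\eps$-smallness. The variation-of-constants formula necessarily carries free constants that encode the data at the anchor point, and these constants enter the bound on $\delta\Phi$ linearly. At $\rminus$ (i.e. $z=-i\kappa$ in inner variables) the only a priori information available — from Theorems \ref{thm:outer:intro}, \ref{thm:aux:intro} and \ref{thm:inner} — gives $|\delta\phi(-i\kappa)|\lesssim\kappa^{-3}$, $|\delta\psi(-i\kappa)|\lesssim\kappa^{-5}$, which are small in $\kappa$ but \emph{not} small in $\eps$; there is no way to improve them into an $\eps^{1-\nu}$ bound starting from the near vertex. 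The paper's proof instead anchors both $\GG^\match_1$ and $\GG^\match_2$ at the \emph{far} vertices $z_1,z_2$ of the matching triangle (Lemma~\ref{lem:linear_matching}), where $|z_1|,|z_2|\sim\eps^{-(1-\nu)}$. There, the very same a priori bounds read $|\delta\phi(z_i)|\lesssim|z_i|^{-3}\lesssim\eps^{3(1-\nu)}$, $|\delta\psi(z_i)|\lesssim\eps^{5(1-\nu)}$, which is precisely what makes the forcing term $\widehat{\delta\Phi_0^\uns}$ of size $\eps^{1-\nu}|\log\eps|$ (Lemmas~\ref{lem:deltaphi0bound} and~\ref{lem:deltaphi0}). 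The exponential kernels $e^{\pm i(s-z)}$ remain bounded along these inward-pointing segments by the geometry of $\mathcal{D}^{\match}_{\theta_1,\theta_2,\nu}$, so no splitting into oscillatory versus non-oscillatory parts is needed. The ``careful decomposition of $\eta^{0,\uns}_-$'' you propose as a remedy does not touch this problem, because the issue is not growth of the kernel but rather that your unknown boundary data at $\rminus$ is exactly the quantity you are trying to estimate.

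Two smaller points. First, the framing: you do not need to \emph{construct} $(\delta\xi,\delta\eta)$ by a fixed point; as the paper notes (Remark~\ref{rmk:matching1}), these functions already exist as a difference of known quantities, and the correct move is to show that $\mathrm{Id}-\Ghmatch$ is invertible on a suitably weighted space and then bound $\delta\Phi=(\mathrm{Id}-\Ghmatch)^{-1}[\widehat{\delta\Phi_0}]$, where the forcing $\widehat{\delta\Phi_0}$ absorbs both $\GG^\match[\mathcal{A}]$ and the boundary contribution. Second, the Lipschitz/operator-norm smallness of $\Ghmatch$ comes from inverse powers of $\kappa$ only (it is $\OO(\kappa^{-2})$, see Lemma~\ref{lem:lin_operator_matching} item~\ref{itemforth}), not from positive powers of $\eps$: the coefficients $\mathcal{B}_1,\mathcal{B}_2$ in the linearized equation are $\OO(|z|^{-4}),\OO(|z|^{-3})$ without additional $\eps$-factors, so your ``$\OO(\eps^{2-2\nu}|\log\eps|)$'' contribution to the Lipschitz constant does not appear. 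This matters for the logic of the global argument, which must first fix $\kappa$ independently of $\eps$ and only later send $\eps\to0$.
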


\subsection{The asymptotic formula}\label{sec:asympformula}

Now, to prove Theorem \ref{thm:main} it only remains to provide an asymptotic formula for the constants $C_1^\star$ and $C_2^\star$. This is done in the following proposition, which is proved in Section \ref{sec-proof-prop}. From now on 
we take 
\begin{equation}\label{def:kappachoice}
\kappa=c|\log\eps|
\end{equation}
for some suitable constant $c>0$ to be chosen later. 

\begin{proposition}\label{prop:CsFormula}
The constants $C_1^\star$ and $C_2^\star$ introduced in Proposition \ref{prop:difference:remainder} satisfy
\begin{equation*}
\begin{split}
C_1^{\uns}&= \frac{1}{\sqrt{|\gamma|}\eps^3}e^{-\frac{i\ol{x_-}}{\eps}}\left(\Theta +\OO\left(\frac{1}{|\log\eps|}\right)\right)\\
C_2^{\uns}&=\frac{1}{\sqrt{|\gamma|}\eps^3}e^{\frac{ i {x_-}}{\eps}}\left( \Theta+\OO\left(\frac{1}{|\log\eps|}\right)\right) \\
C_1^{\sta}&= -\frac{1}{\sqrt{|\gamma|}\eps^3}e^{-\frac{i\ol{x_+}}{\eps}}\left(\Theta +\OO\left(\frac{1}{|\log\eps|}\right)\right)\\
C_2^{\sta}&=-\frac{1}{\sqrt{|\gamma|}\eps^3}e^{\frac{i{x_+}}{\eps}}\left(\Theta+\OO\left(\frac{1}{|\log\eps|}\right)\right).
\end{split}
\end{equation*}
\end{proposition}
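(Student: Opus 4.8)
The plan is to solve the two $2\times2$ linear systems \eqref{eq:linearsystemCs} for the four constants $C_j^\star$ and then feed into them sharp values of $\Delta\eta^{\uns},\Delta\eta^{\sta}$ at $\rminus,\ol{\rminus},\rplus,\ol{\rplus}$ obtained from the inner equation through the matching Theorem~\ref{thm:matching:intro}. We fix the exponent $\nu\in(0,1)$ of Theorem~\ref{thm:matching:intro} and take $\kappa=c|\log\eps|$ with a small constant $c\in(0,1-\nu)$; since $\kappa\to\infty$ as $\eps\to0$, all conditions $\kappa\geq\kappa_1$ of Theorems~\ref{thm:inner}--\ref{thm:matching:intro} hold for $\eps$ small. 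Two structural reductions halve the work. First, since $\xi^{\star},\eta^{\star}$ ($\star=\uns,\sta,\aux$) are real analytic and the relevant domains are invariant under conjugation, $\Delta\eta^{\uns}$ is real analytic, so $\Delta\eta^{\uns}(\ol{\rminus})=\overline{\Delta\eta^{\uns}(\rminus)}$; together with $\overline{e^{i\rminus/\eps}}=e^{-i\ol{\rminus}/\eps}$ and $\Theta\in\RR$ this gives $C_1^{\uns}=\overline{C_2^{\uns}}$. Second, the reversibility identities $\xi^{\sta}(x)=\xi^{\uns}(-x)$, $\eta^{\sta}(x)=\eta^{\uns}(-x)$, $\xi^{\aux}(x)=\xi^{\aux}(-x)$, $\eta^{\aux}(x)=\eta^{\aux}(-x)$ give $\Delta\eta^{\sta}(x)=-\Delta\eta^{\uns}(-x)$; since $\rplus=-\ol{\rminus}$ and $\ol{\rplus}=-\rminus$, the $\sta$–system \eqref{eq:linearsystemCs} becomes the $\uns$–system with $(C_1^{\uns},C_2^{\uns})$ replaced by $(-C_2^{\sta},-C_1^{\sta})$, so $C_1^{\sta}=-C_2^{\uns}$ and $C_2^{\sta}=-C_1^{\uns}$. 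Hence it suffices to compute $C_2^{\uns}$.

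Next I solve the first two equations of \eqref{eq:linearsystemCs} by Cramer's rule. Writing $a_1:=e^{i\rminus/\eps}$, $a_2:=e^{-i\rminus/\eps}$, the second equation has coefficients $\overline{a_2},\overline{a_1}$, the determinant is $|a_1|^2-|a_2|^2$, and since $\Im\rminus=\pi-\kappa\eps$ we have $|a_1|=e^{-(\pi-\kappa\eps)/\eps}$ (exponentially small) and $|a_2|=e^{(\pi-\kappa\eps)/\eps}$ (exponentially large). Using $\overline{a_2}/|a_2|^2=a_1$, $|\Delta\eta^{\uns}(\ol{\rminus})|=|\Delta\eta^{\uns}(\rminus)|$, and that $|a_1|/|a_2|$ is super‑exponentially small, an elementary computation yields
\[
C_2^{\uns}=e^{i\rminus/\eps}\,\Delta\eta^{\uns}(\rminus)\bigl(1+o(1/|\log\eps|)\bigr),
\]
the correction being negligible against the target size $\sim\eps^{-3}e^{-\pi/\eps}$.

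The crux is an accurate evaluation of $\Delta\eta^{\uns}(\rminus)=\eta^{\uns}(\rminus)-\eta^{\aux}(\rminus)$. Applying Theorem~\ref{thm:matching:intro} at the vertex $\rminus$ of $D_{\theta_1,\theta_2,\nu}^{-,\match}$ I split $\eta^{\uns}(\rminus)=\eta^{0,\uns}_-(\rminus)+\delta\eta^{\uns}_-(\rminus)$ and $\eta^{\aux}(\rminus)=\eta^{0,\aux}_-(\rminus)+\delta\eta^{\aux}_-(\rminus)$. By \eqref{defxietaunspm}--\eqref{defxietaunspmaux}, $\eta^{0,\uns}_-(x)-\eta^{0,\aux}_-(x)=\frac{c_{-1}}{\eps^3}\Delta\psi^0\bigl(\eps^{-1}(x-x_-)\bigr)$, and at $x=\rminus$ the inner argument is $\eps^{-1}(\rminus-x_-)=-i\kappa$, which lies in $\mathcal{R}^{\inn}_{\theta,\kappa}$ (up to a harmless bounded adjustment of the constant $\kappa$ in Theorem~\ref{thm:inner}), so $\Delta\psi^0(-i\kappa)=\Theta e^{-\kappa}\bigl(1+\OO(\kappa^{-1})\bigr)$. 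Since $|\rminus-x_-|=\kappa\eps$, the matching remainders satisfy $|\delta\eta^{\uns}_-(\rminus)|,|\delta\eta^{\aux}_-(\rminus)|\leq M_9|\log\eps|\,\eps^{2-\nu}(\kappa\eps)^{-4}$, whence
\[
\Delta\eta^{\uns}(\rminus)=\frac{c_{-1}}{\eps^3}\,\Theta\,e^{-\kappa}\bigl(1+\OO(\kappa^{-1})\bigr)+\OO\!\Bigl(\frac{\eps^{-2-\nu}|\log\eps|}{\kappa^{4}}\Bigr).
\]

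Finally I assemble. From $\rminus=x_--i\kappa\eps$ one has $i\rminus/\eps=ix_-/\eps+\kappa$, so $e^{i\rminus/\eps}e^{-\kappa}=e^{ix_-/\eps}$ with $|e^{ix_-/\eps}|=e^{-\pi/\eps}$; multiplying the last display by $e^{i\rminus/\eps}$ and using the second paragraph,
\[
C_2^{\uns}=\frac{c_{-1}\Theta}{\eps^3}\,e^{ix_-/\eps}\bigl(1+\OO(\kappa^{-1})\bigr)+\OO\!\Bigl(\frac{e^{\kappa}|\log\eps|\,\eps^{1-\nu}}{\kappa^{4}}\cdot\frac{e^{-\pi/\eps}}{\eps^{3}}\Bigr).
\]
The first error is $\OO(\kappa^{-1})=\OO(1/|\log\eps|)$ relative to the main term; for the second, substituting $\kappa=c|\log\eps|$ (so $e^{\kappa}=\eps^{-c}$) its relative size is $\OO\bigl(\eps^{1-\nu-c}/|\log\eps|^{3}\bigr)=o(1/|\log\eps|)$ exactly because $c<1-\nu$. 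Hence $C_2^{\uns}=\frac{1}{\sqrt{|\gamma|}\,\eps^{3}}e^{ix_-/\eps}\bigl(\Theta+\OO(1/|\log\eps|)\bigr)$ by $c_{-1}=1/\sqrt{|\gamma|}$ (see \eqref{def:cpm1}), and the remaining three formulas follow from $C_1^{\uns}=\overline{C_2^{\uns}}$, $C_1^{\sta}=-C_2^{\uns}$, $C_2^{\sta}=-C_1^{\uns}$, $\Theta\in\RR$, and $-\ol{x_+}=x_-$, $-\ol{x_-}=x_+$. The main obstacle is precisely this balance: the matching remainder at $\rminus$ is amplified by the factor $e^{i\rminus/\eps}$, of size $e^{-\pi/\eps}e^{\kappa}$, when transported into $C_2^{\uns}$, so $\kappa$ must be large enough for Theorems~\ref{thm:inner}--\ref{thm:matching:intro} to apply and for the inner remainder $\OO(\kappa^{-1})$ to be small, yet small enough that $e^{\kappa}$ does not overwhelm the algebraic gain $\eps^{1-\nu}$ of the matching estimate; the logarithmic scaling $\kappa=c|\log\eps|$ with $c<1-\nu$ reconciles these demands and leaves the genuine error $\OO(1/|\log\eps|)$, inherited from the function $\chi_2$ of Theorem~\ref{thm:inner}.
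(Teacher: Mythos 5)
Your proposal is correct and follows essentially the same route as the paper: the key decomposition $\Delta\eta^{\uns}(\rminus)=\tfrac{c_{-1}}{\eps^3}\Delta\psi^0\!\bigl(\eps^{-1}(\rminus-x_-)\bigr)+\delta\eta^{\uns}_-(\rminus)-\delta\eta^{\aux}_-(\rminus)$, the use of Theorem~\ref{thm:inner} at $z=-i\kappa$ together with the matching estimates of Theorem~\ref{thm:matching:intro}, and the error-balance under $\kappa=c|\log\eps|$ with $c\in(0,1-\nu)$ all match Lemma~\ref{lem:delta:eta:uns} and the solution of the linear system~\eqref{eq:linearsystemCs} in Section~\ref{sec-proof-prop}, and your bookkeeping of the amplification $e^{i\rminus/\eps}$ versus the algebraic gain $\eps^{1-\nu}$ is exactly the paper's. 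The one genuine deviation is that you invoke the reversibility and real-analyticity symmetries to deduce $C_1^{\uns}=\overline{C_2^{\uns}}$, $C_1^{\sta}=-C_2^{\uns}$, $C_2^{\sta}=-C_1^{\uns}$ and thereby reduce everything to computing $C_2^{\uns}$; the paper computes all four constants directly and explicitly remarks that it avoids this shortcut on purpose, to stress that the method does not rely on the symmetries of the system, so your argument is slightly shorter but less robust to non-symmetric perturbations.
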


Evaluating at $x=0$ the formula for $\Delta^\star$ in~\eqref{eq:formulaDeltaEta} together with Propositions~\ref{prop:difference:remainder} and~\ref{prop:CsFormula} lead to the asymptotic formulas
\[
\begin{split}
\Delta\eta^{\uns}(0)&=\frac{1}{\sqrt{|\gamma|}\eps^3}e^{-\frac{\pi}{\eps}}\left(2\Theta\cos\left(\frac{\alpha}{\eps}\right)  +\OO\left(\frac{1}{|\log\eps|}\right)\right)\\
\pa_x\Delta\eta^{\uns}(0)&=\frac{1}{\sqrt{|\gamma|}\eps^4}e^{-\frac{\pi}{\eps}}\left( -2\Theta\sin\left(\frac{\alpha}{\eps}\right)    +\OO\left(\frac{1}{|\log\eps|}\right)\right)\\
\Delta\eta^{\sta}(0)&=-\frac{1}{\sqrt{|\gamma|}\eps^3}e^{-\frac{\pi}{\eps}} \left(2\Theta\cos\left(\frac{\alpha}{\eps}\right)   +\OO\left(\frac{1}{|\log\eps|}\right)\right)\\
\pa_x\Delta\eta^{\sta}(0)&= -\frac{1}{\sqrt{|\gamma|}\eps^4}e^{-\frac{\pi}{\eps}} \left( 2\Theta\sin\left(\frac{\alpha}{\eps}\right)   +\OO\left(\frac{1}{|\log\eps|}\right)\right),
\end{split}
\]
where $\alpha$ is the constant introduced in \eqref{def:alpha}. 

To complete the proof of Theorem~\ref{thm:main} we 
recall that $\Delta \eta =\Delta\eta^{\uns}+\Delta\eta^{\sta}$ and that by the symmetry properties in Theorem~\ref{thm:outer:intro} and~\ref{thm:aux:intro} of $\eta^{\uns},\eta^\sta, \eta^\aux$ one has that, for $x\in D^\aux_\kappa \cap \mathbb{R}$ 
\[
\Delta \eta^{\uns}(x) = \eta^\uns (x) - \eta^\aux(x)= \eta^{\sta}(-x)- \eta^\aux(-x) = -\Delta \eta^\sta(-x)
\]
and therefore $\Delta \eta^\uns (0)= - \Delta\eta^\sta(0)$.  
This completes the proof of Theorem \ref{thm:main}. 

\begin{remark}
Notice that we could argue by symmetry that $\Delta \eta^\sta (x) = - \Delta \eta^\uns(-x)$ and skip the constants $C_{1,2}^\sta$ of our analysis. However we have preferred to keep all constants in order to emphasize that the method does not depend on the symmetries of the system. 
\end{remark}

\subsection{Proof of Proposition \ref{prop:CsFormula}}  
\label{sec-proof-prop}

To prove Proposition \ref{prop:CsFormula}, the first step is  to provide an asymptotic formula for $\Delta\eta^{\uns}(\rminus), \Delta\eta^{\uns}(\ol{\rminus})$ and  $\Delta\eta^{\sta}(\rplus), \Delta\eta^{\sta}(\ol{\rplus})$. 

\begin{lemma}\label{lem:delta:eta:uns} Let $\nu \in (0,1)$ and 
consider the points $x=\rminus$ and  $x=\ol{\rminus}$ introduced in~\eqref{def:rho-} with $\kappa$ as in \eqref{def:kappachoice} and $c\in (0, 1-\nu)$ . 

Then, the functions $\Delta\eta^{\uns}, \Delta \eta^\sta$ in~\eqref{def:differencesDeltaaux} satisfy
\[  
\begin{split}
\Delta\eta^{\uns}(\rminus)&= \frac{c_{-1}}{\eps^3}
{e^{-\kappa}}
\left(
\Theta  +\OO\left(\frac{1}{|\log\eps|}\right)\right)\\
\Delta\eta^{\uns}(\ol{\rminus})&=\frac{{c_{-1}}}{\eps^3} 
{e^{-\kappa}}
\left(
{ \Theta}
+\OO\left(\frac{1}{|\log\eps|}\right)\right),   
\end{split}
\]
and  
\[  
\begin{split}
\Delta\eta^{\sta}(\rplus)&= \frac{c_{+1}}{\eps^3}
{e^{-\kappa}}
\left(
\Theta  +\OO\left(\frac{1}{|\log\eps|}\right)\right)\\
\Delta\eta^{\sta}(\ol{\rplus})&=\frac{{c_{+1}}}{\eps^3} 
{e^{-\kappa}}
\left(
{\Theta}
+\OO\left(\frac{1}{|\log\eps|}\right)\right),   
\end{split}
\]
where $c_{\pm 1}$ and $\Theta$ are the constants introduced in~\eqref{def:cpm1} and Theorem~\ref{thm:inner} respectively.
\end{lemma}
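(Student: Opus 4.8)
The plan is to localise $\Delta\eta^{\uns}=\eta^{\uns}-\eta^{\aux}$ near the left singularity $x_-$ by means of the complex matching estimates of Theorem~\ref{thm:matching:intro}, to identify it to leading order with the inner difference $\Delta\psi^0$, and then to read off its value at the inner point $z=-i\kappa$ from the asymptotic formula of Theorem~\ref{thm:inner}. First, on the matching domain $D^{-,\match}_{\theta_1,\theta_2,\nu}$, which has $\rminus$ among its vertices, the definitions~\eqref{defxietaunspm}--\eqref{defxietaunspmaux} give
\[
\eta^{\uns}(x)=\frac{c_{-1}}{\eps^3}\psi^{0,\uns}\big(\eps^{-1}(x-x_-)\big)+\delta\eta^{\uns}_-(x),\qquad \eta^{\aux}(x)=\frac{c_{-1}}{\eps^3}\psi^{0,\sta}\big(\eps^{-1}(x-x_-)\big)+\delta\eta^{\aux}_-(x),
\]
whence
\[
\Delta\eta^{\uns}(x)=\frac{c_{-1}}{\eps^3}\,\Delta\psi^0\big(\eps^{-1}(x-x_-)\big)+\big(\delta\eta^{\uns}_-(x)-\delta\eta^{\aux}_-(x)\big),\qquad x\in D^{-,\match}_{\theta_1,\theta_2,\nu}.
\]
Evaluating at $x=\rminus$, the inner variable is $z=\eps^{-1}(\rminus-x_-)=-i\kappa$, which (since $\kappa=c|\log\eps|\gg\kappa_0$) lies well inside $\mathcal{R}^{\inn}_{\theta,\kappa_0}$; thus item~(2) of Theorem~\ref{thm:inner} applies and gives $\Delta\psi^0(-i\kappa)=\Theta e^{-\kappa}\big(1+\chi_2(-i\kappa)\big)$ with $|\chi_2(-i\kappa)|\le M_8/\kappa$.

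It then remains to compare the two error contributions with the main term $\tfrac{|c_{-1}|}{\eps^3}|\Theta|\,e^{-\kappa}$. The inner remainder is relatively of size $\OO(\kappa^{-1})=\OO(|\log\eps|^{-1})$. For the matching remainder, Theorem~\ref{thm:matching:intro} at $|\rminus-x_-|=\kappa\eps$ yields $|\delta\eta^{\uns}_-(\rminus)|,\,|\delta\eta^{\aux}_-(\rminus)|\le M_9|\log\eps|\,\eps^{2-\nu}(\kappa\eps)^{-4}=M_9|\log\eps|\,\kappa^{-4}\eps^{-2-\nu}$. Since $\kappa=c|\log\eps|$ forces $e^{-\kappa}=\eps^{c}$, the main term is of order $\eps^{c-3}$ and the relative size of the matching remainder is $\OO\!\big(\eps^{1-\nu-c}|\log\eps|^{-3}\big)$, which is $o(|\log\eps|^{-1})$ precisely because $c\in(0,1-\nu)$. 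Collecting the three contributions and factoring out $e^{-\kappa}$ gives the asserted formula for $\Delta\eta^{\uns}(\rminus)$.

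The remaining three evaluations follow with no new work. Since $\eta^{\uns}$ and $\eta^{\aux}$ are real-analytic (Theorems~\ref{thm:outer:intro} and~\ref{thm:aux:intro}) and the domain $E^{\out,\uns}_{\kappa}$ is invariant under complex conjugation and contains both $\rminus$ and $\ol{\rminus}$, one has $\Delta\eta^{\uns}(\ol{\rminus})=\overline{\Delta\eta^{\uns}(\rminus)}$; since $c_{-1},\eps,\Theta$ are real and the $\OO(|\log\eps|^{-1})$ error is preserved in modulus, this reproduces the same asymptotics. For the stable manifold, the identical argument near $x_+$---now with $\delta\eta^{\sta}_+,\delta\eta^{\aux}_+$ and $\eta^{0,\sta}_+(x)=\tfrac{c_{1}}{\eps^3}\psi^{0,\sta}(\eps^{-1}(x-x_+))$, $\eta^{0,\aux}_+(x)=\tfrac{c_{1}}{\eps^3}\psi^{0,\uns}(\eps^{-1}(x-x_+))$, and again reaching $z=-i\kappa$ at $x=\rplus$---gives $\Delta\eta^{\sta}(\rplus)=\tfrac{c_{+1}}{\eps^3}e^{-\kappa}\big(\Theta+\OO(|\log\eps|^{-1})\big)$, and real-analyticity on $E^{\out,\sta}_{\kappa}$ handles $\ol{\rplus}$.

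The only genuinely delicate point is the error bookkeeping in the second paragraph: the matching remainder is merely polynomially small and in fact blows up like $\eps^{-2-\nu}$ in absolute value, so it is essential both that $\rminus$ sits at distance $\kappa\eps=\OO(\eps|\log\eps|)$ from the singularity---making $e^{-\kappa}=\eps^{c}$ only polynomially (not exponentially) small---and that $c$ is taken strictly below $1-\nu$; the resulting $|\log\eps|^{-3}$ surplus then absorbs this error into the claimed $\OO(1/|\log\eps|)$. Everything else is a direct substitution of Theorems~\ref{thm:matching:intro} and~\ref{thm:inner} into the identity above.
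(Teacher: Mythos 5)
Your proof is correct and follows essentially the same strategy as the paper's: decompose $\Delta\eta^{\uns}$ through the inner approximations $\eta^{0,\uns}_-,\eta^{0,\aux}_-$ so the leading term is $\tfrac{c_{-1}}{\eps^3}\Delta\psi^0$, apply Theorem~\ref{thm:inner} at $z=-i\kappa$ and Theorem~\ref{thm:matching:intro} for the remainders, and use $e^{\kappa}=\eps^{-c}$ with $c<1-\nu$ to absorb the polynomially small matching error into $\OO(1/|\log\eps|)$. The one small variation is your use of Schwarz reflection (real-analyticity of $\Delta\eta^{\uns}$ on a conjugation-invariant domain) to transfer the estimate from $\rminus$ to $\ol{\rminus}$ rather than repeating the matching argument near $\ol{x_-}$; both routes are valid and the paper leaves this step as ``analogous.''
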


\begin{proof}
We provide the proof for $\Delta\eta^{\uns}(\rminus)$. The other formula can be proven analogously. Note that $\Delta\eta^{\uns}$ can be written as 
\begin{align*}
\Delta\eta^{\uns}(x) & = 
  \eta^\uns(x)- \eta^{0,\uns}_-(x) + \eta^{0,\uns}_-(x) -\eta^{0,\aux}_- (x) + \eta^{0,\aux}_-(x)-\eta^\aux(x) 
\\ &= \frac{c_{-1}}{\eps^3}\Delta \psi^0\left(\frac{x-x_-}{\eps}\right) +\de\eta_-^{\uns}(x)-\de\eta_-^\aux(x) 
\end{align*} 
where $\eta^{0,\star}_-$, $\star=\uns,\aux$ are defined in~\eqref{defxietaunspm}, \eqref{defxietaunspmaux}$, \Delta\psi^0$ is the function analyzed in Theorem~\ref{thm:inner} (recall the inner change of variables \eqref{def:innervariable}) and $\de\eta_-^{\uns}$, $\de\eta_-^\aux$ are the functions introduced in Theorem \ref{thm:matching:intro}. 
Then, it is enough to use the asymptotic formula \eqref{diffinnersol} and the estimates in Theorem \ref{thm:matching:intro}. Indeed, using that $\rminus - x_- = -i \kappa \eps$, we obtain
\begin{align*}
\Delta \eta^\uns(\rminus) &=\frac{c_{-1}}{\eps^3 } \left (
 \Theta e^{-\kappa} \big (1 + \chi(-i\kappa)\big )  + \mathcal{O}\left (\frac{\eps^{1-\nu}}{|\log \eps|^3}\right )
\right ) \\
&= \frac{c_{-1}}{\eps^3 } e^{-\kappa} \left ( \Theta + \mathcal{O}\left (\frac{1}{|\log \eps|} \right ) + e^{\kappa}\mathcal{O}\left (\frac{\eps^{1-\nu} }{|\log \eps|^3}\right )\right ) 
\end{align*}
and therefore, from $e^{\kappa} = \eps^{-c} \leq \eps^{\nu-1}$, we obtain the result. Notice that  
\begin{align*}
\Delta\eta^{\sta}(x) & = 
\eta^{\aux}(x)- \eta^{0,\aux}_+(x) + \eta^{\aux,0}_+(x) -\eta^{0,\sta}_+ (x) + \eta^{0,\sta}_+(x)-\eta^\sta(x) 
\\ &= \frac{c_{+1}}{\eps^3}\Delta \psi^0\left(\frac{x-x_+}{\eps}\right) +\de\eta_+^{\aux}(x)-\de\eta_+^\sta(x) 
\end{align*} 
so the result for $\Delta\eta^\sta$ follows analogously as the one for $\Delta \eta^\uns$.
\end{proof}

To complete the proof of Proposition \ref{prop:CsFormula}, it suffices
to solve the linear system~\eqref{eq:linearsystemCs}. Indeed, we have that, the linear system for $C_{1,2}^\uns$ can be rewritten as 
\[
\begin{pmatrix}
    1 & e^{-\frac{2i \ol{\rminus}}{\eps}} \\ 
    e^{\frac{2i \rminus}{\eps}} & 1 
\end{pmatrix}
\begin{pmatrix}
    C_1^\uns \\ C_2^\uns
\end{pmatrix}
= \begin{pmatrix}
    e^{-\frac{i \ol{\rminus}}{\eps}} \Delta \eta^\uns(\ol{\rminus}) \\
    e^{\frac{i\rminus}{\eps}} \Delta \eta^\uns (\rminus)
\end{pmatrix}  .
\]
Thus, using that $\eps^{-1} i (\rminus - \ol{\rminus})=-\eps^{-1} 2\pi$, that $\rminus=x_- - i\kappa \eps$ and Lemma~\ref{lem:delta:eta:uns}  
\begin{align*}
    C_1^\uns &=  \frac{c_{-1}}{\eps^3} e^{-\frac{i\ol{x_-}}{\eps}} \left ( \Theta + \mathcal{O}\left (\frac{1}{|\log \eps| }\right )\right ) \\
    C_2^\uns &=  \frac{c_{-1}}{\eps^3} e^{\frac{i{x_-}}{\eps}} \left (  \Theta + \mathcal{O}\left (\frac{1}{|\log \eps| }\right )\right ).
\end{align*}
Proceeding analogously for $C_{1,2}^\sta$ we obtain
\begin{align*}
    C_1^\sta &=  \frac{c_{+1}}{\eps^3} e^{-\frac{i\ol{x_+}}{\eps}} \left ( \Theta + \mathcal{O}\left (\frac{1}{|\log \eps| }\right )\right ) \\
    C_2^\sta &=  \frac{c_{+1}}{\eps^3} e^{\frac{i{x_+}}{\eps}} \left (  \Theta + \mathcal{O}\left (\frac{1}{|\log \eps| }\right )\right ).
\end{align*}
Since $c_{\mp 1} = \pm(\sqrt{|\gamma|})^{-1}$ is given in ~\eqref{def:cpm1}, this completes the proof of Proposition \ref{prop:CsFormula}.

\subsection{Notation and preliminaries}

The rest of the paper is devoted to prove the intermediate results in the previous sections. In order to do so, 
here, we set some standard notations used in our work and to provide (and prove) a general result improving the classical fixed point theorem. 
We will use the following notation and conventions:

\begin{itemize}
\item For $g,h: \Omega\subset \mathbb{C}\to \mathbb{C}$, a function defined in a complex set $\Omega$, we will say that $|g(x)|\lesssim |h(x)|$ if there exists a constant $M$ such that for all $x\in \Omega$, $|g(x)|\leq M |h(x)|$. 
\item 
Let $X$ be a Banach space endowed with the norm $\| \cdot \|_X$. We will use the notation $B(\varrho) \subset X$ for the closed ball of radius $\varrho$ centered at the origin of $X$, namely
\[
B(\varrho)=\{ \mathbf{x} \in X : \| \mathbf{x}\|_X \leq \varrho\}.
\]
\item 
From now on, $\kappa_0,\varepsilon>0$ will be fixed; $\kappa_0$ is as large and we need and $\varepsilon_0>0$ is as small as necessary. All the constants appearing in the results are uniform with respect to $\varepsilon \in (0,\varepsilon_0]$ and $\kappa \geq \kappa_0$. Moreover, when we say in the statement of a result, that $\varepsilon$ is small enough (resp. $\kappa$ is big enough) we mean that we are choosing $\varepsilon_0>0$ small enough (resp. $\kappa_0$ big enough) such that the statement hold for $\varepsilon \in (0,\varepsilon_0]$ (resp. $\kappa \geq \kappa_0$). 
\end{itemize}

We present now a result which is a consequence of the Banach fixed point theorem. We will use it several times along the work. 
\begin{theorem}\label{thm:fixedpoint}
    Let $(X \| \cdot \|_X ), (Y,\| \cdot \|_Y)$ be Banach spaces and take any $(\mathbf{x}_0,\mathbf{y}_0)\in X\times Y$. Consider $\mathbf{F}: X \times Y \to X\times Y$ an operator, $\mathbf{F}= (\mathbf{F}_X, \mathbf{F}_Y)$, satisfying that, 
    there exist positive constants $\mathbf{c}$, 
    \[\varrho\geq 3(\mathbf{c}+1) \max \{ \|{\mathbf{F}}_X [\mathbf{x}_0, \mathbf{y}_0]-\mathbf{x}_0\|_X , \|{\mathbf{F}}_Y [\mathbf{x}_0, \mathbf{y}_0]-\mathbf{y}_0)\|_Y \},
    \]
    $L_1,L_2$ and $L_3$ such that 
    \begin{equation}\label{cond1:thm:fixedpoint}
    \begin{aligned}
    \|\mathbf{F}_X[\mathbf{x}, \mathbf{y}]- \mathbf{F}_X[\wt{\mathbf{x}},\wt{\mathbf{y}}] \|_X \leq \mathbf{c}\|\mathbf{y}-\wt{\mathbf{y}}\|_Y + L_1 \|\mathbf{x} - \wt{\mathbf{x}}\|_X \\ 
    \|\mathbf{F}_Y[\mathbf{x},\mathbf{y}]- \mathbf{F}_Y[\wt{\mathbf{x}},\wt{\mathbf{y}}] \|_Y \leq L_2 \|\mathbf{x}-\wt{\mathbf{x}}\|_X+L_3\|\mathbf{y}-\wt{\mathbf{y}}\|_Y   
    \end{aligned}
    \end{equation}
    for any $(\mathbf{x}-\mathbf{x}_0,\mathbf{y}- \mathbf{y}_0), (\wt{\mathbf{x}}-\mathbf{x}_0, \wt{\mathbf{y}}- {\mathbf{y}}_0) \in B(\varrho) \times B(\varrho)\subset X\times Y$.
    Then, if 
    \begin{equation}\label{cond:thm:fixepoin}
    L_1 + \mathbf{c}(L_2+L_3),L_2+L_3\leq \frac{1}{3},
    \end{equation}
    the fixed point equation $(\mathbf{x},\mathbf{y}) = \mathbf{F}[\mathbf{x},\mathbf{y}]$ restricted to $B(\varrho) \times B(\varrho)$ has a unique solution.  
\end{theorem}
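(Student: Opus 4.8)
\textbf{Plan of proof for Theorem~\ref{thm:fixedpoint}.}
The idea is to reduce the coupled fixed point equation on $X\times Y$ to a single contraction and then invoke the classical Banach fixed point theorem. First I would equip $X\times Y$ with the weighted norm
\[
\|(\mathbf{x},\mathbf{y})\|_* = \|\mathbf{x}\|_X + \mathbf{c}\,\|\mathbf{y}\|_Y,
\]
which turns $X\times Y$ into a Banach space, and observe that the closed ball $B(\varrho)\times B(\varrho)$ contains $\{(\mathbf{x},\mathbf{y}):\|(\mathbf{x}-\mathbf{x}_0,\mathbf{y}-\mathbf{y}_0)\|_*\le \varrho\}$ once we work with the product structure. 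The natural strategy is: (i) show that $\mathbf{F}$ maps the product ball $B(\varrho)\times B(\varrho)$ (centered appropriately at $(\mathbf{x}_0,\mathbf{y}_0)$) into itself; (ii) show that $\mathbf{F}$ is a contraction on that ball in the norm $\|\cdot\|_*$; (iii) conclude by Banach's theorem that there is a unique fixed point in the ball.

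For step~(ii), given $(\mathbf{x},\mathbf{y})$ and $(\wt{\mathbf{x}},\wt{\mathbf{y}})$ with the relevant differences from $(\mathbf{x}_0,\mathbf{y}_0)$ in $B(\varrho)\times B(\varrho)$, I would combine the two Lipschitz estimates in~\eqref{cond1:thm:fixedpoint}:
\[
\|\mathbf{F}[\mathbf{x},\mathbf{y}]-\mathbf{F}[\wt{\mathbf{x}},\wt{\mathbf{y}}]\|_*
\le \bigl(L_1 + \mathbf{c}L_2\bigr)\|\mathbf{x}-\wt{\mathbf{x}}\|_X
+ \bigl(\mathbf{c} + \mathbf{c}L_3\bigr)\|\mathbf{y}-\wt{\mathbf{y}}\|_Y .
\]
I want the coefficient of $\|\mathbf{x}-\wt{\mathbf{x}}\|_X$ to be bounded by some $\lambda<1$ and the coefficient of $\mathbf{c}\|\mathbf{y}-\wt{\mathbf{y}}\|_Y$, namely $1+L_3$, to also be controlled — but $1+L_3$ is clearly not less than $1$, so a plain $\|\cdot\|_*$ contraction is too naive. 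The fix is to iterate: estimate $\|\mathbf{F}_Y[\mathbf{x},\mathbf{y}]-\mathbf{F}_Y[\wt{\mathbf{x}},\wt{\mathbf{y}}]\|_Y \le L_2\|\mathbf{x}-\wt{\mathbf{x}}\|_X + L_3\|\mathbf{y}-\wt{\mathbf{y}}\|_Y$ and feed this back into the $X$-estimate, so that after one application the genuine smallness $L_2+L_3\le \tfrac13$ is exploited on the $Y$-component and $L_1+\mathbf{c}(L_2+L_3)\le\tfrac13$ on the $X$-component. Concretely, working with the norm $\|(\mathbf x,\mathbf y)\|_*=\max\{\|\mathbf x\|_X,\mathbf c\,\|\mathbf y\|_Y\}$ (or a suitably chosen linear combination), the two hypotheses in~\eqref{cond:thm:fixepoin} give a contraction constant $\le \tfrac13<1$ directly: indeed
\[
\|\mathbf{F}_X[\mathbf{x},\mathbf{y}]-\mathbf{F}_X[\wt{\mathbf{x}},\wt{\mathbf{y}}]\|_X
\le L_1\|\mathbf{x}-\wt{\mathbf{x}}\|_X + \mathbf{c}\|\mathbf{y}-\wt{\mathbf{y}}\|_Y,
\]
and one bounds $\|\mathbf{y}-\wt{\mathbf y}\|_Y$ in terms of $\|(\mathbf x,\mathbf y)-(\wt{\mathbf x},\wt{\mathbf y})\|_*$ together with the $\mathbf{F}_Y$-estimate to absorb it. The bookkeeping here — choosing the right weights so that $L_1+\mathbf{c}(L_2+L_3)$ and $L_2+L_3$ are exactly the quantities that appear — is the only delicate point, and it is exactly why the statement isolates those two combinations.

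For step~(i), the self-mapping property, I would write for $(\mathbf{x},\mathbf{y})$ in the ball
\[
\|\mathbf{F}_X[\mathbf{x},\mathbf{y}]-\mathbf{x}_0\|_X
\le \|\mathbf{F}_X[\mathbf{x},\mathbf{y}]-\mathbf{F}_X[\mathbf{x}_0,\mathbf{y}_0]\|_X + \|\mathbf{F}_X[\mathbf{x}_0,\mathbf{y}_0]-\mathbf{x}_0\|_X,
\]
bound the first term using~\eqref{cond1:thm:fixedpoint} by $(L_1+\mathbf{c}L_2+\dots)\varrho\le \tfrac13\varrho + (\text{lower order})$ via the contraction estimate just derived, and bound the second term by $\tfrac{\varrho}{3(\mathbf c+1)}$ using the hypothesis $\varrho\ge 3(\mathbf{c}+1)\max\{\dots\}$; the analogous computation for $\mathbf{F}_Y$ uses the factor $\mathbf{c}+1$ in the denominator to compensate for the weight $\mathbf{c}$. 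Adding up, the ``$3$'' in the hypotheses is precisely what makes the sum of the contraction part ($\le\tfrac13$), the initial-displacement part ($\le\tfrac13$), and the cross-term ($\le\tfrac13$) land inside $\varrho$. Once (i) and (ii) hold on the complete metric space $B(\varrho)\times B(\varrho)$, Banach's theorem yields existence and uniqueness of the fixed point, which is the claim. The main obstacle, as indicated above, is purely organizational: picking the product norm and splitting the estimates so that the two structural smallness conditions~\eqref{cond:thm:fixepoin} are what one actually needs, rather than a single naive bound on all of $L_1,L_2,L_3$.
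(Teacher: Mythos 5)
Your opening observation is the right one: $\mathbf{F}$ is not a contraction (nor a self-map) of $B(\varrho)\times B(\varrho)$ because the $X$-estimate carries a coefficient $\mathbf{c}$ on $\|\mathbf{y}-\wt{\mathbf{y}}\|_Y$ with no smallness, and your ``feed the $\mathbf{F}_Y$-estimate back into the $X$-estimate'' remark is exactly the mechanism the paper exploits. But your write-up then conflates two different fixes and never pins down the one that actually works. The paper does \emph{not} change the norm; it uses the plain max-norm $\|(\mathbf{x},\mathbf{y})\|_\times=\max\{\|\mathbf{x}\|_X,\|\mathbf{y}\|_Y\}$ and instead changes the \emph{operator}, replacing $\mathbf{F}$ by
\[
\wh{\mathbf{F}}[\mathbf{x},\mathbf{y}]=\big(\mathbf{F}_X[\mathbf{x},\mathbf{F}_Y[\mathbf{x},\mathbf{y}]],\ \mathbf{F}_Y[\mathbf{x},\mathbf{y}]\big),
\]
which has the same fixed points. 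After this Gauss--Seidel-type substitution the $X$-Lipschitz estimate becomes $\|\wh{\mathbf{F}}_X[\mathbf{x},\mathbf{y}]-\wh{\mathbf{F}}_X[\wt{\mathbf{x}},\wt{\mathbf{y}}]\|_X\le (L_1+\mathbf{c}L_2)\|\mathbf{x}-\wt{\mathbf{x}}\|_X+\mathbf{c}L_3\|\mathbf{y}-\wt{\mathbf{y}}\|_Y$, so that $\max\{L_1+\mathbf{c}L_2+\mathbf{c}L_3,\,L_2+L_3\}\le\frac13$ is precisely the contraction constant of $\wh{\mathbf{F}}$. Without this modification, the claim that ``working with $\max\{\|\mathbf{x}\|_X,\mathbf{c}\|\mathbf{y}\|_Y\}$ ... the two hypotheses give a contraction constant $\le\frac13$ directly'' is false: for any weight $\omega$ you get $\max\{L_1+\mathbf{c}/\omega,\ \omega L_2+L_3\}$, which under the stated hypotheses can always be pushed below $1$ by some $\omega$ but is generically far from $\frac13$, and your phrase about ``bounding $\|\mathbf{y}-\wt{\mathbf{y}}\|_Y$ ... together with the $\mathbf{F}_Y$-estimate to absorb it'' is not an estimate one can actually perform on the raw difference $\mathbf{y}-\wt{\mathbf{y}}$.

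The second, more serious gap is in the self-mapping step. You apply your invariance estimate to $\mathbf{F}$ itself and quote a bound of order $(L_1+\mathbf{c}L_2+\cdots)\varrho$, but for $\mathbf{F}$ the Lipschitz bound gives $(L_1+\mathbf{c})\varrho$, and when $\mathbf{c}\ge1$ the image of $B(\varrho)\times B(\varrho)$ under $\mathbf{F}_X$ can leave the ball. The invariance you want holds only for $\wh{\mathbf{F}}$, and even then it requires a preliminary verification that $\|\mathbf{F}_Y[\mathbf{x},\mathbf{y}]-\mathbf{y}_0\|_Y\le\varrho$ (so that $\mathbf{F}_X$ may legitimately be evaluated at $(\mathbf{x},\mathbf{F}_Y[\mathbf{x},\mathbf{y}])$, since the Lipschitz hypotheses are only assumed there). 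The paper establishes this first, using $L_2+L_3\le\frac13$ together with $\|\mathbf{F}_Y[\mathbf{x}_0,\mathbf{y}_0]-\mathbf{y}_0\|_Y\le\frac{\varrho}{3(\mathbf{c}+1)}$, and then the self-mapping of $\wh{\mathbf{F}}$ falls out with the extra term $\mathbf{c}\cdot\frac{\varrho}{3(\mathbf{c}+1)}$ that the factor $\mathbf{c}+1$ in the hypothesis on $\varrho$ is designed to compensate. To make your argument correct, replace the weighted-norm scaffolding by the explicit operator $\wh{\mathbf{F}}$, prove the preliminary invariance of the $Y$-component, and run both the contraction and self-mapping estimates for $\wh{\mathbf{F}}$, not $\mathbf{F}$.
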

\begin{proof}
   We endow $X\times Y$ with the norm $\|(\mathbf{x}, \mathbf{y})\|_\times = \max\{\|\mathbf{x}\|_X , \|\mathbf{y}\|_Y\}$.
   We notice that $B(\varrho) \times B(\varrho) \subset X\times Y$ is indeed the ball of radius $\varrho$ centered at the origin. 
   
   We first claim that, if $(\mathbf{x}-\mathbf{x}_0, \mathbf{y}-\mathbf{y}_0) \in B(\varrho) \subset X\times Y$, then 
   \[(\mathbf{x}-\mathbf{x}_0,\mathbf{F}_Y[\mathbf{x},\mathbf{y}]- \mathbf{y}_0) \in B(\varrho)\subset X\times Y.\]
   Indeed, it is clear that 
   \begin{align*}
   \|\mathbf{F}_Y[\mathbf{x},\mathbf{y}]-\mathbf{y}_0\|_Y &\leq \|\mathbf{F}_Y[\mathbf{x},\mathbf{y}]-\mathbf{F}_Y[\mathbf{x}_0,\mathbf{y}_0]\|_Y + \|\mathbf{F}_Y[\mathbf{x}_0,\mathbf{y}_0]-\mathbf{y}_0\|_Y \\
   & \leq \varrho \left ( L_2 + L_3 + \frac{1}{3(\mathbf{c}+1)} \right ) \leq \varrho 
   \end{align*}
   where we have used that $L_2+L_3 \leq \frac{1}{3}$.

   Consider the  operator
   \[
   \wh{\mathbf{F}}[\mathbf{x},\mathbf{y}]= \big ( \mathbf{F}_X(\mathbf{x}, \mathbf{F}_Y[\mathbf{x}, \mathbf{y}] ), \mathbf{F}_Y[\mathbf{x},\mathbf{y}]\big ), 
   \]
   which has the same fixed points that $\mathbf{F}$, 
   and we compute the Lipschitz constant of 
   the operator $\wh{\mathbf{F}}$. By hypothesis we have that 
   \begin{align*}
   \|\wh{\mathbf{F}}_X[\mathbf{x}, \mathbf{y}]- \wh{\mathbf{F}}_X[\wt{\mathbf{x}},\wt{\mathbf{y}}] \|_X & \leq \mathbf{c}\|\mathbf{F}_Y[\mathbf{x},\mathbf{y}]- \mathbf{F}_Y[\wt{\mathbf{x}},\wt{\mathbf{y}}] \|_Y + L_1 \|\mathbf{x} - \wt{\mathbf{x}}\|_X \\
   & \leq \mathbf{c}L_3 \|\mathbf{y}-\wt{\mathbf{y}}\|_Y + (L_1+ \mathbf{c}L_2) \|\mathbf{x} - \wt{\mathbf{x}}\|_X.
   \end{align*}
   Then, denoting $L=\max\{ L_1+\mathbf{c}L_2+\mathbf{c}L_3, L_2+L_3\} $
   \[
   \|\wh{\mathbf{F}} [\mathbf{x}, \mathbf{y}]- \wh{\mathbf{F}}[\wt{\mathbf{x}},\wt{\mathbf{y}}] \|_\times \leq L \|(\mathbf{x},\mathbf{y})\|_\times 
   \]
   and hence, the Lipschitz constant of $\wh{\mathbf{F}}$ is $L\leq \frac{1}{3}$ by hypothesis.

   In addition, for $(\mathbf{x}-\mathbf{x}_0,\mathbf{y}-\mathbf{y}_0) \in B(\varrho) \subset X\times Y$, 
   \begin{align*}
   \|\wh{\mathbf{F}} [\mathbf{x}, \mathbf{y}]-(\mathbf{x}_0, \mathbf{y}_0)\|\times \leq &    
   \|\wh{\mathbf{F}} [\mathbf{x}, \mathbf{y}]- \wh{\mathbf{F}} [\mathbf{x}_0, \mathbf{y}_0]\|_\times +\|\wh{\mathbf{F}} [\mathbf{x}_0, \mathbf{y}_0] -(\mathbf{x}_0, \mathbf{y}_0)\|_\times\\ \leq& 
L \|(\mathbf{x}, \mathbf{y})-(\mathbf{x}_0, \mathbf{y}_0)\|_\times +
    \|\wh{\mathbf{F}} [\mathbf{x}_0, \mathbf{y}_0]-  {\mathbf{F}} [\mathbf{x}_0, \mathbf{y}_0] \|_\times \\ & + \|{\mathbf{F}} [\mathbf{x}_0, \mathbf{y}_0] - (\mathbf{x}_0, \mathbf{y}_0) \|_\times       \\ \leq &  L\varrho +
    \|\mathbf{F}_X[\mathbf{x}_0, \mathbf{F}_Y[\mathbf{x}_0, \mathbf{y}_0] ]-\mathbf{F}_X[\mathbf{x}_0,\mathbf{y}_0]\|_X+ \frac{\varrho}{3(\mathbf{c}+1)} \\\leq &  \varrho \left (L+ \frac{1}{3(\mathbf{c}+1)} +\frac{\mathbf{c}}{3(\mathbf{c}+1)} \right )  \leq \varrho.
   \end{align*}
Therefore, the map $\wh{\mathbf{F}}$ is a contraction from $B(\varrho)\subset X\times Y$ to itself and the fixed point theorem implies the existence of an unique fixed point belonging to $B(\varrho) \subset X\times Y$.
   \end{proof}

\section{The invariant manifolds in the outer domain}
\label{sec:outer}

Here we prove Theorem \ref{thm:outer:intro} with a fixed point argument. Then the first step of the proof, done in Section~\ref{sec:fixedpointapproach:outer}, is to reformulate Theorem~\ref{thm:outer:intro} as a fixed point problem. In Section~\ref{sec:contractingmapping:outer} we prove that the fixed point operator is a contraction in a suitable closed ball of a Banach space. 

We prove Theorem~\ref{thm:outer:intro} for the unstable manifold and we obtain the corresponding result for the stable manifold taking advantage of the symmetries of the system. Indeed, by definition~\eqref{def:defdomainouter} of $D^{\out,\star}_{\kappa}$, $x\in D^{\out,\sta}_\kappa$ if and only if $-x \in D^{\out,\uns}_\kappa$ and using that the system is reversible with respect to the involution $\Psi$ in~\eqref{def:involution}, we deduce that, if $(\xi^\uns, \eta^\uns)$ satisfy the conditions in Theorem~\ref{thm:outer:intro}, then
\[
\xi^\sta(x) :=\xi^\uns(-x), \qquad \eta^\sta(x) : = \eta^\uns(-x)
\]
satisfy the corresponding properties.

\subsection{The fixed point approach}\label{sec:fixedpointapproach:outer}

For given $\kappa> 0$ and  $\theta \in \left (0, \arctan \left (\frac{\pi}{3 \alpha}\right )\right )$ we recall definition~\eqref {def:defdomainouter} (see also Figure~\ref{fig:outer}) of the complex domains $D^{\out,\uns}_{\kappa}$. From now on we fix $\theta$ and we do not write explicitly the dependence of the domains on $\theta$. The role of $\kappa$, as we will see, is completely different. 

We introduce, for a real-analytic function $h:D^{\uns,\out}_{\kappa}\rightarrow \CC$, the norm
\begin{align*}
\|h\|_{m,\ell} & = \displaystyle\sup_{x\in D^{\uns,\out}_{\kappa}\cap\{ \Re(x)\leq -2 \alpha \}}|\cosh x|^m |h(x)| \\
& \quad + \displaystyle\sup_{x\in D^{\uns,\out}_{\kappa}\cap\{ \Re(x)\geq -2\alpha\}}|x-x_-|^\ell |x-\ol{x}_-|^\ell |h(x)|
\end{align*}
with $\ell,m\in \mathbb{R}$.  
Then, we define the associated Banach space 
\begin{align*}
\mathcal{E}_{m,\ell}& =\{h: D^{\uns,\out}_{\kappa}\rightarrow \CC \textrm{ is real-analytic with } \|h\|_{m,\ell}<\infty \},\\
\mathcal{DE}_{m,\ell}&=\{h:D^{\uns,\out}_{\kappa}\rightarrow \CC \textrm{ is real-analytic with } \|h\|_{m,\ell}+\|h'\|_{m,\ell+1}<\infty \},
\end{align*}
and the product Banach space 
\[ \EE_\times=\mathcal{DE}_{1,3}\times\mathcal{E}_{1,5},  
\]
with the product norm
\[
\| (h_1,h_2)\|_{\times } = \max\big \{ \|h_1\|_{1,3} + \|h_1'\|_{1,4}, \|h_2\|_{1,5} \big \}.
\] 

We have the following lemma, whose proof is straightforward.

\begin{lemma}\label{propertiesnorm1}
	There exists $M>0$ depending only on $\theta$, such that, for any $\kappa>0$ and $g,h:D^{\out,\uns}_{\kappa} \rightarrow\CC$, it holds
		\begin{enumerate}
		\item If $\ell_2\geq\ell_1\geq 0$, then
		\[\|h\|_{m,\ell_2}\leq M\|h\|_{m,\ell_1}\quad \text{and} \quad \|h\|_{m,\ell_1}\leq \dfrac{M}{(\kappa\eps)^{\ell_2-\ell_1}}\|h\|_{m,\ell_2}.\]
		\item If $\ell_1,\ell_2\geq 0$  and $\|g\|_{m_1,\ell_1},\|h\|_{m_2,\ell_2}<\infty$, then
		$$\|gh\|_{m_1 + m_2,\ell_1+\ell_2}\leq \|g\|_{m_1,\ell_1}\|h\|_{ m_2,\ell_2}.$$	
  \item If $m_2 \geq m_1$, $\ell \geq 0$ and $\| g \|_{m_2,\ell} < \infty$ then
  \[
  \|g\|_{m_1,\ell} \leq M \| g \|_{m_2,\ell}. 
  \]
  \end{enumerate}	
\end{lemma}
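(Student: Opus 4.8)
The plan is to reduce the three estimates to two elementary geometric properties of the outer domain $D^{\out,\uns}_\kappa$ and then to routine bookkeeping of suprema. Write $\|h\|_{m,\ell}=A_m(h)+B_\ell(h)$, where $A_m(h)$ is the \emph{far} part, namely the supremum of $|\cosh x|^m|h(x)|$ over $x\in D^{\out,\uns}_\kappa$ with $\Re x\le -2\alpha$, and $B_\ell(h)$ is the \emph{near} part, the supremum of $|x-x_-|^\ell|x-\ol x_-|^\ell|h(x)|$ over $x\in D^{\out,\uns}_\kappa$ with $\Re x\ge -2\alpha$; note that $A_m$ does not involve $\ell$ and $B_\ell$ does not involve $m$. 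The two geometric facts I would isolate first are: (i) the set $D^{\out,\uns}_\kappa\cap\{\Re x\ge -2\alpha\}$ is contained in a fixed bounded box — its real part lies in $[-2\alpha,\,-\alpha+\pi/\tan\theta]$ and its imaginary part in $[-\pi-\alpha\tan\theta,\,\pi+\alpha\tan\theta]$ — so there is a constant $R=R(\theta,\gamma)$ with $|x-x_-|\le R$ and $|x-\ol x_-|\le R$ there; (ii) the defining inequality of $D^{\out,\uns}_\kappa$ involves only $|\Im x|$, so the domain is invariant under $x\mapsto\ol x$, and it lies in the half-plane below the slanted boundary line through the point of that line nearest to $x_-$, which sits at perpendicular distance $\kappa\eps\cos\theta$ from $x_-$; hence $|x-x_-|\ge\kappa\eps\cos\theta$, and by conjugation symmetry $|x-\ol x_-|\ge\kappa\eps\cos\theta$, for every $x\in D^{\out,\uns}_\kappa$. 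Combining (ii) with the trivial bound $|x-x_-|+|x-\ol x_-|\ge|x_--\ol x_-|=2\pi$ gives $\max\{|x-x_-|,|x-\ol x_-|\}\ge\pi$, and therefore the product bound $|x-x_-|\,|x-\ol x_-|\ge\pi\cos\theta\,\kappa\eps$ on all of $D^{\out,\uns}_\kappa$. I also use $\kappa\eps\le 1$, which holds in the regime relevant to the applications.

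Granting (i) and (ii), the three items follow by inspection. In item (1) only $B_\ell$ varies with $\ell$: passing from $\ell_1$ to $\ell_2\ge\ell_1$ multiplies the near weight by $(|x-x_-|\,|x-\ol x_-|)^{\ell_2-\ell_1}\le R^{2(\ell_2-\ell_1)}$ by (i), giving $\|h\|_{m,\ell_2}\le\max\{1,R^{2(\ell_2-\ell_1)}\}\,\|h\|_{m,\ell_1}$; going the other way divides by that same quantity, which by the product bound is $\ge(\pi\cos\theta\,\kappa\eps)^{\ell_2-\ell_1}$, and since $A_m$ is unchanged and $(\kappa\eps)^{-(\ell_2-\ell_1)}\ge 1$ this gives $\|h\|_{m,\ell_1}\le M(\kappa\eps)^{-(\ell_2-\ell_1)}\|h\|_{m,\ell_2}$. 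In item (2) the weight of $gh$ is, on each of the two regions, the product of the weights of $g$ and $h$, so $A_{m_1+m_2}(gh)\le A_{m_1}(g)A_{m_2}(h)$ and $B_{\ell_1+\ell_2}(gh)\le B_{\ell_1}(g)B_{\ell_2}(h)$; adding these and expanding the product $(A_{m_1}(g)+B_{\ell_1}(g))(A_{m_2}(h)+B_{\ell_2}(h))$, the two mixed terms are non-negative, so $\|gh\|_{m_1+m_2,\ell_1+\ell_2}\le\|g\|_{m_1,\ell_1}\|h\|_{m_2,\ell_2}$, which is precisely the sharp constant $1$. In item (3) only $A_m$ varies with $m$, and on $\{\Re x\le -2\alpha\}$ one has $|\cosh x|^2=\sinh^2(\Re x)+\cos^2(\Im x)\ge\sinh^2(2\alpha)>0$, so for $m_2\ge m_1$, $|\cosh x|^{m_1}\le\max\{1,(\sinh 2\alpha)^{-1}\}^{m_2-m_1}|\cosh x|^{m_2}$, whence $A_{m_1}(g)\le M A_{m_2}(g)$ while $B_\ell$ is untouched.

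I do not expect a genuine obstacle: once (i) and (ii) are recorded the whole statement is arithmetic on suprema. The one point that needs care is the second inequality of item (1) — one must not bound $|x-x_-|^{-(\ell_2-\ell_1)}$ and $|x-\ol x_-|^{-(\ell_2-\ell_1)}$ separately, which would produce a spurious $(\kappa\eps)^{-2(\ell_2-\ell_1)}$; the correct gain of a single power comes from the product bound in (ii), whose proof in turn rests on the fact that $x$ cannot be $\kappa\eps$-close to both $x_-$ and $\ol x_-$ simultaneously because they are $2\pi$ apart. (If one insists that $M$ depend literally on $\theta$ alone, it suffices to recall that in every application of the lemma the exponent differences $\ell_2-\ell_1$ and $m_2-m_1$ range over a fixed finite set.)
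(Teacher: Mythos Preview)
Your proof is correct and matches the paper's approach: the paper simply declares the lemma ``straightforward'' and omits the argument, and what you have written is precisely the elementary domain-geometry computation one would expect. Your parenthetical remark that the constant $M$ unavoidably depends on the (finitely many) exponent differences used in practice is accurate and worth keeping.
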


In this functional setting, Theorem~\ref{thm:outer:intro} (for the unstable solution) is a straightforward consequence of the following result.
\begin{proposition}\label{prop:outer:proof}
Consider the system~\eqref{eq:perturb}, that is
\begin{equation}\label{eq:perturb:proof}
\LL_1\xi =\FF_1 [\xi ,\eta ] ,\qquad  
\LL_2\eta =\FF_2[\xi ,\eta ] 
\end{equation}
with $\LL_1,\LL_2$ and $\FF=(\FF_1,\FF_2)$ defined in~\eqref{def:diffoperators} and~\eqref{eq:outer:RHSoperator} respectively.
There exists $\kappa_0,\eps_0$ and a constant $M_1$ such that for $\eps \in (0,\eps_0)$ and $\kappa >\kappa_0$, system~\eqref{eq:perturb:proof} has solutions $(\xi^\uns,\eta^\uns) \in \mathcal{E}_\times$ satisfying $\|(\xi^\uns,\eta^\uns)\|_\times \leq M_1\eps^2$ and $\partial_x \xi^\uns(0)=0$.
\end{proposition}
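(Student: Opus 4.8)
\textbf{Proof proposal for Proposition \ref{prop:outer:proof}.}

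The plan is to reformulate \eqref{eq:perturb:proof} as a fixed point equation in the Banach space $\EE_\times = \mathcal{DE}_{1,3}\times\mathcal{E}_{1,5}$ and apply Theorem~\ref{thm:fixedpoint}. First I would invert the linear operators $\LL_1$ and $\LL_2$ with the correct boundary behaviour. For $\LL_1 = -\pa_x^2 + 1 - 2u_0 - 6\ga u_0^2$, note that $u_0'$ is a homogeneous solution (differentiate \eqref{eq:mainsecondorder}), decaying as $\Re x\to-\infty$; a second, linearly independent solution can be built by reduction of order, and one constructs a Green's function $\GG_1$ selecting the solution that decays as $\Re x \to -\infty$ and satisfies $\pa_x(\GG_1[h])(0)=0$ — this is where the normalization $\pa_x\xi^\uns(0)=0$ enters. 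For $\LL_2 = \pa_x^2 + \eps^{-2}$, the inverse is the explicit oscillatory convolution against $\eps\sin(\eps^{-1}(x-s))$ (or the complex exponential kernel $e^{\pm i(x-s)/\eps}$), and one must choose the integration contour/limits inside $D^{\out,\uns}_\kappa$ so that the resulting $\eta$ decays as $\Re x\to-\infty$ and so that $\pa_x\LL_2^{-1}[h](0)$ is consistent; the key quantitative point is that $\LL_2^{-1}$ gains a factor $\eps^2$ in the relevant norms when integrating along horizontal rays, which is what ultimately produces the $\OO(\eps^2)$ size of the solution.

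Next I would set $(\mathbf x_0,\mathbf y_0)=(0,0)$ and write the fixed point operator $\mathbf F=(\mathbf F_X,\mathbf F_Y)$ with $\mathbf F_X[\xi,\eta]=\GG_1\FF_1[\xi,\eta]$ acting on $X=\mathcal{DE}_{1,3}$ and $\mathbf F_Y[\xi,\eta]=\LL_2^{-1}\FF_2[\xi,\eta]$ acting on $Y=\mathcal{E}_{1,5}$. One checks: (i) the "seed" bounds $\|\mathbf F_X[0,0]\|_{1,3}+\|(\mathbf F_X[0,0])'\|_{1,4}$ and $\|\mathbf F_Y[0,0]\|_{1,5}$ are $\OO(\eps^2)$ — here $\FF_1[0,0]=0$ so $\mathbf F_X[0,0]=0$, while $\FF_2[0,0]=f'(u_0)(u_0-f(u_0))+f''(u_0)(u_0')^2$ is an explicit function with the right pole orders at $x_\pm,\ol x_\pm$ (double poles of $u_0$ give the claimed orders), and $\LL_2^{-1}$ contributes the $\eps^2$; (ii) the Lipschitz estimates \eqref{cond1:thm:fixedpoint}: since $\FF_1$ contains $-\eta$ with coefficient one, the constant $\mathbf c$ in $\|\mathbf F_X[\xi,\eta]-\mathbf F_X[\wt\xi,\wt\eta]\|_X\le \mathbf c\|\eta-\wt\eta\|_Y+L_1\|\xi-\wt\xi\|_X$ is $\OO(1)$ (coming from $\GG_1$ applied to $-\Delta\eta$, which maps $\mathcal{E}_{1,5}\to\mathcal{DE}_{1,3}$), while $L_1=\OO(\eps^2)$ because the remaining terms $(1+6\ga u_0)\xi^2+2\ga\xi^3$ are quadratic in $\xi\in B(\varrho)$ with $\varrho=\OO(\eps^2)$; similarly $\FF_2$ is at least linear in $(\xi,\eta)$ near $(0,0)$ after subtracting $\FF_2[0,0]$, and $\LL_2^{-1}$ brings $\eps^2$, so both $L_2$ and $L_3$ are $\OO(\eps^2)$ (with $L_3$ also small because of the convolution gain). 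Then $L_1+\mathbf c(L_2+L_3)$ and $L_2+L_3$ are both $\le\frac13$ for $\eps$ small, Theorem~\ref{thm:fixedpoint} applies on $B(\varrho)$ with $\varrho\sim M_1\eps^2$, and the unique fixed point is the desired $(\xi^\uns,\eta^\uns)$. Real-analyticity is inherited from the kernels and the real-analyticity of $u_0$ on $D^{\out,\uns}_\kappa$ (Lemma~\ref{lemma:propertiesu0}).

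The main obstacle I expect is the careful construction and estimation of the two Green's operators on the tilted, $\eps$-dependent domain $D^{\out,\uns}_\kappa$ — in particular establishing that $\GG_1:\mathcal{E}_{1,5}\to\mathcal{DE}_{1,3}$ and $\LL_2^{-1}:\mathcal{E}_{m,\ell}\to\mathcal{E}_{m,\ell}$ with the claimed norm bounds (the weighted $|x-x_\pm|^{-\ell}$ weights near the singularities, and the $e^{-|\Re x|}$ decay away from them) hold uniformly in $\kappa\ge\kappa_0$ and $\eps\in(0,\eps_0)$. The delicate points are: choosing integration paths for $\LL_2^{-1}$ that stay in the domain and do not lose the $\eps^2$ gain (the oscillatory kernel $e^{\pm i(x-s)/\eps}$ must be integrated in a direction where its modulus does not grow — this constrains the slope $\tan\theta$, hence the hypothesis $\theta<\arctan(\pi/(3\alpha))$), and handling the matching of the two norm regimes ($|\Re x|\lessgtr 2\alpha$) across the boundary. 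Away from the singularities the estimates are the standard ones for the soliton linearization; near $x_\pm$ one uses that $u_0$ has a simple pole (cf. \eqref{def:cpm1}) so $\LL_1$ has a regular-singular structure $\sim -\pa_x^2 + \OO(|x-x_\pm|^{-1})$ and $\GG_1$ behaves like the inverse of $-\pa_x^2$ up to controlled corrections, which is what fixes the exponents $3,4,5$. Verifying the normalization $\pa_x\xi^\uns(0)=0$ is then automatic from the construction of $\GG_1$ provided $0\in D^{\out,\uns}_\kappa$, which holds for the given range of $\theta$.
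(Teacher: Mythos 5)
Your overall strategy is exactly the paper's: build right inverses $\GG_1^\out,\GG_2^\out$ of $\LL_1,\LL_2$ with the correct decay at $\Re x\to-\infty$ and the normalization $\pa_x\GG_1^\out[h](0)=0$, set up the fixed point operator $\FF^\out=(\GG_1^\out\circ\FF_1,\GG_2^\out\circ\FF_2)$ on $\EE_\times$, verify the seed bound $\|\FF^\out[0,0]\|_\times=\OO(\eps^2)$, verify the structured Lipschitz estimates of Theorem~\ref{thm:fixedpoint}, and conclude.  So structurally the proposal is correct and matches the paper.

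There is, however, one genuine error in your Lipschitz scaling near the singularities.  You claim $L_1,L_2,L_3=\OO(\eps^2)$, arguing that the nonlinear terms are quadratic in $\xi$ with $\|\xi\|_{1,3}=\OO(\eps^2)$, and you then invoke ``$\eps$ small enough'' to close the contraction.  But this is not what the weighted norms give.  The relevant smallness comes from the $\kappa$-dependence, not from $\eps$.  Concretely, to estimate, say, $\|u_0\xi_\lambda\,(\xi-\wt\xi)\|_{1,5}$ one first gets a bound in $\EE_{3,7}$ of size $\eps^2\|\xi-\wt\xi\|_{1,3}$, and then moving from $\ell=7$ to $\ell=5$ costs a factor $(\kappa\eps)^{-2}$ by Lemma~\ref{propertiesnorm1}, since $|x-x_\pm|\geq\kappa\eps$ in $D_\kappa^{\out,\uns}$.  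The $\eps^2$ in the ball radius is therefore cancelled against this $(\kappa\eps)^{-2}$, and the resulting Lipschitz constant is $\OO(1/\kappa^2)$, \emph{uniform} in $\eps$ but \emph{not} vanishing as $\eps\to 0$ for fixed $\kappa$.  The contraction condition $L_1+\mathbf c(L_2+L_3),\;L_2+L_3\leq\tfrac13$ is then satisfied by taking $\kappa>\kappa_0$ large, not by taking $\eps$ small.  Your ``for $\eps$ small'' would not suffice with $\kappa$ fixed, because $(\kappa\eps)^{-k}\to\infty$ like $\eps^{-k}$.  This is precisely why the statement has the hypothesis $\kappa>\kappa_0$ and why the paper tracks the $\kappa$-dependence explicitly in Lemma~\ref{lemma:Fhat}.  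Once you replace your $\OO(\eps^2)$ by $\OO(1/\kappa^2)$ for $L_1,L_2,L_3$ (keeping $\mathbf c=\OO(1)$ from the $-\eta$ term in $\FF_1$), the rest of your argument goes through verbatim.
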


\begin{remark} 
By definition of the Banach space $\mathcal{E}_\times$, since $(\xi^\uns,\eta^\uns) \in \mathcal{E}_\times$, it satisfies the boundary conditions 
\begin{equation}\label{eq:perturb:conditions:inf}
\lim_{\Re x \to -\infty} (\xi^{\uns}(x),\eta^{\uns}(x)) = (0,0). 
\end{equation}
Therefore, by Cauchy's theorem, it is also true for $x$ on $\mathbb{R}$ that 
\[
\lim_{x \to -\infty} (\partial_x\xi^{\uns}(x), \partial_x \eta^{\uns}(x)) = (0,0). 
\]
Then, 
\[
\lim_{x\to -\infty} \wt G(\xi^\uns(x), \partial_x \xi^\uns(x), \eta^\uns (x), \partial_x \eta^{\uns} (x),x) = \wt G(0,0,0,0)=0,
\]
with $\wt G$ the first integral defined in~\eqref{defGfirstintegral}, and therefore, for $x\in D_{\kappa}^{\out,\uns}$, \[\wt G(\xi^\uns(x), \partial_x \xi^\uns(x), \eta^\uns(x), \partial_x \eta^{\uns}(x),x)=0.\] 
In addition, for $x\in D^{\out,\uns}_\kappa$, we have $|x-x_+|,|x-\ol{x_+}|\geq M$ for some constant $M>0$ and hence the estimates in Theorem~\ref{thm:outer:intro} in the domain $D_\kappa^{\out,\uns} \cap \{\Re x \geq -2 \alpha\}$ hold trivially.
\end{remark}

The remaining part of this section is devoted to prove Proposition~\ref{prop:outer:proof}. In order to do so, we seek a fixed point formulation of~\eqref{eq:perturb:proof} in a suitable ball of $\mathcal{E}_\times$. Therefore, the next step in our analysis is to look for suitable right inverses of the operators $\LL_1$ and $\LL_2$. 

We start with $\LL_1$. The homogeneous equation $\LL_1\xi=0$ has two linearly independent solutions 
$\zeta_1$ and $\zeta_2$, where the odd function $\zeta_1(x) = u_0'(x)$ is a solution due to the translation symmetry and the even function $\zeta_2(x)$ is uniquely defined by the normalization 
\begin{equation}\label{wronskian}
\zeta_1(x) \zeta_2'(x) - \zeta_1'(x) \zeta_2(x) = 1, \quad x \in \R,
\end{equation}
which follows from the Wronskian identity.  The following lemma gives the second solution $\zeta_2$ and it is proved in  Appendix~\ref{app:g1:outer}.

\begin{lemma}\label{lem:zeta2:welldefined} 
For a given $\kappa>0$, 
there exists a unique real analytic even function $\zeta_2:D^{\out,\uns}_{\kappa} \to \mathbb{C}$ satisfying~\eqref{wronskian}. In addition, $\zeta_2(0)\neq 0$ and   
$\|\zeta_2 \|_{-1,2} + \|\zeta_2'\|_{-1,3} \leq M$ for some constant independent of $\kappa\geq 1$.
\end{lemma}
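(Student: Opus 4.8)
The plan is to construct $\zeta_2$ by reduction of order from the explicit solution $\zeta_1:=u_0'$ of $\LL_1\zeta_1=0$ (which solves the linearized equation because \eqref{eq:mainsecondorder} is autonomous). The construction rests on two properties of the domain $D^{\out,\uns}_{\kappa}$, both consequences of Lemma~\ref{lemma:propertiesu0} and of the restriction $\theta<\arctan(\pi/(3\alpha))$ built into \eqref{def:defdomainouter}: the function $u_0$ is analytic on $D^{\out,\uns}_{\kappa}$, since all its singularities (the nearest being $\pm\alpha+i\pi$) lie outside, at distance $\gtrsim\kappa\eps$; and $\zeta_1=u_0'$ has in $D^{\out,\uns}_{\kappa}$ exactly one zero, at $x=0$, which is simple, because $u_0'$ vanishes precisely at the points $ik\pi$, $k\in\Z$, only $x=0$ of which belongs to $D^{\out,\uns}_{\kappa}$, and $u_0''(0)\neq 0$. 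Writing $\zeta_1(x)=x\,\omega(x)$ with $\omega$ analytic, even, non-vanishing on $D^{\out,\uns}_{\kappa}$ and $\omega(0)=u_0''(0)$, I would define
\[
\zeta_2(x):=\zeta_1(x)\,P(x),\qquad P(x):=\int_0^x\Bigl(\frac{1}{\zeta_1(s)^2}-\frac{1}{\omega(0)^2 s^2}\Bigr)\,ds-\frac{1}{\omega(0)^2\,x}.
\]

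The algebraic properties follow quickly. Since $\zeta_1(s)^2=\omega(0)^2 s^2+\OO(s^4)$ near $s=0$, the integrand of $P$ extends analytically across $s=0$; as $D^{\out,\uns}_{\kappa}$ is convex, hence simply connected, and $\zeta_1$ vanishes nowhere else, $P$ is analytic on $D^{\out,\uns}_{\kappa}\setminus\{0\}$ with only a simple pole at $0$, and $P'=\zeta_1^{-2}$. Then $\zeta_2=\zeta_1 P$ is analytic on all of $D^{\out,\uns}_{\kappa}$ (the simple zero of $\zeta_1$ at $0$ cancels the simple pole of $P$), real on $\R$, and $\zeta_2(0)=-1/\omega(0)=-1/u_0''(0)\neq 0$. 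From $P'=\zeta_1^{-2}$ one gets $\zeta_1\zeta_2'-\zeta_1'\zeta_2=\zeta_1^2 P'=1$, i.e.\ \eqref{wronskian}, and $\zeta_2''=\zeta_1''P$, hence $\LL_1\zeta_2=P\,\LL_1\zeta_1=0$. Finally $\zeta_1^{-2}$ and $s^{-2}$ are even, so $P$ is odd, so $\zeta_2=\zeta_1 P$ is even.

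Next comes the bound $\|\zeta_2\|_{-1,2}+\|\zeta_2'\|_{-1,3}\leq M$ with $M$ uniform in $\kappa\geq 1$, done region by region. On $\{\Re x\leq -2\alpha\}\cap D^{\out,\uns}_{\kappa}$, where $u_0,u_0',u_0''=\OO(e^{-|\Re x|})$ and so $|\zeta_1(s)|$ is of order $e^{-|\Re s|}$, estimating directly in the integral formula ($P$ dominated by its endpoint, $|P(x)|\lesssim e^{2|\Re x|}$) gives $|\zeta_2(x)|,|\zeta_2'(x)|\lesssim e^{|\Re x|}\lesssim|\cosh x|$, which is what the $\|\cdot\|_{-1,\cdot}$ norms require there. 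On the bounded region $\{\Re x\geq -2\alpha\}\cap D^{\out,\uns}_{\kappa}$, $\zeta_1=u_0'$ has double poles at $x_-$ and $\ol{x}_-$, both at distance $\gtrsim\kappa\eps$, and is otherwise bounded away from $0$ except near $x=0$, where $\zeta_2=\zeta_1 P$ is analytic; since $\zeta_1^{-2}$ vanishes to fourth order at $x_-,\ol{x}_-$, the integrand of $P$, hence $P$ itself, is analytic and bounded near $x_-$ and $\ol{x}_-$. Therefore, near $x_-$ (and symmetrically near $\ol{x}_-$), $\zeta_1=\OO(|x-x_-|^{-2})$ and $\zeta_1'=\OO(|x-x_-|^{-3})$ give $\zeta_2=\zeta_1 P=\OO(|x-x_-|^{-2})$ and $\zeta_2'=\zeta_1'P+\zeta_1^{-1}=\OO(|x-x_-|^{-3})$, exactly the rates that keep $|x-x_-|^2|x-\ol{x}_-|^2|\zeta_2|$ and $|x-x_-|^3|x-\ol{x}_-|^3|\zeta_2'|$ bounded, with constants uniform in $\kappa$ because the relevant pole coefficients of $P$ (and hence of $\zeta_2$) depend on $\kappa,\eps$ only through $\OO(\kappa\eps)$ corrections of fixed numbers. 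I expect this last point — uniformity in $\kappa$ of the behavior of $\zeta_2$ both near the singularities $x_-,\ol{x}_-$ and as $\Re x\to-\infty$ — to be the main technical obstacle, the rest being essentially bookkeeping.

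For uniqueness, suppose $\tilde\zeta_2$ is real-analytic and even on $D^{\out,\uns}_{\kappa}$ and satisfies \eqref{wronskian}. On a real interval avoiding $x=0$, \eqref{wronskian} forces $\tilde\zeta_2=\zeta_1(P+c)$ for a constant $c$, and the computation above shows that any such function solves $\LL_1\tilde\zeta_2=0$; hence $\LL_1\tilde\zeta_2=0$ on $D^{\out,\uns}_{\kappa}$ by analytic continuation, and subtracting the two Wronskian relations gives $\zeta_1(\tilde\zeta_2-\zeta_2)'-\zeta_1'(\tilde\zeta_2-\zeta_2)\equiv 0$, so $\tilde\zeta_2-\zeta_2$ is a constant multiple of $\zeta_1$. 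Being a difference of even functions it is even, while $\zeta_1$ is odd and not identically zero, so that constant is $0$ and $\tilde\zeta_2=\zeta_2$.
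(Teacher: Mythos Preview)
Your proposal is correct and follows essentially the same reduction-of-order approach as the paper: both construct $\zeta_2=\zeta_1 P$ with $P$ an odd antiderivative of $\zeta_1^{-2}$, use that $\zeta_1=u_0'$ has only the simple zero at $0$ in $D^{\out,\uns}_\kappa$ to see $\zeta_2$ is analytic and even with $\zeta_2(0)\neq 0$, and then bound $\zeta_2$ by noting that $P$ stays bounded near $x_-,\ol{x}_-$ (since $\zeta_1^{-2}$ vanishes to fourth order there) and that $|P|\lesssim e^{2|\Re x|}$ as $\Re x\to-\infty$. Your treatment is in fact slightly more complete than the paper's, which does not spell out the uniqueness argument you give via the parity of $\zeta_1$.
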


\begin{remark}
    We notice that $\zeta_1 = u'_0 \in D\mathcal{E}_{1,2}$. 
\end{remark}

The classical theory of second-order differential equations implies that we can construct right inverses of the operator $\mathcal{L}_1$ as
\begin{equation}\label{invG1:general}
\mathcal{L}_1^{-1}[h] (x) = \zeta_1(x) \left [ C_1+ \int_{x_1}^x\zeta_2(s)h(s)ds \right ] +  \zeta_2(x) \left [ C_2-\int_{x_2}^x\zeta_1(s)h(s) ds
\right ]
\end{equation}
for any given $x_1,x_2,C_1,C_2 \in \mathbb{R}$. However, we are interested in solutions $(\xi^\uns,\eta^\uns)$ satisfying the boundary conditions $\partial_x \xi (0)=0$ and the decay behavior ~\eqref{eq:perturb:conditions:inf}.
Therefore, we impose the same conditions on the solutions of $\mathcal{L}_1 \xi = h$ and we easily obtain that the right inverse is formally given by  
\begin{equation}\label{def:intoperators-1}
\GG_1^\out[h] (x) = \zeta_1(x)\int_0^x\zeta_2(s)h(s)ds - \zeta_2(x)\int_{-\infty}^x\zeta_1(s)h(s) ds
\end{equation}
where the (complex) integration path is, in the first integral, the segment between $0$ and $x$  and, in the second integral, corresponds to the path parameterized by $s= x+ t$, with $t\in (-\infty,0]$.  

In addition, it is straightforward to check that a right inverse of 
the operator $\mathcal{L}_2$ can be formally expressed as
\begin{equation}\label{def:intoperators-2}
\GG_2^\out[h] =
-\frac{i\eps}{2} e^{i\eps^{-1}x}\int_{-\infty}^x  e^{-i\eps^{-1}s} h(s)ds+\frac{i\eps}{2} e^{-i\eps^{-1}x}\int_{-\infty}^x  e^{i\eps^{-1}s} h(s)ds,
\end{equation} 
where the integration path is the horizontal line $s= x+ t$, $t\in (-\infty,0]$. 

The following lemma describes how the operators $\GG_1^\out$ and $\GG_2^\out$ act on functions belonging to $D\mathcal{E}_{1,3}$ and $\mathcal{E}_{1,5}$ respectively.  Its proof follows the same lines as the ones of Proposition 4.3 in~\cite{GomideGSZ22} and we sketch the main steps of the proof in Appendix~\ref{app:g1:outer}.

\begin{lemma}\label{prop_operators}
	The operators $\GG_1^\out$ and $\GG_2^\out$ introduced in \eqref{def:intoperators-1} and \eqref{def:intoperators-2} have the following properties. 
	\begin{enumerate}
		\item $\mathcal{G}_i^\out\circ\mathcal{L}_i=\mathcal{L}_i\circ\mathcal{G}_i^\out=\mathrm{Id}$. 
		\item\label{item2prop_operators} For any $m>1$ and $\ell\ge 5$, there exists a constant $M>0$ independent of $\eps$ and $\kappa$ such that, for every $h \in\mathcal{E}_{m,\ell}$,
		\begin{equation*}\label{desig}
		\left\|\mathcal{G}_1^\out [h]\right\|_{1,\ell-2} \leq M\|h \|_{m,\ell}\qquad \text{and}\qquad \left\|\pa_x\mathcal{G}_1^\out [h]\right\|_{1,\ell-1} \leq M\|h \|_{m,\ell} 
		\end{equation*}
and 
  \[
\pa_x \mathcal{G}_1^\out [h](0)=0.
  \]
  In addition, if $h $ is real analytic, then $\GG_1[h]$ is also real analytic. 
		\item For any $m\geq 1$, $\ell\ge0$, there exists $M>0$ such that for $h \in\mathcal{E}_{m,\ell}$,
\begin{equation*}
\label{desig2}
		\left\|\GG_2^\out [h]\right\|_{m,\ell}\leq M\eps^2\|h \|_{m,\ell}
		\end{equation*}
Moreover, when $h$ is real analytic, $\GG_2^\out[h]$ is also real analytic. 
		\end{enumerate}
		\end{lemma}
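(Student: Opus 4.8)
\textbf{Plan of proof of Lemma~\ref{prop_operators}.}
The proof is a direct verification of each item, following the scheme of Proposition 4.3 in~\cite{GomideGSZ22}. The identities in item~(1) are immediate: $\LL_1$ and $\LL_2$ are the second-order operators annihilating, respectively, $\{\zeta_1,\zeta_2\}$ and $\{e^{i\eps^{-1}x},e^{-i\eps^{-1}x}\}$, and $\GG_i^\out$ is built from the classical variation-of-constants formula with the Wronskian normalizations \eqref{wronskian} and $e^{i\eps^{-1}x}\,\partial_x(e^{-i\eps^{-1}x})-e^{-i\eps^{-1}x}\,\partial_x(e^{i\eps^{-1}x})=-2i\eps^{-1}$; differentiating \eqref{def:intoperators-1}--\eqref{def:intoperators-2} twice and using these relations gives $\LL_i\circ\GG_i^\out=\Id$, while $\GG_i^\out\circ\LL_i=\Id$ follows from the decay $h\in\mathcal E_{m,\ell}$ that kills the boundary terms at $-\infty$. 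The fact that $\pa_x\GG_1^\out[h](0)=0$ is read directly from \eqref{def:intoperators-1}: the term $\zeta_1(x)\int_0^x\zeta_2 h$ has derivative $\zeta_1'(x)\int_0^x\zeta_2 h+\zeta_1(x)\zeta_2(x)h(x)$, and at $x=0$ the first summand vanishes because $\zeta_1=u_0'$ is odd so $\int_0^0=0$ and $\zeta_1'(0)$ multiplies $0$; for the second summand $\zeta_1(0)=u_0'(0)=0$; for the $\zeta_2$-term, $\zeta_2$ is even so $\zeta_2'(0)=0$ and the remaining piece is $\zeta_2(0)\zeta_1(0)\cdot(\text{sign})\,h(0)=0$ again since $\zeta_1(0)=0$. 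Real-analyticity is preserved because the integration paths are the real segment $[0,x]$ and the horizontal half-line, both symmetric under conjugation, and $\zeta_1,\zeta_2$ are real-analytic on $D^{\out,\uns}_\kappa$.

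\textbf{The weighted estimates, item~(3) first.} For $\GG_2^\out$ the gain of $\eps^2$ is the heart of the matter and is elementary: writing $h$ with $\|h\|_{m,\ell}$ finite, along the horizontal path $s=x+t$, $t\le 0$, one has $|e^{\pm i\eps^{-1}(x-s)}|=e^{\mp \eps^{-1}\Im(x-s)}$, and since the path is horizontal $\Im(x-s)=0$, so the exponentials are bounded by $1$. Then $|\GG_2^\out[h](x)|\le \eps\int_{-\infty}^{0}|h(x+t)|\,dt$, and the weight $|\cosh(x+t)|^{-m}$ (resp.\ $|x+t-x_-|^{-\ell}|x+t-\ol x_-|^{-\ell}$) is, along the horizontal ray, integrable in $t$ up to a constant times the weight at $t=0$, provided $m>0$ (resp.\ $\ell>1$, which holds as $\ell\ge 0$ is improved to $\ell\ge 5$ where needed but here only $\ell\ge 0$ together with the exponential decay of $|\cosh|^{-m}$ in the left region carries the integral; on the polynomial region one uses $\ell$ large). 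This yields $\|\GG_2^\out[h]\|_{m,\ell}\le M\eps\cdot\eps\|h\|_{m,\ell}$? — here one must be careful: a naive bound gives only one power of $\eps$. The extra power comes from integrating by parts once in $s$, transferring a derivative onto $h$ is not allowed (we do not control $h'$ in $\mathcal E_{m,\ell}$), so instead one splits the path at $|t|\sim\eps$ and $|t|\gtrsim\eps$: on $|t|\le\eps$ the integrand is $O(\eps)$ in length times the weight, on $|t|\ge \eps$ one integrates by parts using $e^{-i\eps^{-1}s}=i\eps\,\partial_s e^{-i\eps^{-1}s}$ and bounds the boundary and the $\partial_s(\text{weight})$ terms, each of which carries a factor $\eps$; the weight derivative loses one power in the denominator, which is harmless since $\ell\ge 0$ can be taken with room. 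Collecting, $\|\GG_2^\out[h]\|_{m,\ell}\le M\eps^2\|h\|_{m,\ell}$.

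\textbf{Item~(2), the estimate for $\GG_1^\out$.} One estimates the two summands of \eqref{def:intoperators-1} separately using Lemma~\ref{lem:zeta2:welldefined} (so $\|\zeta_2\|_{-1,2}+\|\zeta_2'\|_{-1,3}\le M$) and $\zeta_1=u_0'\in D\mathcal E_{1,2}$. For $x$ in the polynomial region $\{\Re x\ge -2\alpha\}$ one bounds $|\zeta_1(x)|\lesssim |x-x_-|^{-2}|x-\ol x_-|^{-2}$ and $|\int_0^x\zeta_2(s)h(s)\,ds|\lesssim \|h\|_{m,\ell}\int_0^x|s-x_-|^{-2-\ell}|s-\ol x_-|^{-2-\ell}\,ds$, which for $\ell\ge 5$ behaves like $|x-x_-|^{-1-\ell}|x-\ol x_-|^{-1-\ell}$ near the singularities and like a constant away from them; multiplying back by $\zeta_1$ gives the weight $|x-x_-|^{-(\ell-2)}|x-\ol x_-|^{-(\ell-2)}$ after absorbing the extra decay, hence the claimed bound $\|\GG_1^\out[h]\|_{1,\ell-2}\lesssim\|h\|_{m,\ell}$; the $\zeta_2\int_{-\infty}^x\zeta_1 h$ term is handled the same way, now using the exponential decay of $\zeta_1$ in the left region to make the half-line integral converge and picking up the weight of $\zeta_2$. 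In the exponential region $\{\Re x\le -2\alpha\}$ one uses $|\zeta_1(x)|,|\zeta_2(x)|\lesssim e^{-|\Re x|}$ (times $|\cosh x|$-type weights) and $|h(s)|\lesssim\|h\|_{m,\ell}|\cosh s|^{-m}$ with $m>1$ so that $\int_{-\infty}^x|\cosh s|^{-m}|\zeta_1(s)|\,ds$ converges and reproduces $|\cosh x|^{-1}$; the segment integral from $0$ is controlled similarly after splitting at $\Re x=-2\alpha$ and using Lemma~\ref{propertiesnorm1} to convert between weights. The derivative estimate $\|\pa_x\GG_1^\out[h]\|_{1,\ell-1}$ follows by differentiating \eqref{def:intoperators-1}: the terms with $\zeta_1',\zeta_2'$ are estimated exactly as above with one fewer power of decay (whence $\ell-1$ instead of $\ell-2$), and the two terms $\pm\zeta_1(x)\zeta_2(x)h(x)$ cancel by the Wronskian identity \eqref{wronskian}, leaving no contribution of size $|h(x)|$.

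\textbf{Main obstacle.} The one genuinely delicate point is the $\eps^2$ gain in item~(3): the convolution with the oscillatory kernel of $\GG_2^\out$ gives only a single power of $\eps$ by brute force, and the second power must be extracted by the integration-by-parts-and-path-splitting argument sketched above, taking care that the derivatives falling on the polynomial/exponential weights do not destroy convergence near the singularities $x_\pm,\ol x_\pm$ — this is where the precise exponents $\ell\ge 5$, $m>1$ in the hypotheses are used and where the estimates of Lemma~\ref{lem:zeta2:welldefined} on $\zeta_2$ are needed with the stated weights. Everything else is bookkeeping with the norms $\|\cdot\|_{m,\ell}$ and the elementary calculus of Lemma~\ref{propertiesnorm1}.
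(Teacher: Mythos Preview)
Your outline has two genuine gaps, one in item~(2) and one in item~(3).

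\textbf{Item (2), the bound on $\GG_1^\out$.} You estimate the two pieces of \eqref{def:intoperators-1} separately in the basis $\zeta_1,\zeta_2$. This cannot give the stated bound near $x_-$: both $\zeta_1=u_0'$ and $\zeta_2$ have \emph{poles} of order $2$ at $x_-$ (that is what $\zeta_1\in\mathcal E_{1,2}$, $\zeta_2\in\mathcal E_{-1,2}$ means). Your own computation gives $\big|\zeta_1(x)\int_0^x\zeta_2 h\big|\lesssim |x-x_-|^{-2}\cdot|x-x_-|^{-\ell-1}=|x-x_-|^{-\ell-3}$, five powers worse than the target $|x-x_-|^{-(\ell-2)}$; there is no ``extra decay'' to absorb. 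The two pieces of $\GG_1^\out$ do cancel to this order, but capturing that cancellation term by term requires a \emph{change of fundamental system}. The paper introduces (Lemma~\ref{lemma:G1:appendix}) new solutions
\[
\zeta_\pm(x)=\zeta_1(x)\int_{x_\mp}^x\frac{ds}{\zeta_1^2(s)},\qquad
\zeta_+(x)\sim\frac{(x-x_-)^3}{(x-\ol{x_-})^2},\quad
\zeta_-(x)\sim\frac{(x-\ol{x_-})^3}{(x-x_-)^2},
\]
so that in the rewritten operator
\[
\GG_1^\out[h](x)=\frac{1}{W(\zeta_+,\zeta_-)}\Big(\zeta_+(x)\!\int_0^x\!\zeta_- h-\zeta_-(x)\!\int_0^x\!\zeta_+ h\Big)-\zeta_2(x)\!\int_{-\infty}^0\!\zeta_1 h,
\]
each integrand carries a \emph{zero} of order $3$ at one singularity, compensating the pole of order $2$ in the prefactor at the other; then a term-by-term estimate (Lemma~\ref{boundingx0:appendix}) does yield $\|\GG_1^\out[h]\|_{1,\ell-2}\lesssim\|h\|_{m,\ell}$. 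Without this rotation of basis the argument fails.

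\textbf{Item (3), the $\eps^2$ gain for $\GG_2^\out$.} Your path-splitting/integration-by-parts scheme does not work as written: integrating $e^{\mp i\eps^{-1}s}$ by parts throws $\partial_s$ onto $h$, and $h'$ is not controlled in $\mathcal E_{m,\ell}$; replacing $h$ by its pointwise bound before integrating by parts destroys the oscillation that is supposed to produce the extra $\eps$. The correct mechanism is \emph{contour deformation}: the domain $D^{\out,\uns}_\kappa$ is a wedge of half-angle $\theta$ opening to the left, so for each of the two integrals one tilts the horizontal ray by a small angle $0<\vartheta<\theta$ (downward for the $e^{-i\eps^{-1}s}$ piece, upward for the $e^{+i\eps^{-1}s}$ piece) so that the tilted path stays in the domain and the exponential acquires a factor $e^{-\eps^{-1}t\sin\vartheta}$. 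Integrating this gives one $\eps$; the explicit prefactor $\eps/2$ gives the second. This is the content of the reference the paper cites (Lemma~5.5 in \cite{GOS10}) and is exactly how the analogous bound for $\wt\GG_2^\aux$ is proved in Appendix~\ref{app:g1:outer}.
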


In order to prove Proposition~\ref{prop:outer:proof}, we use Lemma \ref{prop_operators} and look for solutions of~\eqref{eq:perturb:proof} belonging to $\mathcal{E}_\times$, satisfying $\partial_x \xi(0)=0$ as fixed points of the operator 
\begin{equation}\label{def:operatorFhat}
\FF^\out = \big (\GG_1^\out \circ\FF_1,\GG_2^\out \circ\FF_2 \big )
\end{equation}
where $\FF_i$ are the operators defined in~\eqref{eq:outer:RHSoperator}.

\subsection{The contraction mapping}\label{sec:contractingmapping:outer}

We prove Proposition~\ref{prop:outer:proof} using Theorem~\ref{thm:fixedpoint}. To do so, we study $\FF^\out[0,0]$ (Lemma~\ref{lemma:Fhat0}) and the Lipschitz constant of $\FF^\out$ in a suitable ball $B(R\eps^2) \subset \mathcal{E}_\times$ (Lemma~\ref{lemma:Fhat}).
  
\begin{lemma}\label{lemma:Fhat0}
There exists a constant $b_1>0$ independent of $\eps$ and $\kappa$ such that 
\[
 \|\FF^\out[0,0]\|_\times\leq b_1\eps^2.
\]
\end{lemma}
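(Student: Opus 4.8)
The plan is to estimate $\FF^\out[0,0] = \big(\GG_1^\out\circ\FF_1[0,0],\, \GG_2^\out\circ\FF_2[0,0]\big)$ by first computing the two components $\FF_1[0,0]$ and $\FF_2[0,0]$ explicitly, then bounding them in the appropriate weighted norms, and finally applying the mapping properties of $\GG_1^\out$ and $\GG_2^\out$ from Lemma~\ref{prop_operators}. From the definitions in~\eqref{eq:outer:RHSoperator}, setting $\xi=\eta=0$ gives $\FF_1[0,0] = 0$, so the first component $\GG_1^\out[\FF_1[0,0]]$ vanishes identically and contributes nothing. The only work is the second component: $\FF_2[0,0] = f'(u_0)(u_0 - f(u_0)) + f''(u_0)(u_0')^2$, where $f(u)=u^2+2\ga u^3$. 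This is a fixed, explicit real-analytic function of $x$ built from $u_0$ and its derivative.

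Next I would estimate $\|\FF_2[0,0]\|_{1,5}$. Since $u_0$ solves the unperturbed equation~\eqref{eq:mainsecondorder}, one has $u_0'' = u_0 - u_0^2 - 2\ga u_0^3 = u_0 - f(u_0)$, so $\FF_2[0,0] = f'(u_0)u_0'' + f''(u_0)(u_0')^2 = \partial_x^2\big(f(u_0)\big)$ — a clean way to see that $\FF_2[0,0]$ is an exact second derivative of $f(u_0) = u_0^2 + 2\ga u_0^3$. Using Lemma~\ref{lemma:propertiesu0}, $u_0$ has simple poles at $x_\pm$ (and their translates) with residue $c_{\pm 1}$, so $f(u_0)\sim \ga\,(2c_{\pm1}^3)(x-x_\pm)^{-3}$ has a triple pole there and its second derivative has a pole of order $5$ at each $x_\pm$. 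Combined with the exponential decay $|u_0(x)|\lesssim e^{-|\Re x|}$ as $\Re x \to -\infty$ (so $|\FF_2[0,0](x)|\lesssim |\cosh x|^{-2}$ in that regime, which is bounded by $|\cosh x|^{-1}$ times a decaying factor), one checks directly that $\|\FF_2[0,0]\|_{1,5} \leq b$ for some constant $b>0$ independent of $\eps$ and $\kappa$; here the weight $|x-x_-|^5|x-\ol x_-|^5$ in the $\|\cdot\|_{1,5}$ norm exactly matches the pole order, and the factors $|x-x_+|,|x-\ol x_+|$ are bounded below on $D^{\out,\uns}_\kappa$.

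Finally, I would apply part~(3) of Lemma~\ref{prop_operators} with $m=1$, $\ell=5$: this gives $\|\GG_2^\out[\FF_2[0,0]]\|_{1,5} \leq M\eps^2\|\FF_2[0,0]\|_{1,5} \leq Mb\,\eps^2$. Since $\GG_1^\out[\FF_1[0,0]] = \GG_1^\out[0] = 0$, its $\mathcal{D}\mathcal{E}_{1,3}$-norm (together with the derivative part) is zero. By the definition of the product norm $\|(h_1,h_2)\|_\times = \max\{\|h_1\|_{1,3}+\|h_1'\|_{1,4},\,\|h_2\|_{1,5}\}$, we conclude $\|\FF^\out[0,0]\|_\times \leq Mb\,\eps^2 =: b_1\eps^2$. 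The only mild subtlety — the ``main obstacle'' such as it is — is bookkeeping the pole orders of $\FF_2[0,0]$ at $x_\pm$ against the weight in $\|\cdot\|_{1,5}$ and checking that the subleading (bounded) terms in the Laurent expansion do not spoil the estimate; but once one writes $\FF_2[0,0] = \partial_x^2(f(u_0))$ this is immediate from Lemma~\ref{lemma:propertiesu0} and the classical asymptotics of $u_0$.
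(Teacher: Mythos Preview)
Your proof is correct and follows essentially the same approach as the paper: note that $\FF_1[0,0]=0$, bound $\|\FF_2[0,0]\|_{1,5}$ by a constant via the pole structure of $u_0$, and then apply part~(3) of Lemma~\ref{prop_operators} to gain the factor $\eps^2$. Your extra observation that $\FF_2[0,0]=\partial_x^2\big(f(u_0)\big)$ is a clean way to read off the pole order~$5$ at $x_-,\ol{x_-}$; the paper instead just uses $u_0\in\mathcal{E}_{1,1}$ and the product rule in Lemma~\ref{propertiesnorm1} to land in $\mathcal{E}_{2,5}\subset\mathcal{E}_{1,5}$.
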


\begin{proof} 
	From definition~\eqref{eq:outer:RHSoperator} of $\mathcal{F}$, 
\[
\FF[0,0] = (0,f'(u_0) (u_0 - f(u_0)) + f''(u_0) (u'_0)^2).
\]
Since $u_0 \in \mathcal{E}_{1,1}$, see~\eqref{def:soliton} and Lemma~\ref{lemma:propertiesu0}, and $f(u)= u^2 + 2 \gamma u^3 $, $\FF_2[0,0] \in \mathcal{E}_{2,5} \subset \mathcal{E}_{1,5}$ with $\|\FF_2[0,0]\|_{1,5} \lesssim 1$ and from Lemma~\ref{prop_operators} the result holds true. 
\end{proof}

\begin{lemma}\label{lemma:Fhat}
There exists $C_1>0$ such that for all $R>0$,  
if $(\xi,\eta),(\wt\xi,\wt\eta)\in B(R\eps^2)\subset \EE_\times$, then 
the operator $\FF^\out$ in \eqref{def:operatorFhat} satisfies
\[
\begin{split}
\left\|\FF_1^\out[\xi,\eta]-\FF_1^\out(\wt\xi,\wt\eta)\right\|_{1,3}&\leq C_1 \|\eta-\wt\eta\|_{1,5}+\frac{C}{\kk^2}\|(\xi,\eta)-(\wt\xi,\wt\eta)\|_\times\\
\left\|\pa_x\FF_1^\out[\xi,\eta]-\pa_x\FF_1^\out[\wt\xi,\wt\eta]\right\|_{1,4}&\leq C_1 \|\eta-\wt\eta\|_{1,5}+\frac{C}{\kk^2}\|(\xi,\eta)-(\wt\xi,\wt\eta)\|_\times \\
\left\|\FF_2^\out[\xi,\eta]-\FF_2^\out[\wt\xi,\wt\eta]\right\|_{1,5}&\leq \frac{C}{{\kk^2}}\|(\xi,\eta)-(\wt\xi,\wt\eta)\|_\times .
\end{split} 
\]
for some constant $C=C(R)>0$ independent of $\eps$ and $\kk$.  
\end{lemma}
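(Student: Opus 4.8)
The plan is to estimate separately the two nonlinearities $\FF_1$ and $\FF_2$ from \eqref{eq:outer:RHSoperator}, then apply the mapping properties of $\GG_1^\out$ and $\GG_2^\out$ from Lemma~\ref{prop_operators}. First I would treat $\FF_1[\xi,\eta]=-\eta+(1+6\ga u_0)\xi^2+2\ga\xi^3$. The term $-\eta$ is linear and contributes $\|\eta-\wt\eta\|_{1,5}$ after noting $\GG_1^\out:\EE_{1,5}\to D\EE_{1,3}$ is bounded with a constant $C_1$ independent of $\eps,\kappa$ (this is exactly item~\eqref{item2prop_operators} with $m=1$, $\ell=5$); this is the source of the constant $\mathbf c$ in Theorem~\ref{thm:fixedpoint}. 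For the quadratic and cubic terms I would write, e.g., $\xi^2-\wt\xi^2=(\xi+\wt\xi)(\xi-\wt\xi)$ and use the algebra property (Lemma~\ref{propertiesnorm1}(2)) together with $u_0\in\EE_{1,1}$: since $\xi,\wt\xi\in B(R\eps^2)$ have $\|\xi\|_{1,3}\lesssim R\eps^2$, the product $(1+6\ga u_0)(\xi+\wt\xi)(\xi-\wt\xi)$ lands in $\EE_{m,\ell}$ with $m\geq 3$, $\ell=6$ (two factors of weight $3$ plus the weight from $u_0$), and $\GG_1^\out$ maps it into $D\EE_{1,4}$ with a gain. The point is that its norm is bounded by $C(R)\eps^2\|\xi-\wt\xi\|_{1,3}\leq C(R)\eps^2 R\cdot\tfrac{1}{(\kappa\eps)^{\ell'}}\cdots$; more simply, one extra factor $\|\xi+\wt\xi\|_{1,3}\lesssim R\eps^2$ is small, and together with Lemma~\ref{propertiesnorm1}(1), which converts surplus decay $\ell=6$ back to $\ell=5$ at the cost of $(\kappa\eps)^{-1}$, one gets the factor $C/\kappa^2$ (one power of $\kappa^{-1}$ from each such conversion, or from $\eps^2/(\kappa\eps)^2$). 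The derivative estimate in $\|\cdot\|_{1,4}$ follows identically from the second bound in item~\eqref{item2prop_operators} of Lemma~\ref{prop_operators}.

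For $\FF_2[\xi,\eta]=f'(u_0+\xi)(u_0+\xi+\eta-f(u_0+\xi))+f''(u_0+\xi)(u_0'+\xi')^2$, I would Taylor-expand around $(\xi,\eta)=(0,0)$ using $f(u)=u^2+2\ga u^3$, so $f'(u_0+\xi)-f'(u_0)$, $f''(u_0+\xi)-f''(u_0)$ are $\OO(\xi)$ with coefficients polynomial in $u_0$. Writing the Lipschitz difference $\FF_2[\xi,\eta]-\FF_2[\wt\xi,\wt\eta]$ as a sum of terms each carrying at least one factor among $\xi-\wt\xi,\ \xi'-\wt\xi',\ \eta-\wt\eta$ and at least one factor among $\xi,\wt\xi,\xi',\wt\xi',\eta,\wt\eta$ (all of size $\lesssim R\eps^2$ in the appropriate weighted norms, using $\xi\in D\EE_{1,3}$ so $\xi'\in\EE_{1,4}$ and $\eta\in\EE_{1,5}$), together with $u_0\in\EE_{1,1}$, $u_0'\in\EE_{1,2}$, one sees each such product lies in $\EE_{m,\ell}$ with $\ell\geq 5$ and has norm $\lesssim C(R)\eps^2\|(\xi,\eta)-(\wt\xi,\wt\eta)\|_\times$. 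Applying $\GG_2^\out$, which by Lemma~\ref{prop_operators}(3) is bounded on $\EE_{m,\ell}$ with an \emph{extra} factor $\eps^2$, gives a bound $\lesssim C(R)\eps^4\|(\xi,\eta)-(\wt\xi,\wt\eta)\|_\times\leq \tfrac{C}{\kappa^2}\|\cdots\|_\times$ once $\eps$ is small — indeed any power of $\eps$ absorbs $\kappa^{-2}=(c|\log\eps|)^{-2}$ for $\eps$ small, but one can also keep it cleaner by extracting $\kappa^{-1}$ factors from the norm conversions in Lemma~\ref{propertiesnorm1}(1) when the raw product has surplus decay.

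The bookkeeping obstacle, and the step I expect to demand the most care, is checking the weight indices: one must verify that every monomial appearing in the expansion of $\FF_2[\xi,\eta]-\FF_2[\wt\xi,\wt\eta]$ genuinely lies in some $\EE_{m,\ell}$ with $\ell\geq 5$ (so that Lemma~\ref{prop_operators}(3), stated for $\ell\geq 0$, and more importantly the surplus-decay conversion giving $\kappa^{-2}$, both apply), keeping track that $\xi'$ only has weight $\ell=4$, not $5$, and that $u_0'$ has weight $2$, not $1$. A particularly delicate term is $f''(u_0+\xi)(u_0'+\xi')^2$, whose difference contains $(u_0'+\xi')(\xi'-\wt\xi')$ with total weight only $4+4=8$ before multiplying by $f''(u_0+\xi)$; one uses $f''(u)=2+12\ga u$, so $f''(u_0+\xi)\in\EE_{0,0}$ is merely bounded, giving $\ell=8\geq 5$, fine — but one must resist the temptation to assign $u_0'$ the same weight as $u_0$. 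Once the index arithmetic is done, the constants $C_1$ (from the linear $-\eta$ term, uniform in $\eps,\kappa$) and $C(R)$ (collecting the polynomial-in-$u_0$ coefficients and the combinatorial factors from the finitely many monomials, uniform in $\eps,\kappa$) are produced exactly as claimed, and the proof concludes.
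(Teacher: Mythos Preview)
Your approach is essentially the same as the paper's: both estimate the Lipschitz difference of $\FF_1,\FF_2$ in weighted norms via the algebra property (Lemma~\ref{propertiesnorm1}) and then apply the mapping bounds for $\GG_1^\out,\GG_2^\out$ (Lemma~\ref{prop_operators}). The paper packages your explicit factorizations $\xi^2-\wt\xi^2=(\xi+\wt\xi)(\xi-\wt\xi)$, etc., into a mean value integral $\int_0^1 D\FF_i[\zeta_\lambda]\,d\lambda$ and bounds the partial derivatives directly (e.g.\ $\|\partial_\xi\FF_1[\zeta_\lambda]\|_{1,2}\lesssim\kappa^{-2}$), but the arithmetic is identical.

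One point in your $\FF_2$ discussion needs correction. You assert that every monomial in $\FF_2[\xi,\eta]-\FF_2[\wt\xi,\wt\eta]$ carries an extra small factor from $\xi,\wt\xi,\xi',\wt\xi',\eta,\wt\eta$, giving norm $\lesssim C(R)\eps^2\|(\xi,\eta)-(\wt\xi,\wt\eta)\|_\times$ before applying $\GG_2^\out$. This is false for the piece $f'(u_0+\wt\xi)(\eta-\wt\eta)$: the coefficient $f'(u_0+\wt\xi)=f'(u_0)+\OO(\wt\xi)$ has leading part $f'(u_0)\in\EE_{1,2}$ of order one, not $\OO(\eps^2)$. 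What actually happens is that this product sits in $\EE_{2,7}$ with norm $\lesssim\|\eta-\wt\eta\|_{1,5}$ (no $\eps^2$); the surplus decay $\ell=7$ converts to $\ell=5$ at cost $(\kappa\eps)^{-2}$, and the $\eps^2$ from $\GG_2^\out$ then yields the $\kappa^{-2}$. This is precisely the mechanism you mention as an alternative at the end, and it is the one the paper uses throughout for $\FF_2$: it bounds $\|\partial_\eta\mathcal M\|_{2,0},\|\partial_\xi\mathcal M\|_{2,2},\|\partial_{\xi'}\mathcal M\|_{2,1}\lesssim(\kappa\eps)^{-2}$ directly, then multiplies by the difference and applies $\GG_2^\out$. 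So your ``cleaner'' route is in fact the necessary one here, not merely an alternative.
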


\begin{proof}
Let $(\xi,\eta), (\wt \xi , \wt \eta) \in B(R \eps^2)$. 
	We define $\zeta_\lambda = (\xi_\lambda, \eta_\lambda)=(\wt \xi , \wt \eta) + \lambda \big (( \xi,\eta) - (\wt \xi , \wt \eta)\big )$. 
	Then, using the mean value theorem
	\[
	\FF_1 [\xi , \eta] (x) - \FF_1 [\wt \xi ,\wt \eta] (x) = 
	\int_{0}^1  D \FF_1 [\zeta_\lambda](x)  \big ( \xi(x)- \wt \xi(x),\eta(x)- \wt \eta(x) \big )^\top \,d\lambda
	\]
	with 
	\[
	D \FF_1[\zeta_\lambda] (x) = \big (\partial_\xi \FF_1[\zeta_\lambda](x) , \partial_{\eta} \FF_1 [\zeta_\lambda](x) \big )=\big (12 \gamma u_0(x) \xi_\lambda(x) + 2 \xi_\lambda(x) + 6 \gamma \xi_\lambda^2(x) , -1 \big ) 
	\]
	and satisfying
	\[
	\| \partial_\xi \FF_1[\zeta_\lambda] \|_{1,2} \lesssim \frac{\eps^2}{(\eps \kappa)^2} + \frac{\eps^2}{\eps \kappa} + \frac{\eps^4}{(\eps \kappa)^4} \lesssim \frac{1}{\kappa^2},
	\]
	where we have used Lemma~\ref{propertiesnorm1}, that $\kappa$ is big enough and that $\eps$ is small enough. Then by the second item in Lemma~\ref{prop_operators} and recalling that 
	$\FF_1^\out = \GG_1^\out \circ \FF_1$
	\begin{align*}
	\|\FF_1^\out[\xi,\eta]- \FF_1^\out[\wt \xi, \wt \eta]\|_{1,3} & \leq M \| \FF_1 [\xi,\eta]- \FF_1[\wt \xi, \wt \eta] \|_{1,5}\\ 
	&\leq  M \|\wt \eta - \eta \|_{1,5} +  \frac{C}{\kappa^2} \|\xi - \wt \xi\|_{1,3}  
	\end{align*}
	where $M$ is the constant provided in item~\ref{item2prop_operators} of Lemma~\ref{prop_operators}, which is independent on $R$.  
	In addition, using again item~\ref{item2prop_operators} in Lemma~\ref{prop_operators}
	\[
	\|\partial_x \FF_1^\out[\xi,\eta]- \partial_x  \FF_1^\out[\wt \xi, \wt \eta]\|_{1,4}  \leq M\|\wt \eta - \eta \|_{1,5} +  \frac{C}{\kappa^2} \|\xi - \wt \xi\|_{1,3}.
	\]
	With respect to the second component, we 
	define
	\[
	\mathcal{M}[\xi,\eta,\xi'] = f'(u_0+\xi)\left(u_0+\xi+\eta-f(u_0+\xi)\right)+f''(u_0+\xi)(u_0'+\xi')^2 
	\]
	which satisfies $\mathcal{M}[\xi,\eta,\xi'] = \FF_2[\xi,\eta]$. We note that 
	$\| u_0 + \xi_\lambda\|_{1,1} ,\|u_0'+\xi_\lambda'\|_{1,2}\lesssim 1$. Then, computing 
	\[
	\partial_{\xi'} \mathcal{M}[\xi_\lambda,\eta_\lambda,\xi_\lambda'] =2 f''(u_0+\xi_\lambda) (u_0'+ \xi_{\lambda}') ,
	\]
	we have that 
	\[
	\|\partial_{\xi'} \mathcal{M}[\xi_\lambda,\eta_\lambda,\xi_\lambda'] \|_{2,1} \lesssim \frac{1}{(\kappa \eps)^{2}}.
	\]
	In addition
	\[
	\| \partial_{\eta} \mathcal{M}[\xi_\lambda,\eta_\lambda,\xi_\lambda'] \|_{2,0}  \lesssim \frac{1}{(\kappa \eps)^{2}},\qquad \| \partial_{\xi} \mathcal{M}[\xi_\lambda,\eta_\lambda,\xi_\lambda'] \|_{2,2}  \lesssim \frac{1}{(\kappa \eps)^{2}}.
	\]
	Then, using the mean's value theorem as Lemma~\ref{propertiesnorm1}, we obtain
	\[
	\| \FF_2 [\xi,\eta] -  \FF_2[\wt \xi,\wt \eta] \|_{1,5} \lesssim 
	\frac{1}{( \kappa \eps)^2} \| (\xi, \eta) - (\wt \xi, \wt \eta)\|_\times,
	\]
from which the last bound in Lemma~\ref{lemma:Fhat} follows by recalling that $\FF^{\out}_2 = \mathcal{G}_2^{\out} \circ \FF_2$ and applying the third item of Lemma~\ref{prop_operators}. 
\end{proof}

\begin{proof}[End of the proof of Proposition~\ref{prop:outer:proof}]
We apply now Theorem~\ref{thm:fixedpoint} to the operator $\mathcal{F}^\out$. Indeed, using Lemmas~\ref{lemma:Fhat0} and~\ref{lemma:Fhat}, we take (with the notation in Theorem~\ref{thm:fixedpoint}) $(\mathbf{x}_0,\mathbf{y}_0)=(0,0)$, 
$\mathbf{c}=C_1$, 
\[
\varrho=3(C_1+1) b_1 \eps^2\geq 3(C_1+1) \|\mathcal{F}^\out[0,0]\|_\times 
\] 
and $L_1=L_2=L_3= \frac{C}{\kappa^2}$.
Hence the conditions~\eqref{cond1:thm:fixedpoint} and~\eqref{cond:thm:fixepoin} in Theorem~\ref{thm:fixedpoint} are trivially satisfied taking $\kappa$ big enough. Therefore, $\mathcal{F}^\out$ has a unique fixed point which belongs to $B(3(C_1+1) b_1\eps^2)$. This completes the proof of Proposition~\ref{prop:outer:proof}.
\end{proof}

\section{An auxiliary solution}\label{sec:aux}
 
Here we prove Theorem~\ref{thm:aux:intro} by constructing a real-analytic solution $(\xi^\aux,\eta^\aux)$ of equation~\eqref{eq:perturb} defined in the domain $D^\aux_\kk$, see~\eqref{def:domainaux} and Figure~\ref{fig:aux}. As we have done in Section~\ref{sec:outer}, we fix $\theta\in\left (0, \mathrm{arctan}\left (\frac{\pi}{\alpha} \right) \right )$ and we omit the dependence on it along the proof.  We will run the fixed point argument similar to that of Section~\ref{sec:outer}. Note however that we have to modify some arguments in a suitable way so that 
\begin{itemize}
    \item The integrals defining the right inverse of the linear operators $\mathcal{L}_1,\mathcal{L}_2$ have to be over paths within the new domain $D^\aux_\kk$, see~\eqref{def:intoperators-1} and~\eqref{def:intoperators-2}.
    \item We have to ensure that the solutions belongs to the $0$ level curve of the first integral $\wt G$ given by~\eqref{defGfirstintegral}.
\end{itemize}  

\subsection{The fixed point approach}
We first define the Banach space where the fixed point argument is carried out. 
Given $\kappa > 0$, we define for a real-analytic function $h:D^{\aux}_{\kappa}\rightarrow \CC$ the norm
\begin{equation}
\label{normcosh:aux}
\|h\|_{\ell}  = \displaystyle\sup_{x\in D^{\aux}_{\kappa}}|(x-x_-)^\ell (x-\ol{x}_-)^\ell (x-x_+)^\ell (x-\ol{x}_+)^\ell h(x)|,
\end{equation}
with the associated Banach spaces 
\begin{equation}\label{def:Banach:aux}
\begin{aligned}
\mathcal{Y}_{\ell}&=\{h:D^{\aux}_{\kappa}\rightarrow \CC \textrm{ is real-analytic with } \|h\|_{\ell}<\infty \},\\
\mathcal{DY}^1_{\ell}&=\{h:D^{\aux}_{\kappa}\rightarrow \CC \textrm{ is real-analytic with } \|h\|_{\ell}+\|h'\|_{\ell+1}<\infty \},\\
\mathcal{DY}^2_{\ell}&=\{h:D^{\aux}_{\kappa}\rightarrow \CC \textrm{ is real-analytic with } \|h\|_{\ell}+\eps\|h'\|_{\ell}<\infty \}.
\end{aligned}
\end{equation}
Then, we define the product Banach space
\[
\YY_\times=\mathcal{DY}^1_{3}\times \mathcal{DY}^2_{5}
\]
with the norm
\[
\|(\xi,\eta)\|_\times= \max\big \{\|\xi\|_{3}+\|\xi'\|_{4}, \|\eta\|_{5}+\eps\|\eta'\|_{5}\big \}.
\]
The counterpart of Lemma~\ref{propertiesnorm1} for the Banach spaces $\mathcal{Y}_\ell$ is the following result whose proof is left to the reader.

\begin{lemma}\label{propertiesnorm1:aux}
	There exists $M>0$, such that, for any $\kappa>0$ and $g,h:D^{\aux}_{\kappa} \rightarrow\CC$, it holds that
		\begin{enumerate}
		\item If $\ell_2\geq\ell_1\geq 0$, then
		\[\|h\|_{\ell_2}\leq M\|h\|_{\ell_1}\quad \text{and} \quad \|h\|_{\ell_1}\leq \dfrac{M}{(\kappa\eps)^{\ell_2-\ell_1}}\|h\|_{\ell_2}.\]
		\item If $\ell_1,\ell_2\geq 0$  and $\|g\|_{\ell_1},\|h\|_{\ell_2}<\infty$, then
		$$\|gh\|_{\ell_1+\ell_2}\leq \|g\|_{\ell_1}\|h\|_{\ell_2}.$$	
  \end{enumerate}	
\end{lemma}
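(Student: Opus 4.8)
The plan is to reduce both claims to two elementary facts about the geometry of the lozenge $D^{\aux}_\kk$. I would first set $w(x)=(x-x_-)(x-\ol{x}_-)(x-x_+)(x-\ol{x}_+)$, so that $\|h\|_\ell=\sup_{x\in D^{\aux}_\kk}|w(x)|^\ell\,|h(x)|$ by \eqref{normcosh:aux}. With this notation claim (2) becomes pointwise and requires no geometry: for every $x\in D^{\aux}_\kk$ one has $|w(x)|^{\ell_1+\ell_2}|g(x)h(x)|=\bigl(|w(x)|^{\ell_1}|g(x)|\bigr)\bigl(|w(x)|^{\ell_2}|h(x)|\bigr)\le\|g\|_{\ell_1}\|h\|_{\ell_2}$, and taking the supremum gives $\|gh\|_{\ell_1+\ell_2}\le\|g\|_{\ell_1}\|h\|_{\ell_2}$ (in fact with $M=1$).

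For claim (1) I would deduce the two inequalities from matching upper and lower bounds on $|w|$ over $D^{\aux}_\kk$. For the upper bound I would use that $\pi-\kappa\eps\le\pi$ forces $D^{\aux}_\kk$ to lie inside the fixed bounded lozenge obtained from \eqref{def:domainaux} by replacing $\pi-\kappa\eps$ with $\pi$ (the intersection of two cone-like regions, one opening to the right of $x_-$ and the other to the left of $x_+$), which depends only on $\theta$ and $\ga$; hence there is $c_0=c_0(\theta,\ga)>0$ with $|w(x)|\le c_0$ on $D^{\aux}_\kk$, uniformly in $\eps$ and $\kappa$. Then $|w|^{\ell_2}|h|=|w|^{\ell_2-\ell_1}\,|w|^{\ell_1}|h|\le\max\{1,c_0\}^{\ell_2-\ell_1}\,|w|^{\ell_1}|h|$, and taking suprema yields $\|h\|_{\ell_2}\le M\|h\|_{\ell_1}$.

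The heart of the matter is the lower bound $|w(x)|\gtrsim\kappa\eps$ on $D^{\aux}_\kk$. I would argue that each singularity $s\in\{x_\pm,\ol{x}_\pm\}$ is separated from the convex set $D^{\aux}_\kk$ by one of its defining lines, at distance exactly $\kappa\eps\cos\theta$: near $x_-=-\al+\pi i$ only the first constraint in \eqref{def:domainaux} is active (the second reads $2\al\tan\theta\ge\kappa\eps$ there, true for $\eps$ small), and in the shifted variable $\zeta=x-x_-$ it becomes $\tan\theta\,\Rp\zeta-\Ip\zeta\ge\kappa\eps$, a half-plane whose boundary line lies at distance $\kappa\eps\cos\theta$ from $\zeta=0$; the other three singularities are treated identically using the appropriate constraint together with the symmetries $x\mapsto-x$ and $x\mapsto\bar x$. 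Hence $|x-s|\ge\kappa\eps\cos\theta$ for all $x\in D^{\aux}_\kk$ and all four $s$. Letting $d_0:=2\min\{\al,\pi\}$ be the minimal distance among the four points $x_\pm,\ol{x}_\pm$, for $\eps$ small at most one of the four factors of $w(x)$ can be smaller than $d_0/2$, so $|w(x)|\ge(\kappa\eps\cos\theta)(d_0/2)^3\ge c_1\kappa\eps$ in that case and $|w(x)|\ge(d_0/2)^4\ge c_1\kappa\eps$ otherwise, with $c_1=c_1(\theta,\ga)>0$. Then $|w|^{\ell_1}|h|=|w|^{-(\ell_2-\ell_1)}|w|^{\ell_2}|h|\le(c_1\kappa\eps)^{-(\ell_2-\ell_1)}\|h\|_{\ell_2}$ gives $\|h\|_{\ell_1}\le M(\kappa\eps)^{-(\ell_2-\ell_1)}\|h\|_{\ell_2}$.

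Everything here is routine; the only step needing a little care is checking, singularity by singularity, that the stated defining line of $D^{\aux}_\kk$ is the one that realizes the separation distance $\kappa\eps\cos\theta$ — equivalently, that the complementary constraint is inactive in a neighborhood of that singularity. As in Lemma~\ref{propertiesnorm1}, the constant $M$ is allowed to depend on the finite set of exponents actually appearing in \eqref{def:Banach:aux}, so that $\max\{1,c_0\}^{\ell_2-\ell_1}$ and $c_1^{-(\ell_2-\ell_1)}$ can be absorbed into it.
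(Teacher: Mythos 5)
The paper leaves this proof to the reader, so there is no paper proof to compare against. Your proof is correct and follows the natural route: claim (2) is the pointwise algebra property, the first half of (1) follows from a uniform upper bound on $|w|$ over a fixed superdomain, and the second half reduces to the lower bound $|w(x)|\gtrsim\kappa\eps$. The latter is the only nontrivial point, and your argument is sound: $D^{\aux}_{\kappa}$ is convex and lies in a half-plane whose boundary line has distance $\kappa\eps\cos\theta$ from the excluded singularity, so $|x-s|\geq\kappa\eps\cos\theta$ for all $x\in D^{\aux}_{\kappa}$; the triangle-inequality observation that at most one factor of $w$ can be below $d_0/2$ then yields the lower bound (note this observation needs no smallness of $\eps$ — smallness is only used to absorb $(d_0/2)^4\gtrsim\kappa\eps$ when the domain is nonempty). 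The concession that $M$ may depend on the finite list of exponents used in \eqref{def:Banach:aux} is the correct reading, consistent with how Lemma~\ref{propertiesnorm1} is applied.
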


We rephrase Theorem~\ref{thm:aux:intro} as the following proposition.

\begin{proposition}\label{prop:aux:proof}
 There exist $\kappa_0, \eps_0>0$ and $M_2>0$, such that, if $\eps\in (0, \eps_0)$ and $\kappa> \kappa_0$, the system~\eqref{eq:perturb} has  
 real-analytic solutions $(\xi^\aux,\eta^\aux) \in \mathcal{Y}_\times$ satisfying
\[
\wt G(\xi^\aux,\partial_x \xi^\aux,\eta^\aux,\partial_x \eta^\aux,x)=0, \qquad \partial_x \xi^\aux (0)=0, 
\]
where $\wt G$ is the first integral introduced in \eqref{defGfirstintegral}, and $\|(\xi^\aux,\eta^\aux)\|_\times \leq M_2 \eps^2$. In addition, $\xi^\aux(x)=\xi^\aux(-x)$ and $\eta^\aux(x)= \eta^\aux(-x)$. 
\end{proposition}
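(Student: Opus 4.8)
The plan is to follow the fixed point scheme used in Section~\ref{sec:outer} for the outer manifolds (Proposition~\ref{prop:outer:proof}), now in the Banach space $\YY_\times$ attached to the bounded lozenge $D^\aux_\kk$, with the two modifications flagged above. First I would rewrite~\eqref{eq:perturb} as a fixed point equation $(\xi,\eta)=\FF^\aux[\xi,\eta]$, where $\FF^\aux$ is built from right inverses $\GG_1^\aux,\GG_2^\aux$ of $\LL_1,\LL_2$ whose integration paths lie entirely inside $D^\aux_\kk$, and then invoke Theorem~\ref{thm:fixedpoint}. The evenness of the solution, the normalization $\pa_x\xi^\aux(0)=0$, and the constraint $\wt G\equiv0$ will all be built into the operator.

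For $\LL_1$, whose homogeneous solutions are the odd function $\zeta_1=u_0'$ and the even function $\zeta_2$ from the analogue of Lemma~\ref{lem:zeta2:welldefined} on $D^\aux_\kk$ (the proof carries over, since $\Re x$ is bounded on the lozenge so the $|\cosh x|$--weights are harmless there), I take
\[
\GG_1^\aux[h](x)=\zeta_1(x)\int_0^x\zeta_2(s)h(s)\,ds-\zeta_2(x)\int_0^x\zeta_1(s)h(s)\,ds ,
\]
the integral over the segment $[0,x]\subset D^\aux_\kk$ ($D^\aux_\kk$ is convex and contains $0$). This vanishes to second order at $x=0$, so $\pa_x\GG_1^\aux[h](0)=0$, and if $h$ is even then $\GG_1^\aux[h]$ is even. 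For $\LL_2=\pa_x^2+\eps^{-2}$ I factor it as $(\pa_x-i\eps^{-1})(\pa_x+i\eps^{-1})$ and invert by
\[
\GG_2^\aux[h](x)=-\frac{i\eps}{2}\,e^{i\eps^{-1}x}\!\int_{p_1}^{x}e^{-i\eps^{-1}s}h(s)\,ds+\frac{i\eps}{2}\,e^{-i\eps^{-1}x}\!\int_{p_2}^{x}e^{i\eps^{-1}s}h(s)\,ds ,
\]
where $p_1$ is the lowest vertex of $D^\aux_\kk$ (so $p_1\in i\R$, $\Im p_1<0$), $p_2=-p_1$ is the highest vertex, the first path is taken with non-decreasing imaginary part and the second with non-increasing imaginary part (both exist inside $D^\aux_\kk$: move vertically along $i\R$ and then horizontally towards $x$), and the two path families are reflections of one another through the origin. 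With these choices $|e^{i\eps^{-1}(x-s)}|\le1$ on the first path and $|e^{-i\eps^{-1}(x-s)}|\le1$ on the second, $\GG_2^\aux[h]$ is even when $h$ is even, and two integrations by parts give $\GG_2^\aux[h]=\tfrac{\eps^2}{2}h+(\text{lower order in }\eps)$ plus boundary terms at $p_1,p_2$ which are harmless because $p_1,p_2$ stay at distance $\OO(1)$ from the four singularities $x_\pm,\ol x_\pm$ and the weight of $\YY_\times$ is bounded on the lozenge. The upshot is the exact analogue of Lemma~\ref{prop_operators}: $\GG_i^\aux\circ\LL_i=\Id$, $\|\GG_1^\aux[h]\|_{\ell-2}+\|\pa_x\GG_1^\aux[h]\|_{\ell-1}\lesssim\|h\|_{\ell}$ with $\pa_x\GG_1^\aux[h](0)=0$, and $\|\GG_2^\aux[h]\|_{\ell}+\eps\|\pa_x\GG_2^\aux[h]\|_{\ell}\lesssim\eps^2\|h\|_{\ell}$, uniformly in $\eps$ and $\kappa$ (using Lemma~\ref{propertiesnorm1:aux} to control the weights along the paths, exactly as in the outer estimates, where the cancellation between the two terms of $\GG_1^\aux$ preserves the two-power gain of $\LL_1^{-1}$ near the singularities).

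To force $\wt G\equiv 0$ I exploit the one free parameter in $\GG_1^\aux$. A fixed point of $(\GG_1^\aux\circ\FF_1,\GG_2^\aux\circ\FF_2)$ solves~\eqref{eq:perturb} exactly, so $\wt G$ is constant along it and equal to its value at $x=0$; by evenness $\xi'(0)=\eta'(0)=u_0'(0)=0$, and~\eqref{defGfirstintegral} then gives
\[
\wt G\big|_{x=0}=g_0(\xi(0))-\frac{\eps^2}{2}\big(\eta(0)+u_0(0)+\xi(0)-f(u_0(0)+\xi(0))\big)^2 ,\qquad g_0(\xi)=-\tfrac12u_0(0)^2+u_0(0)\xi+\tfrac12\xi^2+F(u_0(0)+\xi),
\]
where the unperturbed energy identity $\tfrac12(u_0')^2-\tfrac12u_0^2+F(u_0)\equiv0$ yields $g_0(0)=0$, while $g_0'(0)=u_0(0)+f(u_0(0))>0$ (since $u_0''(0)=u_0(0)-f(u_0(0))<0$ at the maximum). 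Because $\GG_1^\aux[h](0)=0$, replacing the first component by $\GG_1^\aux[\FF_1[\xi,\eta]]+\beta\,\zeta_2$ makes $\xi(0)=\beta\,\zeta_2(0)$ with $\zeta_2(0)\neq0$, so the implicit function theorem produces a unique small root $\beta=\beta[\xi,\eta]=\OO(\eps^2)$ of the scalar equation $\wt G|_{x=0}=0$, depending on $(\xi,\eta)$ only through $\eta(0)$ (and $\xi(0)$) and Lipschitz with constant $\OO(\eps^2)$. I then set
\[
\FF^\aux[\xi,\eta]=\Big(\GG_1^\aux[\FF_1[\xi,\eta]]+\beta[\xi,\eta]\,\zeta_2,\ \GG_2^\aux[\FF_2[\xi,\eta]]\Big),
\]
so that any fixed point solves~\eqref{eq:perturb}, lies on $\{\wt G=0\}$, satisfies $\pa_x\xi(0)=0$, and is even (the operator maps even pairs to even pairs).

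It remains to close the fixed point, which runs as in Lemmas~\ref{lemma:Fhat0}--\ref{lemma:Fhat}: since $\FF_2[0,0]=f'(u_0)(u_0-f(u_0))+f''(u_0)(u_0')^2\in\mathcal{Y}_5$ with $\|\FF_2[0,0]\|_5\lesssim1$, the $\eps^2$--gain of $\GG_2^\aux$ and $\beta[0,0]\,\zeta_2=\OO(\eps^2)$ give $\|\FF^\aux[0,0]\|_\times\lesssim\eps^2$; and the Lipschitz estimates for $\FF_1,\FF_2$ on $B(R\eps^2)$ combined with the smoothing bounds above and Lemma~\ref{propertiesnorm1:aux} yield a Lipschitz constant $\OO(\kappa^{-2})$ for $\FF^\aux$ (with an extra $\OO(\eps^2)$ contribution from $\beta$). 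Theorem~\ref{thm:fixedpoint} then gives, for $\kappa$ large and $\eps$ small, a unique fixed point $(\xi^\aux,\eta^\aux)\in B(M_2\eps^2)\subset\YY_\times$; its evenness follows from uniqueness, since the even fixed point must coincide with the fixed point. The main obstacle I anticipate is the construction and estimation of $\GG_2^\aux$ on the bounded lozenge: one must pick the integration paths so as to simultaneously keep the oscillatory kernels $e^{\pm i\eps^{-1}(x-s)}$ bounded (monotone imaginary part), stay outside the $\kappa\eps$--balls around $x_\pm,\ol x_\pm$ while still reaching every $x\in D^\aux_\kk$, and keep the base points at $\OO(1)$ distance from those singularities so the integration--by--parts boundary terms are $\OO(\eps^2)$, and then verify the weighted bounds uniformly in $x,\eps,\kappa$; by contrast the extra parameter $\beta$ adds only bookkeeping, being $\OO(\eps^2)$ and Lipschitz--small.
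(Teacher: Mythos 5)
Your construction, decomposition, and estimates are essentially those of the paper: both use the same $\GG_1^\aux$ (integrating from $0$, using the even solution $\zeta_2$), the same base points $\pm i\rho$ for $\GG_2^\aux$, the same implicit--function--theorem step to fix the $\wt G$ level by adding a $\beta\,\zeta_2$ term to the first component, the same two--step contraction scheme (Theorem~\ref{thm:fixedpoint}), and the same uniqueness argument for evenness. The one place where you deviate is the integration paths inside $\GG_2^\aux$, and there your specific choice produces a genuinely weaker estimate than the one you claim.

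You propose going from $p_1=-i\rho$ vertically along $i\RR$ up to $i\,\Im x$ and then horizontally across to $x$, arguing only that $|e^{\pm i\eps^{-1}(x-s)}|\le 1$ along the path. But on the horizontal leg $\Im s\equiv\Im x$, so the kernel has modulus exactly $1$ there and contributes \emph{no} decay. When $x$ lies at distance $\sim\kappa\eps$ from one of $x_\pm,\ol{x_\pm}$, the horizontal segment skirts that singularity at the uniform height $\pi-\Im x\sim\kappa\eps$, so $\int_{\gamma_{\mathrm{hor}}}|h(s)|\,|ds|\lesssim \|h\|_\ell\int_0^\alpha\big((t-\alpha)^2+(\kappa\eps)^2\big)^{-\ell/2}dt\lesssim\|h\|_\ell(\kappa\eps)^{1-\ell}$, and after multiplying by the $\frac{i\eps}{2}$ prefactor and the weight $|x-x_\pm|^\ell\sim(\kappa\eps)^\ell$ you get $\lesssim\kappa\eps^2\|h\|_\ell$, not the $\eps^2\|h\|_\ell$ you assert ``uniformly in $\eps$ and $\kappa$''. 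The paper avoids this by taking the first leg \emph{oblique}, $\gamma_1(t)=x+te^{\pm i\vartheta}$ with $0<\vartheta<\theta$ pointing into the interior of $D^\aux_\kk$: along that ray one has simultaneously $|\gamma_1(t)-x_\pm|\gtrsim|x-x_\pm|$ and $e^{-\eps^{-1}t\sin\vartheta}$ decay, whose integral supplies the clean extra factor $\eps/\sin\vartheta$ without any $\kappa$. (Since $\kk$ is eventually taken $\sim|\log\eps|$, your extra factor is not automatically fatal, but it is not negligible either: it would have to be traced through Lemma~\ref{lemma:Fhat:aux}, the bounds of Theorem~\ref{thm:aux:intro}, and everything downstream, and you do not do that.) So either replace the horizontal leg by an oblique one as in the paper, or acknowledge and propagate the $\kappa$-loss; as written, the claimed uniformity in $\kappa$ of the $\GG_2^\aux$ estimate is a gap.

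Two minor remarks: your $g_0$ reproduces the sign typo in \eqref{defGfirstintegral} (the middle terms should come from $-\tfrac12(u_0+\xi)^2$, giving linear coefficient $-u_0(0)-f(u_0(0))=u_0''(0)\ne0$, not $u_0(0)+f(u_0(0))$), though nonvanishing of the derivative, which is all that matters, survives either way; and the phrase that $\beta$ depends ``through $\eta(0)$ (and $\xi(0)$)'' should really be: the implicit function theorem eliminates $\xi(0)$ so that $\beta$ depends on $\eta$ alone, exactly as for $\II[\eta]$ in Lemma~\ref{lemma:initialcondition} --- otherwise the operator is not well posed as a map on $(\xi,\eta)$.
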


To prove Proposition \ref{prop:aux:proof}, we recall that system~\eqref{eq:perturb} is 
\[
\mathcal{L}_1 \xi = \FF_1[\xi,\eta], \qquad \mathcal{L}_2 \eta = \FF_2[\xi,\eta]
\]
with $\mathcal{L}_1, \mathcal{L}_2$ and $\FF=(\FF_1,\FF_2)$ defined in~\eqref{def:diffoperators} and~\eqref{eq:outer:RHSoperator} respectively.  Therefore, in order to set up the fixed point equation, we first introduce the suitable right inverses of the linear operators $\mathcal{L}_1, \mathcal{L}_2$. 
We use the fundamental set of solutions $\zeta_1=u_0'$ and the analytic continuation of $\zeta_2$ (see Lemma~\ref{lem:zeta2:welldefined}). 
The following lemma specifies another suitable property for $\zeta_2$, and it is proved in  Appendix~\ref{app:g1:outer}.

\begin{lemma}\label{lem:zeta2:welldefined:aux}
    The even function $\zeta_2$ in Lemma~\ref{lem:zeta2:welldefined} has an even analytic continuation to $D_\kappa^\aux$. In addition, $\zeta_2 \in D\mathcal{Y}^1_2$.
\end{lemma}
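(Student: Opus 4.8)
The plan is to treat $\zeta_2$ purely as a solution of the linear ODE $\LL_1\zeta=0$, i.e. $-\zeta''+(1-2u_0-6\ga u_0^2)\zeta=0$, whose coefficient is, by Lemma~\ref{lemma:propertiesu0}, real analytic on $\CC\setminus S$ with $S=\{x_\pm+2k\pi i,\ \ol x_\pm-2k\pi i\}_{k\ge 0}$, only the four points $x_\pm,\ol x_\pm$ of $S$ lying in $\{|\Im x|\le 2\pi\}$. I would first record the elementary geometry of $D^\aux_\kappa$ from~\eqref{def:domainaux}: each of the two conditions $|\Im x|\le\ell(\Re x)$ with $\ell$ affine defines a convex set, so $D^\aux_\kappa$ is convex, hence simply connected; its topmost point is $(0,\alpha\tan\theta+\pi-\kappa\eps)$, so $D^\aux_\kappa\subset\{|\Im x|<2\pi\}$ because $\theta<\arctan(\pi/\alpha)$; and a one-line distance estimate (the boundary line closest to $x_-$ lies at perpendicular distance $\kappa\eps\cos\theta$ from $x_-$, likewise for $x_+,\ol x_\pm$) gives $\operatorname{dist}(D^\aux_\kappa,S)\ge\kappa\eps\cos\theta>0$. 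Finally $D^\aux_\kappa$ is invariant under $x\mapsto-x$ — the two inequalities in~\eqref{def:domainaux} are exchanged by this map, using $\Re x_+=-\Re x_-=\alpha$ and $\Im x_\pm=\pi$ — and under complex conjugation.

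With these facts in hand the continuation is immediate: $D^\aux_\kappa\cap D^{\out,\uns}_\kappa$ is convex and contains a neighbourhood of $0$, the function $\zeta_2$ of Lemma~\ref{lem:zeta2:welldefined} is analytic there and solves $\LL_1\zeta=0$, and by the classical theory of linear ODEs with analytic coefficients this solution continues analytically along paths inside the simply connected set $D^\aux_\kappa\subset\CC\setminus S$ to a single-valued analytic function on (a neighbourhood of) $D^\aux_\kappa$ that again solves $\LL_1\zeta=0$ and coincides with $\zeta_2$ on the overlap by the identity theorem; in particular it is analytic at the zeros of $u_0'$, such as $0$, which are ordinary points of the equation. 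Evenness then follows because $u_0$ is even, hence so is the coefficient of $\LL_1$, hence $x\mapsto\zeta_2(-x)$ also solves $\LL_1\zeta=0$ on $D^\aux_\kappa$; it agrees with $\zeta_2$ on the real segment in $D^\aux_\kappa$ (where $\zeta_2$ is even by Lemma~\ref{lem:zeta2:welldefined}), hence everywhere by the identity theorem. Real analyticity follows in the same way from the reality of the coefficient and the conjugation-invariance of $D^\aux_\kappa$, comparing $\zeta_2(x)$ with $\ol{\zeta_2(\ol x)}$.

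For the bound $\zeta_2\in D\mathcal{Y}^1_2$ I would use the local structure of $\LL_1\zeta=0$ at the four regular singular points. At $x_-$, say, the expansion $u_0(x)=c_{-1}/(x-x_-)+\OO(1)$ together with $\ga c_{-1}^2=-1$ (cf.~\eqref{def:cpm1}) makes the leading part of the equation $-\zeta''+\bigl(6(x-x_-)^{-2}+\OO((x-x_-)^{-1})\bigr)\zeta=0$, with indicial exponents $3$ and $-2$; although these differ by the integer $5$, the possible logarithmic term multiplies the exponent-$3$ Frobenius solution and is therefore bounded, so every solution of $\LL_1\zeta=0$ satisfies $|\zeta_2(x)|\lesssim|x-x_-|^{-2}$ and $|\zeta_2'(x)|\lesssim|x-x_-|^{-3}$ in a full punctured neighbourhood of $x_-$ — consistently, Lemma~\ref{lem:zeta2:welldefined} already provides these bounds on the part of that neighbourhood lying in $D^{\out,\uns}_\kappa$ — and symmetrically near $\ol x_-,x_+,\ol x_+$. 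Outside fixed small neighbourhoods of the four points, all the domains $D^\aux_\kappa$ (for $\kappa\ge\kappa_0$, $\eps\le\eps_0$) are contained in one fixed compact subset of $\CC\setminus S$, on which $\zeta_2$ and $\zeta_2'$ are analytic, hence bounded by a constant independent of $\kappa$ and $\eps$. Multiplying by the weight $(x-x_-)^\ell(x-\ol x_-)^\ell(x-x_+)^\ell(x-\ol x_+)^\ell$ with $\ell=2$ for $\zeta_2$ and $\ell=3$ for $\zeta_2'$, which is itself bounded on $D^\aux_\kappa$, then yields $\|\zeta_2\|_2+\|\zeta_2'\|_3\le M$ uniformly in $\kappa$ and $\eps$, i.e. $\zeta_2\in D\mathcal{Y}^1_2$ as in~\eqref{def:Banach:aux}.

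I expect no serious obstacle here: the analytic continuation and the symmetry arguments are standard, and the only point requiring a little care is the local behaviour of $\zeta_2$ at the regular singular points $x_\pm,\ol x_\pm$ — specifically, excluding faster-than-double-pole growth — which the Frobenius analysis above settles and which is in any case already partly encoded in the estimates of Lemma~\ref{lem:zeta2:welldefined}.
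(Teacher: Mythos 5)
Your proposal is correct but takes a genuinely different route from the paper's. The paper continues $\zeta_2$ via the explicit integral representation $\zeta_2(x)=\zeta_1(x)\big[\widehat\zeta_2(\pm r)+\int_{\pm r}^x\zeta_1^{-2}(s)\,ds\big]$ built in the proof of Lemma~\ref{lem:zeta2:welldefined}: one checks that the apparent singularities at the three zeros $0,\pm i\pi$ of $\zeta_1$ lying in $D^\aux_\kappa$ are removable (the limit being $-1/\zeta_1'(x_0)$), since those points never lie on the integration segments; the weight bound then follows from $\zeta_1=u_0'\in \mathcal{DY}^1_2$ and, in later use, from the factorization $\zeta_2=c\zeta_1+\zeta_-$ of Lemma~\ref{lemma:G1:appendix}. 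You instead argue abstractly: $\LL_1\zeta=0$ has analytic coefficients on the convex, simply connected $D^\aux_\kappa\subset\CC\setminus S$, so $\zeta_2$ continues uniquely and single-valuedly; evenness and reality come for free from the symmetry of the coefficient and of the domain; and the double-pole bound is read off from a Frobenius analysis at the regular singular points $x_\pm,\ol{x_\pm}$ (indicial exponents $3,-2$), plus compactness away from them and uniformity in $\kappa,\eps$ since the $D^\aux_\kappa$ all sit inside one fixed bounded lozenge. Both arguments are sound; yours is more self-contained for this lemma, while the paper's explicit formula is what it needs anyway for the subsequent operator estimates (Lemma~\ref{lemma:operators:aux}). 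One small observation: your Frobenius step conservatively allows a $(x-x_-)^3\log(x-x_-)$ term, correctly noting it is harmless; in this particular equation no logarithm actually arises, as one sees directly from the factorization $\zeta_2=c\zeta_1+\zeta_-$ with $\zeta_-$ meromorphic at $x_-$.
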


We define now the linear operators 
\begin{equation}\label{def:intoperators-aux} 
\begin{split}
{\GG}^\aux_1[h](x) &= \zeta_1(x)\int_0^x\zeta_2(s)h(s)ds - \zeta_2(x)\int_{0}^x\zeta_1(s)h(s) ds,\\
{\GG}_2^\aux[h](x) &=
-\frac{i\eps}{2} e^{i\eps^{-1}x}\int_{-i\rho}^x  e^{-i\eps^{-1}s} h(s)ds+\frac{i\eps}{2} e^{-i\eps^{-1}x}\int_{i\rho}^x  e^{i\eps^{-1}s} h(s)ds,
\end{split}
\end{equation}
where $\rho=\rho(\theta)= \alpha_+ \tan \theta + \pi - \kappa \eps$ with $\alpha_+ = \Re x_+$, the superior vertex of $D_\kappa^\aux$. 

The following lemma gives estimates for the linear operators ${\GG}_1^\aux, {\GG}_2^\aux$. Its proof follows the same lines as the one of Lemma~\ref{prop_operators} and it is deferred to Appendix~\ref{app:g1:outer}. 

\begin{lemma}\label{lemma:operators:aux}
The operators ${\GG}_1^\aux$ and ${\GG}_2^\aux$ introduced in~\eqref{def:intoperators-aux} have the following properties.
	\begin{enumerate}
		\item $\mathcal{L}_i\circ{\GG}_i^\aux[\xi]=\xi$.
		\item For any  $\ell\ge 5$, there exists a constant $M>0$ independent of $\eps$ and $\kappa$ such that, for every $h \in\YY_{\ell}$,
		\begin{equation*}	 
		\left\|{\GG}_1^\aux [h]\right\|_{\ell-2} \leq M\|h \|_{\ell}\qquad \text{and}\qquad \left\|\pa_x{\GG}_1^\aux [h]\right\|_{\ell-1} \leq M\|h \|_{\ell}.
		\end{equation*}
  In addition, if $h$ is real analytic, ${\GG}_1^\aux[h]$ is real analytic. 
		\item For any  $\ell\ge0$, there exists $M>0$ such that for every  $h \in\mathcal{Y}_{\ell}$,
\begin{equation*}
\label{desig2:aux}
		\left\|{\GG}_2^\aux [h]\right\|_{\ell}\leq M\eps^2\|h \|_{\ell}, \quad \left\|\partial_x {\GG}_2^\aux [h]\right\|_{\ell}\leq M\eps \| h \|_{ \ell}
		\end{equation*}
When $h$ is real analytic, ${\GG}_2^\aux[h]$ is also real analytic. 
		\end{enumerate}
		\end{lemma}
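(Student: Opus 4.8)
The plan is to follow the proof of Lemma~\ref{prop_operators} (modelled on Proposition~4.3 of~\cite{GomideGSZ22}), with the two changes forced by the present setting: the estimates are made on the \emph{bounded} lozenge $D^\aux_\kk$ of~\eqref{def:domainaux}, so only the behaviour near the four singularities $x_\pm,\ol{x}_\pm$ of $u_0$ matters and there is no $|\cosh x|$--weight; and the integration contours in~\eqref{def:intoperators-aux} must be taken inside $D^\aux_\kk$, with base points $0$ and the vertices $\pm i\rho$. Item~(1) is a direct computation: differentiating $\GG_1^\aux[h]$ twice and using that $\zeta_1=u_0'$ and $\zeta_2$ solve $\LL_1\zeta=0$ with the Wronskian normalisation~\eqref{wronskian} gives $\LL_1\GG_1^\aux[h]=h$; for $\GG_2^\aux$ one uses that $e^{\pm i\eps^{-1}x}$ solve $\LL_2\zeta=0$ and that the prefactors $\mp\tfrac{i\eps}{2}$ are the reciprocal Wronskian of that pair.

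For item~(2) I would rewrite $\GG_1^\aux[h](x)=\int_0^x K(x,s)h(s)\,ds$ with the Green kernel $K(x,s)=\zeta_1(x)\zeta_2(s)-\zeta_2(x)\zeta_1(s)$, the path being any curve from $0$ to $x$ inside $D^\aux_\kk$ along which $|s-x_-|$ is monotone on the portion near $x_-$ while $s$ stays a fixed distance from the other three singularities; such a path exists because $\theta<\arctan(\pi/\alpha)$ keeps the lozenge fat around each singularity. By Lemma~\ref{lemma:propertiesu0}, $u_0$ has a simple pole at $x_-$, so the indicial exponents of $\LL_1$ there are $-2$ and $3$ and, $\zeta_1=u_0'$ being meromorphic, the local solutions $\zeta^{(-2)}\sim(x-x_-)^{-2}$ and $\zeta^{(3)}\sim(x-x_-)^{3}$ carry no logarithm; expanding $K$ in this local basis the diagonal products cancel and $K(x,s)=c_*\big(\zeta^{(-2)}(x)\zeta^{(3)}(s)-\zeta^{(3)}(x)\zeta^{(-2)}(s)\big)$ near $x_-$ for a nonzero constant $c_*$. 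Hence $|K(x,s)|\lesssim|x-x_-|^{-2}|s-x_-|^{3}+|x-x_-|^{3}|s-x_-|^{-2}$ there, and splitting the contour at $x_-$ gives the weighted estimate
\[
\int_0^x|K(x,s)|\,|s-x_-|^{-\ell}\,|ds|\;\lesssim\;|x-x_-|^{-(\ell-2)},
\]
where $\ell\ge5>4$ is exactly what makes both one-variable integrals converge and be dominated by the endpoint near $x_-$; the analogous computation with $\partial_xK$ (using that $\zeta_1',(\zeta^{(3)})'$ have one extra order of pole) yields $|x-x_-|^{-(\ell-1)}$, and there is no boundary term since $K(x,x)=0$. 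Doing this at each of the four singularities and using $|K|\lesssim1$ away from them, Lemma~\ref{propertiesnorm1:aux} gives $\|\GG_1^\aux[h]\|_{\ell-2}\lesssim\|h\|_\ell$ and $\|\partial_x\GG_1^\aux[h]\|_{\ell-1}\lesssim\|h\|_\ell$ (and $\partial_x\GG_1^\aux[h](0)=0$ is immediate); real analyticity is preserved because $\zeta_1,\zeta_2$ are real-analytic and the contour may be chosen symmetric under conjugation.

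For item~(3) the gain of $\eps^2$ comes from the $\eps$--scale oscillation. I would take the contour from $-i\rho$ to $x$ with $\Im s$ increasing monotonically to $\Im x$ (so $|e^{i\eps^{-1}(x-s)}|\le1$ along it) and the one from $i\rho$ to $x$ with $\Im s$ decreasing monotonically to $\Im x$, both running through the bulk of $D^\aux_\kk$; integrating by parts once in each integral, the leading term becomes $\eps^2h(x)$, the boundary contributions at $\pm i\rho$ are $\mathcal{O}(\eps^2\|h\|_\ell)$ (since $|e^{i\eps^{-1}(x\pm i\rho)}|\le1$ on $D^\aux_\kk$ and $h$ is bounded at the vertices, which lie at distance $\gtrsim\alpha$ from the singularities), and the remaining integrals of $h'$ are controlled by the Cauchy estimate $|h'(s)|\lesssim\|h\|_\ell|s-x_-|^{-\ell-1}$ near $x_-$ --- valid because, along a bulk contour, $\mathrm{dist}(s,\partial D^\aux_\kk)\gtrsim|s-x_-|$, again using $\theta<\arctan(\pi/\alpha)$. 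This yields $|\GG_2^\aux[h](x)|\lesssim\eps^2\|h\|_\ell|x-x_-|^{-\ell}$ near $x_-$, hence $\|\GG_2^\aux[h]\|_\ell\lesssim\eps^2\|h\|_\ell$. For the derivative, after differentiating under the integral the $h(x)$ terms cancel between the two pieces, leaving $\tfrac12\big(e^{i\eps^{-1}x}\int e^{-i\eps^{-1}s}h+e^{-i\eps^{-1}x}\int e^{i\eps^{-1}s}h\big)$; one integration by parts turns this into $\mathcal{O}\big(\eps\int|h'|\big)$ plus harmless boundary terms, giving $\|\partial_x\GG_2^\aux[h]\|_\ell\lesssim\eps\|h\|_\ell$, and the conjugation symmetry of the two pieces again gives real analyticity.

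The main obstacle is the geometric bookkeeping: one must exhibit integration contours inside the thin lozenge $D^\aux_\kk$ that simultaneously keep the oscillatory exponentials $\le1$, run through the interior so that the weighted Cauchy estimates on $h'$ hold near the singularities, and join the prescribed base points --- this is exactly what the restriction $\theta<\arctan(\pi/\alpha)$ is for. The second delicate point, already resolved above, is that in $\GG_1^\aux$ the pole order must drop from the naive $\ell+2$ (indeed $\ell+3$ for the derivative) down to $\ell-2$; the Green--kernel factorisation $K=c_*\big(\zeta^{(-2)}(x)\zeta^{(3)}(s)-\zeta^{(3)}(x)\zeta^{(-2)}(s)\big)$, which hinges on $\zeta_1=u_0'$ being meromorphic at the simple poles of $u_0$, makes this cancellation explicit.
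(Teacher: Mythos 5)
Your treatment of items (1) and (2) follows essentially the same strategy as the paper: the paper also rewrites the Green kernel of $\mathcal{L}_1$ in terms of the two fundamental solutions $\zeta_\pm$, whose pole and zero orders at $x_-$, $\overline{x_-}$ are precisely the indicial exponents $-2$ and $3$ that you computed (this is the content of Lemma~\ref{lemma:G1:appendix} and the explicit factorisations~\eqref{def:zetapm}), and then bounds the resulting weighted integrals via Lemma~\ref{boundingx0:appendix}; the domain conditions in Definition~\ref{def:newdomain:appendix} are exactly what you are reconstructing when you ask that the contour keep a fixed distance from the other three singularities. Your threshold $\ell\geq 5>4$ is the same one that appears in the paper's estimates, and your observation that $K(x,x)=0$ kills the boundary term in $\partial_x\GG_1^\aux$ is also used there implicitly.

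For item (3) you take a genuinely different route, and it has a gap. The paper does not integrate by parts and never invokes a Cauchy estimate on $h'$: it splits the path from $\mp i\rho$ to $x$ into a slanted ray from $x$ (along which $|e^{\mp i\eps^{-1}(s-x)}|=e^{-\eps^{-1}t\sin\vartheta}$) plus a vertical segment on the imaginary axis, bounds $|h(s)|$ pointwise by its weight, and integrates the decaying exponential directly, which already produces the factor $\eps$ without ever differentiating $h$; the prefactor $\tfrac{i\eps}{2}$ then gives $\eps^2$. Your integration-by-parts argument needs the Cauchy bound $|h'(s)|\lesssim\|h\|_\ell|s-x_-|^{-\ell-1}$, which requires $\mathrm{dist}(s,\partial D^\aux_\kk)\gtrsim|s-x_-|$; this holds on the spine of the lozenge but fails at and near the endpoint $s=x$ when $x$ lies on (or arbitrarily close to) $\partial D^\aux_\kk$ --- and since the norm $\|\cdot\|_\ell$ is a supremum over all of $D^\aux_\kk$, such $x$ cannot be excluded. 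The gap is fixable (e.g.\ estimate the last $O(\eps)$ of the contour directly, without integration by parts), but the paper's direct oscillatory bound sidesteps the issue entirely and is the simpler choice; I'd drop the integration by parts and the Cauchy estimate.
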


Now, to set up the fixed point argument we proceed in two steps so that we fix the $\wt G$ level curve. For the $\eta$ component, we just impose that it satisfies
\[
\eta={\GG}_2^\aux \circ \FF_2[\xi,\eta].
\]
Note that, then in particular, 
\[
\eta(0)={\GG}_2^\aux \circ \FF_2[\xi,\eta](0).
\]
We use this equality to fix  $\wt G$ at $x=0$. 
Indeed, as we claimed in~\eqref{invG1:general}, $\LL_1$ in~\eqref{def:diffoperators} has several right inverses 
\[
\mathcal{L}_1^{-1} [h] = \zeta_1(x)\left[C_1+\int_0^x\zeta_2(s)h(s)ds\right] - \zeta_2(x)\left[C_2+\int_{0}^x\zeta_1(s)h(s) ds\right].
\]
The condition $\xi'(0)=0$ implies that one has to impose $C_1=0$ (recall that $\zeta_2$ is an even function, see Lemma~\ref{lem:zeta2:welldefined}). We choose a suitable $C_2$ so that the solution lies in $\wt G=0$. Indeed, we have
\[
\wt G\left(-\zeta_2(0)C_2, 0,\eta(0),\eta'(0),0\right)=0.
\]
 
The following lemma ensures that, for a given $\eta$ and $\xi$ in a suitable Banach space,  there exists a unique $C_2$ satisfying this equality.

\begin{lemma}\label{lemma:initialcondition}
Fix $R>0$. There exists $\eps_0$ such that for $\eps \in(0,\eps_0)$, there is a function 
$
\II: B(R\eps^2)\subset\cD\YY^2_5\to \CC
$
such that, for any $\eta \in B(R\eps^2)$, 
\begin{equation}\label{Gimplicit:aux}
\wt G\left(-\zeta_2(0)\II[\eta], 0,\eta(0),\eta'(0), 0\right)=0
\end{equation}
and
\[
\big |\II[\eta] \big | \lesssim \eps^2.
\]
Moreover, for any $\eta,\wt\eta\in B(R\eps^2)\subset\cD\YY^2_5$,
\[
|\II[\eta]-\II[\wt\eta]|\lesssim \eps^2 \|\eta-\wt\eta \|_5.
\]
\end{lemma}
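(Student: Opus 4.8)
The plan is to prove Lemma~\ref{lemma:initialcondition} as a routine application of the implicit function theorem in the variable $C_2 \in \CC$, with $\eta$ (hence the numbers $\eta(0)$ and $\eta'(0)$) playing the role of a parameter. First I would make the dependence of $\wt G$ on $C_2$ explicit. Recalling the formula~\eqref{defGfirstintegral} for $\wt G$, evaluated at $x=0$ with $\xi = -\zeta_2(0) C_2$, $\xi'=0$ and using that $u_0'(0)=0$ (from Lemma~\ref{lemma:propertiesu0}, since $u_0$ is even), the terms proportional to $(1-\eps^2)$ collapse to a constant plus $-\tfrac12 u_0^2(0)$-type contributions, and the only $C_2$--dependence enters through $\xi = -\zeta_2(0) C_2$ in $-\tfrac12(u_0^2 - 2u_0\xi - \xi^2)$, in $F(u_0+\xi)$, and in the $\eps^2$--bracket via $-\tfrac12(\eta + u_0 + \xi - f(u_0+\xi))^2$ (there is no $\xi'$ left). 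So I would write
\[
g(C_2;\eta) := \wt G\big(-\zeta_2(0) C_2, 0, \eta(0), \eta'(0), 0\big)
\]
and observe that $g$ is a real-analytic (indeed polynomial in $C_2$, given that $f$ is cubic) function of $C_2$, with coefficients depending smoothly and boundedly on the parameters $\eta(0),\eta'(0)$. The key point is to locate a simple zero near $C_2 = 0$ for the unperturbed ($\eta = 0$, $C_2 = 0$) configuration and then perturb.

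The main computation is to check the size of $g(0;0)$ and the non-degeneracy $\partial_{C_2} g(0;0) \neq 0$. At $C_2 = 0$, $\eta = 0$ we have $\xi = 0$, so $g(0;0) = \tfrac12(1-\eps^2)(u_0')^2(0) - \tfrac12 u_0^2(0) + F(u_0(0)) + \eps^2\big[(u_0')^2(0)(1 - f'(u_0(0))) - \tfrac12(u_0(0) - f(u_0(0)))^2\big]$; since $u_0'(0)=0$, the $\eps$-independent part is exactly $-\tfrac12 u_0^2(0) + F(u_0(0))$, which is the value of the conserved quantity of the \emph{unperturbed} second-order equation~\eqref{eq:mainsecondorder} on its homoclinic orbit, hence equals its value at the origin, namely $0$. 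Thus $g(0;0) = \OO(\eps^2)$, coming only from the $\eps^2$-bracket. For the derivative, $\partial_{C_2}\big|_{0} \xi = -\zeta_2(0)$, and differentiating the leading part $-\tfrac12(u_0^2 - 2u_0\xi - \xi^2) + F(u_0+\xi)$ in $\xi$ at $\xi=0$ gives $(u_0(0) + f(u_0(0)))\cdot(-\zeta_2(0))$; one checks $u_0(0) = \frac{3}{\sqrt{1+9\gamma}+1} \neq 0$ (and, if needed, that $u_0(0)+f(u_0(0))\neq 0$, which follows since $u_0(0)$ is a turning point value and $u_0''(0) = u_0(0) - f(u_0(0)) \neq 0$ because $u_0$ has a strict maximum at $0$ — actually the relevant combination is $u_0(0)+f(u_0(0))$, for which one uses that $1+9\gamma\in(0,1)$ forces $u_0(0)>1$ whereas a zero of $u+f(u) = u(1+u+2\gamma u^2)$ other than $0$ would need $1+9\gamma<0$). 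Together with $\zeta_2(0)\neq 0$ from Lemma~\ref{lem:zeta2:welldefined}, this gives $\partial_{C_2} g(0;0) = c_0 + \OO(\eps^2)$ with $c_0 \neq 0$ independent of $\eps$.

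With these two facts in hand I would conclude by the implicit function theorem (or, more elementarily, by a contraction-mapping rewriting $C_2 = -[\partial_{C_2}g(0;\eta)]^{-1}\,[\,g(0;\eta) - \partial_{C_2}g(0;\eta)\,C_2 + g(\cdot)\,]$ on a small ball): for $\eps$ small enough and $\|\eta\|_5 \le R\eps^2$, the map $C_2 \mapsto g(C_2;\eta)$ has a unique root $\II[\eta]$ in a neighborhood of $0$, and since $g(0;\eta) = g(0;0) + \OO(\|\eta\|_5) = \OO(\eps^2)$ while the derivative is bounded below by $|c_0|/2$, one gets $|\II[\eta]| \lesssim \eps^2$. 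The Lipschitz estimate $|\II[\eta] - \II[\wt\eta]| \lesssim \eps^2\|\eta-\wt\eta\|_5$ follows by differentiating the defining relation $g(\II[\eta];\eta) \equiv 0$ in $\eta$: $\partial_\eta \II = -[\partial_{C_2}g]^{-1}\,\partial_\eta g$, and $\partial_\eta g$ (i.e.\ the dependence on $\eta(0),\eta'(0)$) is $\OO(\eps^2)$ since $\eta$ enters $\wt G$ only through the $\eps^2$-bracket, which is linear in $\eta$ up to the term $-\tfrac12 \eta^2$; the factor $|\eta(0)|, |\eps\eta'(0)| \lesssim \|\eta\|_5\eps^{\text{(power)}}$ controls the rest. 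The only mild subtlety — and the step I would be most careful about — is bookkeeping the $\eps$-powers: one must use that for $\eta \in \cD\YY^2_5$ the pointwise values $\eta(0)$ and $\eta'(0)$ are bounded by $\|\eta\|_5$ times fixed constants (the points $x_\pm, \ol{x_\pm}$ are at $\OO(1)$ distance from $0$, and $\eps\|\eta'\|_5$ is part of the norm), so that no spurious negative powers of $\eps$ appear and the $\OO(\eps^2)$ bounds genuinely survive; everything else is a standard, finite-dimensional implicit-function argument.
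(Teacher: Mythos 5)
Your outline follows exactly the paper's strategy: apply the implicit function theorem (or a contraction-mapping reformulation) to the scalar equation $\wt G\bigl(-\zeta_2(0)C_2,0,\eta(0),\eta'(0),0\bigr)=0$ near $C_2=0$, using that $\wt G$ restricted to the zero orbit vanishes at $\eps=0$ and that the $C_2$-derivative at the base point is a nonzero $\eps$-independent constant; the $\eps^2$-factors in the bounds come from $\eta$ entering only in the $\eps^2$-bracket and $|\eta(0)|\lesssim\|\eta\|_5$. (You almost note, and should state, that $\eta'(0)$ in fact never enters: the only $\eta'$-term in $\wt G$ is multiplied by $u_0'+\xi'$, which is $0$ at $x=0$.)

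There is, however, a concrete error in the step you flagged as "the key point." You read the coefficient $\partial_{C_2}g(0;0)$ off the displayed formula \eqref{defGfirstintegral}, obtaining $\bigl(u_0(0)+f(u_0(0))\bigr)\cdot(-\zeta_2(0))$. But \eqref{defGfirstintegral} as printed contains a sign typo: since $u=u_0+\xi$, the term $-u^2/2$ expands to $-\tfrac12\bigl[u_0^2+2u_0\xi+\xi^2\bigr]$, not $-\tfrac12\bigl[u_0^2-2u_0\xi-\xi^2\bigr]$. With the correct signs the derivative is $u_0(0)-f(u_0(0))=u_0''(0)$ (using $u_0''=u_0-f(u_0)$), which is exactly what the paper's proof uses, and whose nonvanishing is immediate from the enumeration of zeros of $u_0''$ in Lemma~\ref{lemma:propertiesu0}. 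Your side-argument that $u_0(0)+f(u_0(0))=u_0(0)\bigl(1+u_0(0)+2\gamma u_0(0)^2\bigr)\neq 0$ is also not correct as stated: for $\gamma<0$ the quadratic $1+u+2\gamma u^2$ has discriminant $1-8\gamma>0$, hence a positive real root $\bigl(1+\sqrt{1-8\gamma}\bigr)/(4|\gamma|)$; the statement you need, that this root is not $u_0(0)$, happens to hold because that root exceeds $9/2$ while $u_0(0)<3$ for $\gamma\in(-\tfrac19,0)$, but nothing forces $1+9\gamma<0$. Both issues evaporate once you fix the sign and work with $u_0''(0)$ directly as in the paper's proof.
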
 

\begin{proof}
The proof follows by an implicit function theorem. Take $\eta\in B(R\eps^2)\subset\YY_5$ and denote $\eta_0=\eta(0)$ which satisfies $|\eta_0|\lesssim \eps^2$. Then, since $u_0'(0)=0$, see~\eqref{def:soliton}, equation~\eqref{Gimplicit:aux} is equivalent  
\[
0=\mathbf{G}(\xi_0,\eps; \eta_0)= -u_0''(0) \xi_0 - \frac{\eps^2}{2} (\eta_0 +u_0(0) + \xi_0 -f(u_0(0)+ \xi_0) + \wt{\mathbf{G}} (\xi_0) 
\]
with $ |\wt{\mathbf{G}}(\xi_0) |\lesssim |\xi_0|^2$. It is clear that $\mathbf{G}(0,0;\eta)=0$, then, recalling that $u_0''(0)\neq$ (see Lemma~\ref{lemma:propertiesu0}), the implicit function theorem assures, for $\eps$ small enough, the existence of $\xi_0=\xi_0(\eps; \eta_0)$, satisfying $|\xi_0|\lesssim \eps^2$. In addition, since $|\partial_{\eta_0} \xi_0(\eps; \eta_0)| \lesssim \eps^2$, $|\xi_0(\eps;\eta_0)- \xi_0(\eps;\wt{\eta}_0)| \lesssim \eps^2 |\eta_0- \wt{\eta}_0|$ for any $|\eta_0|, |\wt{\eta}_0| \lesssim \eps^2$. Taking $\mathcal{I}[\eta] = -\xi_0(\eps;\eta(0)) (\zeta_2(0))^{-1}$, the result follows provided $|\eta(0)|\lesssim \|\eta\|_5$.
\end{proof}

Based on the results of Lemmas \ref{lemma:operators:aux} and \ref{lemma:initialcondition}, we look for the functions  $(\xi^\aux,\eta^\aux)$ in Proposition \ref{prop:aux:proof} as fixed points of the operator 
\begin{equation}\label{def:operatorFaux}
\FF^\aux [\xi,\eta]=
\begin{pmatrix}
\FF_1^\aux[\xi,\eta]\\\FF_2^\aux [\xi,\eta]
\end{pmatrix}=
\begin{pmatrix}
-\zeta_2 \cdot \II [\eta]+{\GG}_1^\aux \circ \FF_1[\xi,\eta]\\
{\GG}_2^\aux \circ \FF_2 [\xi,\eta]
\end{pmatrix}
\end{equation}
with ${\GG}_1^\aux, {\GG}_2^\aux$ defined in~\eqref{def:intoperators-aux}, $\zeta_2$ defined by Lemma~\ref{lem:zeta2:welldefined:aux} and $\mathcal{F}=(\mathcal{F}_1,\mathcal{F}_2)$ is given in~\eqref{eq:outer:RHSoperator}.

\subsection{The contraction mapping}

The following two lemmas analyze the operator $\FF^\aux$ defined in~\eqref{def:operatorFaux}. 

\begin{lemma}\label{lemma:Fhat0:aux}
There exists a constant $b_2>0$ independent of $\eps$ and $\kappa$ such that 
\[
 \|\FF^\aux [0,0]\|_\times\leq b_2\eps^2.
\]
\end{lemma}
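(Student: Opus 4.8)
The plan is to mimic the strategy used in the proof of Lemma~\ref{lemma:Fhat0}, since $\FF^\aux[0,0]$ has essentially the same structure as $\FF^\out[0,0]$ up to the extra term $-\zeta_2 \cdot \II[0]$. First I would compute the two components of $\FF^\aux[0,0]$ from definition~\eqref{def:operatorFaux}. For the second component, $\FF_2^\aux[0,0] = {\GG}_2^\aux \circ \FF_2[0,0]$, and from~\eqref{eq:outer:RHSoperator} we have $\FF_2[0,0] = f'(u_0)(u_0 - f(u_0)) + f''(u_0)(u_0')^2$. Using Lemma~\ref{lemma:propertiesu0} (in particular that $u_0$ has simple poles at $x_\pm$ and $\ol{x_\pm}$ and decays like $\cosh^{-1}$ along the real direction) together with the product property in Lemma~\ref{propertiesnorm1:aux}, one checks that $\FF_2[0,0] \in \mathcal{Y}_5$ with $\|\FF_2[0,0]\|_5 \lesssim 1$; indeed $f(u) = u^2 + 2\gamma u^3$ so $f'(u_0)$ has at worst a double pole, $f'(u_0) u_0$ a triple pole, and $f''(u_0)(u_0')^2$ a pole of order $2 + 4 = ... $ — more carefully, $f'' = 2 + 12\gamma u_0$ has a simple pole and $(u_0')^2$ a pole of order $4$, giving order $5$, which matches the norm $\|\cdot\|_5$. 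Then the third item of Lemma~\ref{lemma:operators:aux} gives $\|\FF_2^\aux[0,0]\|_5 + \eps\|\partial_x \FF_2^\aux[0,0]\|_5 \leq M\eps^2 \|\FF_2[0,0]\|_5 \lesssim \eps^2$.

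For the first component, $\FF_1^\aux[0,0] = -\zeta_2 \cdot \II[0] + {\GG}_1^\aux \circ \FF_1[0,0]$. From~\eqref{eq:outer:RHSoperator}, $\FF_1[0,0] = 0$ (since $\FF_1[\xi,\eta] = -\eta + (1 + 6\gamma u_0)\xi^2 + 2\gamma\xi^3$ vanishes at $\xi = \eta = 0$), so the second term is zero and ${\GG}_1^\aux \circ \FF_1[0,0] = 0$. It therefore remains to estimate $\zeta_2 \cdot \II[0]$. By Lemma~\ref{lemma:initialcondition} applied with $\eta = 0$, we have $|\II[0]| \lesssim \eps^2$, and by Lemma~\ref{lem:zeta2:welldefined:aux}, $\zeta_2 \in \mathcal{DY}^1_2$, so $\|\zeta_2\|_2 \lesssim 1$; hence $\|\zeta_2 \cdot \II[0]\|_2 \lesssim \eps^2$. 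Since $2 \le 3$, Lemma~\ref{propertiesnorm1:aux} (first item, with $\ell_1 = 2 \le \ell_2 = 3$ — note the inequality goes the right way, $\|h\|_{2} \le M\|h\|_{3}$ is false, so instead I use that for $x \in D_\kappa^\aux$ one has $|x - x_\pm|, |x - \ol{x_\pm}|$ bounded above, giving directly $\|\zeta_2 \II[0]\|_3 \lesssim \|\zeta_2 \II[0]\|_2 \lesssim \eps^2$) yields $\|\zeta_2 \cdot \II[0]\|_3 \lesssim \eps^2$, and similarly $\|\partial_x(\zeta_2 \cdot \II[0])\|_4 = |\II[0]| \, \|\zeta_2'\|_4 \lesssim \eps^2$ using $\zeta_2' \in \mathcal{Y}_3 \subset \mathcal{Y}_4$ (again since the domain is bounded away from $\pm\infty$ only in the imaginary direction, but $D_\kappa^\aux$ has bounded real extent near the singularities, one must be slightly careful; in fact $\zeta_2' \in \mathcal{Y}_3$ and on $D_\kappa^\aux$ the weights $|x - x_\pm|$ etc. are bounded, so $\|\zeta_2'\|_4 \lesssim \|\zeta_2'\|_3 \lesssim 1$).

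Combining the two components with the product norm $\|(\xi,\eta)\|_\times = \max\{\|\xi\|_3 + \|\xi'\|_4,\ \|\eta\|_5 + \eps\|\eta'\|_5\}$ gives $\|\FF^\aux[0,0]\|_\times \lesssim \eps^2$, which is the claimed bound with some explicit constant $b_2 > 0$ independent of $\eps$ and $\kappa$. I expect the only mildly delicate point to be the bookkeeping with the weighted norms on the lozenge-shaped domain $D_\kappa^\aux$: unlike the outer domain, $D_\kappa^\aux$ does not extend to $\Re x \to \pm\infty$, so all four pole-weights are simultaneously bounded above on it, and one must use this (rather than the first item of Lemma~\ref{propertiesnorm1:aux}, whose second inequality carries a factor $(\kappa\eps)^{-(\ell_2 - \ell_1)}$ that we do \emph{not} want here) to pass freely between norms $\|\cdot\|_\ell$ for different $\ell$ on bounded-weight functions like $\zeta_2$. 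Everything else is a routine pole-counting exercise as in Lemma~\ref{lemma:Fhat0}.
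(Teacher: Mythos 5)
Your proposal is correct and takes exactly the route the paper indicates: the paper states that the proof follows the same lines as Lemma~\ref{lemma:Fhat0} (using Lemmas~\ref{lemma:operators:aux} and~\ref{lemma:initialcondition}) and leaves it to the reader, and you carry out precisely that plan, including the one genuinely new ingredient, namely the term $-\zeta_2\cdot\II[0]$, which you bound via $|\II[0]|\lesssim\eps^2$ and $\zeta_2\in\mathcal{DY}^1_2$. Your side-remark about which direction the norm comparison goes is a little garbled — the inequality $\|h\|_3\leq M\|h\|_2$ that you need is exactly the first inequality in item~1 of Lemma~\ref{propertiesnorm1:aux}, so you are in fact using that lemma rather than bypassing it — but this does not affect the correctness of the argument.
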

 
\begin{lemma}\label{lemma:Fhat:aux}
There exists $C_2$ such that for all $R>0$, if $(\xi,\eta),(\wt\xi,\wt\eta)\in B(R\eps^2)\subset \YY_\times$, the operator $\FF^\aux$ in \eqref{def:operatorFaux} satisfies
\[
\begin{split}
 \left\|\FF^\aux_1[\xi,\eta]-\FF^\aux_1[\wt\xi,\wt\eta]\right\|_{3}&\leq C_2 \|\eta-\wt\eta\|_{5}+\frac{C}{\kk^2}\|(\xi,\eta)-(\wt\xi,\wt\eta)\|_\times, \\
 \left\|\pa_x\FF^\aux_1 [\xi,\eta]-\pa_x\FF^\aux_1[\wt\xi,\wt\eta]\right\|_{4}&\leq C_2 \|\eta-\wt\eta\|_{5}+\frac{C}{\kk^2}\|(\xi,\eta)-(\wt\xi,\wt\eta)\|_\times, \\
\left\|\FF^\aux_2 [\xi,\eta]-\wh\FF^\aux_2[\wt\xi,\wt\eta]\right\|_{5}&\leq \frac{C}{\kk^2}\|(\xi,\eta)-(\wt\xi,\wt\eta)\|_\times, \\
\left\|\pa_x\FF^\aux_2 [\xi,\eta]-\pa_x\FF^\aux_2[\wt\xi,\wt\eta]\right\|_{5}&\leq \frac{C}{\eps\kk^2}\|(\xi,\eta)-(\wt\xi,\wt\eta)\|_\times,
 \end{split} 
 \]
 for some constant $C=C(R)>0$ independent of $\eps$ and $\kk$.
\end{lemma}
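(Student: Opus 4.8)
The plan is to run the same contraction-mapping argument as for the outer-domain analogue, Lemma~\ref{lemma:Fhat}, adapted to the auxiliary setting. Throughout, given $(\xi,\eta),(\wt\xi,\wt\eta)\in B(R\eps^2)\subset\YY_\times$ I write $\zeta_\lambda=(\xi_\lambda,\eta_\lambda)=(\wt\xi,\wt\eta)+\lambda\big((\xi,\eta)-(\wt\xi,\wt\eta)\big)$, which by convexity stays in $B(R\eps^2)$, and I use repeatedly: the algebra and interpolation estimates of Lemma~\ref{propertiesnorm1:aux} — in particular $\|h\|_{\ell_1}\le M(\kk\eps)^{-(\ell_2-\ell_1)}\|h\|_{\ell_2}$, which is the source of the $\kk$-smallness; the mapping properties of $\GG_1^\aux,\GG_2^\aux$ from Lemma~\ref{lemma:operators:aux}; the Lipschitz bound $|\II[\eta]-\II[\wt\eta]|\lesssim\eps^2\|\eta-\wt\eta\|_5$ from Lemma~\ref{lemma:initialcondition}; and the facts that, on the bounded lozenge $D^\aux_\kk$, Lemma~\ref{lemma:propertiesu0} gives $u_0\in\mathcal{Y}_1$ and $u_0'\in\mathcal{Y}_2$ with $\mathcal{O}(1)$ norms — the four simple poles $x_\pm,\ol{x_\pm}$ being exactly cancelled by the weight and the remaining poles of $u_0$ lying outside $D^\aux_\kk$ — and that $\zeta_2\in D\mathcal{Y}^1_2$ by Lemma~\ref{lem:zeta2:welldefined:aux}.

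For the first component I split $\FF_1^\aux=-\zeta_2\cdot\II[\cdot]+\GG_1^\aux\circ\FF_1$ according to \eqref{def:operatorFaux}. On the integral part I apply the mean value theorem: $\FF_1[\xi,\eta]-\FF_1[\wt\xi,\wt\eta]=\int_0^1\big(\partial_\xi\FF_1[\zeta_\lambda]\,(\xi-\wt\xi)-(\eta-\wt\eta)\big)\,d\lambda$ with $\partial_\xi\FF_1[\zeta_\lambda]=2(1+6\ga u_0)\xi_\lambda+6\ga\xi_\lambda^2$. Since $\|\xi_\lambda\|_3\le R\eps^2$, Lemma~\ref{propertiesnorm1:aux} yields $\|\partial_\xi\FF_1[\zeta_\lambda]\|_2\lesssim\eps^2(\kk\eps)^{-2}=\kk^{-2}$ (for $\kk$ large), hence $\|\FF_1[\xi,\eta]-\FF_1[\wt\xi,\wt\eta]\|_5\le\|\eta-\wt\eta\|_5+\frac{C}{\kk^2}\|\xi-\wt\xi\|_3$. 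Feeding this into $\|\GG_1^\aux[h]\|_3\le M\|h\|_5$ and $\|\pa_x\GG_1^\aux[h]\|_4\le M\|h\|_5$ of Lemma~\ref{lemma:operators:aux} gives the $\GG_1^\aux\circ\FF_1$ contribution to the first two estimates. For the term $-\zeta_2\cdot\II[\eta]$, note that $\II[\eta]$ is a \emph{constant}, so $\pa_x\big(-\zeta_2\II[\eta]\big)=-\zeta_2'\,\II[\eta]$; combining $\zeta_2\in\mathcal{Y}_2$, $\zeta_2'\in\mathcal{Y}_3$ with the Lipschitz bound on $\II$ produces a contribution $\lesssim\eps^2\|\eta-\wt\eta\|_5$ to both the $\|\cdot\|_3$ and $\|\cdot\|_4$ norms, absorbed into the $C_2\|\eta-\wt\eta\|_5$ term for $\eps$ small. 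This proves the first two bounds.

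For the second component I write $\mathcal{M}[\xi,\eta,\xi']:=\FF_2[\xi,\eta]=f'(u_0+\xi)\big(u_0+\xi+\eta-f(u_0+\xi)\big)+f''(u_0+\xi)(u_0'+\xi')^2$ and apply the mean value theorem in $(\xi,\eta,\xi')$. The key point is that the three partials at $\zeta_\lambda$ have controlled weighted norms: using $\|u_0+\xi_\lambda\|_1,\|u_0'+\xi_\lambda'\|_2\lesssim 1$ and that $f$ is a cubic polynomial, one checks $\partial_\eta\mathcal{M}[\zeta_\lambda]=f'(u_0+\xi_\lambda)\in\mathcal{Y}_2$, $\partial_{\xi'}\mathcal{M}[\zeta_\lambda]\in\mathcal{Y}_3$ and $\partial_\xi\mathcal{M}[\zeta_\lambda]\in\mathcal{Y}_4$, all with $\mathcal{O}(1)$ norms. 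Multiplying against $\eta-\wt\eta\in\mathcal{Y}_5$, $\xi'-\wt\xi'\in\mathcal{Y}_4$ and $\xi-\wt\xi\in\mathcal{Y}_3$ respectively, each product lands in $\mathcal{Y}_7$; lowering the index from $7$ back to $5$ via Lemma~\ref{propertiesnorm1:aux} supplies a factor $(\kk\eps)^{-2}$, so $\|\FF_2[\xi,\eta]-\FF_2[\wt\xi,\wt\eta]\|_5\lesssim(\kk\eps)^{-2}\|(\xi,\eta)-(\wt\xi,\wt\eta)\|_\times$. Applying $\|\GG_2^\aux[h]\|_5\le M\eps^2\|h\|_5$ and $\|\pa_x\GG_2^\aux[h]\|_5\le M\eps\|h\|_5$ from Lemma~\ref{lemma:operators:aux} then yields $\|\FF_2^\aux[\xi,\eta]-\FF_2^\aux[\wt\xi,\wt\eta]\|_5\lesssim\kk^{-2}\|\cdot\|_\times$ and $\|\pa_x\FF_2^\aux[\xi,\eta]-\pa_x\FF_2^\aux[\wt\xi,\wt\eta]\|_5\lesssim(\eps\kk^2)^{-1}\|\cdot\|_\times$, which are the last two estimates. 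This is exactly where the $\eps$-weight on $\eta'$ in the definition \eqref{def:Banach:aux} of $\mathcal{DY}^2_\ell$ is needed: $\GG_2^\aux$ loses one power of $\eps$ on the derivative, and the weight compensates.

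The argument introduces no new idea beyond Lemma~\ref{lemma:Fhat}; the only genuinely new ingredient is the constant-valued term $-\zeta_2\cdot\II[\eta]$, made harmless by Lemma~\ref{lemma:initialcondition}, together with the bookkeeping that the integration paths defining $\GG_i^\aux$ stay inside $D^\aux_\kk$ (already built into Lemma~\ref{lemma:operators:aux}). The step demanding real care — and the one I would write out in full — is the weight-index accounting for the three partials of $\mathcal{M}$: one must track how the simple poles of $u_0$ are promoted to higher-order poles under $f'$, $f''$ and products, check that each partial lands in the claimed space $\mathcal{Y}_\ell$ with a bound uniform in $\eps$ and $\kk$, and verify that after multiplying by the increments and lowering the index back to $5$ one recovers exactly the $\kk^{-2}$ (respectively $(\eps\kk^2)^{-1}$) smallness, with constants at most polynomial in $R$ so that $C_2$ can be taken independent of $R$, as stated.
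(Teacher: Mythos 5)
Your argument is correct and is precisely the one the paper intends: the authors explicitly state that Lemma~\ref{lemma:Fhat:aux} ``follows exactly the same lines as Lemma~\ref{lemma:Fhat},'' substituting Lemmas~\ref{lemma:operators:aux} and~\ref{lemma:initialcondition} for Lemma~\ref{prop_operators}, and you have filled in exactly that adaptation, including the only genuinely new ingredient (the constant term $-\zeta_2\,\II[\eta]$ handled via the Lipschitz bound on $\II$, and the $\eps$-weight in $\mathcal{DY}^2_5$ compensating the loss of $\eps$ under $\partial_x\GG_2^\aux$). The one loose point is attributing the $\lesssim\eps^2\|\eta-\wt\eta\|_5$ contribution of $-\zeta_2\,\II[\eta]$ to the $C_2$-term: since $\eps\le 1/\kk$ it is cleaner to absorb it into $\frac{C}{\kk^2}\|\cdot\|_\times$, which keeps $C_2$ manifestly independent of $R$; this is cosmetic, not a gap.
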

The proofs of Lemmas \ref{lemma:Fhat0:aux} and \ref{lemma:Fhat:aux}, using Lemmas~\ref{lemma:operators:aux} and~\ref{lemma:initialcondition} follow exactly the same lines as Lemma~\ref{lemma:Fhat0} and~\ref{lemma:Fhat} and are left to the reader. 

As in Section~\ref{sec:outer}, the Lipschitz constant for $\FF^\aux$ obtained in Lemma \ref{lemma:Fhat:aux} is not smaller than one. To overcome this problem we use Theorem~\ref{thm:fixedpoint} to establish that $\FF^\aux$ has a unique fixed point $(\xi^\aux,\eta^\aux)$ belonging to the ball $B(3(C_2+1) b_2 \eps^2)$.  

Let $\wt \xi^\aux, \wt \eta^\aux$ be such that 
\[
\wt \xi^\aux(x)=\xi^{\aux}(-x), \qquad \wt \eta^\aux(x)=\eta^\aux(-x).
\]
It is clear that 
$(\wt \xi^\aux, \wt \eta^\aux) \in B(3(C_2+1) b_2 \eps^2)$ provided the auxiliary domain $D^\aux_\kappa$ is symmetric with respect to $\{\Re x =0\}$ and $\{\Im x=0\}$. Therefore, by uniqueness of the solution of the fixed point equation $(\xi, \eta)= \mathcal{F}^\aux[\xi,\eta]$, in the ball 
$B(3(C_2+1) b_2 \eps^2)$, 
in order to finish the proof of Proposition~\ref{prop:aux:proof}, we only need to argue that $(\wt \xi^\aux , \wt \eta^\aux )$ is also a solution of this fixed point equation.  
For that we emphasize that  
\[
\FF^\aux_1[\wt \xi^\aux,\wt \eta^\aux](x)=\FF^{\aux}_1[\xi^\aux, \eta^\aux](-x).
\] 
Indeed, from definition~\eqref{eq:outer:RHSoperator}, 
\[
\FF_1[\wt \xi^\aux, \wt \eta^\aux](x) = \FF_1[\xi^\aux,\eta^\aux](-x), \qquad 
\FF_2[\wt \xi^\aux, \wt \eta^\aux](x) = \FF_2[\xi^\aux,\eta^\aux](-x)
\]
and from definition~\eqref{def:intoperators-aux} of $\GG_1^\aux,\GG_2^\aux$ and Lemma~\ref{lem:zeta2:welldefined:aux}, denoting $\wt h(x) = h(-x)$, we easily prove that 
\[
\GG_1^\aux[\wt h](x) = \GG_1^\aux[h](-x), \qquad \GG_2^\aux[\wt h](x) = \GG_2^\aux[h](-x).
\]  
In addition, it follows from definition~\eqref{defGfirstintegral} of $\wt G$ and Lemma~\ref{lemma:initialcondition} that $\mathcal{I}[\wt \eta^\aux] = \mathcal{I}[\eta^{\aux}]$
provided
\[
\wt G (\xi_0,0,\eta_0, \eta_0',0)=\wt G(\xi_0,0,\eta_0,-\eta_0',0), \qquad \forall \xi_0,\eta_0,\eta_0' \in \mathbb{R}.  
\]
This completes the proof of Proposition \ref{prop:aux:proof}.
 
\section{The  inner equation}\label{sec:inner}

Here we prove Theorem \ref{thm:inner} with item~\ref{firstitem:thm:inner} proved in Section \ref{sec:proofinner1} and item~\ref{seconditem:thm:inner} proved in Section \ref{sec:proofinner2}.

\subsection{The solutions of the inner equation}\label{sec:proofinner1}

Given $\ell\geq0$ and an analytic function $f: D^{\uns,\inn}_{\theta,\kappa}\rightarrow \CC$, where $D^{\uns,\inn}_{\theta,\kappa}$ is given in~\eqref{innerdomainsol}, consider the norm
\begin{equation}\label{def:innernorm}
\|f\|_{\ell}=\displaystyle\sup_{z\in D^{\uns,\inn}_{\theta,\kappa}}|z^{\ell}f(z)|,
\end{equation}
and the Banach spaces 
\[
\begin{split}
\mathcal{X}_{\ell}&=\{f:D^{\uns,\inn}_{\theta,\kappa}\rightarrow \CC;\ f \textrm{ is a real-analytic function and } \|f\|_{\ell}<\infty \},\\
\mathcal{DX}_{\ell}&=\{f:D^{\uns,\inn}_{\theta,\kappa}\rightarrow \CC;\ f \textrm{ is a real-analytic function and } \|f\|_{\ell}+\|f'\|_{\ell+1}<\infty \}.
\end{split}
\]
We also define the product space
\[
\mathcal{X}_\times=\mathcal{D}\mathcal{X}_{3} \times \mathcal{X}_5
\]
endowed with the norm
\[
\|(\phi,\psi)\|_\times = \max\big \{\|\phi\|_3+ \|\phi'\|_4, \|\psi\|_5\big \}.
\]
The proof of the following lemma can be found in~\cite{Baldoma06}.

\begin{lemma}\label{lemma:propertiesnorm2}
Given  analytic functions $g,h:D^{\uns,\inn}_{\theta,\kappa}\rightarrow\CC$, the following statements hold for some constant $M > 0$ depending only on $\theta$, 
	\begin{enumerate}
		\item If $\ell_1\geq\ell_2\geq 0$, then 		$$\|h\|_{\ell_1-\ell_2}\leq \dfrac{M}{\kappa^{\ell_2}}\|h\|_{\ell_1}.$$			
		\item If $\ell_1,\ell_2\geq 0$, and $\|g\|_{\ell_1},\|h\|_{\ell_2}<\infty$, then
		$$\|g h\|_{\ell_1+\ell_2}\leq \|g\|_{\ell_1}\|h\|_{\ell_2}.$$	
				\item If $h\in\mathcal{X}_{\ell}$ (with respect to the inner domain $ D^{\uns,\inn}_{\theta,\kappa}$), then $\partial_zh\in\mathcal{X}_{\ell+1}$ (with respect to the inner domain $ D^{\uns,\inn}_{2\theta,4\kappa}$), and
		$$\|\partial_zh\|_{\ell+1}\leq M\|h\|_{\ell}.$$		
		\end{enumerate}	
\end{lemma}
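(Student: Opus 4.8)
The statement to prove is Lemma~\ref{lemma:propertiesnorm2}, which collects elementary properties of the weighted norms $\|\cdot\|_\ell$ on the inner domains $D^{\uns,\inn}_{\theta,\kappa}$.

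\medskip

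The plan is to prove the three items directly from the definition~\eqref{def:innernorm} of the norm, using only the geometry of the inner domains~\eqref{innerdomainsol}. The key geometric fact is that on $D^{\uns,\inn}_{\theta,\kappa}$ one has $|z|\geq \kappa \sin\theta'$ for some $\theta'$ depending on $\theta$ (more simply $|z| \geq c\kappa$ for a constant $c=c(\theta)>0$), since the domain is the complement of a cone of half-angle related to $\theta$ shifted up by $\kappa$; hence $\kappa/|z|$ is bounded on the domain, with a bound depending only on $\theta$.

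\medskip

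\textbf{Item (1).} For $\ell_1\geq \ell_2\geq 0$ and $z\in D^{\uns,\inn}_{\theta,\kappa}$, write $|z^{\ell_1-\ell_2}h(z)| = |z|^{-\ell_2}\,|z^{\ell_1}h(z)| \leq |z|^{-\ell_2}\|h\|_{\ell_1}$. Using $|z|\geq c\kappa$ gives $|z|^{-\ell_2} \leq (c\kappa)^{-\ell_2} = M \kappa^{-\ell_2}$ with $M = c^{-\ell_2}$ depending only on $\theta$ (and on $\ell_2$, which is harmless). Taking the supremum over $z$ yields $\|h\|_{\ell_1-\ell_2}\leq M\kappa^{-\ell_2}\|h\|_{\ell_1}$.

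\medskip

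\textbf{Item (2).} This is immediate: for $z\in D^{\uns,\inn}_{\theta,\kappa}$, $|z^{\ell_1+\ell_2} g(z)h(z)| = |z^{\ell_1}g(z)|\,|z^{\ell_2}h(z)| \leq \|g\|_{\ell_1}\|h\|_{\ell_2}$, and taking the supremum gives the product estimate. No constant is needed here.

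\medskip

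\textbf{Item (3).} This is the only step requiring a genuine argument, via Cauchy estimates. Given $z\in D^{\uns,\inn}_{2\theta,4\kappa}$, I want to bound $|z^{\ell+1}\partial_z h(z)|$. The point is that the disc $B(z, r)$ with $r = \delta|z|$ for a suitable small $\delta = \delta(\theta)>0$ is contained in the larger domain $D^{\uns,\inn}_{\theta,\kappa}$; this uses both that shrinking the angle from $2\theta$ to $\theta$ and the level from $4\kappa$ to $\kappa$ leaves room of order $|z|$ around each point of the smaller domain. (One checks that a point at distance $\leq \delta|z|$ from $z$ still satisfies the defining inequality of $D^{\uns,\inn}_{\theta,\kappa}$ for $\delta$ small depending on $\theta$; here the factor $4$ in $4\kappa$ guarantees the level condition survives since $|z|\gtrsim \kappa$.) Then Cauchy's integral formula gives
\[
|\partial_z h(z)| \leq \frac{1}{r}\sup_{w\in B(z,r)}|h(w)| \leq \frac{1}{\delta|z|}\sup_{w\in B(z,r)}\frac{\|h\|_\ell}{|w|^\ell}.
\]
On $B(z,r)$ with $r=\delta|z|$, $\delta<1$, one has $|w|\geq (1-\delta)|z|$, so $|w|^{-\ell}\leq (1-\delta)^{-\ell}|z|^{-\ell}$. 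Hence $|\partial_z h(z)| \leq \delta^{-1}(1-\delta)^{-\ell}\|h\|_\ell |z|^{-\ell-1}$, i.e. $|z^{\ell+1}\partial_z h(z)|\leq M\|h\|_\ell$ with $M=M(\theta)$. Taking the supremum over $z\in D^{\uns,\inn}_{2\theta,4\kappa}$ gives $\|\partial_z h\|_{\ell+1}\leq M\|h\|_\ell$.

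\medskip

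The main (indeed only) obstacle is the geometric inclusion $B(z,\delta|z|)\subset D^{\uns,\inn}_{\theta,\kappa}$ for $z\in D^{\uns,\inn}_{2\theta,4\kappa}$ in item (3), i.e. verifying that the shrinkage of parameters $(2\theta,4\kappa)\to(\theta,\kappa)$ creates a buffer of width proportional to $|z|$. Since the paper states this lemma's proof can be found in~\cite{Baldoma06}, one may alternatively just cite that reference; but the argument above is short enough to include, and the required $\delta(\theta)$ is found by an elementary estimate on the distance from a point of the cone $\{|\Im z| = \tan(2\theta)\Re z + 4\kappa\}$ to the cone $\{|\Im z| = \tan\theta\,\Re z + \kappa\}$.
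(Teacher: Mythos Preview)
Your proof is correct and follows the standard argument. The paper does not give its own proof of this lemma but simply cites \cite{Baldoma06}; the approach there is the same as yours---items (1) and (2) directly from the definition together with the lower bound $|z|\geq \kappa\cos\theta$ on $D^{\uns,\inn}_{\theta,\kappa}$, and item (3) via a Cauchy estimate on a disc $B(z,\delta|z|)\subset D^{\uns,\inn}_{\theta,\kappa}$ for $z\in D^{\uns,\inn}_{2\theta,4\kappa}$. The only point you leave implicit is the explicit verification of that geometric inclusion (which tacitly requires $2\theta<\pi/2$), but you correctly identify it as the sole nontrivial step and describe how to carry it out.
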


The first item in Theorem~\ref{thm:inner} is now rewritten as the following proposition.

\begin{proposition}
    Consider system~\eqref{eq:inner}, namely
    \begin{equation}\label{eq:inner:proof}
    \mathcal{L}_1^\inn[\phi]=\Nin_1[\phi,\psi], \qquad \mathcal{L}_2^\inn[\psi]=\Nin_2[\phi,\psi]
    \end{equation}
    with $\mathcal{L}_1^\inn, \mathcal{L}_2^\inn$ defined in~\eqref{def:diffoperators:inner} and $\Nin_1,\Nin_2$ in~\eqref{eq:inner:RHSoperator}. There exists $\kappa_0$ big enough and a constant $M_7 > 0$ such that for $\kappa>\kappa_0$, equations~\eqref{eq:inner:proof} have solutions $(\phi^{0,\uns},\psi^{0,\uns}) \in \mathcal{X}_\times$ with $\|(\phi^{0,\uns},\psi^{0,\uns})\|_\times \leq M_7 $.
\end{proposition}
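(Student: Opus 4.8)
The plan is to set up a fixed point argument for the system~\eqref{eq:inner:proof} in the product Banach space $\mathcal{X}_\times = \mathcal{DX}_3 \times \mathcal{X}_5$, entirely parallel to the outer-scale construction in Section~\ref{sec:outer}, but now with the inner linear operators $\mathcal{L}_1^\inn = -\partial_z^2 + 6 z^{-2}$ and $\mathcal{L}_2^\inn = \partial_z^2 + 1$. The first step is to exhibit explicit right inverses of $\mathcal{L}_1^\inn$ and $\mathcal{L}_2^\inn$ adapted to the inner domain $\mathcal{D}^{\uns,\inn}_{\theta,\kappa}$. For $\mathcal{L}_1^\inn$ the homogeneous equation has the fundamental system $\{z^3, z^{-2}\}$ (with Wronskian a nonzero constant), so the right inverse is a variation-of-constants operator $\mathcal{G}_1^\inn[h](z) = z^3 \int_{+i\infty}^z s^{-2} h(s)\,ds + \text{(const.)}\, z^{-2}\int_{+i\infty}^z s^3 h(s)\,ds$, with integration paths chosen vertically inside $\mathcal{D}^{\uns,\inn}_{\theta,\kappa}$ so that the decay of $h$ (which lives in $\mathcal{X}_\ell$, $\ell$ large) controls both integrals. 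For $\mathcal{L}_2^\inn$ one uses $\mathcal{G}_2^\inn[h](z) = \frac{1}{2i} e^{iz}\int_{+i\infty}^z e^{-is} h(s)\,ds - \frac{1}{2i} e^{-iz}\int_{+i\infty}^z e^{is} h(s)\,ds$, again with vertical paths going to $+i\infty$ so that the exponentials decay. I expect to prove, as the inner analogue of Lemma~\ref{prop_operators}, the bounds $\|\mathcal{G}_1^\inn[h]\|_{\ell-2} \lesssim \|h\|_\ell$, $\|\partial_z \mathcal{G}_1^\inn[h]\|_{\ell-1} \lesssim \|h\|_\ell$ and $\|\mathcal{G}_2^\inn[h]\|_\ell \lesssim \|h\|_\ell$ (note: no $\eps^2$ gain here, unlike the outer case, since the fast frequency has been normalized to $1$); these follow from Lemma~\ref{lemma:propertiesnorm2} and estimation of the oscillatory integrals along vertical rays.

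The second step is to recast~\eqref{eq:inner:proof} as the fixed point equation $(\phi,\psi) = \mathcal{F}^\inn[\phi,\psi]$ with $\mathcal{F}^\inn = (\mathcal{G}_1^\inn \circ \Nin_1, \mathcal{G}_2^\inn \circ \Nin_2)$, and to verify the hypotheses of the abstract contraction Theorem~\ref{thm:fixedpoint}. One first checks the ``zeroth order'' estimate: $\Nin_1[0,0] = 0$ and $\Nin_2[0,0] = -6 z^{-2}(2 z^{-3}) - 12 z^{-1} z^{-4} = -24 z^{-5}$, which lies in $\mathcal{X}_5$ with $\|\Nin_2[0,0]\|_5 \lesssim 1$, so $\|\mathcal{F}^\inn[0,0]\|_\times \lesssim 1$. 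Next, for $(\phi,\psi),(\wt\phi,\wt\psi)$ in a ball $B(R) \subset \mathcal{X}_\times$ of fixed radius $R$, one estimates the Lipschitz constants of $\Nin_1$ and $\Nin_2$: differentiating the explicit expressions in~\eqref{eq:inner:RHSoperator} and using the algebra property and the gain $\|h\|_{\ell_1 - \ell_2} \lesssim \kappa^{-\ell_2}\|h\|_{\ell_1}$ from Lemma~\ref{lemma:propertiesnorm2}, the off-diagonal dependence of $\Nin_1$ on $\psi$ contributes an $O(1)$ constant $\mathbf{c}$, while all remaining terms (the $\phi^2, \phi^3$ terms in $\Nin_1$, and all of $\Nin_2$, which is quadratic in $\psi$ and in $z^{-1}+\phi$, $z^{-2}+\partial_z\phi$) pick up at least one extra power of $z^{-1}$ relative to the target norm and hence are $O(\kappa^{-1})$ small. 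Choosing $\kappa_0$ large enough makes $L_1 + \mathbf{c}(L_2 + L_3)$ and $L_2 + L_3$ both $\le \tfrac13$, so Theorem~\ref{thm:fixedpoint} yields a unique fixed point $(\phi^{0,\uns},\psi^{0,\uns})$ in the ball $B(3(\mathbf{c}+1)\|\mathcal{F}^\inn[0,0]\|_\times) \subset \mathcal{X}_\times$; real analyticity is inherited because the operators preserve it, and setting $M_7 = 3(\mathbf{c}+1)\cdot(\text{const})$ gives the claimed bound.

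The main obstacle I anticipate is the estimation of the oscillatory integrals defining $\mathcal{G}_2^\inn$ on the \emph{full} inner domain $\mathcal{D}^{\uns,\inn}_{\theta,\kappa}$, which is an unbounded sectorial region. Along a vertical ray going up to $+i\infty$ the kernel $e^{\mp is}$ decays like $e^{-\Im s}$ away from the real axis, but near the ``bottom'' of the domain (where $\Im z$ is bounded and $|z|$ large because $\Re z$ is large and negative) one must instead exploit the polynomial decay of $h \in \mathcal{X}_\ell$, i.e. $|h(s)| \lesssim |s|^{-\ell}$, and show that $\int |s|^{-\ell}\,|ds| \lesssim |z|^{-\ell+1}$ along the chosen contour while the prefactor $|e^{\pm iz}|$ stays bounded — this requires a careful choice of integration path (e.g. first horizontal, then vertical, staying inside $\mathcal{D}^{\uns,\inn}_{\theta,\kappa}$) and a somewhat delicate case analysis depending on the position of $z$ in the domain. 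This is exactly the kind of estimate carried out for the analogous operator in the outer scale and in~\cite{Baldoma06,GomideGSZ22}, so the techniques are available; the bookkeeping with the weight $|z|^\ell$ and the angle $\theta$ is the part that needs genuine care. Everything else — the right inverse for $\mathcal{L}_1^\inn$, the algebra of the norms, and the contraction estimates — is routine once the operator bounds are in hand.
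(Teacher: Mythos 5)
Your overall strategy is exactly the paper's: right inverses $\GG_1^\inn,\GG_2^\inn$ for $\LL_1^\inn,\LL_2^\inn$, the fixed-point operator $\FF^\inn=(\GG_1^\inn\circ\Nin_1,\GG_2^\inn\circ\Nin_2)$, the zeroth-order computation $\Nin_2[0,0]=-24z^{-5}$, and an application of Theorem~\ref{thm:fixedpoint} with the $-1$ in $\Nin_1$ contributing an $O(1)$ constant $\mathbf{c}$ and everything else gaining a power of $\kappa^{-1}$. The abstract scheme is sound and matches the paper step by step.

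However, your concrete definition of $\GG_2^\inn$ is wrong, and this is not a bookkeeping issue. You propose
\[
\GG_2^\inn[h](z) = \tfrac{1}{2i} e^{iz}\int_{+i\infty}^z e^{-is} h(s)\,ds - \tfrac{1}{2i} e^{-iz}\int_{+i\infty}^z e^{is} h(s)\,ds,
\]
and justify both paths by saying ``the exponentials decay.'' They do not: along a vertical ray with $\Im s\to+\infty$ one has $|e^{-is}|=e^{\Im s}\to\infty$, so the first integral is \emph{divergent} for any $h$ with only polynomial decay, which is all $\mathcal{X}_\ell$ gives you. Only $e^{is}$ decays toward $+i\infty$; $e^{-is}$ decays toward $-i\infty$. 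There is no single vertical endpoint that makes both kernels decay. The paper's resolution is to integrate both pieces along the \emph{horizontal} ray $s=z+t$, $t\in(-\infty,0]$ (this is what ``$\int_{-\infty}^z$'' means in~\eqref{def:J}, by the same convention as in the outer operator~\eqref{def:intoperators-2}): there $|e^{\pm is}|$ is constant along the path, convergence comes entirely from the polynomial decay of $h\in\mathcal{X}_\ell$ with $\ell>1$, and this is consistent with the fact that $\cD^{\uns,\inn}_{\theta,\kappa}$ opens to the left so the horizontal ray stays in the domain for every $z$. Your vertical path also has a geometric problem: for $z$ in the lower component of $\cD^{\uns,\inn}_{\theta,\kappa}$ with $\Re z$ moderate, the segment to $+i\infty$ crosses the excluded wedge around the positive real axis.

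The same path choice also matters for $\GG_1^\inn$ for a different reason: both integrals there converge (the integrands are polynomially decaying), so the issue is not divergence but the boundary condition the inverse encodes. Integrating from $-\infty$ horizontally is what selects the particular solution that vanishes as $\Re z\to-\infty$ within $\cD^{\uns,\inn}_{\theta,\kappa}$, which is the inner analogue of the unstable boundary condition; a vertical endpoint changes the solution by a nontrivial element of $\ker\LL_1^\inn$. Once you replace $+i\infty$ by $-\infty$ (horizontal) in both $\GG_1^\inn$ and $\GG_2^\inn$, the rest of your argument — the norm estimates, the algebra via Lemma~\ref{lemma:propertiesnorm2}, the Lipschitz bounds, and the application of Theorem~\ref{thm:fixedpoint} — is exactly the paper's proof and goes through without further change.
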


As in Sections~\ref{sec:outer} and~\ref{sec:aux}, the suitable right inverse of the linear operators $\mathcal{L}_1^\inn,\mathcal{L}_2^\inn$ are given by the linear operators
\begin{equation}\label{def:J}
\begin{split}
\mathcal{G}_1^\inn[h](z)=&\,\dfrac{z^3}{5}\displaystyle\int_{-\infty}^z\dfrac{h(s)}{s^2}ds-\dfrac{1}{5z^2}\displaystyle\int_{-\infty}^z s^3h(s)ds
\\
\mathcal{G}_2^\inn[h](z)= &\,\dfrac{1}{2i}\displaystyle\int_{-\infty}^z e^{-i(s-z)}h(s)ds-\dfrac{1}{2i}\displaystyle\int_{-\infty}^z e^{i(s-z)}h(s)ds.
\end{split}
\end{equation}
The following lemma provides bounds for the linear operator $\mathcal{G}_{1,2}^{\mathrm{in}}$. Its proof is straightforward
from Proposition 5.2 in \cite{GomideGSZ22} (see also \cite{Baldoma06,BaldomaGG22,BaldomaS08}).

\begin{lemma}\label{lemma:operatorsinner}
	Consider $\kappa\geq 1$ big enough. Given $\ell> 2$, the operators $\mathcal{G}_1^\inn:\mathcal{X}_{\ell+2}\rightarrow \mathcal{X}_{\ell}$ and $\mathcal{G}_2^\inn:\mathcal{X}_{\ell}\rightarrow \mathcal{X}_{\ell}$ are well defined and the following statements hold.
	\begin{enumerate}
		\item $\mathcal{G}_i^\inn \circ\LL_i^\inn[h]=\LL_i^\inn\circ\mathcal{G}_i^\inn[h]=h$, $i=1,2$.
		\item For any $\ell >4$, there exists a constant $M>0$ independent of $\kappa$ such that, for every $h \in\mathcal{X}_{\ell}$,
		\begin{equation*}
            \begin{split}
  		\left\| \mathcal{G}_1^\inn[h]\right\|_{\ell-2}&\leq M\|h\|_{\ell},\\
              \left\|\pa_z\mathcal{G}_1^\inn[h]\right\|_{\ell-1}&\leq M\|h\|_{\ell}.
            \end{split}
            \end{equation*}
		\item For any $\ell >1$, there exists a constant $M>0$ independent of $\kappa$ such that, for every $h \in\mathcal{X}_{\ell}$,
		\begin{equation*}
            \left\|\mathcal{G}_2^\inn[h]\right\|_{\ell}\leq M\|h\|_{\ell}.	
  \end{equation*}		
		\end{enumerate}
	\end{lemma}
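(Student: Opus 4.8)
The plan is to split the proof into two independent parts, exactly as in Proposition~5.2 of~\cite{GomideGSZ22} (see also~\cite{Baldoma06,BaldomaGG22,BaldomaS08}): the algebraic fact that $\mathcal{G}_i^\inn$ is a two-sided inverse of $\mathcal{L}_i^\inn$, and the quantitative bounds on the scale $\mathcal{X}_\ell$, the only new ingredient being the bookkeeping adapted to the inner domains $D^{\uns,\inn}_{\theta,\kappa}$ of~\eqref{innerdomainsol}. First I would record the relevant fundamental systems: $z\mapsto z^3$ and $z\mapsto z^{-2}$ span $\ker\mathcal{L}_1^\inn$ with constant Wronskian $-5$, while $z\mapsto e^{\pm i z}$ span $\ker\mathcal{L}_2^\inn$ with Wronskian $2i$. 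The operators in~\eqref{def:J} are then precisely the corresponding variation-of-constants operators with the free base point sent to $-\infty$ inside $D^{\uns,\inn}_{\theta,\kappa}$, so $\mathcal{L}_i^\inn\circ\mathcal{G}_i^\inn=\mathrm{Id}$ follows by differentiating under the integral sign, where the endpoint contributions cancel in pairs, and $\mathcal{G}_i^\inn\circ\mathcal{L}_i^\inn=\mathrm{Id}$ follows by integrating by parts and discarding the homogeneous terms, which vanish because $h\in\mathcal{X}_\ell$ decays at the base point.

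For the bounds on $\mathcal{G}_1^\inn$ I would fix $h\in\mathcal{X}_\ell$ with $\ell>4$, use $|h(s)|\le\|h\|_\ell\,|s|^{-\ell}$, and integrate from $-\infty$ to $z$ along a path inside $D^{\uns,\inn}_{\theta,\kappa}$ on which $|s|\gtrsim|z|$; such a path exists because, outside a bounded set, the inner domain is a wedge opening to the left. Then $\int_{-\infty}^z|s|^{-\ell-2}\,|ds|\lesssim|z|^{-\ell-1}$, convergent at $-\infty$ since $\ell+2>1$, and $\int_{-\infty}^z|s|^{3-\ell}\,|ds|\lesssim|z|^{4-\ell}$, convergent at $-\infty$ \emph{precisely because} $\ell-3>1$, which is where the hypothesis $\ell>4$ enters. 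Multiplying by $|z|^3/5$ and $|z|^{-2}/5$ gives $|\mathcal{G}_1^\inn[h](z)|\lesssim\|h\|_\ell\,|z|^{2-\ell}$, i.e.\ $\|\mathcal{G}_1^\inn[h]\|_{\ell-2}\lesssim\|h\|_\ell$. For the derivative I would differentiate~\eqref{def:J}: the endpoint terms cancel again and one is left with $\partial_z\mathcal{G}_1^\inn[h](z)=\tfrac{3}{5}z^{2}\int_{-\infty}^z s^{-2}h(s)\,ds+\tfrac{2}{5}z^{-3}\int_{-\infty}^z s^{3}h(s)\,ds$, to which the same two integral bounds apply, yielding $\|\partial_z\mathcal{G}_1^\inn[h]\|_{\ell-1}\lesssim\|h\|_\ell$ (alternatively the derivative bound follows from $\partial_z^2\mathcal{G}_1^\inn[h]=6z^{-2}\mathcal{G}_1^\inn[h]-h$ together with Lemma~\ref{lemma:propertiesnorm2} and a Cauchy estimate).

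For $\mathcal{G}_2^\inn$ I would fix $h\in\mathcal{X}_\ell$ with $\ell>1$ and split $\mathcal{G}_2^\inn[h]=\tfrac{1}{2i}(I_--I_+)$ into the pieces carrying $e^{-i(s-z)}$ and $e^{i(s-z)}$. Each piece must be integrated along its own contour in $D^{\uns,\inn}_{\theta,\kappa}$: for $I_-$ a ray $s=z-e^{i\delta}\tau$, $\tau\ge0$, with $\delta=\delta(z)\in(0,\tfrac{\pi}{2})$ chosen so that the ray leaves the excluded wedge to the left while $|e^{-i(s-z)}|=e^{\Im(s-z)}\le e^{-c\tau}$ decays, and symmetrically $s=z-e^{-i\delta}\tau$ for $I_+$. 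Along such rays $|s|\gtrsim|z|$, so $|I_\mp(z)|\lesssim\|h\|_\ell\,|z|^{-\ell}\int_0^\infty e^{-c\tau}\,d\tau\lesssim\|h\|_\ell\,|z|^{-\ell}$, whence $\|\mathcal{G}_2^\inn[h]\|_\ell\lesssim\|h\|_\ell$; the equivalence of these contours with the formal base point $-\infty$ in~\eqref{def:J} is Cauchy's theorem, the arcs at infinity being negligible once $\ell>1$. The stable case $D^{\sta,\inn}_{\theta,\kappa}$ then follows from the involution $z\mapsto-z$. I expect the one genuinely delicate point to be exactly this contour geometry: one must verify that for every $z\in D^{\uns,\inn}_{\theta,\kappa}$ the rays above stay in the domain and keep the exponential factors bounded with a rate $c$ uniform in $z$ (which forces the choice of $\delta$ to depend on which arm of the domain $z$ lies in), whereas once the contours are fixed the remaining estimates are elementary; this is precisely the part worked out in detail in~\cite[Prop.~5.2]{GomideGSZ22}.
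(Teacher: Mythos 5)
Your proposal is correct and follows essentially the same route the paper intends: the paper itself only cites Proposition~5.2 of \cite{GomideGSZ22} (and \cite{Baldoma06,BaldomaGG22,BaldomaS08}) for this lemma, and your argument reconstructs exactly what is in that reference — variation of constants with the fundamental systems $\{z^3, z^{-2}\}$ and $\{e^{\pm iz}\}$, the base point pushed to $-\infty$ inside the inner domain, algebraic decay estimates along contours with $|s|\gtrsim|z|$ for $\mathcal{G}_1^\inn$, and a Cauchy deformation to a $z$-dependent ray along which the exponential kernel decays for $\mathcal{G}_2^\inn$. You also correctly identify where the hypothesis $\ell>4$ enters (convergence of $\int^z_{-\infty}s^3h(s)\,ds$ at $-\infty$) and that the genuinely delicate point is the contour geometry making the rate $c$ in $e^{-c\tau}$ uniform over the domain; the one thing stated a bit loosely is that verifying $\mathcal{G}_1^\inn\circ\mathcal{L}_1^\inn=\mathrm{Id}$ needs decay of $h'$ as well as $h$ at $-\infty$ (which is where $\ell>2$ is used, together with a Cauchy estimate as in Lemma~\ref{lemma:propertiesnorm2}), but this is a minor omission and does not affect the argument.
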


We use the integral operators in \eqref{def:J} in order to obtain solutions of \eqref{eq:inner:proof} with certain decay as $|z|\to \infty$ (within $D^{\us,\inn}_{\theta,\kappa}$, $\us=\uns,\sta$). Indeed, such solutions must be fixed points of the operator
\begin{equation}\label{def:operatorFhatInner}
  \FF^\inn= \big(\GG_1^\inn\circ\Nin_1 ,\GG_2^\inn\circ\Nin_2 \big ),
\end{equation}
where the operators $\Nin_1, \Nin_2$ are those introduced in~\eqref{eq:inner:RHSoperator}.

The following two lemmas give properties of the operator $ \FF^\inn$ when analyzed in the Banach space $\XX_\times=\mathcal{DX}_{3}\times\mathcal{X}_{5}$. 
The proofs of these two lemmas are straightforward using the definition of $\Nin_1$ and $\Nin_2$ in \eqref{eq:inner:RHSoperator}, see  \eqref{def:operatorFhatInner}, and Lemmas \ref{lemma:operatorsinner} and \ref{lemma:propertiesnorm2}.

\begin{lemma}\label{lemma:Fhat0:inner}
There exists a constant $b_3>0$ independent of $\kappa$ such that 
\[
 \|\FF^\inn[0,0]\|_\times\leq b_3.
\]
\end{lemma}
 
\begin{lemma}\label{lemma:Fhat:inner}
There exists $C_3>0$ such that for all $R>0$, if $(\phi,\psi),(\wt \phi, \wt \psi)\in B(R)\subset \XX_\times$, the operator $\FF^\inn$ in \eqref{def:operatorFhatInner} satisfies
\[
\begin{split}
 \left\|\FF_1^\inn[\phi,\psi]-\FF_1^\inn[\wt \phi,\wt \psi]\right\|_{3}&\leq C_3\|\psi-\psi'\|_{5}+\frac{C}{\kk^2}\|(\phi,\psi)-(\wt \phi, \wt \psi)\|_\inn,\\
 \left\|\pa_z\FF_1^\inn[\phi,\psi]-\pa_z\FF_1^\inn[\wt \phi, \wt \psi]\right\|_{4}&\leq C_3\|\psi-\psi'\|_{5}+\frac{C}{\kk^2}\|(\phi,\psi)-(\wt \phi,\wt \psi)\|_\inn,\\
\left\|\FF_2^\inn[\phi,\psi]-\FF_2^\inn[\wt \phi, \wt \psi]\right\|_{5}&\leq \frac{C}{\kk^2}\|(\phi,\psi)-(\wt \phi, \wt \psi)\|_\inn,
 \end{split} 
 \]
 for some constant $C=C(R)>0$ independent of $\kk$.
\end{lemma}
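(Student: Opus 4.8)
The proof will run in close parallel with that of Lemmas~\ref{lemma:Fhat} and~\ref{lemma:Fhat:aux}: a mean value theorem reduction, pointwise bounds on the partial derivatives of $\Nin_1,\Nin_2$ in the norms $\|\cdot\|_\ell$ of Lemma~\ref{lemma:propertiesnorm2}, and the mapping properties of $\mathcal{G}_1^\inn,\mathcal{G}_2^\inn$ from Lemma~\ref{lemma:operatorsinner}; the only new input is the explicit algebraic form of the nonlinearities~\eqref{eq:inner:RHSoperator}. I would first fix $(\phi,\psi),(\wt\phi,\wt\psi)\in B(R)\subset\XX_\times$, set $(\phi_\lambda,\psi_\lambda)=(\wt\phi,\wt\psi)+\lambda\big((\phi,\psi)-(\wt\phi,\wt\psi)\big)$ for $\lambda\in[0,1]$, and record the key preliminary fact: by Lemma~\ref{lemma:propertiesnorm2}(1) one has $\phi_\lambda\in\mathcal{X}_1$ and $\pa_z\phi_\lambda\in\mathcal{X}_2$ with norms $\lesssim R\kappa^{-2}$, so that $w_\lambda:=z^{-1}+\phi_\lambda\in\mathcal{X}_1$ and $p_\lambda:=-z^{-2}+\pa_z\phi_\lambda\in\mathcal{X}_2$ have norms bounded by an absolute constant once $\kappa$ is large. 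In other words $w_\lambda,p_\lambda$ behave to leading order like $z^{-1},-z^{-2}$, which is what makes all the estimates below uniform.

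For the first component, the mean value theorem gives
\[
\Nin_1[\phi,\psi]-\Nin_1[\wt\phi,\wt\psi]=-(\psi-\wt\psi)+\int_0^1\partial_\phi\Nin_1[\phi_\lambda,\psi_\lambda]\,(\phi-\wt\phi)\,d\lambda,
\]
with $\partial_\phi\Nin_1=-12z^{-1}\phi_\lambda-6\phi_\lambda^2\in\mathcal{X}_4$ of norm $\lesssim R$. Hence the $\phi$--part of the difference lies in $\mathcal{X}_7$ with norm $\lesssim R\,\|\phi-\wt\phi\|_3$, while the $\psi$--part lies in $\mathcal{X}_5$. Applying $\mathcal{G}_1^\inn$ and $\pa_z\mathcal{G}_1^\inn$ through Lemma~\ref{lemma:operatorsinner}(2) and using the embedding $\|\cdot\|_3\le M\kappa^{-2}\|\cdot\|_5$ to absorb the $\mathcal{X}_7$--part yields the first two inequalities of the statement, with $C_3=M$ coming from the linear term $-\psi$ and therefore independent of $R$.

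For the second component I would write $\Nin_2=-6w^2\psi-12w^5-12wp^2$ and differentiate: $\partial_\psi\Nin_2=-6w_\lambda^2\in\mathcal{X}_2$, $\partial_\phi\Nin_2=-12w_\lambda\psi_\lambda-60w_\lambda^4-12p_\lambda^2\in\mathcal{X}_4$, and $\partial_{\pa_z\phi}\Nin_2=-24w_\lambda p_\lambda\in\mathcal{X}_3$, all with norms $\lesssim C(R)$ by the preliminary step. Since $\psi-\wt\psi\in\mathcal{X}_5$, $\phi-\wt\phi\in\mathcal{X}_3$ and $\pa_z\phi-\pa_z\wt\phi\in\mathcal{X}_4$ with $\|\pa_z\phi-\pa_z\wt\phi\|_4\le\|(\phi,\psi)-(\wt\phi,\wt\psi)\|_\times$, each of the three terms of $\Nin_2[\phi,\psi]-\Nin_2[\wt\phi,\wt\psi]$ lies in $\mathcal{X}_7$ with norm $\lesssim C(R)\,\|(\phi,\psi)-(\wt\phi,\wt\psi)\|_\times$. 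Applying $\mathcal{G}_2^\inn$, which preserves the index (Lemma~\ref{lemma:operatorsinner}(3)), followed by $\|\cdot\|_5\le M\kappa^{-2}\|\cdot\|_7$, gives the last inequality.

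The only mildly delicate point is combinatorial: $\Nin_2$ contains $w=z^{-1}+\phi$ up to the fifth power and the term $wp^2$ with $p=-z^{-2}+\pa_z\phi$, so one must expand carefully and track the decay index of every monomial. There is no analytic obstacle, since the smallness in $\kappa$ is always supplied by the two extra powers of decay — equivalently, by an explicit factor $z^{-1}$ or $z^{-2}$, or by a factor $\phi=O(R\kappa^{-3})$ — that each difference term carries relative to $\mathcal{X}_5$. Together with Lemma~\ref{lemma:Fhat0:inner}, the estimates above let us invoke Theorem~\ref{thm:fixedpoint} (with $\mathbf{c}=C_3$ and $L_1=L_2=L_3=C\kappa^{-2}$) to produce, for $\kappa$ large, the unique fixed point of $\FF^\inn$ in the relevant ball, which is the solution claimed in the Proposition and hence in item~\ref{firstitem:thm:inner} of Theorem~\ref{thm:inner}.
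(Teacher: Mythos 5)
Your proof is correct and matches exactly what the paper intends: it applies the mean value theorem to $\Nin_1,\Nin_2$, tracks the decay index of each factor via Lemma~\ref{lemma:propertiesnorm2} (using that $z^{-1}+\phi_\lambda$ and $-z^{-2}+\partial_z\phi_\lambda$ lie in $\mathcal{X}_1$ and $\mathcal{X}_2$ with $O(1)$ norm on the ball), and then invokes the mapping properties of $\mathcal{G}_1^\inn,\mathcal{G}_2^\inn$ from Lemma~\ref{lemma:operatorsinner} together with the $\kappa^{-2}$ gain from losing two orders of decay. This is precisely the ``straightforward'' argument the paper refers to, and your identification of the $-\psi$ term as the sole source of the $\kappa$-independent constant $C_3$ is the key structural point.
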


We use again Theorem~\ref{thm:fixedpoint} to conclude the existence of a fixed point of $(\phi,\psi)=\mathcal{F}^\inn[\phi,\psi]$ belonging to $B(3(C_3+1)b_3) \subset \mathcal{X}_\times$. This fixed point is the function given in item~\ref{firstitem:thm:inner} of Theorem \ref{thm:inner}. Moreover, by construction it satisfies the stated estimates and they are real analytic functions. The symmetry is a consequence of the reversibility of equation  \eqref{eq:inner} with respect to \eqref{def:symmetryinner}

\subsection{The difference between the solutions of the inner equation}\label{sec:proofinner2}

To complete the proof of Theorem \ref{thm:inner}, we analyze the differences 
$$
\Delta\phi^0(z)= \phi^{0,\uns}(z)-\phi^{0,\sta}(z),\qquad \Delta\psi^0(z)= \psi^{0,\uns}(z)-\psi^{0,\sta}(z),
$$
for $z\in \mathcal{R}^{\inn,+}_{\theta,\kappa}$ with 
$$\mathcal{R}^{\inn,+}_{\theta,\kappa}= D^{\uns,\inn}_{\theta,\kappa}\cap D^{\sta, \inn}_{\theta,\kappa}\cap\{ z\in i\R \textrm{ and }\Im(z)<0 \}.
$$

Given an analytic function $f:\mathcal{R}^{\inn,+}_{\theta,\kappa}\rightarrow\CC$, we define the norm
\begin{equation*}
\label{expnorm}
\|f\|_{\ell,\exp}=\displaystyle\sup_{z\in\mathcal{R}^{\inn,+}_{\theta,\kappa}}|z^{\ell}e^{iz} f(z)|
\end{equation*}
and the Banach spaces
\begin{equation*}
\begin{split}
\mathcal{Z}_{\ell,\exp}&=\left\{f:\mathcal{R}^{\inn,+}_{\theta,\kappa}\rightarrow\CC;\ \|f\|_{\ell,\exp}<\infty\right\},\\
\mathcal{DZ}_{\ell,\exp}&=\left\{f:\mathcal{R}^{\inn,+}_{\theta,\kappa}\rightarrow\CC;\ \|f\|_{\ell,\exp} +\|f'\|_{\ell,\exp}<\infty\right\}.
\end{split}
\end{equation*}
We will consider the product Banach space 
\begin{equation*}
\mathcal{Z}_{\times,\exp}=\mathcal{DZ}_{0,\exp}\times \mathcal{Z}_{0,\exp}
\end{equation*}
and denote by $\|\cdot \|_{\times,\exp}$ the associated norm:
\[
\|(\phi,\psi)\|_{\times ,\eps} = \max\{\|\phi\|_{0,\exp} + \|\phi'\|_{0,\exp}, \|\psi\|_{0,\exp}\}.
\]
It can be easily seen that, if $f\in\XX_{\ell_1}$ and $g\in\ZZZ_{\ell_2,\exp}$, then $fg\in\ZZZ_{\ell_1+\ell_2,\exp}$ and $\|fg\|_{\ell_1+\ell_2,\exp}\leq \|f\|_{\ell_1} \|g\|_{\ell_2,\exp}$.

The second item in Theorem~\ref{thm:inner} can be rewritten as the following proposition, which will be proved in the rest of this section.

\begin{proposition}\label{prop:difinner}
There exist $\Theta \in \mathbb{R}$ and $\kappa_0, M_8>0$  such that for $\kappa>\kappa_0$, $\Delta\phi^0, \Delta\psi^0 \in \mathcal{D}\mathcal{Z}_{0,\exp}$ and they satisfy
\begin{align*}
 \|\Delta \phi^0 +\Theta e^{-iz}\|_{1,\exp} + \|\partial_z \Delta \phi^0-i\Theta e^{-iz}\|_{1,\exp}&\leq M_8 |\Theta|, \\ 
 \|\Delta \psi^0 -\Theta e^{-iz}\|_{1,\exp} + \|\partial_z \Delta \psi^0+i \Theta e^{-iz}\|_{1,\exp}& \leq M_8 |\Theta|.
\end{align*} 
\end{proposition}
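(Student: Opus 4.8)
The plan is to study the difference $(\Delta\phi^0,\Delta\psi^0)$ as a solution of the linearized inner equation and run a second fixed point argument, now in the exponentially weighted spaces $\mathcal{Z}_{\ell,\exp}$. First I would write down the equation satisfied by $(\Delta\phi^0,\Delta\psi^0)$: since $(\phi^{0,\uns},\psi^{0,\uns})$ and $(\phi^{0,\sta},\psi^{0,\sta})$ both solve \eqref{eq:inner:proof}, subtracting and using the mean value theorem gives a linear system of the form $\LL_1^\inn\Delta\phi^0 = -\Delta\psi^0 + A(z)\Delta\phi^0$, $\LL_2^\inn\Delta\psi^0 = B(z)\Delta\phi^0 + C(z)\Delta\phi^0{}' + D(z)\Delta\psi^0$, where $A,B,C,D$ are built from $\phi^{0,\us},\psi^{0,\us}$ and hence, by item~\ref{firstitem:thm:inner} of Theorem~\ref{thm:inner} and Lemma~\ref{lemma:propertiesnorm2}, have norms $\|A\|_{3},\|B\|_{3},\|C\|_{2},\|D\|_{2}\lesssim 1$, with the crucial gain of powers of $1/\kappa$ when $\kappa$ is large. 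The homogeneous equation $\LL_2^\inn\psi=0$ has solutions $e^{\pm iz}$; in the region $\mathcal{R}^{\inn,+}_{\theta,\kappa}$ (the negative imaginary axis, far from the origin) the solution $e^{-iz}$ is the one that decays, so the expected leading behavior is $\Delta\psi^0\sim \Theta e^{-iz}$, $\Delta\phi^0\sim -\Theta e^{-iz}$ (the relation between the two leading coefficients is forced by the first equation, where $\LL_1^\inn(e^{-iz})=-e^{-iz}+\tfrac{6}{z^2}e^{-iz}$, so to leading order $e^{-iz}$ solves $\LL_1^\inn\phi=-\psi$ with $\psi=-e^{-iz}$ — matching the signs in \eqref{diffinnersol}).

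The second step is to set up the fixed point. I would fix a reference solution $\phi_{\mathrm{h}}(z)=-e^{-iz}$, $\psi_{\mathrm{h}}(z)=e^{-iz}$ of the homogeneous part and look for $\Delta\phi^0 = \Theta\phi_{\mathrm{h}} + \wt\phi$, $\Delta\psi^0 = \Theta\psi_{\mathrm{h}} + \wt\psi$ with $(\wt\phi,\wt\psi)$ a higher-order correction. Using appropriate right inverses of $\LL_1^\inn$ and $\LL_2^\inn$ adapted to this region (analogous to $\GG_1^\inn,\GG_2^\inn$ in \eqref{def:J} but with integration paths going to $-i\infty$ along $\mathcal{R}^{\inn,+}_{\theta,\kappa}$, so that the operators map $\mathcal{Z}_{\ell,\exp}$ into itself or gain decay), the correction $(\wt\phi,\wt\psi)$ solves a fixed point equation $\mathbf{F}^{\exp}[\wt\phi,\wt\psi]=(\wt\phi,\wt\psi)$ whose nonautonomous coefficients carry factors of $1/\kappa$. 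Theorem~\ref{thm:fixedpoint} then yields a unique small solution in a ball of $\mathcal{Z}_{\times,\exp}$, and tracking norms gives $\|\wt\phi\|_{1,\exp},\|\wt\psi\|_{1,\exp}\lesssim |\Theta|$, i.e. the bounds claimed in the proposition with $\chi_j = \wt\phi/(\Theta e^{-iz})$ etc. The constant $\Theta$ itself is then \emph{defined} as the coefficient of $e^{-iz}$ extracted from $\Delta\psi^0$ (equivalently, as a suitable limit $\lim_{\Im z\to-\infty} e^{iz}\Delta\psi^0(z)$ along the imaginary axis); that it is real follows from the reversibility \eqref{def:symmetryinner} together with \eqref{def:symmetryinnersol}, which forces $\Delta\phi^0(z) = -\Delta\phi^0(-\bar z)\overline{\phantom{x}}$-type symmetry pinning $\Theta\in\RR$, and item (3) of Theorem~\ref{thm:inner} (that $\Theta=0$ iff $\Delta\phi^0\equiv 0$) is immediate from the asymptotic formula once $\Theta$ is so defined.

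The main obstacle is the construction and estimation of the right inverses in the exponentially weighted norm on the \emph{unbounded} region $\mathcal{R}^{\inn,+}_{\theta,\kappa}$: one must choose the integration contours so that $e^{-iz}$ is genuinely the decaying mode and so that the operator $\GG_2^{\inn}$ acting on a function in $\mathcal{Z}_{\ell,\exp}$ does not lose the $e^{-iz}$ weight (this is where the geometry of the wedge $|\Im z|>\tan\theta\,\Re z+\kappa$ intersected with $i\RR_{<0}$ matters, guaranteeing $\Re(iz)<0$ uniformly so $|e^{-iz}|$ is the small factor), while $\GG_1^\inn$ contributes the algebraic factor $z^3$ that must be absorbed by the gain $1/\kappa$ and the $z^{-2}$ inside. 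A secondary subtlety is making the bootstrap consistent: the coefficient $C(z)$ multiplies $\Delta\phi^0{}'$, so one needs the derivative estimate $\|\partial_z\wt\phi\|_{1,\exp}\lesssim|\Theta|$ to close, which is why the space $\mathcal{DZ}_{0,\exp}$ carries the derivative norm; differentiating the fixed point equation and using Lemma~\ref{lemma:propertiesnorm2}(3) handles this, but care is needed that differentiation does not degrade the $\kappa$-smallness. Once these are in place, the fixed point theorem delivers existence, uniqueness, the asymptotics \eqref{diffinnersol}, and the reality of $\Theta$, completing the proof of Proposition~\ref{prop:difinner} and hence of Theorem~\ref{thm:inner}.
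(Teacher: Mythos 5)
There is a genuine gap in the identification step. Your fixed point argument, run in the exponentially weighted space $\ZZZ_{\times,\exp}$, constructs, for each value of the parameter $\Theta$, \emph{some} solution $\Theta\Phi_{\mathrm{h}}+\wt\Phi_\Theta$ of the linear system that decays like $e^{-iz}$; it does not establish that the actual difference $\Delta\Phi^0=(\Delta\phi^0,\Delta\psi^0)$ lies in that family. From item~\ref{firstitem:thm:inner} of Theorem~\ref{thm:inner} you only know $|\Delta\phi^0(z)|\lesssim|z|^{-3}$ and $|\Delta\psi^0(z)|\lesssim|z|^{-5}$, i.e.\ polynomial decay. The ``definition'' $\Theta=\lim_{\Im z\to-\infty}e^{iz}\Delta\psi^0(z)$ therefore begs the question: $|e^{iz}|$ grows like $e^{|\Im z|}$, so the existence of this limit is \emph{precisely} the statement that $\Delta\psi^0$ decays like $e^{-iz}$, which is what you are trying to prove. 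Writing $\Delta\phi^0=\Theta\phi_{\mathrm h}+\wt\phi$ at the outset is circular for the same reason: it presupposes that $\Delta\Phi^0$ has an $e^{-iz}$ leading term with a well-defined coefficient, i.e.\ that $\Delta\Phi^0\in\ZZZ_{\times,\exp}$.

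The paper resolves this in a different way, and the difference is essential. Instead of building a $\Theta$-parameterized family and then matching, it starts from the \emph{known} function $\Delta\Phi^0$ and writes its variation-of-constants representation: for the $\psi$-component, the $e^{iz}$-integral is taken from $-i\infty$ (convergent because of the a priori polynomial decay), while the $e^{-iz}$-integral is taken from the \emph{finite} base point $z_2=-i\kappa$; this produces the explicit constant $\Theta_0=\frac{1}{2i}e^{\kappa}\big(i\Delta\psi^0(-i\kappa)-\partial_z\Delta\psi^0(-i\kappa)\big)$, which is computable from data already in hand. One then obtains the affine identity $\big(\mathrm{Id}-\wh\PP\big)\Delta\Phi^0=\wh{\Delta\Phi^0_0}$ with $\wh{\Delta\Phi^0_0}=(-\Theta_0 e^{-iz},\Theta_0 e^{-iz})^\top$ explicitly exponentially small, and with $\wh\PP$ a small operator on $\ZZZ_{\times,\exp}$ (and in fact contractive also on the larger polynomially weighted spaces, so $\mathrm{Id}-\wh\PP$ is injective there). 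This forces $\Delta\Phi^0=\big(\mathrm{Id}-\wh\PP\big)^{-1}[\wh{\Delta\Phi^0_0}]\in\ZZZ_{\times,\exp}$ -- the bootstrap from polynomial to exponential decay that is missing from your argument. The final $\Theta$ is then a refinement of $\Theta_0$ obtained by replacing the base point $-i\kappa$ by $-i\infty$ (equation~\eqref{def:StokesContantDef}), which upgrades the $O(\kappa^{-1})$ error in the asymptotics to $O(|z|^{-1})$. Your remarks about the linearized system, the choice of reference mode $(-e^{-iz},e^{-iz})$, the wedge geometry, the role of the derivative norm in $\mathcal{DZ}_{0,\exp}$, and the reality of $\Theta$ via reversibility are all correct and consistent with the paper; the missing piece is the passage from a priori polynomial decay of $\Delta\Phi^0$ to exponential decay, which cannot be bypassed by a purely constructive fixed point in $\ZZZ_{\times,\exp}$.
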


Since both the stable and unstable solutions satisfy equation~\eqref{eq:inner:proof}, applying the mean value theorem, one can see that the functions $\Delta\phi^0$, $\Delta\psi^0$ satisfy a linear homogeneous equation of the form
\begin{equation}\label{eq:inner:diff}
\left\{ \begin{array}{l}  
\wt\LL_1^\inn\Delta\phi^0 =\PP_1[\Delta\phi^0,\Delta\psi^0], \\
\LL_2^\inn\Delta\psi^0 =\PP_2[\Delta\phi^0,\Delta\psi^0 ],  
\end{array} \right.
\end{equation}
where $\wt\LL_1^\inn=-\pa^2_z$, $\LL_2^\inn$ is the operator introduced in \eqref{def:diffoperators:inner}  and $\PP_1$, $\PP_2$ are defined by
\begin{align}\label{def:innerdiff:operatorRHS}
\left\{ \begin{array}{l} 
\PP_1[\Delta\phi^0,\Delta\psi^0](z) =a_{11}(z)\Delta\phi^0(z)-\Delta\psi^0(z),
\vspace{0.2cm} \\
\PP_2[\Delta\phi^0,\Delta\psi^0](z) = a_{21}(z)\Delta\phi^0(z)+a_{22}(z)\Delta\psi^0(z)+a_{23}(z)\pa_z\Delta\phi^0(z), \end{array} \right.
\end{align}
where, introducing $\Phi^{0,\star}=(\phi^{0,\star},\psi^{0,\star})$, $\star=\uns,\sta$ and defining 
$N$ as the functional such that the operator $\Nin_2 [\phi,\psi]$ in~\eqref{eq:inner:RHSoperator} can be written as
\[
\Nin_2[\phi,\psi]=N[\phi,\psi, \pa_z\phi],
\]
$a_{i,j}$ is defined as
\begin{equation*}
\begin{aligned}
a_{11}(z)= &-\frac{6}{z^2}+\int_{0}^1D_1 \Nin_1[\Phi^{0,\sta}(z)+\sigma (\Phi^{0,\uns}(z)-\Phi^{0,\sta}(z)) ]d\sigma,\\
a_{2j}(z)=& \int_{0}^1D_j N\big [\Phi^{0,\sta}(z)  +\sigma (\Phi^{0,\uns}(z)-\Phi^{0,\sta}(z)),\\ & \qquad \qquad \pa_z\phi^{0,\sta}(z)+\sigma(\pa_z\phi^{0,\uns}(z)-\pa_z\phi^{0,\sta}(z))\big ]d\sigma.
\end{aligned}
\end{equation*}
Using the norm introduced in \eqref{def:innernorm}, these functions satisfy
\begin{equation}\label{def:innerdiff:aestimates}
\|a_{11}\|_2\lesssim 1,\quad \|a_{21}\|_4\lesssim 1,\quad \|a_{22}\|_2\lesssim 1,\quad \|a_{23}\|_3\lesssim 1.
\end{equation}

We now write equation \eqref{eq:inner:diff} as an integral fixed point equation. 
On the one hand,
\[
\partial_z \Delta \phi^0 (z)=  C_1 - \int_{z_1}^z  \mathcal{P}_1[\Delta \phi^0,\Delta \psi^0](s) ds
\]
with $C_1= \partial_z \Delta \phi^0(z_1)$. Since $\lim_{\Im z \to -\infty} \partial_z \Delta \phi^0(z)=0$, we conclude that 
\[
\partial_z \Delta \phi^0(z) = - \int_{-i\infty}^z  \mathcal{P}_1[\Delta \phi^0,\Delta \psi^0](s)  ds 
\]
and as a consequence, reasoning analogously,
\begin{equation}\label{redef:deltaphi0}
\Delta \phi^0(z) = \int_{-i\infty}^{z} \int_{-i\infty}^s \mathcal{P}_1[\Delta \phi^0,\Delta \psi^0](\sigma) d\sigma.
\end{equation}
On the other hand, recalling that $\mathcal{L}^\inn_2 [\Delta \psi^0]= \partial_z^2 \Delta \psi^0 + \Delta\psi^0$, we have 
\begin{align*}
\Delta\psi^0(z) & = e^{iz} \left (C_1 + \frac{1}{2i
}\int_{z_1}^z e^{-is} h(s) ds \right )  + e^{-iz}\left ( C_2- \int_{z_2}^z e^{is}h(s) ds\right ) 
\end{align*}
with 
\[
2i e^{iz_1} C_1 = i \Delta \psi^0(z_1) + \partial_z \Delta \psi^0(z_1), \qquad 
2i e^{-iz_2} C_2 = i \Delta \psi^2(z_2) - \partial_z \Delta \psi^0(z_2),
\]
Using~\eqref{def:innerdiff:aestimates}, taking $z_2 =-i\kappa$ and imposing that $\lim_{\Im z \to -\infty} \Delta \psi^0(z) =0$, we obtain 
\begin{equation}\label{redef:deltapsi0}
\begin{aligned}
\Delta \psi^0(z)=    &\int_{-i\infty}^z \frac{e^{-i(s-z)}}{2i} \PP_2[\Delta \phi^0, \Delta \psi^0](s) ds   + \Theta_0e^{-iz} \\ & -\int_{-i\kappa}^z \frac{e^{i(s-z)}}{2i}\PP_2[\Delta \phi^0, \Delta \psi^0](s) ds 
\end{aligned}
\end{equation}
with 
\begin{equation}\label{def:Theta0:inner}
\Theta_0 = \Theta_0(\kappa)=\frac{1}{2i} e^{\kappa} \big ( i \Delta \psi^0 (-i\kappa)- \partial_z \Delta \psi^0 (-i\kappa) \big ).
\end{equation}
We emphasize that, from item~\ref{firstitem:thm:inner} of Theorem~\ref{thm:inner}, $|\Delta \phi^0 (z)| \lesssim |z|^{-3}$, $|\Delta \psi^0 (z)|\lesssim |z|^{-5}$ uniformly on the domain $\mathcal{R}_{\theta,\kappa}^\inn$ and hence, using also bounds~\eqref{def:innerdiff:aestimates} of $a_{ij}$, the improper integrals in~\eqref{redef:deltaphi0} and~\eqref{redef:deltapsi0} are well defined. 
Therefore, $(\Delta \phi^0, \Delta \psi^0)$ satisfies the fixed point equation 
\begin{equation}\label{eq:IntegralForInnerDif}
\left\{ \begin{array}{l}
\Delta\phi^0(z) =\Gindif_1 \circ \PP_1[\Delta\phi^0,\Delta\psi^0](z),\\
\Delta\psi^0(z) =\Theta_0 e^{-iz}+ \Gindif_2\circ \PP_2[\Delta\phi^0,\Delta\psi^0](z).
\end{array} \right.
\end{equation}
where the constant $\Theta_0 = \Theta_0(\kappa)$ is defined in~\eqref{def:Theta0:inner}, $\PP$ in~\eqref{def:innerdiff:operatorRHS} and $\Gindif=(\Gindif_!,\Gindif_2)$
is the integral linear operator defined on functions $h:\mathcal{R}^{\inn,+}_{\theta,\kappa}\rightarrow \CC$, as
\[
\begin{aligned}    
\Gindif_{1}[h](z)&=-\int_{-i\infty}^z\int_{-i\infty}^s h(\sigma)d\sigma\, d s,\\
\Gindif_{2}[h](z)&=\displaystyle\int_{-i\infty}^z\dfrac{e^{-i(s-z)}h(s)}{2i}ds-\displaystyle\int_{-i\kappa}^z\dfrac{e^{i(s-z)}h(s)}{2i}ds.
\end{aligned}
\]

Denoting $\Delta \Phi^0 = (\Delta \phi^0, \Delta \psi^0)$, equation~\eqref{eq:IntegralForInnerDif} can be rewritten as
\[  
\Delta \Phi^0  = \Delta \Phi^0_0+ \wt \PP [ \Delta \Phi^0] , \qquad \Delta \Phi^0_0(z) = \begin{pmatrix}
    0 \\ \Theta_0 e^{-iz} 
\end{pmatrix}, 
\] 
where $\wt \PP$ is the linear operator defined by
\begin{equation}
	\label{Btilop}
	\wt{\mathcal{P}}=\big (\wt\PP_1, 
\wt\PP_2\big )=\big (\Gindif_1\circ\PP_1, 
\Gindif_2\circ\mathcal{P}_2\big ).
	\end{equation}
Notice that, if the operator $\mathrm{Id} - \wt \PP$ were invertible, then we could write $\Delta \Phi^0 = \big ( \mathrm{Id} - \wt \PP \big )^{-1} [\Delta \Phi^0_0]$ and study $\Delta \Phi^0$ through $\wt \PP$ and $\Delta \Phi^0$. 

The following lemma specifies properties of the linear operator $\wt{\mathcal{P}}$. Its proof is straightforward using the estimates in~\eqref{def:innerdiff:aestimates} and the definition of the operators in \eqref{Btilop}, where we also recall that $\mathcal{R}_{\theta,\kappa}^\inn$ is a subset of $i\mathbb{R}$. 

\begin{lemma}\label{lemma:IntOpInnerDiff}
	The linear operator $\wt{\mathcal{P}}:\mathcal{Z}_{\times,\exp}\rightarrow\mathcal{Z}_{\times,\exp}$ given in~\eqref{Btilop},
  is well defined. Moreover, there exists a constant $M$ such that for each $\kappa\geq 1$,
	\begin{enumerate}
		\item The linear operators $\wt{\PP}_1, \pa_z\wt{\PP}_1:\mathcal{Z}_{\times,\exp}\rightarrow\mathcal{Z}_{0,\exp}$ satisfy	
		\begin{equation*}
\begin{split}
\|\wt{\PP}_1[\Delta\phi^0,\Delta\psi^0]  \|_{0,\exp} &\leq\frac{M}{\kk^2}\|\Delta\phi^0\|_{0,\exp} +M\|\Delta\psi^0\|_{0,\exp},  \\ 
 \|\partial_z \wt{\PP}_1[\Delta\phi^0,\Delta\psi^0]  \|_{0,\exp} &\leq\frac{M}{\kk^2}\|\Delta\phi^0\|_{0,\exp} +M\|\Delta\psi^0\|_{0,\exp}.
\end{split}		
  \end{equation*}
  \item The linear operator $\wt{\PP}_2:\mathcal{Z}_{\times,\exp}\rightarrow\mathcal{Z}_{0,\exp}$ satisfy	
  \begin{equation*}
\|\wt{\PP}_2[\Delta\phi^0,\Delta\psi^0]\|_{0,\exp}\leq \dfrac{M}{ \kappa}\|(\Delta\phi^0,\Delta\psi^0)  \|_{0,\exp}.	
		\end{equation*}
\end{enumerate}
\end{lemma}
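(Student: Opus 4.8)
\textbf{Proof plan for Lemma \ref{lemma:IntOpInnerDiff}.}

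The plan is to estimate each of the three integral operators appearing in $\wt{\mathcal P}=(\Gindif_1\circ\PP_1,\Gindif_2\circ\PP_2)$ directly on the ray $\mathcal R^{\inn,+}_{\theta,\kappa}\subset i\R_{<0}$, parametrizing $z=-it$ with $t\ge\kappa$, and exploiting that on this ray $|e^{iz}|=e^{t}$ grows while the kernels of $\Gindif$ are built from $e^{\pm i(s-z)}$. First I would record the two elementary consequences of the weighted norms: for $h\in\mathcal Z_{0,\exp}$ one has $|h(-it)|\le \|h\|_{0,\exp}e^{-t}$, and for $a_{ij}$ the bounds \eqref{def:innerdiff:aestimates} give $|a_{11}(-it)|\lesssim t^{-2}$, $|a_{21}(-it)|\lesssim t^{-4}$, $|a_{22}(-it)|\lesssim t^{-2}$, $|a_{23}(-it)|\lesssim t^{-3}$. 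Hence $\PP_1[\Delta\phi^0,\Delta\psi^0]$ is controlled in $\mathcal Z_{2,\exp}$ by $\kappa^{-2}\|\Delta\phi^0\|_{0,\exp}$ coming from the $a_{11}$ term and in $\mathcal Z_{0,\exp}$ by $\|\Delta\psi^0\|_{0,\exp}$ coming from the $-\Delta\psi^0$ term; likewise $\PP_2$ is bounded in $\mathcal Z_{2,\exp}$ by $\kappa^{-2}(\|\Delta\phi^0\|_{0,\exp}+\|\Delta\psi^0\|_{0,\exp})$ after using item (3) of Lemma \ref{lemma:propertiesnorm2} to absorb $\pa_z\Delta\phi^0$ into $\|\Delta\phi^0\|_{0,\exp}$ (this is where one loses a power of $\kappa$, not a power of $z$).

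Next I would bound the double antiderivative $\Gindif_1$. Writing $h(-it)=\wt h(t)e^{-t}$ with $|\wt h(t)|\le t^{-\ell}\|h\|_{\ell,\exp}$, the inner integral $\int_{-i\infty}^{s}h$ along the ray equals $-i\int_{\sigma_0}^{\infty}\wt h(\tau)e^{-\tau}\,d\tau$ type expression, which by integration by parts (or by the trivial bound $\int_{t}^{\infty}e^{-\tau}d\tau=e^{-t}$) is $O(e^{-t}\,t^{-\ell})$ up to a constant; iterating once more gives $\Gindif_1[h](-it)=O(e^{-t}t^{-\ell})$, so $\Gindif_1:\mathcal Z_{\ell,\exp}\to\mathcal Z_{\ell,\exp}$ is bounded with a constant independent of $\kappa$, and the same for $\pa_z\Gindif_1[h]=-\int_{-i\infty}^z h$. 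Composing with the bound on $\PP_1$ yields item (1): the $a_{11}$ contribution carries the $\kappa^{-2}$ and lands in $\mathcal Z_{2,\exp}\hookrightarrow\mathcal Z_{0,\exp}$ with an extra $\kappa^{-2}$ from item (1) of Lemma \ref{lemma:propertiesnorm2}, while the $\Delta\psi^0$ contribution has no smallness and gives the $M\|\Delta\psi^0\|_{0,\exp}$ term. For $\Gindif_2$, the first (convergent) integral $\int_{-i\infty}^z e^{-i(s-z)}h(s)ds$ has kernel $|e^{-i(s-z)}|=e^{-(t-\tau)}$ for $\tau\le t$, which combined with $|h(-i\tau)|\le e^{-\tau}\tau^{-\ell}\|h\|_{\ell,\exp}$ integrates to $O(e^{-t}t^{-\ell})$; the second integral $\int_{-i\kappa}^z e^{i(s-z)}h(s)ds$ has kernel $e^{(t-\tau)}$, so the integrand is $O(e^{-t})$ uniformly and integration over a length-$O(t)$ segment would a priori cost a factor $t$, but one gains a full power of $\kappa$ because $\PP_2\in\mathcal Z_{2,\exp}$ means the integrand decays like $\tau^{-2}$, whose integral from $\kappa$ is $O(\kappa^{-1})$. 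This produces the $\kappa^{-1}$ in item (2), which is exactly the (weaker) smallness one should expect from the oscillatory/growing branch.

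The main obstacle, and the only place the argument is not completely mechanical, is the second branch of $\Gindif_2$: its kernel $e^{i(s-z)}$ grows along the integration ray, so naive bounds lose the exponential weight. The resolution is precisely the choice of lower limit $-i\kappa$ (not $-i\infty$) together with the $\tau^{-2}$ decay of $\PP_2[\Delta\phi^0,\Delta\psi^0]$: writing $z=-it$, $s=-i\tau$ with $\kappa\le\tau\le t$, the integrand is $e^{i(s-z)}\PP_2(s)=e^{-(t-\tau)}O(\tau^{-2}e^{-\tau})\cdot(\text{const})$, and after pulling out $e^{-t}$ one is left with $\int_\kappa^{t}e^{\tau}e^{-\tau}\tau^{-2}d\tau\lesssim\kappa^{-1}$, giving $\|\wt\PP_2[\cdots]\|_{0,\exp}\lesssim\kappa^{-1}\|(\Delta\phi^0,\Delta\psi^0)\|_{0,\exp}$ as claimed. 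Collecting the three estimates, recalling $\mathcal R^{\inn,+}_{\theta,\kappa}\subset i\R$ so that all the path integrals above are genuine one-dimensional integrals, and taking $\kappa\ge 1$, gives the stated bounds with a constant $M$ independent of $\kappa$; well-definedness of $\wt{\mathcal P}$ on $\mathcal Z_{\times,\exp}$ follows from the absolute convergence of the improper integrals established en route.
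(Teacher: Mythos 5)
Your overall strategy is the right one, and it is essentially what the authors mean when they call this proof ``straightforward'': parametrize the ray $\mathcal{R}^{\inn,+}_{\theta,\kappa}\subset i\mathbb{R}$ by $z=-it$, read off the pointwise bounds $|a_{11}|,|a_{22}|\lesssim t^{-2}$, $|a_{23}|\lesssim t^{-3}$, $|a_{21}|\lesssim t^{-4}$ from \eqref{def:innerdiff:aestimates} and $|h(-it)|\leq\|h\|_{\ell,\exp}\,t^{-\ell}e^{-t}$ from \eqref{def:expnorm}, show $\Gindif_1$ and $\partial_z\Gindif_1$ preserve $\mathcal{Z}_{\ell,\exp}$ with $\kappa$-independent constants, and handle the growing branch of $\Gindif_2$ by observing that the finite lower endpoint $-i\kappa$ together with the $\tau^{-2}$ decay of $\mathcal{P}_2$ gives $\int_\kappa^t\tau^{-2}\,d\tau\lesssim\kappa^{-1}$. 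That last computation, carried out correctly in your third paragraph, is the only place where anything nontrivial happens, and you have it.

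Several of your intermediate claims are off, however, even though the conclusions survive. First, $\|a_{11}\Delta\phi^0\|_{2,\exp}\leq\|a_{11}\|_2\|\Delta\phi^0\|_{0,\exp}$ with \emph{no} $\kappa^{-2}$; the $\kappa^{-2}$ enters once, when you then embed $\mathcal{Z}_{2,\exp}\hookrightarrow\mathcal{Z}_{0,\exp}$ after applying $\Gindif_1$. Your account first puts a $\kappa^{-2}$ in the $\mathcal{Z}_{2,\exp}$ bound and then claims ``an extra $\kappa^{-2}$'' from the embedding; that would give $\kappa^{-4}$, which is neither true nor claimed. Second, $\|\mathcal{P}_2\|_{2,\exp}$ is \emph{not} $\lesssim\kappa^{-2}(\|\Delta\phi^0\|_{0,\exp}+\|\Delta\psi^0\|_{0,\exp})$: the $a_{22}\Delta\psi^0$ term lives in $\mathcal{Z}_{2,\exp}$ with no $\kappa$-gain at all, and it is precisely this term that prevents $\wt\PP_2$ from doing better than $\kappa^{-1}$. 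Third, invoking item (3) of Lemma~\ref{lemma:propertiesnorm2} to ``absorb $\partial_z\Delta\phi^0$ into $\|\Delta\phi^0\|_{0,\exp}$'' is both unnecessary and wrong: that item is a Cauchy estimate on the two-dimensional inner domain in the algebraic norm $\|\cdot\|_\ell$, while here $\mathcal{R}^{\inn,+}_{\theta,\kappa}$ is a ray and the weight is exponential, so no such estimate is available; but none is needed, because $\|\partial_z\Delta\phi^0\|_{0,\exp}$ is by definition part of the $\mathcal{DZ}_{0,\exp}$ norm and hence of $\|(\Delta\phi^0,\Delta\psi^0)\|_{\times,\exp}$. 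Finally, your kernel exponents in the second paragraph are reversed: on the path from $-i\infty$ to $z=-it$ one has $\tau\geq t$ and $|e^{-i(s-z)}|=e^{t-\tau}\leq1$, and on the path from $-i\kappa$ to $z$ one has $\kappa\leq\tau\leq t$ and $|e^{i(s-z)}|=e^{\tau-t}\leq1$; you write the opposite in paragraph two, then the correct one in paragraph three. None of these slips changes the final statement, but each should be fixed before this reads as a proof rather than a plan.
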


This result of Lemma \ref{lemma:IntOpInnerDiff} does not lead to check that $\wt \PP$ has small norm so that $\mathrm{Id} - \wt \PP$ is invertible. Hence we proceed in a similar way as in the proof of Theorem~\ref{thm:fixedpoint}. We emphasize that $\Delta \Phi_0$ is also a solution of
\begin{equation}
\label{eq:IntegralForInnerDif:new}
\Delta \Phi^0 = \wh{\Delta \Phi^0_0}+ \wh \PP [ \Delta \Phi^0] , \qquad \wh{\Delta \Phi^0_0}(z) =  \Delta \Phi_0^0 (z)+ \begin{pmatrix}
    \wt \PP_1[\Delta \Phi^0_0] \\0
\end{pmatrix},
\end{equation}
where $\wh \PP$ is the linear operator defined by 
\begin{equation*}
\left\{ \begin{array}{l}
\wh\PP_1[\Delta\phi^0, \Delta \psi^0] =\wt\PP_1 \big [\Delta \phi^0, \wt\PP_2[\Delta\phi^0, \Delta\psi^0 ]\big ], \vspace{0.2cm}\\
\wh\PP_2[\Delta\phi^0, \Delta \psi^0 ] =  \wt\PP_2[\Delta\phi^0  \Delta \psi^0 ].
\end{array} \right. 
\end{equation*}
Lemma \ref{lemma:IntOpInnerDiff} implies that $\wh \PP$ satisfies
\[
\left\|\wh \PP[\Delta\phi^0,\Delta\psi^0]\right\|_{\times,\exp}\lesssim\frac{1}{\kk}\left\|\Delta\phi^0,\Delta\psi^0\right\|_{\times,\exp}.
\]
Then we conclude that, taking $\kappa$ big enough, $\mathrm{Id}-\wh \PP $ is invertible in $\ZZZ_{\times,\exp}$. On the other hand, using that $\Delta_0^0(z) = (0, \Theta_0 e^{-iz})^\top$, formula~\eqref{def:innerdiff:operatorRHS} of $\PP_1$ and that $\wt \PP_1 = \Gindif_1\circ \PP_1$, we obtain that 
\begin{equation}\label{first:approximation:difference}
\wh{\Delta \Phi_0^0}(z)=\begin{pmatrix}
    \wt\PP_1[\Delta \Phi_0^0](z) \\ \Theta_0 e^{-iz} 
\end{pmatrix}= \begin{pmatrix}
    -\Theta_0 e^{-iz} \\ \Theta_0 e^{-iz}
\end{pmatrix} \in \ZZZ_{\times,\exp}.
\end{equation}
As a consequence, it follows from equation~\eqref{eq:IntegralForInnerDif:new} that 
$\big ( \mathrm{Id} - \wh\PP \big ) \Delta\Phi^0 = \wh{\Delta\Phi_0^0} \in \ZZZ_{\times,\exp}$ and we conclude
\[
\Delta \Phi^0 = \big ( \mathrm{Id} - \wh\PP \big )^{-1}[ \wh{\Delta \Phi^0_0}] \in \ZZZ_{\times,\exp}. 
\]
In addition, this implies that, for $z\in \mathcal{R}^{\inn,+}_{\theta,\kappa}$,
\[
\begin{pmatrix}
\Delta\phi^0(z)\\ \Delta\psi^0(z)\end{pmatrix}
=
\Theta_0 e^{-iz}\begin{pmatrix}-1+\OO\left(\frac{1}{ {\kk}}\right)\\ 1+\OO\left(\frac{1}{\kk}\right)\end{pmatrix}.
\]
Note that this asymptotic formula is not the one given in Proposition~\ref{prop:difinner}. Indeed, the asymptotics here is given with respect to $\kk^{-1}$ whereas the one in Proposition~\ref{prop:difinner}  is given in terms of $z^{-1}$. To improve the asymptotics, we need to define a new constant $\Theta$ which is $\kk^{-1}$ close to $\Theta_0$.

We define the constant
\begin{equation}\label{def:StokesContantDef}
\Theta=\Theta_0-\int_{-i\kappa}^{-i\infty}\dfrac{e^{iz}\PP_2[\Delta\phi^0,\Delta\psi^0] (z)}{2i}dz.
\end{equation}
Note that the fact that  $(\Delta\phi^0,\Delta\psi^0)\in\ZZZ_{\times,\exp}$ implies that the integral is convergent and the constant $\Theta$ is well-defined.

Proposition~\ref{prop:difinner} (and hence the second statement of Theorem~\ref{thm:inner}) is a direct consequence of the following lemma.

\begin{lemma}\label{lemma:FinalAsymptotics}
The functions $(\Delta\phi^0,\Delta\psi^0)$ satisfy that, for $z\in \mathcal{R}^{\inn,+}_{\theta,\kappa}$,
\[
\begin{pmatrix}
\Delta\phi^0(z)\\ \Delta\psi^0(z)\end{pmatrix}
=
\Theta e^{-iz}\begin{pmatrix}-1+\OO\left(\frac{1}{z}\right)\\ 1+\OO\left(\frac{1}{z}\right)\end{pmatrix},
\]
for some constant $\Theta \in \mathbb{R}$.  
\end{lemma}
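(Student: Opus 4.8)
The plan is to upgrade the crude asymptotics $\Delta\Phi^0(z)=\Theta_0 e^{-iz}\big(-1+\mathcal{O}(\kappa^{-1}),\,1+\mathcal{O}(\kappa^{-1})\big)$ — which was obtained above from the invertibility of $\mathrm{Id}-\wh{\mathcal{P}}$ in $\mathcal{Z}_{\times,\exp}$ — into the claimed $z^{-1}$-asymptotics with the corrected constant $\Theta$ from \eqref{def:StokesContantDef}. First I would record the consequence of $\Delta\Phi^0\in\mathcal{Z}_{\times,\exp}$: there is a constant $M$ (independent of $\kappa$) with $|\Delta\phi^0(z)|,|\Delta\psi^0(z)|\leq M|\Theta_0||e^{-iz}|$ on $\mathcal{R}^{\inn,+}_{\theta,\kappa}$, and likewise for $\partial_z\Delta\phi^0$. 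Combining this with the bounds \eqref{def:innerdiff:aestimates} on $a_{11},a_{2j}$ gives that the integrand $e^{is}\mathcal{P}_2[\Delta\phi^0,\Delta\psi^0](s)$ appearing in \eqref{def:StokesContantDef} is $\mathcal{O}(|\Theta_0||s|^{-2})$ along the negative imaginary axis, so the integral defining $\Theta-\Theta_0$ converges and satisfies $|\Theta-\Theta_0|\lesssim |\Theta_0|/\kappa$; in particular $\Theta$ is real (all the $a_{ij}$, the solutions, and the contour respect the reality, being pulled back from real-analytic objects via \eqref{def:symmetryinnersol}) and $\Theta=\Theta_0(1+\mathcal{O}(\kappa^{-1}))$, so the two constants are interchangeable up to the stated error and $\Theta\neq0\iff\Theta_0\neq0$.

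Next I would rewrite the fixed-point equation \eqref{eq:IntegralForInnerDif} for $\Delta\psi^0$ by splitting the second integral at $-i\infty$: since
\[
-\int_{-i\kappa}^{z}\frac{e^{i(s-z)}}{2i}\mathcal{P}_2(s)\,ds
=-\int_{-i\infty}^{z}\frac{e^{i(s-z)}}{2i}\mathcal{P}_2(s)\,ds
+\int_{-i\infty}^{-i\kappa}\frac{e^{i(s-z)}}{2i}\mathcal{P}_2(s)\,ds,
\]
and the last term is exactly $-e^{-iz}(\Theta-\Theta_0)$ by \eqref{def:StokesContantDef}, one gets
\[
\Delta\psi^0(z)=\Theta e^{-iz}
+\int_{-i\infty}^{z}\frac{e^{-i(s-z)}}{2i}\mathcal{P}_2(s)\,ds
-\int_{-i\infty}^{z}\frac{e^{i(s-z)}}{2i}\mathcal{P}_2(s)\,ds,
\]
with both integrals now absolutely convergent. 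The point of this rewriting is that the remainder integrals, evaluated with the $\mathcal{O}(|\Theta_0||s|^{-2})$ bound on the integrand and $|e^{-i(s-z)}|\leq|e^{-iz}|$, $|e^{i(s-z)}|\leq|e^{-iz}|$ for $s$ below $z$ on the imaginary axis, are bounded by $C|\Theta_0||e^{-iz}|/|z|\lesssim |\Theta||e^{-iz}|/|z|$, which is precisely the $\mathcal{O}(z^{-1})$ term. The same integration-by-parts/splitting trick applied to \eqref{redef:deltaphi0}, again using $|\mathcal{P}_1(s)|\lesssim |\Theta_0|(|s|^{-2}|e^{-is}|+\ldots)$ and $\mathcal{P}_1=a_{11}\Delta\phi^0-\Delta\psi^0$ with $\Delta\psi^0=\Theta e^{-iz}+\mathcal{O}(|\Theta||z|^{-1}|e^{-iz}|)$ just obtained, produces $\Delta\phi^0(z)=-\Theta e^{-iz}+\mathcal{O}(|\Theta||z|^{-1}|e^{-iz}|)$; one evaluates $\int_{-i\infty}^z\int_{-i\infty}^s(-\Theta e^{-i\sigma})\,d\sigma\,ds=-\Theta e^{-iz}$ for the leading term and controls the rest by the same $|s|^{-2}$ decay. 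Iterating this once more (feeding the improved $\Delta\phi^0,\Delta\psi^0$ back into $\mathcal{P}_1,\mathcal{P}_2$) or, more cleanly, running a contraction in the weighted space $\mathcal{Z}_{1,\exp}$ for the difference $\big(\Delta\phi^0+\Theta e^{-iz},\,\Delta\psi^0-\Theta e^{-iz}\big)$ closes the estimate, and the derivative bounds follow identically by differentiating the integral representations (or by Cauchy estimates on a slightly smaller domain $\mathcal{R}^{\inn}_{2\theta,4\kappa}$, as in Lemma~\ref{lemma:propertiesnorm2}).

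The main obstacle I anticipate is bookkeeping the two different asymptotic regimes carefully: the bound coming out of the $\mathrm{Id}-\wh{\mathcal{P}}$ inversion is uniform-in-$\kappa$ but only says ``$\mathcal{O}(1)$ times $|e^{-iz}|$'', whereas the target needs the sharp ``$1+\mathcal{O}(z^{-1})$'' form, and the gain of the decaying factor $|z|^{-1}$ in the remainder comes solely from integrating the $|s|^{-2}$ tails of $a_{ij}$ against the oscillatory-but-non-growing exponential kernels on the vertical ray $i\mathbb{R}_{<0}$. One must check that no logarithmic loss appears — it does not, because the slowest-decaying contribution is $\int_{|z|}^{\infty}t^{-2}\,dt=|z|^{-1}$ — and that the constant $\Theta$ is genuinely independent of the basepoint $-i\kappa$ used in its definition, which follows because changing $\kappa$ changes $\Theta_0$ and the correction integral in exactly compensating ways (both solutions $\phi^{0,\star}$ are fixed, defined on the full inner domains, independently of $\kappa$). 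Everything else — convergence of improper integrals, reality of $\Theta$ via the reversibility \eqref{def:symmetryinner}, the iteration/contraction — is routine given Lemmas~\ref{lemma:operatorsinner}–\ref{lemma:IntOpInnerDiff}.
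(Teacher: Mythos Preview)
Your proposal is correct and follows essentially the same route as the paper: redefine the constant via \eqref{def:StokesContantDef}, rewrite the $\Delta\psi^0$ integral equation so that both integrals run from $-i\infty$ (the paper calls this operator $\check{\GG}_2^{\inn}$), use $\mathcal{P}_2[\Delta\phi^0,\Delta\psi^0]\in\mathcal{Z}_{2,\exp}$ to get $\Delta\psi^0-\Theta e^{-iz}\in\mathcal{Z}_{1,\exp}$, then feed this into the $\Delta\phi^0$ equation to extract the leading $-\Theta e^{-iz}$ with a $\mathcal{Z}_{1,\exp}$ remainder, and finally read off reality of $\Theta$ from the symmetry. One small slip: your double-integral evaluation $\int_{-i\infty}^z\int_{-i\infty}^s(-\Theta e^{-i\sigma})\,d\sigma\,ds$ equals $+\Theta e^{-iz}$, not $-\Theta e^{-iz}$; the sign is then corrected by the outer minus in the definition of $\Gindif_1$, so your conclusion $\Delta\phi^0=-\Theta e^{-iz}+\mathcal{O}(|z|^{-1}e^{-iz})$ stands.
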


\begin{proof}
We exploit the fact that we already have proven that $(\Delta \phi^0, \Delta \psi^0) \in \ZZZ_{\times ,\exp}$.
We obtain the asymptotic formula for each component. From~\eqref{eq:IntegralForInnerDif} and using definition~\eqref{def:StokesContantDef} of $\Theta$, we note that, the second component can be written as   
\[
\Delta\psi^0(z)=\Theta e^{-iz}+\check{\GG}_2^\inn\big [\PP_2[\Delta\phi^0,\Delta\psi^0]\big ](z),  
\]
with 
\begin{equation*}
\check{\GG}^{\inn}_2[h](z)=\displaystyle\int_{-i\infty}^z\dfrac{e^{-i(s-z)}h(s)}{2i}ds-\displaystyle\int_{-i\infty}^z\dfrac{e^{i(s-z)}h(s)}{2i}ds.
\end{equation*}
Since $(\Delta\phi^0,\Delta\psi^0)\in\ZZZ_{\times,\exp}$, estimates~\eqref{def:innerdiff:aestimates} imply that $\PP_2[\Delta\phi^0,\Delta\psi^0] \in \ZZZ_{2, \exp}$ and 
\[
\left\|\PP_2[\Delta\phi^0,\Delta\psi^0] \right\|_{2,\exp}\lesssim 1.
\]
Then, it is a straightforward computation to see that $\Delta\psi^0-\Theta e^{-iz}\in  \ZZZ_{1,\exp}$ and 
\[
\left\|\Delta\psi^0-\Theta e^{-iz} \right\|_{1,\exp}=\left\|\check{\GG}_2^\inn \big [\PP_2[\Delta\phi^0,\Delta\psi^0]\big ]\right\|_{1,\exp}\lesssim 1.
\]  
This completes the proof of the asymptotic formula for $\Delta\psi^0$. Analogous computations lead to the asymptotic formula for $\partial_z \Delta \psi^0$. 

Now we prove the asymptotic formula for the first component. To this end, using that we rewrite the identity (see~\eqref{eq:IntegralForInnerDif:new} and~\eqref{first:approximation:difference})
\[
\Delta\phi^0(z)=\wt \PP_1[\Delta \Phi_0^0](z) + \wh\PP_1[\Delta\phi^0,\Delta\psi^0](z)=-\Theta_0e^{-iz}+\wt\PP_1\big [\Delta\phi^0,\wt\PP_2[\Delta\phi^0,\Delta\psi^0]\big ](z)
\]
as  
\[
\Delta\phi^0(z)=-\Theta e^{-iz}+\wt\PP_1\big [\Delta\phi^0,\check{\GG}_2^\inn\big [\PP_2[\Delta\phi^0,\Delta\psi^0]\big ]\big ](z),
\]
where we have used 
\[
\Delta\psi_0 (z) = \Theta_0 e^{-iz} + \wt \PP_2[\Delta\phi^0,\Delta \psi^0 ] (z)=  \Theta e^{-iz} + \check{\GG}_2^\inn \big [\PP_2[\Delta\phi^0,\Delta \psi^0] \big ] (z).
\]
Then, it can be easily seen that 
\[
\Delta\phi^0(z)+\Theta e^{-iz}=\wt\PP_1\big [\Delta\phi^0,\check{\GG}_2^\inn \big [\PP_2[\Delta\phi^0,\Delta\psi^0]\big ]\big ]\in \ZZZ_{1,\exp}
\] 
and
\[
\left\|\Delta\phi^0+\Theta e^{-iz}\right\|_{1,\exp}\lesssim 1.
\]
This completes the asymptotic formula for the first component and analogously we have the one for its derivative.

It only remains to show that the constant $\Theta$ is real. This is a direct consequence of the fact that the solutions $(\phi^{0,\star}, \psi^{0,\star})$, $\star=\uns,\sta$ are real-analytic and satisfy \eqref{def:symmetryinnersol}. Indeed these two properties  imply that, for $z\in \mathcal{R}^{\inn,+}_{\theta,\kappa}$ (recall that $\mathcal{R}^{\inn,+}_{\theta,\kappa}\subset i\RR$), 
\[
\Delta \psi^0(z)\in\RR.
\]
This implies that  $e^{iz}\Delta \psi^0(z)\in\RR$ and therefore $\Theta\in\RR$ since it can be defined as
\[
\lim_{\Im z\to -\infty, z\in i\RR}e^{iz}\Delta \psi^0(z).
\]
This completes the proof of Lemma \ref{lemma:FinalAsymptotics}.  
\end{proof}

Finally, the fact that $\Theta\neq 0$ if and only if $\Delta\phi^0$ does not vanish at one point is  a direct consequence of the asymptotic formula. This proves the third item of Theorem~\ref{thm:inner}.

\section{Matching around  singularities}\label{sec:matching}

Here we prove Theorem~\ref{thm:matching:intro}. We will give the proof only for the $-$ case, being the $+$ case is analogous. Due to this reason, we  omit the sign $\pm$ in our notation and we provide  estimates for $(\xi^{\uns},\eta^{\uns})$ and $(\xi^\aux,\eta^\aux)$ around the singularity $x_-$.

It is convenient to work with inner variables, see~\eqref{def:innervariable} and~\eqref{def:innerfunctions}, namely,  
\begin{equation}\label{inner_variables_matching}
z=\varepsilon^{-1} (x-x_-), \quad \phi(z)=\frac{\varepsilon}{c_{- 1}} \xi(x_-+\varepsilon z ),\quad
\psi(z)=\frac{\varepsilon^3}{c_{- 1}} \eta (x_-+ \varepsilon z).
\end{equation}
We define now the matching domain $D^{-,\match}_{\theta_1,\theta_2,\nu}$  by \eqref{def:domainsmatching} in the inner variable. We fix $0<\nu <1$ and  $0<\theta_2 < \theta <\theta_1<\frac{\pi}{2}$,  where $\theta$ is the angle introduced in \eqref{def:defdomainouter}, and we define 
\[
\mathcal{D}_{\theta_1,\theta_2,\nu}^{\match} = \widehat{-i\kappa, z_1,z_2},
\]
the triangle with vertices $-i\kappa, z_1,z_2$, with 
\begin{equation*}
z_1=-i\kappa +\frac{1}{\varepsilon^{1-\nu}} e^{-i \theta_1}, \qquad z_2 = -i\kappa - \frac{1}{\varepsilon^{1-\nu}} e^{-i\theta_2}.
\end{equation*}
In addition, if we define 
\[
\hat{u}_0(z) = u_0(x_-+\eps z),
\]
we notice that, if $z \in \mathcal{D}_{\theta_1,\theta_2}^{\nu, \match}$, then $|\varepsilon z| \lesssim \varepsilon^{\nu}$ and therefore
\begin{equation}\label{exp_u0_inner} 
    \begin{split}
       \varepsilon c_{-1}^{-1} \hat{u}_0(z)   & = \frac{1}{z} + \varepsilon \sum_{k\geq 0} c_k (\varepsilon z)^k=
\frac{1}{z} + \mathcal{O}(\varepsilon), \\ 
\varepsilon c_{-1}^{-1} \hat{u}'_0(z) &= - \frac{1}{z^2 }  +\mathcal{O}(\varepsilon^2).
    \end{split}  
\end{equation}  
Moreover, defining 
\begin{equation}\label{unstainner_variables}
\phi^{\us}(z) = \frac{\varepsilon}{c_{-1}} \xi^{\us} (x_-+\varepsilon z )  , \qquad 
\psi^{\us}(z) = \frac{\varepsilon^3}{c_{-1}} \eta^{\us} (x_-+\varepsilon z )  , \qquad \us=\uns,\aux
\end{equation}
with $(\xi^{\uns}, \eta^{\uns})$  and $(\xi^\aux, \eta^\aux)$, given in Theorems~\ref{thm:outer:intro} and~\ref{thm:aux:intro} respectively, we have that  
\begin{equation}\label{recall:phipsi}
\big |\phi^\us(z) \big |   \lesssim \frac{1}{|z|^3}, \qquad   \big |\partial_z  \phi^\us(z) \big | \lesssim \frac{1}{|z|^4},
\qquad \big | \psi^\us (z) \big |\lesssim \frac{1}{|z|^5}. 
\end{equation}  
Now we rephrase Theorem~\ref{thm:matching:intro}  in the inner variables as follows. 

\begin{theorem}\label{thm:matching}
    Let $\theta>0, \kappa_0$ be fixed as in Theorems~\ref{thm:outer:intro},  \ref{thm:aux:intro} and  \ref{thm:inner}.
Take $0<\theta_2<\theta <\theta_1< \frac{\pi}{2}$ and $\nu\in (0,1)$. 
We introduce the functions 
\[
\delta \phi^{\us}(z) = \frac{\eps }{ c_{-1}} \delta \xi^\us_-( x_-+\eps z ), \quad 
\delta \psi^{\us}(z) = \frac{\eps^3 }{ c_{-1}} \delta \eta^\us_-( x_-+\eps z), \quad \us=\uns,\aux,
\]
with $\delta\xi^\us_-,\delta \eta^\us_-$ defined in Theorem~\ref{thm:matching:intro}. 
Then there exist $\kappa_1\geq \kappa_0$ and a constant $M>0$ such that for all $\kappa \geq \kappa_1$ and $z \in \mathcal{D}_{\theta_1,\theta_2, \nu}^{\match}$
\begin{align*}
& \big |\delta \phi^\us (z)\big |    \leq   M |\log \eps|\frac{\eps^{1-\nu}}{|z|^2},  & \big | \partial_ z \delta \phi^\us (x)\big | \leq   M |\log \eps|\frac{\eps^{1-\nu}}{|z|^3}, \\
& \big |\delta \eta^\us (x)\big | \leq   M |\log \eps|\frac{\eps^{1-\nu}}{|z|^4},  & \big | \partial_z \delta \eta^\us_- (x)\big | \leq   M |\log \eps|\frac{\eps^{1-\nu}}{|z|^4}.
\end{align*}
\end{theorem}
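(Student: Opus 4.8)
The plan is to set up a fixed point argument for the differences $(\delta\phi^{\us},\delta\psi^{\us})$, $\us=\uns,\aux$, directly in the matching domain $\mathcal{D}^{\match}_{\theta_1,\theta_2,\nu}$, following the classical strategy of complex matching but taking advantage of the fact that both the outer objects (from Theorems~\ref{thm:outer:intro} and~\ref{thm:aux:intro}) and the inner objects (from Theorem~\ref{thm:inner}) are already known to exist and satisfy the stated bounds. First I would write the equation satisfied by $(\delta\phi^{\us},\delta\psi^{\us})$: subtracting the inner equation~\eqref{eq:inner:proof} (satisfied by $(\phi^{0,\us}_{\text{inner}},\psi^{0,\us}_{\text{inner}})$, i.e.\ the appropriately relabelled $\phi^{0,\uns}$ or $\phi^{0,\sta}$) from the full rescaled equation for $(\phi^{\us},\psi^{\us})$ in the variables~\eqref{inner_variables_matching}, one gets a linear-in-$(\delta\phi,\delta\psi)$ system $\mathcal{L}^{\inn}_1\delta\phi = \wt{\mathcal N}_1$, $\mathcal{L}^{\inn}_2\delta\psi = \wt{\mathcal N}_2$, where the right-hand side splits into (a) an \emph{error term} $\mathcal{E}(z)$ coming from the fact that the full rescaled operators differ from the limiting inner operators $\mathcal{L}^{\inn}_i$, $\mathcal{N}^{\inn}_i$ by $\mathcal{O}(\eps)$-corrections (these corrections carry the expansions~\eqref{exp_u0_inner} of $\hat u_0$), evaluated on the \emph{known} inner solution, and (b) terms linear in $(\delta\phi,\delta\psi,\partial_z\delta\phi)$ with coefficients that are small because $(\phi^{\us},\psi^{\us})$ and the inner solution are both small (of size $|z|^{-3}$, $|z|^{-5}$) on the matching domain. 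The key point is that the error term $\mathcal{E}$, because $|\eps z|\lesssim\eps^{\nu}$ on $\mathcal{D}^{\match}_{\theta_1,\theta_2,\nu}$, is of size $\eps\cdot|z|^{-p}$ for the appropriate weight $p$ — this is where the factor $\eps^{1-\nu}$ eventually comes from after the $|z|\lesssim\eps^{\nu-1}$ scaling.

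Next I would introduce the right inverses of $\mathcal{L}^{\inn}_1,\mathcal{L}^{\inn}_2$ \emph{adapted to the matching domain}: they are the analogues of $\mathcal{G}^{\inn}_1,\mathcal{G}^{\inn}_2$ in~\eqref{def:J} but with the integration paths chosen to start at the vertices $z_1,z_2$ of the triangle $\mathcal{D}^{\match}_{\theta_1,\theta_2,\nu}$ (where the outer estimates of Theorems~\ref{thm:outer:intro},~\ref{thm:aux:intro} are available, hence serve as ``boundary data''), so that the fixed point operator $\mathbf{F}^{\match} = (\mathcal{G}^{\match}_1\circ\wt{\mathcal N}_1,\ \mathcal{G}^{\match}_2\circ\wt{\mathcal N}_2)$ maps into a ball of the Banach space with a weighted sup-norm on $\mathcal{D}^{\match}_{\theta_1,\theta_2,\nu}$ — specifically, I would use norms with weights $|z|^2$ for $\delta\phi$, $|z|^3$ for $\partial_z\delta\phi$, and $|z|^4$ for $\delta\psi$ and $\partial_z\delta\psi$, matching the claimed estimates. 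The logarithm $|\log\eps|$ in the statement signals that the operator $\mathcal{G}^{\match}_2$ applied to the relevant $\eps$-small error produces a harmless $\log$ from the length $\sim\eps^{\nu-1}$ of the integration path relative to the oscillation; I would track this carefully in the analogue of Lemma~\ref{lemma:operatorsinner} for the truncated operators. The independent term $\mathbf{F}^{\match}[0,0]$ is estimated via the explicit error $\mathcal{E}$ and the outer boundary data at $z_1,z_2$, giving a bound $\lesssim|\log\eps|\,\eps^{1-\nu}$ in the weighted norm; the Lipschitz constant of $\mathbf{F}^{\match}$ is $\lesssim 1/\kappa$ plus an $\eps$-small part, so for $\kappa$ large Theorem~\ref{thm:fixedpoint} (or a plain contraction argument) applies and yields the unique small fixed point, which must coincide with $(\delta\phi^{\us},\delta\psi^{\us})$ by uniqueness of the outer/inner solutions already established. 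Finally, translating the weighted bounds back through the substitution $\delta\xi^{\us}_- = c_{-1}\eps^{-1}\delta\phi^{\us}(\eps^{-1}(x-x_-))$ etc.\ and using $\eps^{\nu-1}\gtrsim|z|$ gives the $\eps^{2-\nu}/|x-x_\pm|^k$ estimates of Theorem~\ref{thm:matching:intro}.

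The main obstacle I anticipate is getting the \emph{right inverse on the triangular matching domain} to behave well — in the outer and inner regions one integrates along rays to $\pm\infty$ or along the imaginary axis where the exponential weight $e^{\pm is}$ has a definite sign of decay, but on the compact triangle $\mathcal{D}^{\match}_{\theta_1,\theta_2,\nu}$ with vertices near the singularity one must choose the base points $z_1,z_2$ and the integration paths so that (i) the path stays inside the domain where all functions are controlled, (ii) the oscillatory factor $e^{\pm i(s-z)}$ does not blow up (so one integrates ``downhill'' in $\mathrm{Im}$), and (iii) the contributions of the endpoint data are of the correct size. Balancing these three requirements against the geometry of the triangle — in particular near the vertex $-i\kappa$ closest to the singularity, where $|z|\sim\kappa$ is smallest and the weights are least favorable — is the delicate part, and it is exactly where the restriction $\kappa\geq\kappa_1$ (larger than $\kappa_0$) and the $1/\kappa$ gain in the contraction constant are forced. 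The rest (the expansions~\eqref{exp_u0_inner}, the bilinear estimates on the weighted norms, applying the fixed point theorem) is routine given the machinery already developed in Sections~\ref{sec:outer}--\ref{sec:inner}.
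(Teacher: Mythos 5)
Your plan is essentially the paper's argument: pass to inner variables, derive the linear equation for $\delta\Phi^\us$, build right inverses of $\mathcal{L}^\inn_1,\mathcal{L}^\inn_2$ anchored at the far vertices $z_1,z_2$ of the triangle, set up a contraction in the weighted space $\mathcal{DY}_2\times\mathcal{Y}_4$ (using the a priori bound on $\delta\Phi^\us$ from the outer/inner estimates to place it in that space), and bound the independent term by the boundary data at $z_1,z_2$ plus the $\mathcal{O}(\eps)$ model error. Two minor corrections to your heuristics: the $|\log\eps|$ loss actually comes from $\GG^\match_1$ applied at the borderline weight $\ell=4$ (not from $\GG^\match_2$), and the leading $\eps^{1-\nu}$ factor comes from the boundary data (e.g.\ $|b_{\phi}|\lesssim|z_1|^{-1}\lesssim\eps^{1-\nu}$), not from the error $\mathcal{E}$, which only contributes $\eps|\log\eps|$.
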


\begin{remark}\label{rmk:matching1}
    We emphasize that we already know the existence of $\delta \phi^\us, \delta \psi^\us$ in the matching domain and that, using~\eqref{recall:phipsi} and Theorem~\ref{thm:inner}
    \[
    \big |\delta \phi^\us (z) \big | \leq \big | \phi^\us(z)\big | + \big | \phi^{0,\us}(z) \big | \lesssim \frac{1}{|z|^3}, \qquad \big | \delta \psi^\us(z) \big | \leq \big | \phi^\us(z)\big | + \big | \phi^{0,\us}(z) \big |\lesssim \frac{1}{|z|^5},
\]
and also $\big | \partial_z \delta \phi^\us\big | \lesssim |z|^{-4}$. 
    However, these estimates do not imply that, when $\eps=0$, $\delta \phi^\us, \delta \psi^\us=0$.
\end{remark}
The remaining part of this section is devoted to prove Theorem~\ref{thm:matching}. The prove for $\us=\uns, \aux$ are identical and, therefore, we only present the first one.

\subsection{Reformulation of the problem}

To prove Theorem \ref{thm:matching} we look for differential  equations which have  ($\delta \xi^\uns, \delta\eta^\uns)$,  as a solutions. To this end, let ($\xi^\uns, \eta^\uns)$  be the solution of equation~\eqref{eq:perturb} provided in Theorem~\ref{thm:outer:intro} and consider the function $(\phi^\uns, \psi^\uns)$ defined in~\eqref{unstainner_variables}. Applying the change of coordinates to  equation~\eqref{eq:perturb} we have that 

\begin{align*}
\left\{ \begin{array}{l} 
\mathcal{L}^{\inn}_1[\phi^\uns] = \Nch_1[\phi^\uns,\psi^\uns;\eps]:=\Nin_1[\phi^\uns,\psi^\uns] + \mathcal{A}_1[\phi^\uns, \psi^\uns;\eps], \vspace{0.2cm} \\  
\mathcal{L}^{\inn}_2[\psi^\uns] = \Nch_2[\phi^\uns,\psi^\uns;\eps]:=\Nin_2[\phi^\uns,\psi^\uns] + \mathcal{A}_2[\phi^\uns, \psi^\uns;\eps],
\end{array} \right.
\end{align*}
where $\LL_j^\inn$ and $\Nin_j$, $j=1,2$ are introduced in \eqref{def:diffoperators:inner} and \eqref{eq:inner:RHSoperator}.

 We introduce the notation $\Phi=(\phi,\psi)$, $\mathcal{A}[\Phi;\varepsilon]=(\mathcal{A}_1[\Phi;\varepsilon], \mathcal{A}_2[\Phi;\varepsilon])$, 
\[
\mathcal{L}^{\inn} [\Phi] = (\mathcal{L}^{\inn}_1[\phi], \mathcal{L}^{\inn}_2[\psi]), 
\qquad 
\Nin[\Phi] = (\Nin_1[\Phi], \Nin_2[\Phi]),
\]
and
\[
\Nch[\Phi;\varepsilon] = (\Nch_1[\Phi;\varepsilon], \Nch_2[\Phi;\varepsilon]) = \Nin[\Phi]+ \mathcal{A}[\Phi ;\varepsilon].
\]
Since, by Theorem \ref{thm:inner}, $\Phi^{0,\uns}=(\phi^{0,\uns},\psi^{0,\uns})$ is a solution of $\mathcal{L}^{\inn} [\Phi^{0,\uns}] = \Nin[\Phi^{0,\uns}]$ and $\Phi^{\uns}$ satisfies $\mathcal{L}^{\inn}[\Phi^{\uns}] = \Nin[\Phi^{\uns}] + \mathcal{A}[\Phi^{\uns};\varepsilon]$, using the mean value theorem, we have that $\delta \Phi^\uns=\Phi^{\uns} -   \Phi^{0,\uns}$ satisfies
\begin{align*}
\mathcal{L}^{\inn}[\delta \Phi^\uns]=&
\mathcal{L}^{\inn}[\Phi^{\uns}](z) - \mathcal{L}^{\inn}[\Phi^{0,\uns}](z)
\\  = & \int_{0}^1 D_{\Phi}\Nin [\Phi^{0,\uns} + \lambda (\Phi^{\uns} - \Phi^{0,\uns})] (z) \cdot (\Phi^{\uns}(z) - \Phi^{0,\uns}(z)) \, d\lambda + \mathcal{A}[\Phi^{\uns};\varepsilon](z) \\ 
&+ \int_{0}^1 D_{\partial_z \phi} \Nin[\Phi^{0,\uns} + \lambda (\Phi^{\uns} - \Phi^{0,\uns})](z)\cdot\left(\partial_z \phi^{\uns} (z) - \partial_z \phi^{0,\uns} (z) \right)\, d\lambda.
\end{align*} 
We denote 
\begin{equation}\label{def:calB:matching}
\begin{aligned}
\mathcal{B}_1^\uns(z) & = \int_{0}^1 D_{\Phi} \Nin[\Phi^{0,\uns} + \lambda (\Phi^{\uns} - \Phi^{0,\uns})] (z)   \, d\lambda - \left (\begin{array}{cc} 0 & -1 \\ 0 & 0 \end{array}\right ), \\ 
\mathcal{B}_2^\uns(z) & =  \int_{0}^1 D_{\partial_z \phi} \Nin[\Phi^{0,\uns} + \lambda (\Phi^{\uns} - \Phi^{0,\uns})] (z)   \, d\lambda,  \\
\mathcal{B}_3(z) &= \left ( \begin{array}{cc} 0 &  -1 \\ 0 & 0 \end{array}\right ),
\end{aligned}
\end{equation}
and $A^\uns(z) = \mathcal{A}[\Phi^\uns;\eps](z)$. 
We emphasize that $\mathcal{B}_1^\uns, \mathcal{B}_2^\uns$ and $A^\uns$ are known functions that depend on the solutions $\Phi^\uns=(\phi^\uns,\psi^\uns)$ and $\Phi^{0,\uns}=(\phi^{0,\uns}, \psi^{0,\uns})$, which have already been constructed above. We then obtain that   $\delta \Phi^\uns= (\delta \phi^\uns ,\delta \psi^\uns) $ satisfies the non-homogeneous linear equation
\begin{equation}\label{eq_matching_first}
\mathcal{L}^{\inn} [\delta \Phi^\uns] (z)= \mathcal{B}_1^\uns(z) \delta \Phi^\uns(z) + \mathcal{B}_2^\uns(z) \partial_z \delta 
\phi^\uns (z) + \mathcal{B}_3(z) \delta \Phi^\uns(z) + A^\uns(z) .
\end{equation}

The following lemma characterizes the solutions of $\mathcal{L}^\inn [\Phi] = h$ with  given initial conditions. Its proof is straightforward and is omitted. 

\begin{lemma}\label{lem:linear_matching} 
Let $\Phi$ be a solution of $\mathcal{L}^\inn [\Phi]= h$ defined in $\mathcal{D}^{\match}_{\theta_1,\theta_2,\nu}$. Then, $\Phi=(\phi,\psi)$ 
is given by 
$$
\Phi(z) = 
      \left (\begin{array}{cc}
    z^3 a_\phi + \frac{1}{z^2} b_\phi \\    
    e^{i(z-z_1)} a_\psi  + e^{-i(z-z_2)} b_\psi  
    \end{array}\right ) + \GG^\match[h],
    $$ 
where 
\begin{equation}\label{def:lem:abphi}
\begin{aligned}
a_\phi &= \frac{1}{ 5 z_1^3 } \big ( 2 \delta \phi  (z_1)+ \partial_z \delta \phi  (z_1) z_1\big ),  & b_\phi &= \frac{z_1^2}{5} \big (3 \delta \phi (z_1)- \partial_z \delta \phi (z_1) z_1\big ), \\ 
a_\psi &= \frac{1}{2} \big (\delta \psi (z_1) - i \partial_z \delta \psi (z_1)\big ), 
&b_\psi &= \frac{1}{2} \big (\delta \psi (z_2) + i \partial_z \delta \psi (z_2)\big ),
\end{aligned}
\end{equation}
and $\GG^\match[h]= (\GG^\match_1[h_1], \GG^\match_2[h_2] )$ is the linear operator (compare with~\eqref{def:J}) defined by 
\begin{equation}\label{def:lem:mathcalI}
\begin{aligned}
\GG^\match_1 [h](z) &= \frac{z^3}{5} \int^{z}_{z_1} \frac{h(s)}{s^2} ds- \frac{1}{5z^2} \int_{z_1}^z s^3 h(s) ds, \\
\GG^\match_2 [h](z) &=\frac{1}{2i} \int_{z_1}^z e^{-i(s-z)}h(s) ds - \frac{1}{2i}\int_{z_2}^z e^{i(s-z)} h(s) ds.
\end{aligned}
\end{equation}
\end{lemma}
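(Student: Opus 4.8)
The statement to prove is Lemma~\ref{lem:linear_matching}, which is a variation-of-parameters formula for the linear operator $\mathcal{L}^{\inn}$ on the matching domain, with the constants pinned down by initial conditions at the vertex $z_1$ (and $z_2$ for the oscillatory part).

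The plan is to treat the two components separately, since $\mathcal{L}^{\inn}=(\mathcal{L}^{\inn}_1,\mathcal{L}^{\inn}_2)$ decouples into the scalar equations $\mathcal{L}^{\inn}_1[\phi]=-\partial_z^2\phi+\tfrac{6}{z^2}\phi=h_1$ and $\mathcal{L}^{\inn}_2[\psi]=\partial_z^2\psi+\psi=h_2$. For the first component, I would note that the homogeneous equation $-\phi''+\tfrac{6}{z^2}\phi=0$ is Euler type, with the two independent solutions $\phi=z^3$ and $\phi=z^{-2}$ (indicial roots $3$ and $-2$ of $r(r-1)=6$); their Wronskian is $z^3\cdot(-2z^{-3})-3z^2\cdot z^{-2}=-5$, which is exactly the origin of the factor $\tfrac15$ appearing in $\GG^\match_1$ in~\eqref{def:lem:mathcalI}. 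Standard variation of parameters then gives the general solution $\phi(z)=z^3 a_\phi+z^{-2}b_\phi+\GG^\match_1[h_1](z)$ with $a_\phi,b_\phi$ free constants and $\GG^\match_1$ the particular solution normalized to vanish to second order at $z_1$, i.e. $\GG^\match_1[h_1](z_1)=\partial_z\GG^\match_1[h_1](z_1)=0$ (which one checks directly by differentiating the two integrals and using that both lower limits are $z_1$). Imposing $\phi(z_1)=\delta\phi(z_1)$ and $\partial_z\phi(z_1)=\partial_z\delta\phi(z_1)$ yields a $2\times2$ linear system for $(a_\phi,b_\phi)$ whose inverse produces exactly the formulas in~\eqref{def:lem:abphi}; this is a short explicit computation (the matrix is $\begin{pmatrix} z_1^3 & z_1^{-2}\\ 3z_1^2 & -2z_1^{-3}\end{pmatrix}$ with determinant $-5$).

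For the second component, the homogeneous solutions of $\psi''+\psi=0$ are $e^{iz}$ and $e^{-iz}$, and it is convenient to use the shifted basis $e^{i(z-z_1)}$ and $e^{-i(z-z_2)}$ so that the initial conditions localize cleanly. The particular solution $\GG^\match_2[h_2]$ defined in~\eqref{def:lem:mathcalI} is again the one normalized so that the $e^{i(z-z_1)}$-integral vanishes at $z_1$ and the $e^{-i(z-z_2)}$-integral vanishes at $z_2$; differentiating shows $\GG^\match_2[h_2](z_1)=\tfrac{1}{2i}\int_{z_1}^{z_1}\cdots - \tfrac1{2i}\int_{z_2}^{z_1}e^{i(s-z_1)}h_2(s)\,ds$, so one has to be slightly careful: the clean statement is that $a_\psi,b_\psi$ are read off from $\delta\psi$ and $\partial_z\delta\psi$ at the two vertices as in~\eqref{def:lem:abphi}, and the verification is that $\psi(z)=e^{i(z-z_1)}a_\psi+e^{-i(z-z_2)}b_\psi+\GG^\match_2[h_2](z)$ does solve $\mathcal{L}^{\inn}_2[\psi]=h_2$ and is consistent with the prescribed data — using $e^{i(z-z)}=1$ and that $\tfrac{d}{dz}\GG^\match_2[h_2]=\tfrac12\int_{z_1}^z e^{-i(s-z)}h_2+\tfrac12\int_{z_2}^z e^{i(s-z)}h_2$, so that $\GG^\match_2[h_2]$ and its derivative at $z_1$ combine to give precisely $a_\psi=\tfrac12(\delta\psi(z_1)-i\partial_z\delta\psi(z_1))$ after solving the $2\times2$ system, and similarly at $z_2$ for $b_\psi$.

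Honestly, there is no real obstacle here: the lemma is pure linear ODE bookkeeping, and the only place requiring attention is matching the normalization conventions of the integral operator $\GG^\match$ with the chosen fundamental basis so that the constants come out exactly as in~\eqref{def:lem:abphi} rather than with extra boundary terms — in particular being consistent about which lower limit ($z_1$ or $z_2$) is used in each of the two integrals defining $\GG^\match_1$ and $\GG^\match_2$. For this reason the proof can legitimately be said to be ``straightforward and omitted,'' as the authors do; the content is simply the Euler-equation solutions $z^3, z^{-2}$ for $\mathcal{L}^{\inn}_1$, the exponentials $e^{\pm iz}$ for $\mathcal{L}^{\inn}_2$, the Wronskians $-5$ and $-2i$ respectively, and inversion of two explicit $2\times 2$ systems.
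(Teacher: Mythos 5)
Your approach is the intended one: the proof of the lemma is ``straightforward and omitted'' in the paper, and it is indeed variation of parameters plus the inversion of two explicit $2\times 2$ systems. The identification of the homogeneous basis $\{z^3, z^{-2}\}$ for $\mathcal{L}^{\inn}_1$ (Euler indicial roots $3,-2$ of $r(r-1)=6$) and $\{e^{iz}, e^{-iz}\}$ for $\mathcal{L}^{\inn}_2$, the Wronskians $-5$ and $-2i$, the normalization $\GG^\match_1[h](z_1)=\partial_z\GG^\match_1[h](z_1)=0$, and the observation that the boundary contributions of $\GG^\match_2$ cancel in the combinations $\psi(z_1)-i\partial_z\psi(z_1)$ and $\psi(z_2)+i\partial_z\psi(z_2)$ are all correct, and inverting the indicated $2\times 2$ matrices does reproduce~\eqref{def:lem:abphi} exactly.

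There is, however, one claim you assert but do not check, and it is precisely the place where the bookkeeping is nontrivial. You state that ``standard variation of parameters then gives the general solution $\phi(z)=z^3 a_\phi + z^{-2} b_\phi + \GG^\match_1[h_1](z)$,'' i.e.\ that $\GG^\match_1[h_1]$ as defined in~\eqref{def:lem:mathcalI} is a particular solution of $\mathcal{L}^\inn_1[\phi]=h_1$. If you actually carry out the computation you quote, it fails by a sign. Writing $\GG^\match_1[h]=\frac{z^3}{5}I_1 - \frac{1}{5z^2}I_2$ with $I_1=\int_{z_1}^z s^{-2}h\,ds$ and $I_2=\int_{z_1}^z s^3 h\,ds$, one finds
\[
\partial_z\GG^\match_1[h]=\frac{3z^2}{5}I_1+\frac{2}{5z^3}I_2,\qquad
\partial_z^2\GG^\match_1[h]=\frac{6z}{5}I_1-\frac{6}{5z^4}I_2+h,
\]
and therefore
\[
-\partial_z^2\GG^\match_1[h]+\frac{6}{z^2}\GG^\match_1[h]=-h.
\]
That is, $\mathcal{L}^\inn_1\circ\GG^\match_1=-\mathrm{Id}$ with the formula as written; the sign-correct particular solution of $\mathcal{L}^\inn_1[\phi]=h_1$ is $-\GG^\match_1[h_1]$. (This is consistent with the variation-of-parameters formula: for $\phi''-\frac{6}{z^2}\phi=-h$ with basis $z^3,z^{-2}$, Wronskian $W=-5$ and forcing $g=-h$, one gets $\phi_p=-z^3\int\frac{z^{-2}g}{W}+z^{-2}\int\frac{z^3 g}{W}=-\frac{z^3}{5}\int\frac{h}{s^2}+\frac{1}{5z^2}\int s^3 h$.) The same $-\mathrm{Id}$ relation holds for $\GG^\inn_1$ in~\eqref{def:J}, whereas $\GG^\match_2$ and $\GG^\inn_2$ are genuine right inverses of $\mathcal{L}^\inn_2$; so the two components in~\eqref{def:lem:mathcalI} are mutually inconsistent. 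This is almost certainly a sign typo in the paper that propagates harmlessly (it would at worst flip the sign of the Stokes constant $\Theta$, which is immaterial since only $\Theta\neq 0$ is used), and it does not change the formulas in~\eqref{def:lem:abphi} since $\GG^\match_1[h]$ and its derivative still vanish at $z_1$. Nevertheless, since asserting that $\GG^\match_1$ is the particular solution is the single nontrivial step of this lemma, you should have actually verified the sign rather than attributing it wholesale to ``the factor $\frac{1}{5}$ coming from the Wronskian $-5$'' — a careful check would either have caught the discrepancy or produced the $-\GG^\match_1$ that actually does the job.
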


Since $\delta \Phi^\uns$ is a solution of~\eqref{eq_matching_first},  Lemma~\ref{lem:linear_matching} implies that $\delta\Phi^\uns$ satisfies the following fixed point (affine) equation
\begin{align}\label{eq_matching_second}
    \delta \Phi^\uns(z) = &\left (\begin{array}{cc}
    z^3 a_{\phi^\uns} + \frac{1}{z^2} b_{\phi^\uns} \\    
    e^{i(z-z_1)} a_{\psi^\uns}  + e^{-i(z-z_2)} b_{\psi^\uns} 
    \end{array}\right ) +  \GG^\match[A^\uns](z) - \left (\begin{array}{c} \GG^\match_1[\delta \psi^\uns](z) \\ 0\end{array}\right )
    \\ &+ \GG^\match [\mathcal{B}_1^\uns \cdot \delta \Phi^\uns] (z) + \GG^\match[\mathcal{B}_2^\uns \cdot \partial_z \delta \Phi^\uns](z),    \notag 
\end{align} 
where $a_{\phi^\uns},b_{\phi^\uns },a_{\psi^\uns}, b_{\psi^\uns}$ are defined by~\eqref{def:lem:abphi} and  
we have used definition~\eqref{def:calB:matching} of $\mathcal{B}_3$. To shorten the notation we introduce 
\begin{equation}\label{def:Phi0:Gmatch}
\begin{aligned}
\delta \Phi_0^\uns(z) &= \begin{pmatrix}\delta \phi_0^\uns(z)\\ \delta \psi_0^\uns(z)\end{pmatrix} =  \left (\begin{array}{cc}
    z^3 a_{\phi^\uns} + \frac{1}{z^2} b_{\phi^\uns} \\    
    e^{i(z-z_1)} a_{\psi^\uns}  + e^{-i(z-z_2)} b_{\psi^\uns} 
    \end{array}\right ) +  \GG^\match[A^\uns](z), \\
    \Gmatch[\delta\Phi]&=    \begin{pmatrix}\Gmatch_1 [\delta\Phi]\\ \Gmatch_2 [\delta\Phi]\end{pmatrix} = \GG^\match [\mathcal{B}_1^\uns \cdot \delta \Phi] (z) + \GG^\match[\mathcal{B}_2^\uns \cdot \partial_z \delta \Phi](z),
\end{aligned}
\end{equation} 
after which we rewrite equation~\eqref{eq_matching_second} as
\begin{equation}\label{eq_matching_third}
\delta \Phi^\uns = \delta\Phi^\uns_0 - \left (\begin{array}{c} \GG^\match_1[\delta \psi^\uns](z) \\ 0\end{array}\right ) + \Gmatch[\delta \Phi^\uns].
\end{equation}
Using that $\delta \Phi^\uns$ is a solution of~\eqref{eq_matching_third}, we observe that $\delta \Phi^\uns$ must be also a solution of 
\begin{equation}\label{eq_matching_forth}
\delta \Phi^\uns = \widehat{\delta \Phi^\uns_0} + \Ghmatch [\delta \Phi^\uns],
\end{equation}
with 
\begin{equation}\label{def:Gmatchhat}
\begin{split}
     \widehat{\delta \Phi^\uns_0}&= \delta\Phi^\uns_0 - \left (\begin{array}{c} \GG^\match_1[\delta \psi^\uns_0](z) \\ 0\end{array}\right ),\\
      \Ghmatch [\delta \Phi]&=- \left (\begin{array}{c} \GG^\match_1 \big [\Gmatch_2[\delta \Phi] \big ](z) \\ 0\end{array}\right )+
\Gmatch[\delta \Phi].
\end{split}
\end{equation}

\subsection{The matching error}
 
For fixed $\ell\in\RR$, we introduce the norm
$$
\|f\|_\ell = \sup_{z\in D_{\theta_1,\theta_2,\nu}^{\match}} \left | z^\ell f(z)\right|
$$
and the Banach spaces 
\begin{equation*}
    \begin{split}
        \mathcal{Y}_{\ell}&=\{f : \mathcal{D}
        _{\theta_1,\theta_2,\nu }^{ \match} \to \mathbb{C}; \, f\, \text{is an analytic function and }\|f\|_{\ell}<\infty\}, \\ 
        \mathcal{D}\mathcal{Y}_{\ell}&=\{f : \mathcal{D}_{\theta_1,\theta_2,\nu}^{ \match} \to \mathbb{C}; \, f\, \text{is an analytic function and }\|f\|_{\ell} + \|f'\|_{\ell+1} <\infty\}.
    \end{split}
\end{equation*}
These Banach spaces satisfy the following properties.

\begin{lemma}\label{lem:banach:matching}
Let $\ell_1,\ell_2 \in \mathbb{R}$. Then 
\begin{enumerate}
    \item If $f\in \mathcal{Y}_{\ell_1}$, then $f\in \mathcal{Y}_{\ell_2}$, for all $\ell_2\in \mathbb{R}$. Moreover for $\ell_1 >\ell_2$ 
    $$
    \|f\|_{\ell_2} \lesssim \kappa^{\ell_2-\ell_1} \| f\|_{\ell_1}
    $$
    and for $\ell_1<\ell_2$, 
    $$
    \|f\|_{\ell_2}\lesssim \varepsilon^{(\ell_1-\ell_2)(1-\nu)}.
    $$
    \item If $f\in \mathcal{Y}_{\ell_1}$ and $g\in \mathcal{Y}_{\ell_2}$, then 
    $\|fg\|_{\ell_1 + \ell_2} \leq \|f\|_{\ell_1}\|g\|_{\ell_2}$.
\end{enumerate}
\end{lemma}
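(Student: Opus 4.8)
The plan is to reduce both statements to the elementary two‑sided bound
\[
c_0\,\kappa \;\le\; |z| \;\le\; C_0\,\varepsilon^{\nu-1},\qquad z\in \mathcal{D}^{\match}_{\theta_1,\theta_2,\nu},
\]
with $c_0,C_0>0$ depending only on the fixed angles $\theta_1,\theta_2$, and then to read off (1) and (2) by inserting powers of $|z|$ into $|z^\ell f(z)|$. So the first thing I would do is establish these $|z|$‑bounds directly from the description of $\mathcal{D}^{\match}_{\theta_1,\theta_2,\nu}$ as the triangle with vertices $-i\kappa$, $z_1=-i\kappa+\varepsilon^{\nu-1}e^{-i\theta_1}$ and $z_2=-i\kappa-\varepsilon^{\nu-1}e^{-i\theta_2}$.

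For the upper bound, since $\mathcal{D}^{\match}_{\theta_1,\theta_2,\nu}$ is the convex hull of its vertices and $z\mapsto|z|$ is convex, $|z|\le\max\{|-i\kappa|,|z_1|,|z_2|\}\le\kappa+\varepsilon^{\nu-1}\le C_0\varepsilon^{\nu-1}$ for $\varepsilon$ small, using $\kappa=c|\log\varepsilon|$ (so that $\kappa\,\varepsilon^{1-\nu}\to0$). For the lower bound I would use the line $L$ through the two vertices $-i\kappa$ and $z_2$: it has unit normal $(-\sin\theta_2,-\cos\theta_2)$, one checks (using $\theta_2<\theta_1$) that $z_1$ and the origin lie on strictly opposite sides of $L$, and $\mathrm{dist}(0,L)=\kappa\cos\theta_2$; since the whole triangle lies on the $z_1$‑side of $L$, this gives $|z|\ge\kappa\cos\theta_2\ge\kappa\cos\theta_1=:c_0\kappa$ for every $z\in\mathcal{D}^{\match}_{\theta_1,\theta_2,\nu}$.

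Granting this, part (2) is immediate: for $z\in\mathcal{D}^{\match}_{\theta_1,\theta_2,\nu}$ one has $|z^{\ell_1+\ell_2}f(z)g(z)|=|z^{\ell_1}f(z)|\,|z^{\ell_2}g(z)|\le\|f\|_{\ell_1}\|g\|_{\ell_2}$, and taking the supremum yields $\|fg\|_{\ell_1+\ell_2}\le\|f\|_{\ell_1}\|g\|_{\ell_2}$. For part (1) I would write $|z^{\ell_2}f(z)|=|z|^{\ell_2-\ell_1}|z^{\ell_1}f(z)|\le|z|^{\ell_2-\ell_1}\|f\|_{\ell_1}$ and split on the sign of $\ell_2-\ell_1$. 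When $\ell_1>\ell_2$ the exponent is negative, so $|z|\ge c_0\kappa$ gives $|z|^{\ell_2-\ell_1}\le(c_0\kappa)^{\ell_2-\ell_1}\lesssim\kappa^{\ell_2-\ell_1}$, hence $\|f\|_{\ell_2}\lesssim\kappa^{\ell_2-\ell_1}\|f\|_{\ell_1}$. When $\ell_1<\ell_2$ the exponent is positive, so $|z|\le C_0\varepsilon^{\nu-1}$ gives $|z|^{\ell_2-\ell_1}\lesssim\varepsilon^{(\nu-1)(\ell_2-\ell_1)}=\varepsilon^{(\ell_1-\ell_2)(1-\nu)}$, hence $\|f\|_{\ell_2}\lesssim\varepsilon^{(\ell_1-\ell_2)(1-\nu)}\|f\|_{\ell_1}$. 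In particular $\|f\|_{\ell_2}<\infty$ for every $\ell_2\in\mathbb{R}$, which gives the first assertion of (1).

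The computation is entirely routine; the only point that requires a moment of care is the geometry of the matching triangle for the lower bound on $|z|$. Indeed, for small $\varepsilon$ the vertex $z_2$ lies in the upper half‑plane and $\mathcal{D}^{\match}_{\theta_1,\theta_2,\nu}$ is a thin sliver pinched at $-i\kappa$, so one cannot simply invoke $\Im z\le-\kappa$; one really must locate the nearest boundary point, which is exactly where the factor $\cos\theta_2$ (equivalently $\cos\theta_1$) enters. Beyond this I expect no obstacle.
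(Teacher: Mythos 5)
Your proof is correct, and it supplies the verification that the paper leaves implicit (the lemma is stated without proof). The key observation is exactly the one you make: the matching triangle, although pinched at $-i\kappa$, is \emph{not} contained in $\{\Im z\le -\kappa\}$ for small $\varepsilon$ (since $z_2$ migrates into the upper half-plane once $\varepsilon^{\nu-1}\sin\theta_2>\kappa$), so the lower bound $|z|\gtrsim\kappa$ must be obtained from the distance of the origin to the separating line through $-i\kappa$ and $z_2$, which is $\kappa\cos\theta_2$. The upper bound $|z|\le\kappa+\varepsilon^{\nu-1}\lesssim\varepsilon^{\nu-1}$ (using $\kappa\varepsilon^{1-\nu}\to0$) and the reduction of both items to inserting powers of $|z|$ are the obvious, correct steps. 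One small remark: as written in the paper, the bound for $\ell_1<\ell_2$ is missing the factor $\|f\|_{\ell_1}$ on the right-hand side; your version $\|f\|_{\ell_2}\lesssim\varepsilon^{(\ell_1-\ell_2)(1-\nu)}\|f\|_{\ell_1}$ is the intended statement, and this is how it is used downstream.
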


We define the product Banach space $ \mathcal{Y}_\times=\mathcal{D}\mathcal{Y}_2 \times \mathcal{Y}_4$ endowed with the product norm 
\begin{equation}\label{def:norm:matchin}
\| (\phi,\psi) \|_\times= \max\{\| \phi \|_2 + \| \partial_z \phi \|_3 ,\|\psi \|_4 \}.
\end{equation}
We note that, as claimed in Remark~\ref{rmk:matching1}, $\delta \phi^\uns \in \mathcal{D}\mathcal{Y}_3$, $\delta \psi^\uns \in \mathcal{Y}_5$ with $\|\delta \phi^\uns\|_3 + \|\partial_z \delta \phi^\uns \|_4 , \|\delta \psi^\uns \|_5 \lesssim 1$ and therefore, by Lemma~\ref{lem:banach:matching},
\begin{equation}\label{Banachdeltaphi}
\|\delta \Phi^\uns\|_\times= \max\{|\delta \phi^\uns\|_2 + \|\partial_z \delta \phi^\uns\|_3, \|\delta \psi^\uns\|_4 \}\lesssim \frac{1}{\kappa}. 
\end{equation}

We now start estimating all the elements in the fixed point equation~\eqref{eq_matching_third}. 
The following lemma, whose proof is given in  Section~\ref{sec:matching:operator}, deals with the operators $\GG^\match$ and $\Gmatch$ defined in~\eqref{def:lem:mathcalI} and~\eqref{def:Phi0:Gmatch} respectively. 
 
\begin{lemma}\label{lem:lin_operator_matching} If $\kappa $ is big enough, the following statements are satisfied:
\begin{enumerate}
\item \label{firstitem}If $h\in \mathcal{Y}_\ell$ with $\ell>4$, then 
$\GG^\match_1 [h] \in \mathcal{Y}_{\ell-2}$ and 
$$
\|\GG^\match_1[h]\|_{\ell -2} \lesssim \|h \|_{\ell}, \qquad \|\partial_z \GG^\match_1[h]\|_{\ell -1} \lesssim \|h \|_{\ell}.
$$
\item\label{seconditem} If $h \in \mathcal{Y}_{\ell}$ with $\ell>0$, then
$\GG^\match_2[h] \in \mathcal{Y}_{\ell}$ and $\|\GG^\match_2[h]\|_{\ell} \lesssim\|h\|_{\ell}$.
\item If $h\in \mathcal{Y}_4$, then 
$\GG^\match_1 [h] \in \mathcal{Y}_{2}$ and $\|\GG^\match_1[h]\|_{2} \lesssim|\log \varepsilon| \|h \|_{2}$.
\item \label{itemforth} If $h\in \mathcal{Y}_\times =\mathcal{D}\mathcal{Y}_2 \times \mathcal{Y}_4$, then $\Gmatch[h] =\big (\Gmatch_1[h],\Gmatch_2 [h]\big )\in \mathcal{D}\mathcal{Y}_4 \times \mathcal{Y}_6$ with
$$
\|\Gmatch_1 [h]\|_4+ \|\partial_z \big (\Gmatch_1[h]\big )\|_5+\|\Gmatch_2 [h]\|_6 \lesssim  \|h\|_\times.
$$
As a consequence, by definition~\eqref{def:norm:matchin} of $\|\cdot \|_\times$, we have $
\|\mathcal{G}^{\match}[h]\|_\times  \lesssim \frac{1}{\kappa^2} \|h\|_\times.
$
\end{enumerate}
\end{lemma}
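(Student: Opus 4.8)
\textbf{Proof plan for Lemma~\ref{lem:lin_operator_matching}.}

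The plan is to estimate the two scalar operators $\GG^\match_1$ and $\GG^\match_2$ separately and then assemble the bound on the vector operator $\Gmatch$. For item~\ref{firstitem}, I would parametrize the two contour integrals in $\GG^\match_1$ by straight segments inside the triangle $\mathcal{D}^\match_{\theta_1,\theta_2,\nu}$. Writing $s = z + t(z_1 - z)$ for the first integral (and $s = z + t(z_2-z)$ for the branch ending at $z_2$ whenever that path is used), and using that on these segments one has $|s| \gtrsim |z|$ and $|s| \gtrsim \kappa$, the factor $z^3/5 \int_{z_1}^z s^{-2}h(s)\,ds$ is bounded by $|z|^3 \|h\|_\ell \int |s|^{-2-\ell}$; since $\ell > 4$, the integral converges at the far endpoint and contributes a factor $\sim |z|^{-1-\ell}$ (up to the $\kappa$-dependent endpoint), giving the $|z|^{-(\ell-2)}$ decay after multiplying by $|z|^3$. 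The second piece $z^{-2}/5\int s^3 h(s)\,ds$ is handled the same way, using $\ell-3 > 1$ so that $\int |s|^{3-\ell}$ converges. The derivative estimate $\|\partial_z \GG^\match_1[h]\|_{\ell-1} \lesssim \|h\|_\ell$ follows by differentiating under the integral sign: the boundary terms cancel (this is the standard identity $\GG^\match_1 \circ \LL^\inn_1 = \mathrm{Id}$ in disguise) and one is left with $3z^2 \int s^{-2}h + 2z^{-3}\int s^3 h$, estimated exactly as before. For item~\ref{seconditem}, the operator $\GG^\match_2$ has the oscillatory kernels $e^{\mp i(s-z)}$; along the segments in $\mathcal{D}^\match_{\theta_1,\theta_2,\nu}$ (which lie in a cone around the negative imaginary axis opening with angle less than $\pi/2$) one has $\Im(s-z)$ of a sign that makes $|e^{-i(s-z)}| \le 1$ on the first path and $|e^{i(s-z)}| \le 1$ on the second (this is precisely why the endpoints $z_1$ resp.\ $z_2$ were chosen as they were). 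Hence each integral is bounded by $\|h\|_\ell \int_{z_1}^z |s|^{-\ell}|ds| \lesssim |z|^{1-\ell}\cdot|z|^{-1} \cdot \|h\|_\ell \lesssim |z|^{-\ell}\|h\|_\ell$, using $\ell > 0$ for convergence along the (bounded) segment; more carefully, one splits the segment near $z$ and near the far vertex to extract the full $|z|^{-\ell}$ factor.

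For item~(3), the point is that when $\ell = 4$ exactly, the integral $\int |s|^{-2} |h(s)|\,|ds| \le \|h\|_4 \int |s|^{-6}$ still converges and behaves like $|z|^{-5}$, so $|z|^3$ times that is $|z|^{-2}$; the only borderline is the second piece $z^{-2}\int s^3 h\,ds$, where $\int |s|^{-1}|ds|$ over a segment of length $\sim \varepsilon^{-(1-\nu)}$ starting at distance $\sim \kappa$ from the origin produces the logarithm $\log(\varepsilon^{-(1-\nu)}/\kappa) \lesssim |\log \varepsilon|$ (recall $\kappa = c|\log\varepsilon|$ from~\eqref{def:kappachoice}). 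Multiplying by $|z|^{-2}$ and noting $|z|^2 \lesssim |z|^2$ recovers $\|\GG^\match_1[h]\|_2 \lesssim |\log\varepsilon|\,\|h\|_2$. For item~\ref{itemforth}, I would first combine the pointwise bounds on $\mathcal{B}_1^\uns$ and $\mathcal{B}_2^\uns$ from~\eqref{def:calB:matching}: since $\Phi^{0,\uns}, \Phi^\uns$ satisfy $|\phi| \lesssim |z|^{-3}$, $|\psi| \lesssim |z|^{-5}$, $|\partial_z\phi| \lesssim |z|^{-4}$, and $\Nin$ is polynomial in $\phi$, $1/z$, $\psi$, $\partial_z\phi$ with the $-6/z^2$ subtracted off in $\mathcal{B}_1^\uns$, each entry of $\mathcal{B}_1^\uns$ lies in $\mathcal{Y}_3$ (the leading $1/z$ times $\phi$-type terms give $|z|^{-4}$, but the slowest-decaying entry coming from $D_\psi$ of the $(1/z+\phi)^2\psi$ term is $\sim |z|^{-2}$; I would track carefully that $\mathcal{B}_1^\uns \in \mathcal{Y}_2$ entrywise suffices for what follows) and $\mathcal{B}_2^\uns \in \mathcal{Y}_3$ entrywise. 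Then for $h = (\phi,\psi) \in \mathcal{Y}_\times$, the product $\mathcal{B}_1^\uns \cdot h$ lies in $\mathcal{Y}_2\cdot\mathcal{Y}_2 + \mathcal{Y}_2\cdot\mathcal{Y}_4 \subset \mathcal{Y}_4$ componentwise (using part~(2) of Lemma~\ref{lem:banach:matching} and that $\mathcal{Y}_4 \hookrightarrow \mathcal{Y}_6$), and likewise $\mathcal{B}_2^\uns \cdot \partial_z\phi \in \mathcal{Y}_3 \cdot \mathcal{Y}_3 = \mathcal{Y}_6 \subset \mathcal{Y}_4$; feeding these into items~\ref{firstitem} and~\ref{seconditem} gives $\Gmatch_1[h] \in \mathcal{Y}_4$ with derivative in $\mathcal{Y}_5$, and $\Gmatch_2[h] \in \mathcal{Y}_6$. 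The final claim $\|\Gmatch[h]\|_\times \lesssim \kappa^{-2}\|h\|_\times$ then follows from part~(1) of Lemma~\ref{lem:banach:matching}: gaining from $\mathcal{Y}_4$ down to the $\mathcal{Y}_2$-norm in the definition~\eqref{def:norm:matchin} of $\|\cdot\|_\times$ costs a factor $\kappa^{2-4} = \kappa^{-2}$, and similarly for the $\psi$-component from $\mathcal{Y}_6$ to $\mathcal{Y}_4$.

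The main obstacle I anticipate is the bookkeeping in item~\ref{itemforth}: one must verify that every entry of $\mathcal{B}_1^\uns$ and $\mathcal{B}_2^\uns$ has \emph{enough} decay so that after multiplying by $h \in \mathcal{Y}_\times$ and applying $\GG^\match_1$ (which requires input in $\mathcal{Y}_\ell$ with $\ell > 4$ strictly, per item~\ref{firstitem}) one lands in a space with surplus decay $\ell - 4 \ge $ something positive, so that the $\kappa$-gain is genuinely $\kappa^{-2}$ and not merely $\kappa^{-1}$ or $|\log\varepsilon|$-lossy. This is delicate precisely because one entry of $D_\Phi\Nin$ decays only like $|z|^{-2}$, which sits right at the boundary of applicability of item~\ref{firstitem}; the resolution is that this slow entry multiplies the $\psi$-component of $h$, which lives in $\mathcal{Y}_4$, so the product is in $\mathcal{Y}_6$ and there is room to spare. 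I would isolate this case explicitly rather than arguing with a uniform entrywise bound on $\mathcal{B}_1^\uns$. The oscillatory estimates in items~\ref{seconditem} and~(3) are routine once the sign of $\Im(s-z)$ along the integration segments is pinned down, which is guaranteed by the constraint $\theta_2 < \theta_1 < \pi/2$ on the opening of the triangle.
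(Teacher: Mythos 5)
Your proof has essentially the same structure as the paper's. For items (1)--(3) the paper does not reprove them but cites the analogous Lemma 6.2 of \cite{GomideGSZ22} (and \cite{BaldomaCS13}); your sketches are the standard arguments. For item (4) the paper, like you, records entrywise bounds on $\mathcal{B}_1^\uns$ and $\mathcal{B}_2^\uns$ and tracks which entry hits which component of $h$, concluding $\|\mathcal{B}_1^\uns\cdot h\|_6,\|\mathcal{B}_2^\uns\cdot\partial_z\phi\|_6\lesssim\|h\|_\times$ and applying items (1)--(2). That said, a few steps in your write-up do not hold as stated.

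\textbf{Item (2).} Your chain $\|h\|_\ell\int_{z_1}^z|s|^{-\ell}\,|ds|\lesssim|z|^{1-\ell}\cdot|z|^{-1}\cdot\|h\|_\ell$ is unjustified. With only the bound $|e^{\mp i(s-z)}|\le 1$, the segment integral satisfies $\int_{z_1}^z|s|^{-\ell}\,|ds|\lesssim|z|^{1-\ell}$, which loses a power of $|z|$ relative to the claimed $\|\GG^{\match}_2[h]\|_\ell\lesssim\|h\|_\ell$. The missing ingredient is the \emph{exponential} decay of the kernel: parametrizing the segment from $z$ towards $z_1$ by arclength $t$, $|e^{-i(s-z)}|=e^{\Im(s-z)}=e^{-t\sin\beta}$ for some $\beta>0$ determined by $\theta_1,\theta_2$, and this factor localizes the integration to a bounded $t$-interval, giving $\lesssim|z|^{-\ell}\|h\|_\ell$. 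The phrase ``splits the segment near $z$ and near the far vertex'' does not replace this.

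\textbf{Item (4).} The paper shows $|P_{11}|,|P_{21}|\lesssim|z|^{-4}$, $P_{12}=0$, $|P_{22}|\lesssim|z|^{-2}$, and $Q_1=0$, $|Q_2|\lesssim|z|^{-3}$. Your claim ``each entry of $\mathcal{B}_1^\uns$ lies in $\mathcal{Y}_3$'' is incorrect, and the alternative uniform bound $\mathcal{B}_1^\uns\in\mathcal{Y}_2$ entrywise definitively does \emph{not} suffice (as you yourself later suspect): it would only put $P_{11}h_1$ in $\mathcal{Y}_4$, outside the scope $\ell>4$ of item (1). The refined entrywise decay is essential. Two points you should add: (i) the vanishing $P_{12}=0$ is not automatic but comes from subtracting off $\mathcal{B}_3=\begin{pmatrix}0&-1\\0&0\end{pmatrix}$ (not ``$-6/z^2$''; that term sits in $\LL_1^\inn$, not $D_\Phi\Nin$) --- without it $P_{12}h_2\in\mathcal{Y}_4$ and the argument fails; (ii) the embedding direction you wrote, $\mathcal{Y}_4\hookrightarrow\mathcal{Y}_6$, is backwards --- higher $\ell$ means faster decay, so $\mathcal{Y}_6\hookrightarrow\mathcal{Y}_4$. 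What you do isolate correctly is the critical cancellation: the $|z|^{-2}$-entry is $P_{22}$, which pairs with the $\psi$-component $h_2\in\mathcal{Y}_4$ and so lands in $\mathcal{Y}_6$, and the $|z|^{-4}$-entries pair with $h_1\in\mathcal{Y}_2$ to reach $\mathcal{Y}_6$ as well; this is exactly the paper's resolution.
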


We claim now that the operator $\Ghmatch: \mathcal{Y}_\times \to \mathcal{Y}_\times$ defined in~\eqref{def:Gmatchhat} satisfies that, for $\kappa$ big enough,  
\begin{equation}\label{eq:Gmatchcontractive}
\| \Ghmatch[h] \|_\times \lesssim \frac{1}{\kappa^2}\|h\|_\times.
\end{equation}
Indeed, by item~\ref{itemforth} in Lemma~\ref{lem:lin_operator_matching}, if $h\in \mathcal{Y}_\times$, then $\Gmatch_2[h] \in \mathcal{Y}_6$. Therefore, by item~\ref{firstitem} in Lemma~\ref{lem:lin_operator_matching},
$\GG^\match_1 [\Gmatch_2[h]] \in \mathcal{D}\mathcal{Y}_4$  and the estimates in item~\ref{firstitem} apply. By Lemma~\ref{lem:banach:matching}, we have 
$$
\|\GG^\match_1 [\Gmatch_2[h]] \|_2+ \|\partial_z \GG^\match_1 [\Gmatch_2[h]] \|_3 \lesssim \frac{1}{\kappa^2}\|h\|_\times.
$$
Then, the claim follows from item~\ref{itemforth} of Lemma~\ref{lem:lin_operator_matching} and definition~\eqref{def:Phi0:Gmatch} of $\Ghmatch$.   

It follows from~\eqref{eq_matching_forth} that 
$$
\big (\mathrm{Id} -  \Ghmatch\big )\delta \Phi^\uns = \widehat{\delta \Phi^\uns_0}. 
$$
Therefore,  using that $\delta \Phi^* \in \YY_\times$ (see~\eqref{Banachdeltaphi}) and that, by \eqref{eq:Gmatchcontractive}, $\mathrm{Id} - \Ghmatch:\YY_\times\to \YY_\times$ is invertible, we obtain that
$$
\delta \Phi^\uns = \big (\mathrm{Id}- \Ghmatch\big )^{-1} [\widehat{\delta \Phi^\uns_0}]\quad \text{and}\quad \| \delta \Phi^\uns\|_\times \lesssim \|\widehat{\delta \Phi^\uns_0}\|_\times. 
$$
Theorem~\ref{thm:matching} is then a consequence of the following lemma whose proof is given in Section~\ref{sec:proofdelphi0}.

\begin{lemma}\label{lem:deltaphi0} Let $\nu \in (0,1)$. 
If $\kappa$ is big enough, then $\|\widehat{\delta \Phi^\uns_0} \|_\times \lesssim |\log \eps|\eps^{1-\nu}$. 
\end{lemma}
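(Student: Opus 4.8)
The plan is to estimate each of the two ingredients of $\widehat{\delta\Phi_0^\uns}$ in the norm $\|\cdot\|_\times$, namely the ``seed'' $\delta\Phi_0^\uns$ defined in~\eqref{def:Phi0:Gmatch} and the correction $-(\GG_1^\match[\delta\psi_0^\uns],0)^\top$ appearing in~\eqref{def:Gmatchhat}. Recall that
\[
\delta\Phi_0^\uns(z)=\begin{pmatrix} z^3 a_{\phi^\uns}+\frac{1}{z^2}b_{\phi^\uns}\\ e^{i(z-z_1)}a_{\psi^\uns}+e^{-i(z-z_2)}b_{\psi^\uns}\end{pmatrix}+\GG^\match[A^\uns](z),
\]
so the estimate splits into three pieces: the homogeneous polynomial part of $\phi$, the homogeneous oscillatory part of $\psi$, and the particular solution $\GG^\match[A^\uns]$.

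First I would bound the homogeneous parts using the explicit formulas~\eqref{def:lem:abphi} for $a_{\phi^\uns},b_{\phi^\uns},a_{\psi^\uns},b_{\psi^\uns}$ together with the bound~\eqref{Banachdeltaphi}, i.e. $\|\delta\Phi^\uns\|_\times\lesssim\kappa^{-1}$, evaluated at the vertices $z_1,z_2$. Since $|z_1|,|z_2|\sim\eps^{-(1-\nu)}$ on the matching domain, the bound~\eqref{Banachdeltaphi} gives $|\delta\phi^\uns(z_1)|\lesssim\kappa^{-1}|z_1|^{-2}$, $|\partial_z\delta\phi^\uns(z_1)|\lesssim\kappa^{-1}|z_1|^{-3}$, hence $|a_{\phi^\uns}|\lesssim\kappa^{-1}|z_1|^{-5}\lesssim\kappa^{-1}\eps^{5(1-\nu)}$ and $|b_{\phi^\uns}|\lesssim\kappa^{-1}|z_1|^{0}=\kappa^{-1}$; thus $\|z^3 a_{\phi^\uns}+z^{-2}b_{\phi^\uns}\|_2\lesssim|z_1|^5|a_{\phi^\uns}|+|b_{\phi^\uns}|\lesssim\kappa^{-1}$, which is much smaller than $|\log\eps|\eps^{1-\nu}$ once $\kappa=c|\log\eps|$. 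Similarly $|a_{\psi^\uns}|,|b_{\psi^\uns}|\lesssim\kappa^{-1}|z_j|^{-4}$, and since $e^{i(z-z_1)}$ and $e^{-i(z-z_2)}$ stay bounded on the triangle $\widehat{-i\kappa,z_1,z_2}$ (this uses the geometry: the sides make angles $\theta_1,\theta_2$ with the imaginary direction so that $\Im(z-z_1)\le0$ along the appropriate edges), the oscillatory part is also $O(\kappa^{-1}\eps^{4(1-\nu)})$ in $\mathcal{Y}_4$. These contributions are therefore negligible.

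Next I would estimate the particular solution $\GG^\match[A^\uns]$, which is the genuinely nontrivial term and where the $|\log\eps|\eps^{1-\nu}$ comes from. Here $A^\uns(z)=\mathcal{A}[\Phi^\uns;\eps](z)$ is the collection of terms that were dropped when passing from the rescaled equation~\eqref{eq:perturb} to the inner equation~\eqref{eq:inner}; using the expansions~\eqref{exp_u0_inner} of $\eps c_{-1}^{-1}\hat u_0$ and $\eps c_{-1}^{-1}\hat u_0'$ and the bounds~\eqref{recall:phipsi} on $\phi^\uns,\psi^\uns$, one checks that each monomial in $\mathcal{A}_1,\mathcal{A}_2$ carries at least one extra power of $\eps$ times a rational function of $z$ of the appropriate degree, so that $A_1^\uns\in\mathcal{Y}_4$ with $\|A_1^\uns\|_4\lesssim\eps$ and $A_2^\uns\in\mathcal{Y}_6$ with $\|A_2^\uns\|_6\lesssim\eps$ (the careful bookkeeping of which powers of $\eps$ and $z$ appear is the main obstacle, and is exactly the point where the factor $\eps^{1-\nu}$ rather than $\eps$ can enter, because converting between norms $\mathcal{Y}_{\ell_1}$ and $\mathcal{Y}_{\ell_2}$ with $\ell_1<\ell_2$ costs $\eps^{(\ell_1-\ell_2)(1-\nu)}$ by Lemma~\ref{lem:banach:matching}). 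Then item~\ref{seconditem} of Lemma~\ref{lem:lin_operator_matching} gives $\|\GG^\match_2[A_2^\uns]\|_6\lesssim\|A_2^\uns\|_6\lesssim\eps$, and item~\ref{firstitem} gives $\|\GG^\match_1[A_1^\uns]\|_2\lesssim\|A_1^\uns\|_4\lesssim\eps$ — but here one must be careful: to land in $\mathcal{Y}_2$ from $\mathcal{Y}_4$ via $\GG^\match_1$ one invokes item~(3) of Lemma~\ref{lem:lin_operator_matching}, which produces the $|\log\eps|$ factor, so $\|\GG^\match_1[A_1^\uns]\|_2\lesssim|\log\eps|\,\eps^{1-\nu}$ after accounting for the norm conversion. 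Together with the derivative estimate $\|\partial_z\GG^\match_1[A_1^\uns]\|_3\lesssim\|A_1^\uns\|_4$ from item~\ref{firstitem}, this shows $\|\GG^\match[A^\uns]\|_\times\lesssim|\log\eps|\eps^{1-\nu}$.

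Finally I would handle the correction term $\GG_1^\match[\delta\psi_0^\uns]$. By the previous two steps, $\delta\psi_0^\uns=e^{i(z-z_1)}a_{\psi^\uns}+e^{-i(z-z_2)}b_{\psi^\uns}+\GG^\match_2[A_2^\uns]$ lies in $\mathcal{Y}_4$ with norm $\lesssim\eps$ (the oscillatory pieces being in fact much smaller). Applying item~(3) of Lemma~\ref{lem:lin_operator_matching} once more, $\|\GG^\match_1[\delta\psi_0^\uns]\|_2\lesssim|\log\eps|\,\|\delta\psi_0^\uns\|_2\lesssim|\log\eps|\,\eps^{1-\nu}$, and the derivative bound comes from item~\ref{firstitem}. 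Collecting the three contributions into the definition~\eqref{def:Gmatchhat} of $\widehat{\delta\Phi_0^\uns}$ yields $\|\widehat{\delta\Phi_0^\uns}\|_\times\lesssim|\log\eps|\,\eps^{1-\nu}$, which is the claim. The only delicate point throughout is the precise power-counting in $A^\uns$ and the repeated use of the norm-interpolation inequality of Lemma~\ref{lem:banach:matching}, which is what forces the exponent $1-\nu$ instead of a full power of $\eps$; everything else is a routine application of Lemmas~\ref{lem:banach:matching} and~\ref{lem:lin_operator_matching}.
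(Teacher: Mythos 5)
There is a genuine gap in the estimate of the homogeneous part of $\delta\Phi_0^\uns$. You bound $a_{\phi^\uns},b_{\phi^\uns},a_{\psi^\uns},b_{\psi^\uns}$ using the bootstrap bound~\eqref{Banachdeltaphi}, i.e. $|\delta\phi^\uns(z)|\lesssim\kappa^{-1}|z|^{-2}$ and $|\delta\psi^\uns(z)|\lesssim\kappa^{-1}|z|^{-4}$, and conclude $|b_{\phi^\uns}|\lesssim\kappa^{-1}$ and similarly for the oscillatory part, then claim this is ``much smaller than $|\log\eps|\,\eps^{1-\nu}$ once $\kappa=c|\log\eps|$.'' This is backwards: with $\kappa=c|\log\eps|$ the ratio $\kappa^{-1}/(|\log\eps|\,\eps^{1-\nu})=(c|\log\eps|^2\eps^{1-\nu})^{-1}\to\infty$ as $\eps\to 0$, so $\kappa^{-1}$ is in fact much \emph{larger} than the target bound, and the estimate fails. (The lemma also requires the constant in $\lesssim$ to be uniform in $\kappa\geq\kappa_0$, so even for fixed $\kappa$ a bound that is merely $O(\kappa^{-1})$ cannot give the $\eps$-dependence claimed.)

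The fix — and what the paper does — is to use the sharper a priori decay recorded in Remark~\ref{rmk:matching1}, namely $|\delta\phi^\uns(z)|\lesssim|z|^{-3}$, $|\partial_z\delta\phi^\uns(z)|\lesssim|z|^{-4}$, $|\delta\psi^\uns(z)|\lesssim|z|^{-5}$, which follow from~\eqref{recall:phipsi} and the inner-solution bounds rather than from~\eqref{Banachdeltaphi}. Inserting these at the vertices $z_1,z_2\sim\eps^{-(1-\nu)}$ into~\eqref{def:lem:abphi} gives $|a_{\phi^\uns}|\lesssim|z_1|^{-6}$, $|b_{\phi^\uns}|\lesssim|z_1|^{-1}\lesssim\eps^{1-\nu}$ and $|a_{\psi^\uns}|,|b_{\psi^\uns}|\lesssim|z_1|^{-5}$, which are what yield the $\eps^{1-\nu}$ in Lemma~\ref{lem:deltaphi0bound}. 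A related misattribution: you locate the source of the exponent $1-\nu$ in the power-counting of $\mathcal{A}[\Phi^\uns;\eps]$ and norm conversions, but in the paper $\GG^\match[A^\uns]$ contributes only $\eps|\log\eps|$ (a strictly smaller term, via Lemma~\ref{lem:1:matching} and items 1--3 of Lemma~\ref{lem:lin_operator_matching}); the dominant $\eps^{1-\nu}$ comes precisely from the homogeneous coefficients $a_{\phi^\uns},b_{\phi^\uns},a_{\psi^\uns},b_{\psi^\uns}$, and the final $|\log\eps|$ factor then comes from $\GG^\match_1[\delta\psi_0^\uns]$. Your overall decomposition of $\widehat{\delta\Phi_0^\uns}$ into three pieces mirrors the paper's, but without the sharper pointwise decay at $z_1,z_2$ the key step does not close.
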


It remains to prove Lemmas~\ref{lem:lin_operator_matching} and \ref{lem:deltaphi0}.

\subsection{Proof of Lemma~\ref{lem:lin_operator_matching}} \label{sec:matching:operator}

The proof of the three first items of Lemma~\ref{lem:lin_operator_matching} can be found in the proof of Lemma 6.2 in \cite{GomideGSZ22} (see also~\cite{BaldomaCS13}).

Now we prove item~\ref{itemforth}. We first note that, from definition~\eqref{eq:inner:RHSoperator} of   $\Nin_1, \Nin_2$, 
\begin{align*}
D_{\Phi} \Nin[\Phi] (z)= \left ( 
\begin{array}{cc} 
-\frac{12}{z} \phi - 6 \phi^2 & -1 \\
 \mathfrak{g}[\Phi]& 
- 6 \left (\frac{1}{z} + \phi \right )^2 
\end{array}\right ), \\
D_{\partial_z \phi} \Nin[\Phi] (z) = \left (0, -24 \left (\frac{1}{z} + \phi  \right ) \left (-\frac{1}{z^2} + \partial_z \phi\right ) \right )^{\top},
\end{align*}
where 
$$
\mathfrak{g}[\Phi](z) = -12 \left (\frac{1}{z} + \phi \right ) \left (\psi + 2 \left (\frac{1}{z} + \phi\right )^3 \right )- 36 \left (\frac{1}{z} + \phi \right )^4 - 12 \left (-\frac{1}{z^2} + \partial_z \phi \right )^2.
$$
Let us denote 
\[
P(z)=D_\Phi \Nin[\Phi ](z) - \left (\begin{array}{cc} 0 &-1 \\ 0 & 0 \end{array}\right ), \qquad Q(z)= D_{\partial_z \phi} \Nin[\Phi] (z).
\]
Then, $P=(P_{ij})_{i,j}$ is a $2\times 2$ matrix and, for $\Phi\in \mathcal{Y}_3\times \mathcal{Y}_3$,
its coefficients satisfy 
\[
|P_{11}(z)|\lesssim \frac{1}{|z|^4},\quad P_{12}(z)=0,\quad|P_{21}(z)|\lesssim \frac{1}{|z|^4},\quad|P_{22}(z)|\lesssim \frac{1}{|z|^2},\quad
\]
whereas $Q$ is a $2$-dimensional vector which, for $\Phi\in \mathcal{Y}_3\times \mathcal{Y}_3$, satisfies
\[
Q_1(z)=0,\qquad |Q_{2}(z)|\lesssim \frac{1}{|z|^3}.
\]
Finally, by definition~\eqref{def:calB:matching} of $\mathcal{B}_1^\uns(z), \mathcal{B}_2^\uns(z)$, if $h\in \mathcal{D}\mathcal{Y}_2 \times \mathcal{Y}_4$, 
then we have 
$$
\|\mathcal{B}_1^\uns \cdot h \|_6 , \,   \| \mathcal{B}_2^\uns \cdot h \|_6 \lesssim \|h\|_\times,
$$ 
and by item~\ref{firstitem} and item~~\ref{seconditem} of Lemma~\ref{lem:lin_operator_matching}, $\Gmatch[h] \in \mathcal{D}\mathcal{Y}_4 \times \mathcal{Y}_6$ with bounded norm. This completes the proof of Lemma \ref{lem:lin_operator_matching}. 

\subsection{Proof of Lemma~\ref{lem:deltaphi0}}  \label{sec:proofdelphi0}
We introduce 
$$ 
\widetilde{\delta \phi_0^\uns} = z^3 a_{\phi^\uns} + \frac{1}{z^2 } b_{\phi^\uns},  \qquad 
\widetilde{\delta \psi_0^\uns} = e^{i(z-z_1) }a_{\psi^\uns} + e^{-i(z-z_2) }b_{\psi^\uns},
$$
where $a_{\phi^\uns}, b_{\phi^\uns}, a_{\psi^\uns}$ and $b_{\psi^\uns}$ are defined by~\eqref{def:lem:abphi} with $\phi = \phi^\uns$ and $\psi=\psi^\uns$.
From~\eqref{def:Gmatchhat}, we have that 
$\widehat{\delta\Phi^\uns_0} = \big ( \widehat{\delta \phi^\uns_0}, \widehat{\delta\psi^\uns_0}\big ) $
is defined by 
\begin{align*}
 \widehat{\delta \phi_0^\uns}(z) & = \delta \phi_0^\uns(z)  - \GG^\match_1[\delta \psi_0^\uns] = \widetilde{\delta \phi_0^\uns}(z)   + \GG^\match_1[A_1^\uns](z) - \GG^\match_1[\delta \psi_0^\uns],\\ 
 \widehat{\delta \psi_0^\uns}(z)& = \delta \psi_0^\uns(z) =  \widetilde{\delta \psi_0^\uns}(z) + \GG^\match_2[A_2^\uns](z),
 \end{align*}
 where $A^\uns=\big (A^\uns_1,A^\uns_2\big)$ is defined by 
 \begin{equation}\label{recallAast}
 A^\uns(z) = \mathcal{A}[\Phi^\uns](z) = \Nch_1[\Phi^\uns](z) - \Nin_1 [\Phi^\uns](z). 
 \end{equation}
We recall that $\phi^\uns \in D\mathcal{Y}_3$ and $\psi^\uns\in \mathcal{Y}_5$, see \eqref{recall:phipsi}. The following lemma estimates $\widetilde{\delta \phi_0^\uns}(z)$.

\begin{lemma} \label{lem:deltaphi0bound} 
	Fix $\nu\in (0,1)$. If $\eps>0$ is small enough, then 
we have for all $z\in D_{\theta_1,\theta_2,\nu}^\match$,
$$
\big |z^2 \widetilde{\delta \phi_0^\uns}(z) \big | + \big |z^3\partial_z \widetilde{\delta \phi_0^\uns}(z) \big |+ \big |z^4 \widetilde{\delta \psi_0^\uns}(z) \big | \lesssim \eps^{1-\nu}.
$$
\end{lemma}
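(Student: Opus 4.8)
\textbf{Plan of proof of Lemma~\ref{lem:deltaphi0bound}.}
The strategy is to control the four constants $a_{\phi^\uns},b_{\phi^\uns},a_{\psi^\uns},b_{\psi^\uns}$ defined in~\eqref{def:lem:abphi} and then propagate these bounds through the explicit formulas for $\widetilde{\delta\phi_0^\uns}$ and $\widetilde{\delta\psi_0^\uns}$ over the matching triangle $D^\match_{\theta_1,\theta_2,\nu}$. The key point is that the vertices $z_1,z_2$ lie at distance $\sim\eps^{-(1-\nu)}$ from the real axis (more precisely $z_1=-i\kappa+\eps^{-(1-\nu)}e^{-i\theta_1}$, $z_2=-i\kappa-\eps^{-(1-\nu)}e^{-i\theta_2}$), so $|z_1|,|z_2|\sim\eps^{-(1-\nu)}$ and, since $\theta_1,\theta_2<\pi/2$, we have $\Im z_1,\Im z_2\sim -\eps^{-(1-\nu)}$, i.e. $|e^{iz_1}|,|e^{iz_2}|$ are \emph{superexponentially} small in $\eps$. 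The whole content of the lemma is that these large-$|z|$ gains beat the $|z|^3$ and $1/z^2$ growth factors in $\widetilde{\delta\phi_0^\uns}$.

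First I would bound the constants. By Remark~\ref{rmk:matching1} (or equivalently~\eqref{Banachdeltaphi}), $\delta\phi^\uns\in\mathcal{D}\mathcal{Y}_3$ and $\delta\psi^\uns\in\mathcal{Y}_5$ with norms $\lesssim 1/\kappa\lesssim 1$, so $|\delta\phi^\uns(z_1)|\lesssim|z_1|^{-3}$, $|\partial_z\delta\phi^\uns(z_1)|\lesssim|z_1|^{-4}$, $|\delta\psi^\uns(z_j)|\lesssim|z_j|^{-5}$ and $|\partial_z\delta\psi^\uns(z_j)|\lesssim|z_j|^{-5}$ (the last using that $\delta\psi^\uns=\psi^\uns-\psi^{0,\uns}$ with both terms in $\mathcal{X}_5$, whose derivatives lie in $\mathcal{X}_6\subset$ the relevant space, or simply a Cauchy estimate on a disc of radius $\sim|z_j|$). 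Plugging into~\eqref{def:lem:abphi}: the combination $2\delta\phi^\uns(z_1)+z_1\partial_z\delta\phi^\uns(z_1)$ is $\mathcal{O}(|z_1|^{-3})$, so $|a_{\phi^\uns}|\lesssim|z_1|^{-6}\sim\eps^{6(1-\nu)}$; similarly $|b_{\phi^\uns}|\lesssim|z_1|^2\cdot|z_1|^{-3}=|z_1|^{-1}\sim\eps^{1-\nu}$; and $|a_{\psi^\uns}|\lesssim|z_1|^{-5}$, $|b_{\psi^\uns}|\lesssim|z_2|^{-5}$, both $\sim\eps^{5(1-\nu)}$.

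Next I would estimate each term on $D^\match_{\theta_1,\theta_2,\nu}$. For the $\phi$ component: on the triangle $|z|\lesssim\eps^{-(1-\nu)}$, so $|z^3 a_{\phi^\uns}|\lesssim\eps^{-3(1-\nu)}\eps^{6(1-\nu)}=\eps^{3(1-\nu)}$ and $|z^{-2}b_{\phi^\uns}|\lesssim|b_{\phi^\uns}|\lesssim\eps^{1-\nu}$, hence $|z^2\widetilde{\delta\phi_0^\uns}(z)|\lesssim|z^2|(\eps^{3(1-\nu)}+\eps^{1-\nu}/|z|^2)\lesssim\eps^{1-\nu}$ (the first contribution gives $|z^2|\eps^{3(1-\nu)}\lesssim\eps^{(1-\nu)}$, the second is $\lesssim\eps^{1-\nu}$ directly); the derivative $\partial_z\widetilde{\delta\phi_0^\uns}=3z^2a_{\phi^\uns}-2z^{-3}b_{\phi^\uns}$ is handled identically, giving $|z^3\partial_z\widetilde{\delta\phi_0^\uns}(z)|\lesssim\eps^{1-\nu}$. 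For the $\psi$ component: on the triangle, for a point $z$ I use that $\Im(z-z_1)\le 0$ on the side of the triangle facing $z_1$ so $|e^{i(z-z_1)}|\le 1$, and likewise $|e^{-i(z-z_2)}|\le 1$ — more carefully one checks from the geometry of the lozenge/triangle that the relevant exponentials never exceed a constant — hence $|\widetilde{\delta\psi_0^\uns}(z)|\lesssim|a_{\psi^\uns}|+|b_{\psi^\uns}|\lesssim\eps^{5(1-\nu)}$, and $|z^4\widetilde{\delta\psi_0^\uns}(z)|\lesssim\eps^{-4(1-\nu)}\eps^{5(1-\nu)}=\eps^{1-\nu}$.

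\textbf{Main obstacle.} The routine part is the bookkeeping of powers of $|z_1|,|z_2|$ versus $|z|$; the delicate point is the exponential factors in $\widetilde{\delta\psi_0^\uns}$: one must verify that for every $z$ in the triangle $D^\match_{\theta_1,\theta_2,\nu}$ we genuinely have $\Im(z-z_1)\le C$ and $-\Im(z-z_2)\le C$ for a uniform constant, i.e. that neither $e^{i(z-z_1)}$ nor $e^{-i(z-z_2)}$ blows up. This is a convexity/geometry argument about the triangle: the linear functional $\Im(\cdot-z_1)$ on a triangle attains its max at a vertex, so it suffices to check the three vertices $-i\kappa,z_1,z_2$, and by the choice $0<\theta_2<\theta_1<\pi/2$ and $z_1,z_2$ symmetric about the vertical through $-i\kappa$ these are all $\le 0$ up to an $\mathcal{O}(\kappa)=\mathcal{O}(|\log\eps|)$ loss, which is absorbed into the $\lesssim$ (and contributes at worst a $|\log\eps|$, harmless here since the final claim in Lemma~\ref{lem:deltaphi0} already carries a $|\log\eps|$). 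Once this geometric fact is in hand, the lemma follows by assembling the pieces above.
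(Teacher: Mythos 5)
Your approach is the same as the paper's: bound the four constants $a_{\phi^\uns},b_{\phi^\uns},a_{\psi^\uns},b_{\psi^\uns}$ using the decay of $\delta\phi^\uns,\partial_z\delta\phi^\uns,\delta\psi^\uns,\partial_z\delta\psi^\uns$ at $z_1,z_2$, use $|z|\lesssim\eps^{-(1-\nu)}$ on the triangle, and control the two exponentials. The power counting and the constant bounds $|a_{\phi^\uns}|\lesssim\eps^{6(1-\nu)}$, $|b_{\phi^\uns}|\lesssim\eps^{1-\nu}$, $|a_{\psi^\uns}|,|b_{\psi^\uns}|\lesssim\eps^{5(1-\nu)}$ are all correct and match the paper.

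However, the geometric discussion in your ``main obstacle'' paragraph contains several sign errors, which happen to cancel but make the argument as written invalid. First, you assert $\Im z_1,\Im z_2\sim-\eps^{-(1-\nu)}$, but in fact
\[
\Im z_1=-\kappa-\eps^{-(1-\nu)}\sin\theta_1<0,\qquad \Im z_2=-\kappa+\eps^{-(1-\nu)}\sin\theta_2>0
\]
for $\eps$ small, so $\Im z_2\sim+\eps^{-(1-\nu)}$; the triangle straddles the real axis. Consequently $z_1,z_2$ are not symmetric about the vertical through $-i\kappa$ (and the hypotheses only give $\theta_2<\theta_1$, not $\theta_1=\theta_2$). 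Second, you claim ``$\Im(z-z_1)\le 0\ldots$ so $|e^{i(z-z_1)}|\le 1$'', but $|e^{i(z-z_1)}|=e^{-\Im(z-z_1)}$, so this implication is backwards: one needs $\Im(z-z_1)\ge 0$, and analogously $\Im(z-z_2)\le 0$ for $|e^{-i(z-z_2)}|\le 1$. Third, when you check the vertices you conclude ``these are all $\le 0$'', but in fact $\Im(v-z_1)\ge 0$ at all three vertices $v\in\{-i\kappa,z_1,z_2\}$. The correct and simple observation (used by the paper) is that over the convex triangle $\Im z$ attains its minimum at $z_1$ and its maximum at $z_2$, so $\Im z_1\le\Im z\le\Im z_2$; this gives $\Im(z-z_1)\ge 0$ and $\Im(z-z_2)\le 0$, hence both exponentials are $\le 1$ with no $|\log\eps|$ loss at all. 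With these corrections your proof coincides with the paper's; as written, the geometric step does not hold.
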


\begin{proof}
By definition~\eqref{def:lem:abphi}, we have
$$
|a_\phi^\uns|\lesssim\frac{1}{|z_1|^6} \lesssim \varepsilon^{6(1-\nu)}, \qquad |b_\phi^\uns| \lesssim \frac{1}{|z_1|} \lesssim\varepsilon^{1-\nu}, \qquad |a_\psi^\uns|, |b_\psi^\uns| \lesssim  \varepsilon^{5(1-\nu)}. 
$$
Then, for $z\in D_{\theta_1,\theta_2}^{\nu, \match}$, using that $|z| \lesssim \min \{ |z_1|, |z_2|\} \lesssim \varepsilon^{-(1-\nu)}$, we obtain 
\[
\begin{split}
\big |z^2 \delta \phi_0^\uns(z) \big |& = \left | z^5 a_{\phi} + b_{\phi} \right |\lesssim    
|z|^5 \eps^{6(1-\nu)} + \eps^{1-\nu} \lesssim \eps^{1-\nu}, \\
\big |z^4 \delta \psi_0^\uns(z) \big | &  \lesssim \varepsilon^{5(1-\nu)} |z|^4 \big (e^{-\Im (z-z_1)} + e^{\Im (z-z_2)} \big ) \lesssim \eps^{1-\nu},
\end{split}
\]
where in the last inequality we have used that $\Im z_2 >\Im z >\Im z_1$. 
\end{proof}

Next we analyze $\GG^\match[A^\uns]$. To do so, we look for an explicit expression of $\Nch$. 

\begin{lemma}\label{lem:1:matching}
    The fixed point equation~\eqref{eq:perturb} in the inner variables~\eqref{inner_variables_matching} can be written as 
    \begin{equation*}
\left\{  \begin{array}{l}
        \mathcal{L}_1^{\inn} \phi = \Nch_1[\phi,\psi;\varepsilon], \\ 
        \mathcal{L}_2^{\inn} \psi =
        \Nch_2[\phi,\psi;\varepsilon],
    \end{array} \right.
\end{equation*}
with 
 \begin{equation*}
\left\{  \begin{array}{l}
    \Nch_1 [\phi,\psi;\varepsilon](z)=  
    \Nin_1[\phi,\psi](z) + \mathcal{A}_1[\phi,\psi;\varepsilon](z), \vspace{0.2cm}\\ 
     \Nch_2[\phi,\psi;\varepsilon](z)= \Nin_1[\phi,\psi](z) + \mathcal{A}_2[\phi,\psi;\varepsilon](z),
    \end{array} \right.
\end{equation*}
where, for $z \in \mathcal{D}_{\theta_1,\theta_2,\nu}^{\match}$, 
\begin{equation}\label{boundsA1A2:matching}
\big | \mathcal{A}_1[\phi^\uns,\psi^\uns;\varepsilon](z) \big | \lesssim
  \frac{\eps}{|z|^4}  , \qquad 
\big | \mathcal{A}_2[\phi^\uns,\psi^\uns;\varepsilon](z) \big | \lesssim \frac{\eps}{|z|^4}.  
\end{equation}
\end{lemma}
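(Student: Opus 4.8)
\textbf{Proof plan for Lemma~\ref{lem:1:matching}.}

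The plan is to obtain the equation for $(\phi,\psi)$ in the inner variables by literally substituting the scaling~\eqref{inner_variables_matching} into system~\eqref{eq:perturb}, keeping track of which terms survive in the limit $\eps\to 0$ (these form $\Nin_j$) and which carry a positive power of $\eps$ (these form $\mathcal{A}_j$), and then estimating the latter on the matching domain. First I would recall that $\xi = \eps^{-1} c_{-1} \phi(z)$, $\eta = \eps^{-3} c_{-1}\psi(z)$ with $z = \eps^{-1}(x-x_-)$, so that $\partial_x = \eps^{-1}\partial_z$ and hence $\partial_x^2\xi = \eps^{-3}c_{-1}\partial_z^2\phi$, $\partial_x^2\eta = \eps^{-5}c_{-1}\partial_z^2\psi$. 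Multiplying the first equation of~\eqref{eq:perturb} by $\eps^3 c_{-1}^{-1}$ and the second by $\eps^5 c_{-1}^{-1}$, one checks that the linear parts $\LL_1\xi$, $\LL_2\eta$ become exactly $\LL_1^\inn\phi$, $\LL_2^\inn\psi$ up to terms of order $\eps^2$: indeed $\LL_1 = -\partial_x^2 + 1 - 2u_0 - 6\ga u_0^2$ and, using $\eps c_{-1}^{-1}\hat u_0(z) = z^{-1} + \mathcal{O}(\eps)$ from~\eqref{exp_u0_inner} together with $c_{-1}^2\ga = -1$, the potential $-2u_0 - 6\ga u_0^2$ scaled by $\eps^2$ produces $6/z^2 + \mathcal{O}(\eps)$, whereas the constant $1$ contributes only $\eps^2$; the operator $\LL_2 = \partial_x^2 + \eps^{-2}$ scaled by $\eps^2$ gives $\partial_z^2 + 1 = \LL_2^\inn$ exactly, with no error.

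Next I would treat the right-hand sides. For $\FF_1[\xi,\eta] = -\eta + (1+6\ga u_0)\xi^2 + 2\ga\xi^3$, multiplying by $\eps^3 c_{-1}^{-1}$ and again using the expansions~\eqref{exp_u0_inner} and $c_{-1}^2\ga=-1$ one sees that $-\eps^3 c_{-1}^{-1}\eta = -\psi$, that $\eps^3 c_{-1}^{-1}(1+6\ga u_0)\xi^2 = (\eps^2 c_{-1}^{-1} + 6\ga\,\eps c_{-1}^{-1} c_{-1}^{-1}\hat u_0)\,\eps\,c_{-1}\phi^2 = -6\phi^2/z + \mathcal{O}(\eps)$, and that the cubic term gives $2\ga c_{-1}^2 \phi^3 + \cdots = -2\phi^3 + \mathcal{O}(\eps)$ (here the $\mathcal{O}(\eps)$ in the quadratic term comes from the remainder in $\eps c_{-1}^{-1}\hat u_0 = z^{-1}+\mathcal{O}(\eps)$ and from the bare $\eps^2$ coefficient). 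The surviving terms assemble into $\Nin_1[\phi,\psi]$ and the remainder is $\mathcal{A}_1$. The second right-hand side $\FF_2[\xi,\eta] = f'(u_0+\xi)(u_0+\xi+\eta-f(u_0+\xi)) + f''(u_0+\xi)(u_0'+\xi')^2$, with $f(u)=u^2+2\ga u^3$, is handled the same way: after multiplying by $\eps^5 c_{-1}^{-1}$ and writing $\eps c_{-1}^{-1}(\hat u_0 + \xi) = z^{-1}+\phi + \mathcal{O}(\eps)$, $\eps c_{-1}^{-1}(\hat u_0' + \xi') = -z^{-2}+\partial_z\phi + \mathcal{O}(\eps^2)$, one recognizes the terms $-6(z^{-1}+\phi)^2(\psi + 2(z^{-1}+\phi)^3) - 12(z^{-1}+\phi)(-z^{-2}+\partial_z\phi)^2$ defining $\Nin_2$, the rest being $\mathcal{A}_2$.

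It remains to prove the bounds~\eqref{boundsA1A2:matching}. On $\mathcal{D}^{\match}_{\theta_1,\theta_2,\nu}$ we have $|\eps z| \lesssim \eps^\nu$, so each occurrence of $\eps z^k$ in the Taylor tails of $\eps c_{-1}^{-1}\hat u_0$ and $\eps c_{-1}^{-1}\hat u_0'$ is small, and $|z|\gtrsim\kappa$ is large; moreover $\phi^\uns$, $\partial_z\phi^\uns$, $\psi^\uns$ obey $|\phi^\uns|\lesssim |z|^{-3}$, $|\partial_z\phi^\uns|\lesssim |z|^{-4}$, $|\psi^\uns|\lesssim|z|^{-5}$ by~\eqref{recall:phipsi}. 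The error term $\mathcal{A}_1$ is a finite sum of monomials each carrying at least one factor $\eps$ and, after accounting for the decay of $\phi^\uns$ and of the powers of $z^{-1}$, at least the decay rate $|z|^{-4}$ (the slowest term being the one coming from $\eps\cdot(\eps c_{-1}^{-1}\hat u_0)\cdot\phi^\uns \sim \eps z^{-1}\cdot z^{-3}$, together with the bare $\eps^2 c_{-1}^{-1}\cdot \eps c_{-1}\phi^2 \sim \eps^3 z^{-6}$ which is even smaller); hence $|\mathcal{A}_1[\phi^\uns,\psi^\uns;\eps](z)| \lesssim \eps |z|^{-4}$. The same bookkeeping applies to $\mathcal{A}_2$: every error monomial has a factor $\eps$ and decays at least like $|z|^{-4}$ once the expansions~\eqref{exp_u0_inner} and the estimates~\eqref{recall:phipsi} are inserted, so $|\mathcal{A}_2[\phi^\uns,\psi^\uns;\eps](z)|\lesssim \eps|z|^{-4}$ as well. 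The main obstacle is purely bookkeeping: keeping the large number of terms organized and checking that \emph{no} error monomial decays more slowly than $|z|^{-4}$ (in particular that the potentially dangerous contributions coming from the $z^{-2}$ in $\LL_1^\inn$ interacting with $\phi^\uns$, and from the $z^{-2}$ inside $(-z^{-2}+\partial_z\phi)^2$, are in fact at least $\mathcal{O}(\eps|z|^{-4})$); once the worst term in each component is identified the estimates follow from Lemma~\ref{lem:banach:matching} and the triangle inequality.
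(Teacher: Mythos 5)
Your plan is correct and follows essentially the same route as the paper: substitute the inner scaling, keep the terms that survive as $\eps\to 0$ to recover $\Nin_1,\Nin_2$, collect the $\eps$-dependent remainders into $\mathcal{A}_1,\mathcal{A}_2$, and then bound them on $\mathcal{D}^{\match}_{\theta_1,\theta_2,\nu}$ using $\eps c_{-1}^{-1}\hat u_0 = z^{-1}+\mathcal{O}(\eps)$, $\eps c_{-1}^{-1}\hat u_0' = -z^{-2}+\mathcal{O}(\eps^2)$, the decay $|\phi^\uns|\lesssim |z|^{-3}$, $|\partial_z\phi^\uns|\lesssim|z|^{-4}$, $|\psi^\uns|\lesssim|z|^{-5}$, and the fact that $|\eps z|\lesssim\eps^\nu$ in the matching domain to absorb monomials like $\eps^2\phi$ into $\eps/|z|^4$. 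A small bookkeeping slip: you write $\eps^3 c_{-1}^{-1}(1+6\gamma u_0)\xi^2 = (\eps^2 c_{-1}^{-1}+6\gamma\eps c_{-1}^{-2}\hat u_0)\eps c_{-1}\phi^2$, but since $\xi^2 = \eps^{-2}c_{-1}^2\phi^2$, the correct scaling is $\eps c_{-1}(1+6\gamma\hat u_0)\phi^2 = \eps c_{-1}\phi^2 - 6\phi^2/z + \mathcal{O}(\eps)\phi^2$; the leading term $-6\phi^2/z$ and the $\mathcal{O}(\eps/|z|^6)$ remainder (hence also $\lesssim\eps/|z|^4$) are unchanged, so the estimate and the final conclusion are unaffected.
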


\begin{proof}
An straightforward computation shows that in the inner variables, the fixed point equation~\eqref{eq:perturb} can be expressed as 
\begin{equation*}
\left\{  \begin{array}{l}
        \mathcal{L}_1^{\inn} \phi = \Nch_1[\phi,\psi;\varepsilon], \vspace{0.2cm}\\ 
        \mathcal{L}_2^{\inn} \psi =
        \Nch_2[\phi,\psi;\varepsilon],
    \end{array} \right.
\end{equation*}
 with
 \begin{equation*}
\left\{  \begin{array}{l}
     \Nch_1[\phi,\psi;\varepsilon](z)  = \varepsilon^2 \phi(z) \left [-1 + 2u_0(x_-+\eps z )\right ]   + \phi(z) \left [ 6\gamma \varepsilon^2 u_0^2(x_-+\eps z ) + \frac{6}{z^2}\right ] \vspace{0.2cm} \\      \hspace{5cm} + 
     \varepsilon^3 c_{- 1}^{-1}\mathcal{F}_1[\varepsilon^{-1} c_{- 1} \phi, \varepsilon^{-3} c_{- 1} \psi](x_-+\eps z), \vspace{0.2cm}\\ 
     \Nch_2[\phi,\psi;\varepsilon](z) = \varepsilon^{5} c_{- 1}^{-1}
     \mathcal{F}_2[\varepsilon^{-1} c_{- 1} \phi, \varepsilon^{-3} c_{- 1} \psi] (x_-+\eps z).
    \end{array} \right.
 \end{equation*}
Using the expression~\eqref{eq:outer:RHSoperator} of $\mathcal{F}=(\FF_1,\FF_2)$ we obtain 
 \begin{align*}
    \Nch_1[\phi,\psi;\varepsilon](z)=  
    & -\psi  -\frac{6}{z} \phi^2 -2 \phi^3 + \mathcal{A}_1[\phi;\varepsilon](z)
    \\ =& \Nin_1[\phi,\psi](z) + \mathcal{A}_1[\phi,\psi;\varepsilon](z),
\end{align*}
with 
\begin{align*}
\mathcal{A}_1[\phi,\psi;\varepsilon](z)=& \varepsilon^2 \phi(z) \left [-1 + 2\hat{u}_0(z)\right ] + \phi(z) \left [ 6\gamma \varepsilon^2 \hat{u}_0^2(z)  + \frac{6}{z^2}\right ] \\ 
&+
\left ({c_{- 1} }\varepsilon + 6 \varepsilon \gamma c_{- 1} \hat{u}_0(z)  + \frac{6}{ z} \right ) \phi^2.
\end{align*}
Analogously, tedious but easy computations lead to
\begin{align*}
    \Nch_2[\phi,\psi;\varepsilon](z)=& -6 \left (\frac{1}{z} +\phi \right )^2  \left (\psi + 2 \left (\frac{1}{z} + \phi\right )^3 \right )-12 \left (\frac{1}{z} + \phi \right ) \left (-\frac{1}{z^2} + \partial_z \phi\right )^2 \\ & - 6  \left (\frac{1}{z} + \phi\right )^2 C[\phi,\psi;\varepsilon](z) +  \left (\psi + 2 \left (\frac{1}{z} + \phi\right )^3 \right ) B[\phi;\varepsilon](z) \\ &+   B[\phi;\varepsilon](z) \cdot C[\phi,\psi;\varepsilon](z)  + D[\phi,\psi;\varepsilon](z)
    \\ =& \Nin_2[\phi,\psi](z) + \mathcal{A}_2[\phi,\psi;\varepsilon](z)
\end{align*}
with
\begin{align*}
          B[\phi;\varepsilon](z)=&-6\left (\varepsilon c_{- 1}^{-1} \hat{u}_0  - \frac{1}{z} \right ) \left ( \frac{1}{z} + 2 \phi + \varepsilon c_{- 1}^{-1} \hat{u}_0   \right )  + 2 \varepsilon c_{- 1} (\varepsilon c_{- 1}^{-1} \hat{u}_0 + \phi),
\\ 
C[\phi,\psi;\varepsilon](z) = & 
\varepsilon^2 ( \varepsilon c_{- 1}^{-1} \hat{u}_0 + \phi) - \varepsilon c_{- 1} (\varepsilon c_{- 1}^{-1} \hat{u}_0 + \phi)^2 \\
  &+2 \left (\varepsilon c_{- 1}^{-1} \hat{u}_0 - \frac{1}{z}\right )\left[ \left  (\varepsilon c_{- 1}^{-1}\hat{u}_0  + \phi   \right )^2 +
\left  (\varepsilon c_{- 1}^{-1} \hat{u}_0 + \phi  \right ) \left (\frac{1}{z} + \phi\right ) \right] \\ &+
2 \left (\varepsilon c_{- 1}^{-1} \hat{u}_0 - \frac{1}{z}\right ) \left(\frac{1}{z} + \phi \right )^2  ,
\\  
D[\phi,\psi;\varepsilon](z) =& 2 c_{- 1} \varepsilon (\varepsilon c_{- 1}^{-1} \hat{u}_0' + \partial_z \phi)^2  - 12 \left (\varepsilon c_{- 1}^{-1} \hat{u}_0 -\frac{1}{z}\right ) \left (\varepsilon c_{- 1}^{-1} \hat{u}_0' +\partial_z \phi\right )^2 \\ 
&-12 \left (\frac{1}{z} +\phi \right ) \left (\varepsilon c_{- 1}^{-1}  \hat{u}_0'+ \frac{1}{z^2} \right ) \left (2\partial_z \phi + \varepsilon  c_{- 1}^{-1}\hat{u}_0' - \frac{1}{z^2}\right ).
\end{align*} 
To prove the bounds for  $\mathcal{A}_1[\phi^\uns,\psi^\uns;\eps], \mathcal{A}_2[\phi^\uns, \psi^\uns; \eps]$, we recall that $c_{-1}^{-1}= \sqrt{|\gamma|}$ with $\gamma<0$ and take into account~\eqref{exp_u0_inner} and~\eqref{recall:phipsi}, to obtain
$$
\left | \frac{1}{z} + \phi^\uns(z) \right | \lesssim \frac{1}{|z|}, \qquad \left |\eps c_{-1}^{-1} \hat{u_0}- \frac{1}{z}\right | \lesssim \eps, \qquad 
\left |\varepsilon  c_{- 1}^{-1}\hat{u}_0' + \frac{1}{z^2} \right | \lesssim \eps^2.
$$
The proof of~\eqref{boundsA1A2:matching} follows from these bounds and the explicit expressions of the functions involved. 
\end{proof}
 
Lemma~\ref{lem:1:matching}, together with items~\ref{firstitem} and~\ref{seconditem} of Lemma~\ref{lem:lin_operator_matching}, implies that,
for all $z\in \mathcal{D}_{\theta_1,\theta_2,\nu}^\match$, we have
$$
\big |z^2 \GG^\match_1[A_1^\uns] (z) \big |+ \big |z^3 \partial_z \GG^\match_1[A_1^\uns] (z) \big | + |z^4 \GG^\match_2[A_2^\uns](z) \big | \lesssim \eps |\log \eps|, 
$$
where we recall that $A^\uns(z)=\mathcal{A}[\phi^\uns,\psi^\uns]$ (see~\eqref{recallAast}). This estimate and Lemma \ref{lem:deltaphi0bound} imply that for all $z\in \mathcal{D}_{\theta_1,\theta_2,\nu}^\match$,  we have 
\begin{equation*}
\big |z^2 \delta \phi_0^\uns(z) \big | + \big |z^3\partial_z \delta \phi_0^\uns(z) \big |+ \big |z^4\delta \psi_0^\uns(z) \big | \lesssim \eps^{1-\nu}.
\end{equation*}
To estimate $\widehat{\delta \phi_0^\uns}(z)$, it only remains to analyze $\GG^\match_1 [ \delta\psi_0^\uns ]$. To this end, it is enough to recall that $\big |z^4\delta \psi_0^\uns(z) \big | \lesssim \eps^{1-\nu}$ and Lemma~\ref{lem:lin_operator_matching} imply 
\begin{equation*}
    \big | z^2 \GG^\match_1 [ \delta\psi_0^\uns ] (z) \big | \lesssim |\log \eps| \eps^{1-\nu}. 
\end{equation*}
Therefore, recalling that $\widehat{\delta \psi_0^{\uns}} = \delta \psi_0^\uns$, we conclude that, for all $z\in \mathcal{D}_{\theta_1,\theta_2,\nu}^\match$, we have
$$
\big |z^2 \widehat{\delta\phi_0^\uns} (z) \big |+ \big |z^3\partial_z \widehat{\delta  \phi_0^\uns}(z)\big | + |z^4 \widehat{\psi_0^\uns}(z) \big | \lesssim \eps^{1-\nu} |\log \eps|.
$$
This completes the proof of Lemma \ref{lem:deltaphi0}.

\section{The difference between the invariant manifolds}
\label{sec:difference}

Here we prove Proposition \ref{prop:difference:remainder} for $\Delta\eta^{\uns}$. The proof for $\Delta\eta^{\sta}$ is analogous.
We define first the following Banach spaces with norms with exponential weights
\[
\EE_\ell=\{h:E^{\out,\uns}_\kk\to\CC; \, h\,\, \text{real-analytic},\, \|h\|_{\ell,\exp}<\infty\},
\]
where
\begin{equation}\label{def:expnorm}
\|h\|_{\ell,\exp}=\sup_{x\in E^{\out,\uns}_\kk}\left|(x-x_-)^\ell(x-x_+)^\ell(x-\bar x_-)^\ell(x-\bar x_+)^\ell e^{\frac{1}{\eps}\left(\pi-|\Im x|\right)}h(x)\right|.
\end{equation}
We also consider the  Banach space
\[
\EE_\times=\{h=(h_1,h_2):E^{\out,\uns}_\kk\to\CC^2; \, h\,\, \text{real-analytic},\, \|h\|_{\times}<\infty\},
\]
where
\begin{equation}\label{def:norm:expvector}
\|h\|_{\times}= \max \big \{\eps^{-1}\|h_1\|_{0,\exp},\|h_2\|_{0,\exp}+\eps\|\pa_x h_2\|_{0,\exp} \big \}.
\end{equation}
We look for an integral equation in these Banach spaces which has as a unique solution $(\Delta\zeta^{\uns},\Delta\eta^{\uns})$. The following lemma presents suitable inverses of the operators $\wh\LL_1$ and $\LL_2$ defined by \eqref{def:L1hat} and  \eqref{def:diffoperators} respectively. Its proof follows the same lines as the proof of Lemma  7.1 in \cite{GomideGSZ22}.
 
\begin{lemma}\label{lemma:diff:operator:2}
The operators 
\begin{equation*}
\label{def:intoperators-diff:1}
\wh{\GG}_1[h](x)= u_0''(x)\int_0^x\frac{1}{u_0''(s)}h(s)ds\\
\end{equation*}
and
\begin{equation*}
\begin{split}
\wh{\GG}_2[h] (x)=&
-\frac{i\eps}{2} e^{i\eps^{-1}x}\int_{\rminus}^x  e^{-i\eps^{-1}s} h(s)ds+\frac{i\eps}{2} e^{-i\eps^{-1}x}\int_{\overline{\rminus}}^x  e^{i\eps^{-1}s} h(s)ds\\
&+\frac{i\eps\sin\left(\frac{\rr_--x}{\eps}\right)}{2\sin\left(\frac{\rr_--\ol{\rr_-}}{\eps}\right)}e^{i\eps^{-1}\ol{\rr_-}}\int_{\rr_-}^{\ol{\rr_-}}  e^{-i\eps^{-1}s} h(s)ds\\
&-\frac{i\eps\sin\left(\frac{\ol{\rr_-}-x}{\eps}\right)}{2\sin\left(\frac{\ol{\rr_-}-\rr_-}{\eps}\right)}e^{i\eps^{-1}\rr_-}\int^{\rr_-}_{\ol{\rr_-}}  e^{i\eps^{-1}s} h(s)ds,
\end{split}
\end{equation*}
with $\rminus=x_--i\kappa\eps$, have the following properties.
\begin{itemize}
\item Fix $\ell\in\RR$. The operator $\wh{\GG}_1$ is well defined from $\EE_\ell$ to $\EE_\ell$ and satisfies
\[
\left\|\wh{\GG}_1[h]\right\|_{\ell,\exp}\leq M\eps\|h\|_{\ell,\exp}.
\]
It is also well-defined from $\EE_\ell$ to $\EE_0$ and satisfies
\[
\left\|\wh{\GG}_1[h]\right\|_{0,\exp}\leq \frac{M\eps}{(\kappa\eps)^{\ell}}\|h\|_{\ell,\exp}.
\]
Furthermore, $\wh\LL_1\circ \wh{\GG}_1=\mathrm{Id}$ and, for $h\in \EE_\ell$, 
\[
\wh{\GG}_1(h)(0)=0.
\]
\item Fix $\ell>1$. The operator $\wh{\GG}_2$ is well defined from $\EE_\ell$ to $\EE_0$ and satisfies
\[
\begin{split}
\left\|\wh{\GG}_2[h]\right\|_{0,\exp}&\leq \frac{M\eps}{(\kappa\eps)^{\ell-1}}\|h\|_{\ell,\exp},\\
\left\|\pa_x\wh{\GG}_2[h]\right\|_{0,\exp}&\leq \frac{M}{(\kappa\eps)^{\ell-1}}\|h\|_{\ell,\exp}.
\end{split}
\]
Furthermore, $\LL_2\circ \wh{\GG}_2=\mathrm{Id}$ and, for $h\in \EE_\ell$ 
\[
\wh{\GG}_2[h](\rr_-)=0\quad\text{and}\quad\wh{\GG}_2[h](\ol{\rr_-})=0.
\]
\end{itemize}
\end{lemma}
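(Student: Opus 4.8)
The plan is to recognize $\wh\GG_1$ and $\wh\GG_2$ as explicit variation-of-parameters right inverses and to follow the scheme of Lemma~7.1 in \cite{GomideGSZ22}. First I would record the kernels of the two linear operators. From \eqref{def:L1hat}, $\wh\LL_1[u_0'']=-\pa_x u_0''+(u_0'''/u_0'')\,u_0''=0$, so $u_0''$ spans $\ker\wh\LL_1$ and $\wh\GG_1$ is the particular solution of $\wh\LL_1\Delta\zeta=h$ that vanishes at $x=0$; the identities $\wh\LL_1\circ\wh\GG_1=\mathrm{Id}$ and $\wh\GG_1[h](0)=0$ are then immediate. For $\LL_2=\pa_x^2+\eps^{-2}$ the kernel is spanned by $e^{\pm i\eps^{-1}x}$ with Wronskian $-2i\eps^{-1}$; the first two terms of $\wh\GG_2$ are the associated particular solution based at $\rminus$ and $\ol\rminus$, and the last two terms are the unique combination of $e^{\pm i\eps^{-1}x}$ added so that the sum vanishes at both $\rminus$ and $\ol\rminus$. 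Solving this $2\times2$ linear system is legitimate because its determinant is a nonzero multiple of $\sin\!\big(\eps^{-1}(\rminus-\ol\rminus)\big)=i\sinh\!\big(2\eps^{-1}(\pi-\kappa\eps)\big)$; the same computation gives $\LL_2\circ\wh\GG_2=\mathrm{Id}$ together with $\wh\GG_2[h](\rminus)=\wh\GG_2[h](\ol\rminus)=0$.

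For the estimates on $\wh\GG_1$ I would bound the integrand using $|h(s)|\le\|h\|_{\ell,\exp}\,|s-x_-|^{-\ell}|s-x_+|^{-\ell}|s-\ol x_-|^{-\ell}|s-\ol x_+|^{-\ell}\,e^{-\eps^{-1}(\pi-|\Im s|)}$ together with Lemma~\ref{lemma:propertiesu0}: $u_0''$ has triple poles at $x_\pm,\ol x_\pm$ and its eight zeros $x_j^{\pm}$ stay off the straight contour from $0$ to $x$ inside $E^{\out,\uns}_\kappa$ for $\theta<\mathrm{atan}(\pi/3\alpha)$ — this is precisely why that restriction on $\theta$ is imposed — so that $|u_0''(s)|^{-1}\lesssim|s-x_-|^{3}|s-x_+|^{3}|s-\ol x_-|^{3}|s-\ol x_+|^{3}$ on the contour. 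Parametrising the contour so that $|\Im s|$ is monotone, the exponential weight integrates as $\int e^{-\eps^{-1}(\pi-|\Im s|)}\,|ds|\lesssim\eps\,e^{-\eps^{-1}(\pi-|\Im x|)}$, which supplies the gain $\eps$; multiplying by $|u_0''(x)|\lesssim|x-x_-|^{-3}|x-x_+|^{-3}|x-\ol x_-|^{-3}|x-\ol x_+|^{-3}$ and collecting the polynomial weights yields $\|\wh\GG_1[h]\|_{\ell,\exp}\lesssim\eps\|h\|_{\ell,\exp}$. The bound in $\EE_0$ follows from $|x-x_-|\ge\kappa\eps$ on $E^{\out,\uns}_\kappa$ while $|x-x_+|,|x-\ol x_-|,|x-\ol x_+|$ are bounded below by a positive constant, which costs the factor $(\kappa\eps)^{-\ell}$.

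The treatment of $\wh\GG_2$ is parallel, with the oscillatory kernels $e^{\mp i\eps^{-1}(s-x)}$ replacing $u_0''(x)/u_0''(s)$, and the estimates in the exponentially weighted norm $\EE_\ell$ reduce to the same kind of contour integrals; the place where the argument is genuinely delicate — and the main obstacle — is the choice of integration contours. For each of the two main terms one must integrate from $\rminus$ (resp. $\ol\rminus$) to $x$ along a path on which $|\Im s|$ decreases monotonically, so that $|e^{\mp i\eps^{-1}(s-x)}|$ times the exponential weight of $h$ is dominated by its value at the endpoint $x$; here the fact that $E^{\out,\uns}_\kappa$ reaches only the conjugate pair $x_-,\ol x_-$, which lie on opposite sides of the real line, is what lets each term be controlled by the singularity it actually sees. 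Near the apex one has $\int|s-x_-|^{-\ell}\,|ds|\lesssim(\kappa\eps)^{-(\ell-1)}$ — one power worse than for $\wh\GG_1$, since there is no cancelling $|x-x_-|^{-\ell}$ prefactor — while the $\tfrac{i\eps}{2}$ in front of the main terms restores a factor $\eps$; the two $\sin$-weighted correction terms carry the denominator $\sinh(2\eps^{-1}(\pi-\kappa\eps))$ and are exponentially smaller. Differentiating in $x$ costs one factor $\eps^{-1}$ from the prefactors $e^{\pm i\eps^{-1}x}$, which accounts for the missing $\eps$ in the bound for $\pa_x\wh\GG_2[h]$, the boundary contributions from differentiating under the integral sign being of the same or lower order. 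Finally, real-analyticity of $\wh\GG_i[h]$ for real-analytic $h$ is preserved by taking the contours symmetric under complex conjugation; everything besides the contour bookkeeping is a routine estimate in the weighted norms.
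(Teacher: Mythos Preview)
Your proposal is correct and takes the same approach as the paper, which simply records that the proof ``follows the same lines as the proof of Lemma~7.1 in \cite{GomideGSZ22}''. Your expansion of that scheme---the variation-of-parameters identification of $\wh\GG_1,\wh\GG_2$, the exponential weight $e^{-\eps^{-1}(\pi-|\Im s|)}$ integrating to the gain $\eps$, the loss of one power $(\kappa\eps)^{-1}$ for $\wh\GG_2$ from the lack of a compensating prefactor, and the contour bookkeeping near $\rminus,\ol\rminus$---is an accurate summary of that argument.
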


The functions $(\Delta\zeta^\uns,\Delta\eta^\uns)$ introduced in Lemma \ref{lemma:differenceequation:2} satisfy equation \eqref{eq:lineardiff:2}. Now, by the properties of the operators $\wh\GG_1$ and $\wh \GG_2$ introduced in Lemma \ref{lemma:diff:operator:2}, the functions $(\Delta\zeta^\uns,\Delta\eta^\uns)$ must be a fixed point of the operator
\begin{equation}\label{eq:diff:operator}
\PP \big [\Delta\zeta,\Delta\eta\big ](x)=\begin{pmatrix}
   \wh \GG_1\circ\wh\NNN_1 \big [\Delta\zeta,\Delta\eta, \Delta\eta' \big ](x)\\   
 C_1^\uns e^{\frac{ix}{\eps}}+C_2^\uns e^{-\frac{ix}{\eps}}+\wh \GG_2\circ\wh\NNN_2 \big [\Delta\zeta,\Delta\eta,\Delta\eta' \big ](x)
\end{pmatrix}
\end{equation}
for some constants $C_1^\uns$, $C_2^\uns$ satisfying \eqref{eq:linearsystemCs}.

Note that by Lemma \ref{lemma:diff:operator:2}, the function $\RRR^\uns$ introduced in Lemma \ref{prop:difference:remainder} is given by 
\begin{equation}\label{def:Runs}
\RRR^\uns=\wh \GG_2\circ\wh\NNN_2 \big [\Delta\zeta^\uns,\Delta\eta^\uns,\pa_x\Delta\eta^\uns\big ].
\end{equation}
and it satisfies the properties in \eqref{def:R:zeros}. Therefore, it only remains to obtain the estimates in \eqref{def:R:estimates}. 

To this end, we use a fixed point argument relying on \eqref{eq:diff:operator}. However, the operator $\PP$ is not contractive and, therefore, proceeding as in Section \ref{sec:outer}, we consider the operator
\begin{equation*}
\wh \PP \big [\Delta\zeta,\Delta\eta \big ]=\begin{pmatrix}
 \PP_1 \big [\Delta\zeta,\PP_2\big [\Delta\zeta,\Delta\eta \big ] \big ]\\   
\PP_2 \big [\Delta\zeta,\Delta\eta \big ],
\end{pmatrix}
\end{equation*}
which has the same fixed points as $\PP$ and is contractive. Note that both operators $\PP$ and $\wh\PP$ are affine. The following lemma gives the Lipschitz constant of the operator $\PP$.  Its proof  is a direct consequence of Lemmas \ref{lemma:differenceequation:2} and 
\ref{lemma:diff:operator:2}. 

\begin{lemma}\label{lemma:operatorPLipschtiz}
There exists $M>0$ such that, for any $(\Delta\zeta_1,\Delta\eta_1), (\Delta\zeta_2,\Delta\eta_2) \in \EE_\times$, the operator $\PP$ satisfies
\[
\begin{split}
\left\|\PP_1 \big [\Delta\zeta_1,\Delta\eta_1 \big ]-\PP_1 \big [\Delta\zeta_2,\Delta\eta_2 \big ]\right\|_{0,\exp}\leq& M\eps \left\|\Delta\eta_1-\Delta\eta_2\right\|_{0,\exp}
\\&+\frac{M\eps}{\kappa}\left\|(\Delta\zeta_1,\Delta\eta_1)-(\Delta\zeta_2,\Delta\eta_2)\right\|_\times, \\
\left\|\PP_2 \big [\Delta\zeta_1,\Delta\eta_1 \big ]-\PP_2 \big [\Delta\zeta_2,\Delta\eta_2 \big ]\right\|_{0,\exp}\leq &\frac{M}{\kappa}\left\|(\Delta\zeta_1,\Delta\eta_1)-(\Delta\zeta_2,\Delta\eta_2)\right\|_\times, \\
\left\|\pa_x\PP_2 \big [\Delta\zeta_1,\Delta\eta_1 \big ]-\pa_x\PP_2 \big [\Delta\zeta_2,\Delta\eta_2 \big ]\right\|_{0,\exp}\leq &\frac{M}{\eps\kappa}\left\|(\Delta\zeta_1,\Delta\eta_1)-(\Delta\zeta_2,\Delta\eta_2)\right\|_\times.
\end{split}
\]
\end{lemma}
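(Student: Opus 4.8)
The statement to prove is Lemma~\ref{lemma:operatorPLipschtiz}, which gives the Lipschitz constants of the operator $\PP$ defined in~\eqref{eq:diff:operator}. Since $\PP$ is affine, its Lipschitz constant is determined by its linear part, i.e. by the action of $\wh\GG_1\circ\wh\NNN_1$ and $\wh\GG_2\circ\wh\NNN_2$ on the Banach space $\EE_\times$ with the exponentially weighted norm~\eqref{def:norm:expvector}. The plan is to combine the estimates for the integral operators $\wh\GG_1,\wh\GG_2$ from Lemma~\ref{lemma:diff:operator:2} with the estimates for the coefficient functions $\wh r,\wh s,\wh t,\wh c,\wh d,\wh e$ from Lemma~\ref{lemma:differenceequation:2}, keeping careful track of the powers of $(x-x_\pm)$, $(x-\bar x_\pm)$ and of $\eps$.

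\begin{proof}
Since the operators $\PP$ and $\wh\PP$ are affine, their increments only see the linear parts $\wh\GG_1\circ\wh\NNN_1$ and $\wh\GG_2\circ\wh\NNN_2$, where $\wh\NNN_1,\wh\NNN_2$ are the linear operators with coefficients $\wh r,\wh s,\wh t,\wh c,\wh d,\wh e$ introduced in Lemma~\ref{lemma:differenceequation:2}. Take $(\Delta\zeta_1,\Delta\eta_1),(\Delta\zeta_2,\Delta\eta_2)\in\EE_\times$ and set $\delta\zeta=\Delta\zeta_1-\Delta\zeta_2$, $\delta\eta=\Delta\eta_1-\Delta\eta_2$. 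Then
\[
\PP_1[\Delta\zeta_1,\Delta\eta_1]-\PP_1[\Delta\zeta_2,\Delta\eta_2]
=\wh\GG_1\bigl[-\delta\eta+\wh r\,\delta\zeta+\wh s\,\delta\eta+\wh t\,\pa_x\delta\eta\bigr].
\]
One first estimates the argument in a weighted norm $\|\cdot\|_{\ell,\exp}$ with a suitable $\ell>1$. Using the bounds of Lemma~\ref{lemma:differenceequation:2} on $\wh r,\wh s,\wh t$ together with the multiplicative property of the norms (analogous to Lemma~\ref{propertiesnorm1}) and the fact that on $E^{\out,\uns}_\kk$ one has $|x-x_\pm|,|x-\bar x_\pm|\geq M\kk\eps$, the terms $\wh r\,\delta\zeta,\wh s\,\delta\eta,\wh t\,\pa_x\delta\eta$ contribute a factor $\eps^2/(\kk\eps)^{\ell}\lesssim \eps^{2-\ell}\kk^{-\ell}$, which after applying the first bound of Lemma~\ref{lemma:diff:operator:2} (which costs an extra $\eps/(\kk\eps)^{\ell}$ or $\eps$) gives the $\eps\kk^{-1}$ term; the term $-\delta\eta$ is of size $\eps^0$ in $\|\cdot\|_{\ell,\exp}$ after losing powers of $\kk\eps$, and applying $\wh\GG_1$ costs a factor $\eps$, producing the leading $M\eps\|\delta\eta\|_{0,\exp}$ term. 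One has to choose $\ell$ large enough so that Lemma~\ref{lemma:diff:operator:2} applies, and then trade the surplus powers of $(x-x_\pm)(x-\bar x_\pm)$ for powers of $(\kk\eps)^{-1}$ to return to $\|\cdot\|_{0,\exp}$.

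For the second component,
\[
\PP_2[\Delta\zeta_1,\Delta\eta_1]-\PP_2[\Delta\zeta_2,\Delta\eta_2]
=\wh\GG_2\bigl[\wh c\,\delta\zeta+\wh d\,\delta\eta+\wh e\,\pa_x\delta\eta\bigr].
\]
By Lemma~\ref{lemma:differenceequation:2}, $\wh c,\wh d$ are of order $1$ (no $\eps$ gain) with poles of order $3$ and $2$ respectively, and $\wh e$ is of order $\eps^2$ with poles of order $3$; by definition~\eqref{def:norm:expvector} of $\|\cdot\|_\times$, $\delta\zeta$ carries a factor $\eps$, $\delta\eta$ a factor $1$, and $\pa_x\delta\eta$ a factor $\eps^{-1}$. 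Multiplying out and using $|x-x_\pm|,|x-\bar x_\pm|\geq M\kk\eps$, the bracket lies in $\EE_\ell$ for $\ell$ as large as needed with norm $\lesssim \kk^{-\ell}\eps^{-\ell}\|(\delta\zeta,\delta\eta)\|_\times$ up to a fixed power of $\eps$; then the two bounds of Lemma~\ref{lemma:diff:operator:2} for $\wh\GG_2$ supply the extra factors $\eps/(\kk\eps)^{\ell-1}$ and $1/(\kk\eps)^{\ell-1}$, which after simplification produce respectively the $M\kk^{-1}\|(\delta\zeta,\delta\eta)\|_\times$ bound for $\PP_2$ and the $M(\eps\kk)^{-1}\|(\delta\zeta,\delta\eta)\|_\times$ bound for $\pa_x\PP_2$, as claimed. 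Collecting the three estimates completes the proof.
\end{proof}

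The only genuine bookkeeping issue—and the place to be careful—is matching the powers of $\eps$: one must verify that the extra $\eps$'s gained from $\wh\GG_1,\wh\GG_2$, combined with the $\eps^2$ prefactors of $a,b,c,d$ (here $\wh r,\wh s,\wh t,\wh e$) and the weights in the norm~\eqref{def:norm:expvector}, leave exactly the advertised $\eps\kk^{-1}$, $\kk^{-1}$ and $(\eps\kk)^{-1}$, with the constant $M$ independent of both $\eps$ and $\kk$; the factors $\eps/\kk$ coming from $\wh r\,\delta\zeta$ etc.\ are what forces the Lipschitz constant to be small once $\kk$ is taken large. No new analytic ingredient beyond Lemmas~\ref{lemma:differenceequation:2} and~\ref{lemma:diff:operator:2} is needed.
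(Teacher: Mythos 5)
Your proposal is correct and takes the same route as the paper: the paper dispatches Lemma~\ref{lemma:operatorPLipschtiz} as ``a direct consequence of Lemmas~\ref{lemma:differenceequation:2} and~\ref{lemma:diff:operator:2},'' which is exactly the bookkeeping you spell out. Your power count checks out term by term: for example, $\wh r\,\delta\zeta$ contributes $\eps^2\cdot\frac{M\eps}{(\kappa\eps)^3}\cdot\|\delta\zeta\|_{0,\exp}=\frac{M}{\kappa^3}\|\delta\zeta\|_{0,\exp}\leq\frac{M\eps}{\kappa^3}\|(\delta\zeta,\delta\eta)\|_\times$, and similarly the $\wh t\,\pa_x\delta\eta$ term gives $\eps^2\cdot\frac{M\eps}{\kappa\eps}\cdot\eps^{-1}\|(\cdot)\|_\times=\frac{M\eps}{\kappa}\|(\cdot)\|_\times$, while the bare $-\delta\eta$ term (no coefficient smallness) is handled by the first bound of Lemma~\ref{lemma:diff:operator:2} with $\ell=0$, yielding the leading $M\eps\|\delta\eta\|_{0,\exp}$ without a $\kappa^{-1}$ factor; this is precisely why the operator is not contractive on its own and the paper passes to $\wh\PP$. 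One small point worth tightening if you were to write this out in full: rather than saying ``choose $\ell$ large enough and trade surplus powers,'' the cleanest choice is to take $\ell$ equal to the pole order of each coefficient ($\ell=3,2,1$ for $\wh r,\wh s,\wh t$, $\ell=3,2,3$ for $\wh c,\wh d,\wh e$), verifying the constraint $\ell>1$ of Lemma~\ref{lemma:diff:operator:2} for $\wh\GG_2$ is met in each case.
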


Lemma \ref{lemma:operatorPLipschtiz} implies that $\wh\PP$ satisfies 
\[
\left\|\wh\PP_1 \big [\Delta\zeta_1,\Delta\eta_1 \big ]-\wh\PP \big [\Delta\zeta_2,\Delta\eta_2 \big ]\right\|_\times\leq \frac{M}{\kappa}\left\|(\Delta\zeta_1,\Delta\eta_1)-(\Delta\zeta_2,\Delta\eta_2)\right\|_\times.
\]
Therefore, taking $\kk>0$ large enough, $\wh \PP$ is contractive and has the unique fixed point $(\Delta\zeta^\uns,\Delta\eta^\uns)$. 

We use $\wh \PP$ to obtain estimates of the fixed point with respect to the norm introduced in \eqref{def:norm:expvector}. Indeed, since it is a fixed point, it can be written as 
\[
(\Delta\zeta^\uns,\Delta\eta^\uns)=\wh\PP [0,0]+\left[\wh\PP \big [\Delta\zeta^\uns,\Delta\eta^\uns \big ]-\wh\PP[0,0]\right]
\]
and, therefore, 
\[
\begin{split}
\left\|(\Delta\zeta^\uns,\Delta\eta^\uns)\right\|_\times\leq&\left\|\wh\PP(0,0)\right\|_\times+\left\|\wh\PP(\Delta\zeta^\uns,\Delta\eta^\uns)-\wh\PP(0,0)\right\|_\times\\
\leq&\left\|\wh\PP(0,0)\right\|_\times+\frac{M}{\kappa}\left\|(\Delta\zeta^\uns,\Delta\eta^\uns)\right\|_\times.
\end{split}
\]
Taking $\kk$ large enough implies that 
\[
\left\|(\Delta\zeta^\uns,\Delta\eta^\uns)\right\|_\times\leq2\left\|\wh\PP(0,0)\right\|_\times.
\]
Therefore, it only remains to estimate 
\[
\wh\PP[0,0] (x) = \begin{pmatrix}\wh{\PP}_1[0,0](x) \\ 
\PP_2[0,0](x)\end{pmatrix} =
\begin{pmatrix}\PP_1 [0,  {\PP}_2[0,0]](x)\\
 C_1^\uns e^{\frac{ix}{\eps}}+C_2^\uns e^{-\frac{ix}{\eps}}
\end{pmatrix},
\]
where $C_1^\uns$, $C_2^\uns$ are constants satisfying \eqref{eq:linearsystemCs}. 

By the definition of the norm \eqref{def:expnorm}, we have
\[
\left\|\PP_2[0,0] \right\|_{0,\exp}\leq \left(|C_1^\uns|+|C_2^\uns|\right)e^{\frac{\pi}{\eps}},
\]
which by Lemma \ref{lemma:operatorPLipschtiz}, implies
\[
\left\|\PP_1[0,\PP_2[0,0]  ]\right\|_{0,\exp}\lesssim \left(|C_1^\uns|+|C_2^\uns|\right)e^{\frac{\pi}{\eps}}.
\]
Therefore,  
\[
\left\|(\Delta\zeta^\uns,\Delta\eta^\uns)\right\|_\times\leq2\left\|\wh\PP [0,0]\right\|_\times\lesssim \left(|C_1^\uns|+|C_2^\uns|\right)e^{\frac{\pi}{\eps}}.
\]
Finally,  by definition~\eqref{def:Runs} of $\RRR^\uns$
\[
\RRR^\uns=\wh\PP_2 \big [\Delta\zeta^\uns,\Delta\eta^\uns \big ]-\wh\PP_2  [0,0  ],
\]
we obtain
\[
\left\|\RRR^\uns\right\|_{0,\exp}\leq \frac{M}{\kappa}\left\|(\Delta\zeta^\uns,\Delta\eta^\uns)\right\|_\times\lesssim
\frac{1}{\kappa}\left(|C_1^\uns|+|C_2^\uns|\right)e^{\frac{\pi}{\eps}},
\]
which concludes the proof of Proposition \ref{prop:difference:remainder}.

\appendix
\section{Proof of Lemma \ref{lemma:propertiesu0}}
\label{app-A}
\renewcommand\theequation{\Alph{section}.\arabic{equation}}
\setcounter{equation}{0}

We take $\beta=\sqrt{1+9\gamma} \in (0,1)$. It is straightforward to check that $u_0''(x) =0$ if and only if
$$
\beta \cosh^2 x - \cosh x - 2 \beta=0 
$$
so that 
$$
\cosh x = \frac{1}{2\beta} \left (1 \pm \sqrt{1+ 8\beta^2}\right )\in \mathbb{R}.
$$
Writing $x= a+ib$, we have that 
$$
\cosh a \cos b + i \sinh a \sin b=  \frac{1}{2\beta} \left (1 \pm \sqrt{1+ 8\beta^2}\right ).
$$
Therefore, $\sinh a \sin b=0$. If $a=0$, then 
$$
\cos b =  g_\pm(\beta):=\frac{1}{2\beta} \left (1 \pm \sqrt{1+ 8\beta^2}\right ).
$$
We impose 
$$
| 1 \pm \sqrt{1+ 8 \beta^2 } | \leq 2 \beta
$$ 
and obtain the condition 
$$
\pm \sqrt{1+ 8 \beta^2} \leq -1 -2 \beta^2 
$$
that it is always true, taking the negative sign and $\beta \in (0,1)$. This implies that, for $\beta \in (0,1)$, 
$$
-1 < \frac{1}{2\beta} \left (1 \pm \sqrt{1+ 8\beta^2}\right ) < 0
$$
and therefore $b=\mathrm{acos}(g_-\beta)) \in \left (\frac{\pi}{2}, \pi\right )$. Then $u''_0(\pm i b)=0$.

On the other hand, if $b=0$, then
$$
\cosh a = g_{\pm }(\beta) = \frac{1}{2\beta} \left (1 \pm \sqrt{1+ 8\beta^2}\right ).
$$
Since $g_-(\beta)<-1$, we need to study the zeros of $\cosh a = g_+(\beta)$. 
We notice that, since $\beta \in (0,1)$,  
$$
\cosh a = g_+ (\beta) > \frac{1}{\beta}>1
$$
and that implies that $a=\mathrm{acosh} ( g_+(\beta)) > \alpha$ and $u''_0(\pm a)=0$. 

Finally, when $b= \pm i \pi$, then 
$$
\cosh a= - g_{\pm}(\beta) =  \frac{1}{2\beta} \left (\pm \sqrt{1+ 8\beta^2}-1 \right )
$$
so that
$$
\cosh a = -g_+(\beta)= \frac{1}{2\beta} \left (\sqrt{1+ 8\beta^2}-1 \right ) < \frac{1}{\beta}.
$$

\section{Proof of Proposition \ref{prop:nonvanishing}}
\label{app:nonvanishingStokes}
\setcounter{equation}{0}

Here we prove that the constant $\Theta$ is not zero. To this end, it is convenient to work with just one function instead of two, as in the inner equation~\eqref{eq:inner}. Indeed, note that it is easy to check that if one defines
\[
\Phi=\frac{1}{z}+\phi, 
\]
it satisfies the fourth order equation
\begin{equation}\label{eq:inner:fourth}
\pa_z^4\Phi+\pa_z^2\Phi=2\Phi^3.
\end{equation}
We have the following lemma.

\begin{lemma}\label{lemma:series}
The functions 
\[
\Phi^\star(z)=\frac{1}{z}+\phi^{0,\star}(z),
\]
where $\phi^{0,\star}$ are the functions obtained in Theorem \ref{thm:inner}, are asymptotic to the same series at $z=\infty$ (within their domain of definition), which is of the form 
\[
\hat\Phi(z)=\sum_{n\geq 0}\frac{a_n}{z^{2n+1}},
\]
with coefficients satisfying that $a_n\in\RR$,
\begin{equation}\label{def:paritya_n}
a_n(-1)^n>0
\end{equation}
and 
\begin{equation}\label{def:angrowth}
|a_n|\geq  (2n)!.
\end{equation}
\end{lemma}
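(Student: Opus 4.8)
The plan is to prove Lemma~\ref{lemma:series} in three stages: first establish that the formal series solution exists and is unique, then read off the reality and sign pattern of its coefficients from the recursion, and finally extract the lower bound $|a_n| \geq (2n)!$ from the same recursion by an induction that keeps track of how the fourth-order derivative dominates.

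First I would plug the ansatz $\hat\Phi(z) = \sum_{n \geq 0} a_n z^{-(2n+1)}$ into equation~\eqref{eq:inner:fourth}. Only odd powers appear because the equation is odd under $z \to -z$ together with $\Phi \to -\Phi$, and the normalization coming from $\Phi^\star \sim 1/z$ forces $a_0 = 1$. Matching the coefficient of $z^{-(2n+3)}$ gives a recursion of the schematic shape
\[
(2n+1)(2n+2)\, a_{n+1} = -\,(2n+1)(2n+2)(2n+3)(2n+4)\, a_{n+2}\big|_{\text{shifted}} \;+\; 2\!\!\sum_{i+j+k=n}\!\! a_i a_j a_k,
\]
which, once reindexed correctly so that the highest-index unknown is isolated, reads
\[
a_{n+2} = \frac{-(2n+1)(2n+2)\,a_{n+1} + 2\sum_{i+j+k=n} a_i a_j a_k}{(2n+1)(2n+2)(2n+3)(2n+4)} \cdot (-1),
\]
up to getting the signs and the exact shift right; the point is that $a_{n+1}$ and $a_n$ (and lower) determine $a_{n+2}$, with $a_0=1$ and $a_1$ determined by the first nontrivial balance. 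A direct check at low order ($a_0 = 1 > 0$, $a_1 < 0$, etc.) should confirm that $(-1)^n a_n > 0$, and then an induction: assuming $(-1)^j a_j > 0$ for all $j \leq n+1$, the term $-(2n+1)(2n+2)a_{n+1}$ has sign $(-1)^{n+2}$ and every term $a_i a_j a_k$ with $i+j+k=n$ has sign $(-1)^n = (-1)^{n+2}$, so after dividing by the positive denominator and tracking the overall sign from the fourth-derivative term one gets $(-1)^{n+2} a_{n+2} > 0$. Reality of all $a_n$ is immediate from the recursion since $a_0, a_1 \in \mathbb{R}$.

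The growth bound $|a_n| \geq (2n)!$ is the crux and I would again argue by induction using the recursion, isolating $a_{n+2}$ from the leading fourth-order term: from $\partial_z^4 \hat\Phi$ the coefficient of $a_{n+2}$ is $(2n+1)(2n+2)(2n+3)(2n+4)$ while the terms involving $a_{n+1}$ come only from $\partial_z^2 \hat\Phi$ and the cubic term involves only $a_i$ with $i \leq n$. Because all $a_n$ alternate in sign in a way that is \emph{compatible} with the signs in the recursion (this is exactly why I prove~\eqref{def:paritya_n} first), there is no cancellation: $|a_{n+2}|(2n+1)(2n+2)(2n+3)(2n+4) \geq (2n+1)(2n+2)|a_{n+1}|$, hence $|a_{n+2}| \geq \frac{|a_{n+1}|}{(2n+3)(2n+4)}$ — which is the wrong direction. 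So instead I expect the useful inequality to run the other way: rewrite the recursion to express $a_{n+1}$ (the second-derivative term) in terms of $a_{n+2}$ and the cubic sum, i.e.\ treat~\eqref{eq:inner:fourth} as $\partial_z^2(\partial_z^2\Phi + \Phi) = 2\Phi^3$, so that the coefficient balance gives $-(2n+1)(2n+2)\big[(2n+3)(2n+4)a_{n+2} + a_{n+1}\big] = 2\sum_{i+j+k=n-?} a_i a_j a_k$; comparing with the known Borel-summability/Gevrey-1 structure of inner equations (the solutions are analytic near $z=\infty$ with the series Gevrey of order $2$ in $1/z$, reflecting the $(2n)!$), one extracts $|a_{n+1}| \geq c \sum |a_i||a_j||a_k|$ up to polynomial factors, and then $|a_n| \geq (2n)!$ follows by the standard argument that $(2n)!$ satisfies $\sum_{i+j+k=n}(2i)!(2j)!(2k)! \lesssim (2n)!$ with room to spare. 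I anticipate the main obstacle is precisely getting the bookkeeping of the recursion exactly right so that the sign pattern genuinely forbids cancellation and the cubic convolution can be bounded below rather than above; a clean way to sidestep delicate constants is to prove the slightly stronger statement $(-1)^n a_n \geq (2n)!$ directly by strong induction, choosing the induction hypothesis so that the cubic term alone already dominates $(2n)!$ (via $(2\cdot 0)!(2\cdot 0)!(2n)! = (2n)!$ from the $i=j=0$, $k=n$ term, which is never cancelled by the sign analysis), and then absorbing the derivative terms as a harmless positive contribution.

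Once Lemma~\ref{lemma:series} is in hand, the nonvanishing of $\Theta$ follows by the standard Borel-resummation / Stokes-phenomenon argument: if $\Theta = 0$ then $\Delta\phi^0 \equiv 0$ by item (3) of Theorem~\ref{thm:inner}, so $\phi^{0,\uns} = \phi^{0,\sta}$ would be a \emph{single} analytic function asymptotic to the divergent series $\hat\Phi$ on a full neighborhood of $z = \infty$ (the union of $\cD^{\uns,\inn}$ and $\cD^{\sta,\inn}$ covers a sector of opening $> \pi$), and the $(2n)!$ lower bound~\eqref{def:angrowth} together with this too-large sector contradicts Watson-type uniqueness/the fact that a Gevrey-$2$ series cannot have an analytic sum on a sector wider than the Gevrey-critical opening; the sign condition~\eqref{def:paritya_n} is what guarantees the bound~\eqref{def:angrowth} is not accidentally an overestimate of a series that could still be summable. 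I would state this last deduction as a short corollary right after the lemma, citing the relevant resummation lemma from \cite{Baldoma06} or \cite{BaldomaS08} rather than reproving it.
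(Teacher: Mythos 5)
Your overall strategy is the same as the paper's: substitute the ansatz $\hat\Phi(z) = \sum_{n\ge 0} a_n z^{-(2n+1)}$ into the fourth-order inner equation $\partial_z^4\Phi + \partial_z^2\Phi = 2\Phi^3$, prove the sign alternation first, then leverage that sign pattern to forbid cancellation in the recursion and extract factorial growth, and finally deduce $\Theta\neq 0$ from the divergence of the asymptotic series. Proving the sign pattern \eqref{def:paritya_n} \emph{before} the growth bound is indeed the right order, and your observation that the absence of cancellation is what makes the growth bound extractable is the correct key idea.

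However, there is a concrete gap in the bookkeeping that derails the growth argument, and the workaround you propose does not repair it. The recursion is obtained by matching the coefficient of $z^{-(2n+5)}$: the term $\partial_z^4\Phi$ contributes $(2n+1)(2n+2)(2n+3)(2n+4)\, a_n$ (a \emph{lower} index with a \emph{large} coefficient), $\partial_z^2\Phi$ contributes $(2n+3)(2n+4)\, a_{n+1}$, and the cube $2\Phi^3$ contributes $6 a_0^2 a_{n+1} = 6 a_{n+1}$ plus convolutions over indices $\ge 1$ summing to $n+1$. Isolating the highest index $a_{n+1}$ therefore yields
\begin{equation*}
\big[(2n+3)(2n+4)-6\big]\,a_{n+1} = -(2n+1)(2n+2)(2n+3)(2n+4)\,a_n + 6\!\!\sum_{\substack{k_1,k_2\ge 1\\ k_1+k_2=n+1}}\!\! a_{k_1}a_{k_2} + 2\!\!\sum_{\substack{k_1,k_2,k_3\ge 1\\ k_1+k_2+k_3=n+1}}\!\! a_{k_1}a_{k_2}a_{k_3}.
\end{equation*}
Your proposal instead assigns the fourth-derivative term the top index (you write $a_{n+2}$ there), which produces a coefficient that is too large on the wrong side and leads you to the ``wrong direction'' inequality $|a_{n+2}|\gtrsim |a_{n+1}|/(2n+3)(2n+4)$. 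With the indexing corrected, the factorial growth comes for free from the \emph{linear} fourth-derivative term, not from the cubic convolution: since all three contributions on the right have the sign $(-1)^{n+1}$, dropping the two sums only decreases the absolute value, so $|a_{n+1}|\ge \frac{(2n+1)(2n+2)(2n+3)(2n+4)}{(2n+3)(2n+4)-6}\,|a_n|\ge (2n+1)(2n+2)\,|a_n|$, and $|a_n|\ge (2n)!$ follows by induction from $a_0=1$. Your alternative plan of making ``the cubic term alone already dominate $(2n)!$'' cannot work: the nonlinear convolution with indices $\ge 1$ summing to $n+1$ is of order at most $(2n-2)!$ under the inductive hypothesis, far below the target $(2n+2)!$ for $|a_{n+1}|$. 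The factorial gain really is driven by the coefficient $(2n+1)(2n+2)(2n+3)(2n+4)$ from $\partial_z^4$ acting on $a_n$, compared with the $O(n^2)$ coefficient in front of $a_{n+1}$. Your final paragraph on $\Theta\neq 0$ is essentially the paper's argument, though the paper's version is more elementary than the Watson/Gevrey-sector framing you invoke: if $\Theta=0$ then $\Phi^{\uns}=\Phi^{\sta}$ on $\mathcal{R}^{\inn}_{\theta,\kappa}$, hence (by real-analyticity) on the conjugate domain as well, hence on a full punctured neighborhood of $\infty$, so the function is analytic at $\infty$ and its asymptotic series would converge, contradicting \eqref{def:angrowth} directly.
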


\begin{proof}
 To prove the lemma, we look for a recurrence to define the coefficients of $\Phi$. First note that by Theorem \ref{thm:inner} it must be of the form 
 \[
 \hat\Phi(z)=\frac{1}{z}+\OO\left(\frac{1}{z^3}\right)
 \]
It is straightforward to see from \eqref{eq:inner:fourth} that the series has only odd powers. We obtain that 
 \[
 \begin{split}
a_{n+1}=&\frac{1}{(2n+3)(2n+4)-6}\Bigg[-(2n+1)(2n+2)(2n+3)(2n+4)a_n\\
&+6\sum_{\substack{k_1,k_2\geq 1\\k_1+k_2=n+1}}a_{k_1}a_{k_2}+2\sum_{\substack{k_1,k_2,k_3\geq 1\\k_1+k_2+k_3=n+1}}a_{k_1}a_{k_2}a_{k_3}\Bigg],
 \end{split}
 \]
 which, by induction, implies  $a_n\in\RR$ and \eqref{def:paritya_n}.

 Moreover, for all $n\geq 0$,
 \[
 \begin{split}
|a_{n+1}|\geq& \frac{(2n+1)(2n+2)(2n+3)(2n+4)}{(2n+3)(2n+4)-6}|a_n|\\
 \geq &(2n+1)(2n+2)|a_n|,
\end{split}
\]
which implies \eqref{def:angrowth}.
\end{proof}

The fact that $\Theta\neq 0$ is a direct consequence of Lemma \ref{lemma:series}. By the third statement of Theorem \ref{thm:inner}, it is enough to prove that there exists $z_0\in \RRR_{\theta,\kk}^\inn$ such that $\Delta\phi^0(z_0)\neq 0$, or equivalently
\[
\Phi^\uns(z_0)-\Phi^\sta(z_0)\neq 0.
\]
We argue by contradiction. Assume that $\Phi^\uns(z)=\Phi^\sta(z)$ for all $z\in \RRR_{\theta,\kk}^\inn$. Since, by Theorem \ref{thm:inner}, $\Phi^\uns$, $\Phi^\sta$ are real-analytic, they must coincide also in 
\[
\overline{\RRR}_{\theta,\kk}^\inn=\left\{z:\overline{z}\in \RRR_{\theta,\kk}^\inn\right\}.
\]
Therefore, the functions $\Phi^\uns$, $\Phi^\sta$ can be analytically extended to the neighborhood of infinity $|z|\geq \kk$ and, thus, are analytic at infinity. This contradicts the fact that the asymptotic series of these functions at infinity have coefficients growing  faster than a factorial.

\section{The right inverses of $\LL_1$}
\label{app:g1:outer}
\setcounter{equation}{0}

Here we prove Lemmas~\ref{lem:zeta2:welldefined}, \ref{prop_operators}, \ref{lem:zeta2:welldefined:aux}, 
and~\ref{lemma:operators:aux}. 

\subsection{Proof of Lemmas~\ref{lem:zeta2:welldefined} and~\ref{lem:zeta2:welldefined:aux}}\label{appendix:zeta2}

We first prove Lemma~\ref{lem:zeta2:welldefined} in Section~\ref{appendix:zeta2:out}. Then, we prove Lemma~\ref{lem:zeta2:welldefined:aux} in Section~\ref{appendix:zeta2:aux} as an straightforward consequence of Lemma~\ref{lem:zeta2:welldefined}.

\subsubsection{Proof of Lemma~\ref{lem:zeta2:welldefined}}\label{appendix:zeta2:out}
Let $\zeta_1 (x)= u_0'(x)$. In $D^{\out,\uns}_\kappa$, it only vanishes at $x=0$ 
(see~\ref{def:soliton}). We rewrite~\eqref{wronskian} as 
\[
\left (\frac{\zeta_2}{\zeta_1} \right )' = \frac{1}{\zeta_1^2},
\]
which is equivalent at the domain $D^{\out,\uns}_\kappa \backslash \{0\}$. For $x\in B_r \subset \mathbb{C}$, the open ball centered at the origin of radius $r$,   
\[
\zeta_1(x) = \sum_{k=1}^\infty c_k x^{2k-1}, \qquad c_1 \neq 0.
\]
Therefore, writing $\wh \zeta_2 = \zeta_2 \zeta_1^{-1}$ we have that 
\[
\wh \zeta_2'(x) =  \frac{1}{\zeta_1^2(x)}=\frac{1}{c_1 x^2} \sum_{k=0}^{\infty} d_k x^{2k}
\]
which implies that 
\begin{equation}\label{defzeta2hat:appendix}
\wh \zeta_2 (x) = -\frac{1}{c_1 x} + c_0+ \sum_{k=1}^{\infty} \frac{d_k}{2k} x^{2k-1}, \qquad x\in B_r. 
\end{equation}
As a consequence, taking $c_0=0$ yields 
\begin{equation}\label{zeta2:appendix}
\zeta_2 (x) = \wh \zeta_2(x) \zeta_1(x) = -1  + \sum_{k=1} \hat c_k x^{2k} , \qquad x \in B_r,
\end{equation}
which defines an even real analytic function in $B_r$. Notice that $\zeta_2(0)=-1\neq 0$. For $x\in D_{\kappa}^{\out,\uns} \backslash B_r$, we define $\zeta_2(x)$ as
\begin{equation}\label{definitionzeta2:appendix}
\zeta_2(x) = \begin{cases} 
\zeta_1(x) \left [ \wh \zeta_2(r) + \int_{r}^x \frac{1}{\zeta_1^2(s)}\, ds \right ] & \text{if }  \Re x\geq 0, \\ 
\zeta_1(x) \left [ \wh \zeta_2(-r) + \int_{-r}^x \frac{1}{\zeta_1^2(s)}\, ds \right ] & \text{if }   \Re x< 0,
\end{cases}
\end{equation}
with $\wh \zeta_2$ defined in~\eqref{defzeta2hat:appendix},
which is the even analytic extension at $D_{\kappa}^{\out,\uns}$ of $\zeta_2$ defined in~\eqref{zeta2:appendix}.  

We notice that since $\zeta_1 =u_0'\in \mathcal{E}_{1,2}$, 
then for $x\in D^{\out,\uns}_\kappa \cap \{\Re x \leq -10\}$, 
\[
|\zeta_2 (x)| \lesssim \frac{1}{|\cosh x|} \left [1 +  \int_{\Re x}^{-r} \cosh^2 s \, ds  \right ] \lesssim \cosh \Re x \lesssim |\cosh x|,
\]
where we have used $\cosh \Re s \lesssim |\cosh s|\lesssim \cosh \Re s$. 

When $x\in D^{\out,\uns}_\kappa \cap \{\Re x \geq -10\}$, 
$
|\zeta_2(x)| \lesssim |\zeta_1(x)|
$ and we conclude that $\zeta_2 \in \mathcal{E}_{-1,2}$.

\subsubsection{Proof of Lemma~\ref{lem:zeta2:welldefined:aux}}\label{appendix:zeta2:aux}

On $D_\kappa^\aux$, see~\eqref{def:domainaux} and Figure~\ref{fig:aux},  $\zeta_1$ has simple zeroes at $0,i \pi, -i \pi$. Then, denoting $x_0=0,i\pi,-i\pi$, one has $\zeta_1(x)=\zeta_1'(x_0) (x-x_0) + \mathcal{O}(x-x_0)^2$ with $\zeta_1'(x_0)\neq 0$, and, as a consequence, when $x$ goes to $x_0$ in definition~\eqref{definitionzeta2:appendix} of $\zeta_2$, 
we have 
\[
\lim_{x\to x_0} \zeta_2(x)=\lim_{x\to x_0} \zeta_1(x) \int_{\pm r}^x \frac{1}{\zeta_1^2(s)} \, ds = -\frac{1}{\zeta_1'(x_0)}.
\]
In addition, $x_0$ do not belong to the segment between $x\in D_\kappa^\aux$ and $\pm r$ and then we conclude that $\zeta_2$ defined in~\eqref{definitionzeta2:appendix} is, in fact, well defined and real analytic also at $D_\kappa^\aux$. Finally, using that $\zeta_1=u_0' \in \D\mathcal{Y}_2^1$, where $D\mathcal{Y}_\ell^1)$ is defined by  ~\eqref{def:Banach:aux}, we obtain the result.

\subsection{Fundamental solutions of $\mathcal{L}_1[\zeta]=0$}

Here we provide new sets of fundamental solutions of the linear second order differential equation $\mathcal{L}_1[\zeta]=0$, where $\mathcal{L}_1$ is defined in~\eqref{def:diffoperators}. We mainly follow the strategy in~\cite{GomideGSZ22}, being the first result below an adaptation of Lemma A.1 
in~\cite{GomideGSZ22}.

We fix the complex rectangle 
\begin{equation}\label{def:rectangle:appendix}
    R=\{x\in \mathbb{C}: \quad -10 \leq \Re x \leq 0,\, \;\; |\Im x| \leq  2\pi\}
\end{equation}
and we emphasize that, by Lemma~\ref{lemma:propertiesu0} $\zeta_1 =u_0'$ is analytic in $R\backslash\{x_-,\ol{x_-}\}$. 
\begin{lemma}\label{lemma:G1:appendix}
    Let \[
\zeta_{+} (x) = \zeta_1(x) \int_{x_{-}}^x \frac{1}{\zeta_1^2(s)} \, ds, \qquad \zeta_{-} (x) = \zeta_1(x) \int_{\ol{x_{-}}}^x \frac{1}{\zeta_1^2(s)} \, ds.
\]
Then, 
\begin{itemize}
    \item $\zeta_{\pm}$ are analytic solutions of $\LL_1[\zeta]=0$ in the domain $R\backslash\{x_-,\ol{x_-}\}$ satisfying
    \[
    W(\zeta_+,\zeta_-) = \zeta_+ \zeta_-' - \zeta_+'\zeta_-=\int_{x_-}^{\ol{x_-}} \frac{1}{\zeta_1^2 (s)} \, ds \neq 0.
    \]
    \item They satisfy, for $x\in R$ with $R$ defined in~\eqref{def:rectangle:appendix},
    \begin{equation}\label{def:zetapm}
    \zeta_{+}(x)= \frac{ (x- x_- )^3}{(x-\ol{x_-})^2} \wh \zeta_+(x), \qquad  \zeta_{-}(x)= \frac{ (x- \ol{x_-} )^3}{(x-x_-)^2} \wh \zeta_-(x)
    \end{equation}
    where $\wh \zeta_{\pm}$ are analytic functions in $R$ and $|\wh \zeta_{\pm}(x)|\leq M $ for some constant $M$ (independent on $x$). 
    \item For some constant $c$, we have 
    \begin{equation}\label{relationzeta12zetapm}
\zeta_1(x) = \frac{1}{W(\zeta_+,\zeta_-)} \big (\zeta_+(x) - \zeta_-(x) \big ),\qquad \zeta_2(x) = c \zeta_1(x) + \zeta_-(x).
\end{equation} 
\end{itemize}
\end{lemma}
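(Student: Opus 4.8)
The statement to prove is Lemma~\ref{lemma:G1:appendix}, which provides an alternative fundamental system $\{\zeta_+,\zeta_-\}$ for $\LL_1[\zeta]=0$ on the rectangle $R$, together with their factorization near the singularities $x_\pm$ of $u_0$ and the change-of-basis formulas relating them to $\zeta_1=u_0'$ and $\zeta_2$.

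\textbf{Approach.} The starting point is that $\zeta_1 = u_0'$ is a nonvanishing-away-from-its-zeros solution of $\LL_1[\zeta]=0$ on $R\setminus\{x_-,\ol{x_-}\}$ (by Lemma~\ref{lemma:propertiesu0}, $u_0$ has simple poles at $x_\pm$ so $u_0'$ has double poles there, hence $\zeta_1^{-2}$ is analytic there; and $u_0'$ vanishes only at $x=0$ inside $R$, which I will need to treat when integrating $\zeta_1^{-2}$). The classical reduction-of-order formula $\zeta(x) = \zeta_1(x)\int^x \zeta_1^{-2}(s)\,ds$ produces a second solution; choosing the two base points $x_-$ and $\ol{x_-}$ gives $\zeta_+$ and $\zeta_-$. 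First I would verify these are genuine solutions: away from the zeros and poles of $\zeta_1$ this is the standard Wronskian computation, and the Wronskian $W(\zeta_+,\zeta_-)$ is computed directly from the two integral representations — writing out $\zeta_+\zeta_-' - \zeta_+'\zeta_-$ and using $(\int_{x_-}^x - \int_{\ol{x_-}}^x) = \int_{x_-}^{\ol{x_-}}$, one finds $W = \int_{x_-}^{\ol{x_-}} \zeta_1^{-2}(s)\,ds$, which is nonzero because near $x_\pm$ the integrand $\zeta_1^{-2}$ behaves like $c_{\pm1}^{-2}(s-x_\pm)^2/\ldots$ times a nonzero analytic factor (double pole of $u_0$ means double zero of $u_0'^{-1}$... careful: $u_0$ has a simple pole, $u_0'$ has a double pole, so $\zeta_1^{-2}$ has a zero of order $4$ at $x_\pm$) — the integral over a path joining the two conjugate points is then a convergent, generically nonzero quantity; one can argue it is nonzero by noting $\zeta_+$ and $\zeta_-$ are linearly independent since they vanish to different orders at $x_-$ versus $\ol{x_-}$.

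\textbf{Key steps.} (i) Establish the local behavior of $\zeta_1 = u_0'$ near $x_-$ and $\ol{x_-}$: from $u_0(x) = c_{\mp1}/(x-x_\pm) + \OO(1)$ we get $\zeta_1(x) = -c_{\mp1}/(x-x_\pm)^2 + \OO(1)$, hence $\zeta_1^{-2}(x) = c_{\mp1}^{-2}(x-x_\pm)^4(1+\ldots)$ is analytic with a fourth-order zero. (ii) Insert this into $\zeta_+(x) = \zeta_1(x)\int_{x_-}^x \zeta_1^{-2}$: the integral starting at $x_-$ behaves like $(x-x_-)^5$ near $x_-$, so $\zeta_+ \sim (x-x_-)^{-2}\cdot(x-x_-)^5 = (x-x_-)^3$; near $\ol{x_-}$, $\zeta_1 \sim (x-\ol{x_-})^{-2}$ and the integral is analytic and generically nonzero, giving $\zeta_+ \sim (x-\ol{x_-})^{-2}$. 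This yields the claimed factorization $\zeta_+ = (x-x_-)^3(x-\ol{x_-})^{-2}\wh\zeta_+$ with $\wh\zeta_+$ analytic and bounded on $R$ — boundedness on the compact set $R$ follows once analyticity and the removal of the apparent singularities at $x_\pm$, $0$ are checked (the zero of $u_0'$ at $x=0$ creates only an apparent singularity of $\zeta_+$ since $\zeta_+$ extends analytically through $0$ by the same reduction-of-order argument used for $\zeta_2$ in Appendix~\ref{appendix:zeta2:out}; one must check $0$ is not on the integration path, or choose the path to avoid it). The estimate for $\zeta_-$ is symmetric. (iii) For the change-of-basis formulas: $\zeta_1$, being a solution, is a linear combination of $\zeta_+,\zeta_-$; matching, e.g., the behavior at $x_-$ (where $\zeta_+$ vanishes but $\zeta_-$ has a pole) and using the Wronskians $W(\zeta_1,\zeta_+)$, $W(\zeta_1,\zeta_-)$ pins down the coefficients, giving $\zeta_1 = W(\zeta_+,\zeta_-)^{-1}(\zeta_+-\zeta_-)$; similarly, $\zeta_2 - \zeta_-$ is a solution whose behavior at $\ol{x_-}$ forces it to be a multiple of $\zeta_1$, i.e.\ $\zeta_2 = c\,\zeta_1 + \zeta_-$ for some constant $c$. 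These identities can be checked by evaluating both sides and their derivatives at one convenient point (say on the real axis), using the normalization $W(\zeta_1,\zeta_2)=1$ from~\eqref{wronskian}.

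\textbf{Main obstacle.} The routine parts are the Wronskian algebra and the reduction-of-order bookkeeping. The genuinely delicate point is controlling the integral $\int_{x_-}^x \zeta_1^{-2}(s)\,ds$ uniformly on $R$: one must choose integration paths that stay inside $R$, avoid the pole structure, and avoid (or correctly account for) the zero of $\zeta_1$ at $x=0$; then one must argue that the resulting function $\wh\zeta_+$ has no spurious singularities and is bounded on the compact rectangle. Showing $W(\zeta_+,\zeta_-)\neq 0$ also requires a small argument — most cleanly via the independent-vanishing-orders observation at $x_-$ and $\ol{x_-}$ — rather than by evaluating the integral explicitly. I would organize the proof so that the analyticity/boundedness of $\wh\zeta_\pm$ on $R$ is extracted from the explicit local forms at the three special points $x_-,\ol{x_-},0$ plus compactness, and the basis-change identities follow formally at the end.
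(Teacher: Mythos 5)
Your overall structure mirrors the paper's: you use reduction of order with base points $x_\pm$, you extract the local factorization at $x_-,\ol{x_-}$ from the fourth-order zero of $\zeta_1^{-2}$ there, you handle the apparent singularities of $\zeta_\pm$ at the zeros of $\zeta_1$ by the limit $\zeta_\pm(x)\to -1/\zeta_1'(x_0)$, and you obtain the change-of-basis identities by matching local behavior and using the normalization $W(\zeta_1,\zeta_2)=1$. All of that is consistent with how the paper proceeds.

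The genuine gap is your argument that $W(\zeta_+,\zeta_-)\neq 0$. You propose to deduce it from the observation that ``$\zeta_+$ and $\zeta_-$ are linearly independent since they vanish to different orders at $x_-$ versus $\ol{x_-}$.'' This is circular. Write $c=W(\zeta_+,\zeta_-)=\int_{x_-}^{\ol{x_-}}\zeta_1^{-2}$; then $\zeta_+-\zeta_- = c\,\zeta_1$ identically, and moreover near $\ol{x_-}$ one has $\int_{x_-}^x \zeta_1^{-2} = c + \mathcal O\big((x-\ol{x_-})^5\big)$, so $\zeta_+(x)\sim c\,\zeta_1(x)\sim c\,(x-\ol{x_-})^{-2}$ there. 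In other words, $\zeta_+$ has a second-order pole at $\ol{x_-}$ (and $\zeta_-$ at $x_-$) \emph{only if} $c\neq 0$; if $c=0$ then $\zeta_+=\zeta_-$ and this single solution vanishes to order three at both $x_-$ and $\ol{x_-}$. The indicial analysis near a regular singular point fixes the two possible local exponents ($3$ and $-2$) but is silent on whether the one-dimensional subspace of solutions vanishing at $x_-$ coincides with the one vanishing at $\ol{x_-}$. So vanishing-order considerations alone do not decide $c\neq 0$. The paper instead computes $c$ explicitly: parameterizing the segment $s=-\alpha+it$, $t\in[-\pi,\pi]$, it reduces $\int_{x_-}^{\ol{x_-}}\zeta_1^{-2}$ to a real one-variable integral and evaluates it as $3\pi i\,(|\gamma|-5/9)$, which is nonzero for $\gamma\in(-\tfrac19,0)$ since $|\gamma|<\tfrac19<\tfrac59$. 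That explicit evaluation (or some genuine global input beyond local exponents) is what you are missing.
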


\begin{proof}
On the rectangle $R$ in \eqref{def:rectangle:appendix}, the function  $\zeta_1(x)=u_0'(x)$, see ~\eqref{def:soliton},  has simple zeroes only at $x=0,\pm i \pi, \pm i 2\pi $, that is, writing $x_0=0,i\pi,-\pi$,
$\zeta_1(x) = \zeta_1'(x_0) (x-x_0) + \mathcal{O}(x-x_0)^2$ when 
$x$ is close to $x_0$. Moreover, for all $x\in R$, the segments $\overline{x, x_-}$ and $\overline{x,\ol{x_-}}$ do not cross $x_0$. Then, since $\zeta_1'(x_0)\neq 0$, 
\[
\lim_{x\to x_0} \zeta_\pm (x) = -\frac{1}{\zeta_1'(x_0)}
\]
that implies that $\zeta_\pm$ are well defined at the set $R$. In addition, the fact that $\zeta_1^{-2}$ has zeroes of order $4$ at $x_-,\ol{x_-}$ and it is uniformly bounded at $R$, implies that the estimates in~\eqref{def:zetapm} follow immediately and hence the second item of Lemma \ref{lemma:G1:appendix} is already proven.  

From the definition of $\zeta_\pm$, one can easily compute $W(\zeta_+,\zeta_-)$. We check that it is not zero. Indeed, we define 
\[
\wt{u}_0(t) = u_0(-\alpha + i t) = \frac{3}{\cos t +1 -   3 \sqrt{|\gamma|} i\sin t }
\]
and, after some tedious computations, we have that 
\[
\frac{1}{(u_0'(-\alpha+it))^2} = -\frac{1}{(\wt{u}'_0(t))^2 }=  \frac{(\cos t +1 -  3 \sqrt{|\gamma|} i \sin t)^4}{9 (\sin t+   3\sqrt{|\gamma|}i \cos t)^2}.
\]
Then, again performing some tedious but straightforward computations, we obtain  
\[
\int_{x_-}^{\ol{x_-}} \frac{1}{\zeta_1^2(s)} \, ds = -i \int_{\pi}^{-\pi} \frac{1}{(\wt u_0(t))^2 }\, dt = 3\pi i 
 \left (  |\gamma| - \frac{5}{9} \right ).
 \] 
This ends the proof of the first item of Lemma \ref{lemma:G1:appendix}. 

Finally, we prove the third item of Lemma \ref{lemma:G1:appendix}.  
By the first item, $\zeta_+,\zeta_-$ are independent solutions of $\mathcal{L}_1[\zeta]=0$, so that $\zeta_1= c_1   \zeta_- + c_2 \zeta_+$. Evaluating at $x_-,\ol{x_-}$ we obtain the coefficients $c_1,c_2$ and the formula for $\zeta_1$. On the other hand, $\zeta_2$ is a linear combination of $\zeta_+,\zeta_-$, which yields~\eqref{relationzeta12zetapm} since  $W(\zeta_1,\zeta_2) \neq 0$.
\end{proof}

Now we study 
  \begin{equation}\label{defJ:appendix}
            J_\pm(x):=\left | \zeta_\pm (x) \int_{0}^{x} \zeta_\mp(s) h(s) \, ds \right | ,
    \end{equation}
which play a key role when bounding the norm of the linear operators $\GG_1, \wt{\GG}_1$ defined in~\eqref{def:intoperators-1} and~\eqref{def:intoperators-aux} respectively. Since these operators are defined over analytic functions in different domains, we introduce a new class of domains that posses the minimal properties we need to be able to bound $J_\pm$. 

\begin{definition}\label{def:newdomain:appendix}
  Let $D \subset  R$, with $R$ defined in~\eqref{def:rectangle:appendix}, be a closed bounded domain satisfying that 
\begin{itemize}
    \item $0\in \mathrm{int}(D)$, $x_-, \ol{x_-} \notin  D $,
    \item if $x \in D$, then $\Re x \in D$ and the segments
    $\overline{0,x} \in D$, $\overline{x, \Re x }\subset D $,
    \item there exists a constant $\vartheta\in (0,1)$ such that
    if $x\in D$ either $|\Im x |< \pi$, or 
    \[
    |\Re x + \alpha| \geq \vartheta \min\{ |x- x_-|, |x-\ol{x_-}|\}.
    \]
\end{itemize}  
\end{definition}

\begin{remark}\label{domains:appendix}
Notice that $D_{\kappa}^{\out,\uns}\cap \{-10<\Re < 0\}$ in ~\eqref{def:defdomainouter} and $D_\kappa^{\aux}$ in ~\eqref{def:domainaux} satisfy the conditions in Definition~\ref{def:newdomain:appendix}.   
\end{remark}

\begin{lemma}\label{boundingx0:appendix}
Let $D$ be a domain satisfying the conditions in Definition~\ref{def:newdomain:appendix} and fix $\ell \geq 5$. If $h:D\to \mathbb{C}$, then 
    \begin{equation*}
           \left| J_\pm(x) \right| \lesssim \frac{\lfloor h\rfloor_{\ell}}{|x-x_-|^{\ell -2 } |x- \ol{x_-}|^{\ell -2}}, \qquad x\in D,
    \end{equation*}
where $J_\pm$ has been introduced in~\eqref{defJ:appendix} and 
\[
\lfloor h \rfloor_\ell = \sup_{x\in D} |h(x)| |x-x_-|^{\ell} |x- \ol{x_-}|^\ell.
\] 
\end{lemma}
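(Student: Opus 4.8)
\textbf{Proof proposal for Lemma~\ref{boundingx0:appendix}.}

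The plan is to estimate the integral $\int_0^x \zeta_\mp(s) h(s)\,ds$ by choosing a convenient integration path inside $D$ and then multiply by the prefactor $\zeta_\pm(x)$, using the explicit factorized forms~\eqref{def:zetapm} for $\zeta_\pm$. First I would reduce to the case $x\in D$ with $\Re x$ on the real segment by splitting the path $\overline{0,x}$ into the horizontal piece $\overline{0,\Re x}$ and the vertical piece $\overline{\Re x, x}$, both of which lie in $D$ by Definition~\ref{def:newdomain:appendix}. On the horizontal piece $s$ stays at a distance $\gtrsim 1$ from $x_-$ and $\ol{x_-}$ (since $|\Im s|=0$ there and the singularities are at height $\pm\pi$), so there $|\zeta_\mp(s)|\lesssim 1$ and $|h(s)|\lesssim \lfloor h\rfloor_\ell$, contributing $\mathcal{O}(\lfloor h\rfloor_\ell)$. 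For the vertical piece, parametrize $s=\Re x + it$; using~\eqref{def:zetapm}, $|\zeta_\mp(s)|\lesssim |s-\ol{x_\mp}|^3/|s-x_\mp|^2$ (with the appropriate labelling), and the key geometric point is that along a vertical segment the quantities $|s-x_-|$ and $|s-\ol{x_-}|$ are comparable to $|\,\Re x+\alpha\,| + |\,|\Im s| - \pi\,|$ up to constants; integrating $|h(s)|\le \lfloor h\rfloor_\ell |s-x_-|^{-\ell}|s-\ol{x_-}|^{-\ell}$ against the polynomially-bounded $|\zeta_\mp|$ over $t$ then produces a bound $\lesssim \lfloor h\rfloor_\ell \,|x-x_-|^{-(\ell-3)}|x-\ol{x_-}|^{-(\ell-3)}$ or similar, after carrying out the elementary one-variable integral $\int \frac{dt}{(c^2+ (t-\pi)^2)^{\ell}}$ and tracking the powers.

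Next I would multiply by $\zeta_\pm(x)$, again using~\eqref{def:zetapm}: $|\zeta_\pm(x)|\lesssim |x-x_\pm|^3/|x-\ol{x_\pm}|^2$. The gain of three powers of the distance from one singularity against a loss of two from the other, combined with the bound on the integral, must be balanced carefully to land exactly at the stated exponent $\ell-2$ in both $|x-x_-|$ and $|x-\ol{x_-}|$. Here the third bullet of Definition~\ref{def:newdomain:appendix} is essential: when $|\Im x|\ge \pi$, the hypothesis $|\Re x+\alpha|\ge \vartheta\min\{|x-x_-|,|x-\ol{x_-}|\}$ lets me convert powers of $|\Re x+\alpha|$ (which appear naturally from the vertical-integral estimate, since $c=\Re x+\alpha$ there) into powers of $\min\{|x-x_-|,|x-\ol{x_-}|\}$, which is what is needed to absorb the discrepancy between the $-2$ and $-3$ exponents coming from $\zeta_\pm$. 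When $|\Im x|<\pi$ instead, the factor $|x-x_-|$ and $|x-\ol{x_-}|$ are both $\gtrsim 1$, so the estimate is trivial in that regime and one just needs $|\zeta_\pm(x)|\lesssim 1$ and the crude bound on the integral.

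The main obstacle I anticipate is the bookkeeping of exponents in the regime $|\Im x|\ge \pi$ where $x$ is close to one of the singularities, say $x\to x_-$: there $\zeta_+(x)$ has a \emph{zero} of order $3$ while $\zeta_-(x)$ has a \emph{pole} of order $2$, and the integral $\int_0^x \zeta_\mp h$ itself can blow up like a power of $|x-x_-|$ coming from the non-integrable-looking behaviour of $h$ near $x_-$ (which is only prevented from diverging because the path $\overline{0,x}$ stays away from $x_-$ except at the very endpoint). The delicate step is to show that the endpoint contribution of the vertical integral near $s=x$ produces precisely the factor that, after multiplication by $\zeta_\pm(x)$, combines with the $\zeta_1^{-2}$-type singularity structure to give the claimed $|x-x_-|^{-(\ell-2)}$ and no worse. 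I would handle this by isolating a small disc around $x_-$ (and $\ol{x_-}$), on which I use the local expansions $\zeta_1(s)=c_{\pm1}(s-x_\pm)+\dots$ hence $\zeta_1^{-2}(s)\sim (s-x_\pm)^{-2}$, reduce the integral to an explicit elementary antiderivative of $(s-x_-)^{-2-\ell}(s-\ol{x_-})^{-\ell}$, and read off the power; outside those discs everything is uniformly bounded and the estimate is routine. The symmetry $\overline{J_+(\bar x)}\leftrightarrow J_-(x)$ reduces the two cases to one.
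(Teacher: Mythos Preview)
Your overall strategy (horizontal-plus-vertical path decomposition, use of the factorized form~\eqref{def:zetapm}, and invoking the third bullet of Definition~\ref{def:newdomain:appendix}) coincides with the paper's. However, there is a genuine gap in your case analysis for the vertical integral.

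You write that when $|\Im x|<\pi$ the distances $|x-x_-|$ and $|x-\ol{x_-}|$ are both $\gtrsim 1$ and the estimate is trivial. This is false: in the domains of interest (e.g.\ $D^{\out,\uns}_\kappa$) \emph{every} point has $|\Im x|<\pi$, yet the domain reaches within $\kappa\eps$ of $x_-$. The delicate case is precisely $x$ close to $x_-$ with $\Im x$ slightly below $\pi$, which you have dismissed. The paper handles this by splitting not on $|\Im x|$ but on the size of $|\Re x+\alpha|$ relative to $|x-x_-|$. If $|\Re x+\alpha|\ge\vartheta|x-x_-|$, one factors $|\Re x+\alpha|$ out of the vertical integral $\int\frac{dt}{((\Re x+\alpha)^2+(t-\pi)^2)^{(\ell+2)/2}}$ and converts it to a power of $|x-x_-|$. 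If instead $|\Re x+\alpha|<\vartheta|x-x_-|$, the third bullet forces $|\Im x|<\pi$; the vertical segment then runs over $t\in[0,\Im x]$ and never reaches $t=\pi$, so the integrand is bounded by $(\pi-t)^{-(\ell+2)}$ and the integral by $(\pi-\Im x)^{-(\ell+1)}$. The key observation you are missing is that in this sub-case $\pi-\Im x\ge\sqrt{1-\vartheta^2}\,|x-x_-|$ by Pythagoras, which gives the required power of $|x-x_-|$. Your proposed local-expansion approach near $x_-$ is unnecessary; the cruder integral bounds suffice once the case split is done correctly.
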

\begin{proof}
We recall that $x_-=-\alpha + i \pi$ with $\alpha>0$. We only provide the details for $J_+$ being the corresponding for $J_-$ analogous. 
When $x\in D \cap\{ x\in \mathbb{C}: \Re x\geq -\frac{\alpha}{2}\}$, then, using the second item in Lemma~\ref{lemma:G1:appendix},  
\begin{equation*}
\begin{aligned}
\left | \zeta_+(x) \int_{0}^{x} \zeta_-(s) h(s) \, ds \right | & \lesssim \frac{|x-x_-|^3}{|x-\ol{x_-}|^2 }  \int_{0}^x \frac{\lfloor h \rfloor_{ \ell}}{|s-x_-|^{\ell +2 } |s-\ol{x_-}|^{\ell -3}} \\
&  
\lesssim\frac{\lfloor h \rfloor_{ \ell}}{|x-x_-|^{\ell -2 } |x- \ol{x_-}|^{\ell -2}}.
\end{aligned}
\end{equation*}
Now we deal with $x\in D\cap \{ x\in \mathbb{C}: \Re x<-\frac{\alpha}{2}\}$. Since, by Lemma~\ref{lemma:G1:appendix}, $\zeta_\pm$ and $h$ are analytic functions in $D\subset R$, we write
    \begin{align*}
    \zeta_+(x)\int_{0}^x \zeta_- (s) h(s) \, ds   & =
    \zeta_+(x)\left [\int_{\gamma_1} \zeta_-(s) h(s)\, ds + \int_{\gamma_2} \zeta_-(s) h(s)\, ds
    \right ]\\ & =: G_1(x) + G_2(x),
    \end{align*}
    with $\gamma_1(t)= -t$, for $t\in [0, -\Re x]$ and $\gamma_2(t) = \Re x +it$, for $t\in \overline{0, \Im x}$. Notice that, by Definition~\ref{def:newdomain:appendix} of $D$, the paths $\gamma_1,\gamma_2 \subset D$. Then, we obtain 
    \begin{align*}
         |G_1(x)|=\left |\zeta_+(x) \int_{\gamma_1} \zeta_-(s) h(s) \, ds \right | & \lesssim  \frac{|x-x_-|^3} {|x-\ol{x_-}|^2 }\left |\int_{0}^{|\Re x|} \frac{\lfloor h \rfloor_{ \ell}}{|t +x_-|^{\ell +2 } |t+\ol{x_-}|^{\ell -3}} \right | \\ &  \lesssim
         \frac{\lfloor h \rfloor_{ \ell}} {|x-x_-|^{\ell -2 } |x- \ol{x_-}|^{\ell -2}},
    \end{align*}
where we have used that $|t + x_-|, |t + \ol{x_-}| \geq \pi$ and that $|x|\lesssim 1$. 

With respect to $G_2$,   we have that 
\[
|G_2(x)| \lesssim \lfloor h \rfloor_\ell \frac{|x-x_-|^3} {|x-\ol{x_-}|^2 } \left | \int_{0}^{\Im x} \frac{1}{ |\Re x +i t - x_-|^{\ell +2} |\Re x +i t-\ol{x_-}|^{\ell -3} }\, dt \right |.
\]
Then, if $\Im x\geq 0$, since $ |\Re x + i t - \ol{x_-}|\geq \pi$, for $t\in [0,\Im x]$, we have that 
\[
|G_2(x)| \lesssim \lfloor h \rfloor_\ell  \frac{|x-x_-|^3} {|x-\ol{x_-}|^2 }  \int_{0}^{\Im x} \frac{1}{\big ( (\Re x + \alpha)^2 + (t-\pi)^2 \big )^{\frac{\ell+2}{2}} 
  }\, dt. 
\]
In the case $|\Re x + \alpha| \geq \vartheta |x- x_-|$, 
\begin{align*}
|G_2(x) | & \lesssim \lfloor h \rfloor_\ell \frac{|x-x_-|^3} {|x-\ol{x_-}|^2 } \frac{1}{|\Re x + \alpha|^{\ell +1}} \int_{-\infty}^{+\infty} \frac{1}{ (1 + t^2)^{\frac{\ell +2}{2}}} \, dt \\ & \lesssim \lfloor h \rfloor_\ell  \frac{1}{|x-\ol{x_-}|^2 
|x-x_-|^{\ell -2}},
\end{align*}
and the result follows provided $\ell \geq 5$. If 
$|\Re x + \alpha|\leq \vartheta |x-x_-|$, then $0\leq \Im x <\pi$ and $\pi-\Im x  \geq \sqrt{1-\vartheta^2} |x-x_-|$.  We obtain
\[
|G_2(x)|\lesssim \lfloor h \rfloor_\ell \frac{|x-x_-|^3} {|x-\ol{x_-}|^2 } \int_{0}^{\Im x} \frac{1}{(\pi -t)^{\ell +2}} \, dt \lesssim 
\lfloor h \rfloor_\ell  \frac{|x-x_-|^3} {|x-\ol{x_-}|^2 (\pi - \Im x)^{\ell +1}},
\]
and the result follows trivially also in this case. 

The details in the case $\Im x\leq 0$ are left to the reader. 
\end{proof}
\subsection{Proof of Lemma~\ref{prop_operators}}

The result related to $\GG_2$ defined in~\eqref{def:intoperators-2} is a straightforward consequence of Lemma 5.5 in~\cite{GOS10}. 

We focus now on proving the results related to $\GG_1$. To do so we follow the main ingredients in the proof of Proposition 4.3 in~\cite{GomideGSZ22}. 
When  $x\in D^{\out,\uns}_\kappa \cap \{\Re x \leq -10\}$, by Lemma~\ref{lem:zeta2:welldefined},  $|\zeta_2 (x)| \lesssim |\cosh x|$.  From here, using also that $|\zeta_1(x) | \lesssim |\cosh x|^{-1}$ and following exactly the same steps as the ones in~\cite{GomideGSZ22}, we prove that
\[
| \cosh x|^{m} \big |\GG_1[h](x) \big | \lesssim \| h\|_{m,\ell}, \qquad x\in D^{\out,\uns}_\kappa \cap \{\Re x \leq -10\}.
\]   

The case  $x\in D^{\out,\uns}_\kappa \cap \{\Re x \geq -10\}$ is more involved. Indeed, the main obstacle to overcome is that $\zeta_1,\zeta_2$ have poles of order $2$ at $x=x_-, \ol{x_-}$. Following~\cite{GomideGSZ22}  we rewrite $\GG_1$ in~\eqref{def:intoperators-1} in terms of $\zeta_+,\zeta_-$ in Lemma~\ref{lemma:G1:appendix}. Using the third item of this result, we obtain that 
    \begin{align*}
    \GG_1[h] (x) =  & \frac{1}{W(\zeta_+,\zeta_-)} \left[ \zeta_+(x) \int_{0}^x \zeta_-(s) h(s) \, ds - \zeta_-(x) \int_{0}^x \zeta_+(s) h(s)\, ds\right ] \\ & - \zeta_2(x) \int_{-\infty}^0 \zeta_1(s) h(s)\, ds.
    \end{align*}
By Remark~\ref{domains:appendix}, we can use the results in Lemma~\ref{boundingx0:appendix} to bound the two first integrals defining $\mathcal{G}_1[h]$. To bound the third integral, we claim that is a convergent real integral and that $\|\zeta_2\|_{-1,2}\lesssim 1$. Then, 
\[
\left |\zeta_2 (x) \int_{-\infty}^0 \zeta_1(s) h(s)\, ds \right | \lesssim \left |\zeta_2 (x)\right|\|h\|_{m,\ell} \lesssim \frac{\|h\|_{m,\ell}}{|x-x_-|^2|x-\ol{x_-}|^2}.
\]
Again, using that $\ell\geq 5$, the first bound in Lemma~\ref{prop_operators} is proven. To prove $\|\partial_x \mathcal{G}_1[h]\|_{1,\ell -1}$ we proceed analogously. Indeed, we have that
    \begin{align*}
    \partial_x \GG_1[h] (x) =  & \frac{1}{W(\zeta_+,\zeta_-)} \left[ \zeta'_+(x)  \int_{0}^x \zeta_-(s) h(s) \, ds - \zeta_-'(x) \int_{0}^x \zeta_+(s) h(s)\, ds\right ] \\ & - \zeta_2'(x) \int_{-\infty}^0 \zeta_1(s) h(s)\, ds,
    \end{align*}
where 
\[
\zeta_+'(x)=\frac{(x-x_-)^2}{(x-\ol{x_-})^3} \wt{\zeta}_+(x), \qquad \zeta_-'(x)=\frac{(x-\ol{x_-})^2}{(x-x_-)^3} \wt{\zeta}_-(x)
\]
for $\wt{\zeta}_{\pm}$ are analytic functions uniformly bounded at $R$. 

To complete  the proof of Lemma~\ref{prop_operators}, we just recall that, by Lemma~\ref{lem:zeta2:welldefined}, $\zeta_2$ is an even function. 

\subsection{Proof of Lemma~\ref{lemma:operators:aux}}

We first notice that using relations~\eqref{relationzeta12zetapm} between $\zeta_1,\zeta_2$ and $\zeta_{+},\zeta_-$ we have that 
\begin{equation}\label{appendixwtg1}
\wt{\GG}_1[h](x) = \frac{1}{W(\zeta_+,\zeta_-)} \left ( \zeta_+(x)\int_{0}^x \zeta_-(s) h(s)\, ds - \zeta_-(x) \int_{0}^x \zeta_+(x) h(s) \, ds\right ) 
\end{equation}
and that by Remark~\ref{domains:appendix}, we can apply the results in Lemma~\ref{lemma:G1:appendix} for 
 $x\in D^{\aux}_{\kappa} \cap \{ x \in \mathbb{C} : \Re x \leq 0\}$. Then,  
 we have 
\[
|\wt{\GG}_1[h](x) | \lesssim\frac{\|h\|_{ \ell}}{|x-x_-|^{\ell -2 } |x- \ol{x_-}|^{\ell -2}},
\]
so that, since $1 \lesssim |x-x_+|, |x-\ol{x_+}|$, for 
$x\in D^{\aux}_{\kappa} \cap \{ x \in \mathbb{C} : \Re x \leq 0\}$, we obtain 
\begin{equation}\label{boundwtG1}
|\wt{\GG}_1[h](x) | |x-x_-|^{\ell -2 } |x- \ol{x_-}|^{\ell -2} |x_-x_+|^{\ell -2 } |x- \ol{x_+}|^{\ell -2} \lesssim \|h \|_\ell .
\end{equation}
When $x\in D^{\aux}_{\kappa} \cap \{ x \in \mathbb{C} : \Re x \geq 0\}$,  we only need to define the new set of fundamental solutions of $\mathcal{L}_1[h]$ given by 
\[
\wt{\zeta}_+(x) = \zeta_1(x) \int_{x_+}^x \frac{1}{\zeta_1^2(s)}\, ds , \qquad \wt{\zeta}_-(x)= \zeta_1(x)\int_{\ol{x_+}}^x \frac{1}{\zeta_1^2(s)} \, ds
\]
and proceeding in an analogous way as for $x\in D^{\aux}_{\kappa} \cap \{ x \in \mathbb{C} : \Re x \leq 0\}$ to obtain the bound~\eqref{boundwtG1} for $x\in D_\kappa^\aux$. By definition~\eqref{normcosh:aux} of the norm,
$\| \wt{\GG}_1[h]\|_{\ell -2 } \lesssim \|h\|_{\ell}$. 

Differentiating~\eqref{appendixwtg1} with respect to $x$ and performing similar bounds as the previous one, we prove the result for $\partial_x \GG_1[h]$.

For the operator $\wt \GG_2$ in~\eqref{def:intoperators-aux}, we take $x\in D_\kappa^\aux$ be such that $\Re x\leq 0$ since the case $\Re x\geq 0$ is analogous. In this case $1\lesssim |x-x_+|, |x-\ol{x_+}|$ and hence we have to prove
\[
|\GG_2(x)| \lesssim \eps^2 \frac{\|h\|_\ell} {|x-x_-|^\ell |x-\ol{x_-}|^\ell }.
\]
By  definition~\eqref{def:intoperators-aux} of $\GG_2$ it is enough to prove that for $\Re x\leq 0$, 
\begin{equation}\label{boundG21:appendix}
\left | e^{ \pm i\eps^{-1} x}\int_{\mp i\rho}^x e^{\mp i\eps^{-1} s }h(s) \, ds \right |   \lesssim \eps \frac{\|h\|_\ell} {|x-x_-|^\ell |x-\ol{x_-}|^\ell }.
\end{equation}
We deal with the  bound for the integral from $-i\rho$. To prove the second one is analogous. We write
\begin{align*}
e^{i\eps^{-1} x}\int_{-i\rho}^x e^{-i\eps^{-1} s }h(s) \, ds &  =e^{i\eps^{-1} x} \int_{\gamma_1} e^{-i\eps^{-1} s }h(s) \, ds + e^{i\eps^{-1} x}\int_{\gamma_2} e^{-i\eps^{-1} s }h(s) \, ds 
\\ & =:  G_1(x) + G_2(x),
\end{align*}
where the paths $\gamma_1,\gamma_2$ are defined by
\[
\gamma_1(t)=x+te^{ -i\vartheta}, \quad t\in \overline{0,-\mathrm{sec} \vartheta\,  \Re x }, \qquad 
\gamma_2(t) = it,\quad t \in \overline{\tan \vartheta \, \Re x,-\rho} 
\]
with $\vartheta>0$ such that $\gamma_1(t) \in D_\kappa^\aux$. We recall that $\Re x\leq 0$ and hence $1\lesssim |x-x_+|, |x-\ol{x_+}|$. Therefore, 
\[
|G_1(x)| \lesssim \|h \|_\ell  \int_{0}^{\mathrm{sec} \vartheta |\Re x|}
 \frac{e^{-\eps^{-1} t \sin \vartheta}}{|x - x_-+ t e^{-i\vartheta }|^{\ell}|x - \ol{x_-}+ t e^{-i\vartheta }|^{\ell}} \, dt.
\]
The geometry of the set $D_\kappa^\aux$ implies that 
\[
|x - x_- + t e^{-\i \vartheta}| \gtrsim |x-x_-|, \qquad 
|x - \ol{x_-} + t e^{-\i \vartheta}| \gtrsim |x-\ol{x_-}|,
\]
hence
\[
|G_1(x)| \lesssim \frac{\| h \|_\ell}{|x-x_-|^\ell |x-\ol{x_-}|^\ell } \int_{0}^\infty e^{-\eps^{-1} t \sin \vartheta} \, dt \lesssim  \eps \frac{\| h \|_\ell}{|x-x_-|^\ell |x-\ol{x_-}|^\ell }.
\]

The bound for $G_2$ follows using the same arguments. It is clear that $|x-x_-+it|\gtrsim |x- x_-|$ and $|x-\ol{x_-}| \gtrsim |x-\ol{x_-}|$. Hence,
\begin{align*}
|G_2(x)| & \lesssim \|h \|_\ell 
 \int_{-\rho}^{ -\tan \vartheta |\Re x|}  \frac{e^{\eps^{-1} t}}{|x-x_-+ it|^{\ell}|x - \ol{x_-}+ it|^{\ell}} \, dt \\ 
& \lesssim \frac{\| h \|_\ell}{|x-x_-|^\ell |x-\ol{x_-}|^\ell } \int_{-\infty}^0 e^{\eps^{-1} t}\, dt \lesssim \eps \frac{\| h \|_\ell}{|x-x_-|^\ell |x-\ol{x_-}|^\ell }.
\end{align*}
As a consequence,~\eqref{boundG21:appendix} is proven. 

\bibliography{references}

\begin{thebibliography}{10}

\bibitem{Alf2}
G.~L. Alfimov, A.~S. Korobeinikov, C.~Lustri, and D.~E. Pelinovsky.
\newblock Standing lattice solitons in the discrete nls equation with
  saturation.
\newblock {\em Nonlinearity}, 32:3445--3484, 2019.

\bibitem{Alf1}
G.~L. Alfimov, A.~S. Malishevskii, and E.~V. Medvedeva.
\newblock Discrete set of kink velocities in josephson structures: the nonlocal
  double sine-gordon model.
\newblock {\em Physica D}, 282:16--26, 2014.

\bibitem{prl-alfimov}
G.~L. Alfimov, E.~V. Medvedeva, and D.~E. Pelinovsky.
\newblock Wave systems with an infinite number of localized travelling waves.
\newblock {\em Phys. Rev. Lett.}, 112, 2014.

\bibitem{Baldoma06}
I.~Baldom{\'a}.
\newblock The inner equation for one and a half degrees of freedom rapidly
  forced {H}amiltonian systems.
\newblock {\em Nonlinearity}, 19(6):1415--1445, 2006.

\bibitem{BaldomaCS13}
I.~Baldom\'{a}, O.~Castej\'{o}n, and T.~M. Seara.
\newblock Exponentially small heteroclinic breakdown in the generic {H}opf-zero
  singularity.
\newblock {\em J. Dynam. Differential Equations}, 25(2):335--392, 2013.

\bibitem{BaldomaCS18}
I.~Baldom\'{a}, O.~Castej\'{o}n, and T.~M. Seara.
\newblock Breakdown of a 2{D} heteroclinic connection in the {H}opf-zero
  singularity ({I}).
\newblock {\em J. Nonlinear Sci.}, 28(5):1551--1627, 2018.

\bibitem{BaldomaCS18b}
I.~Baldom\'{a}, O.~Castej\'{o}n, and T.~M. Seara.
\newblock Breakdown of a 2{D} heteroclinic connection in the {H}opf-zero
  singularity ({II}): the generic case.
\newblock {\em J. Nonlinear Sci.}, 28(4):1489--1549, 2018.

\bibitem{BFGS12}
I.~Baldom{\'a}, E.~Fontich, M.~Guardia, and T.~M. Seara.
\newblock Exponentially small splitting of separatrices beyond melnikov
  analysis: rigorous results.
\newblock {\em Journal of Differential Equations}, 253(12):3304--3439, 2012.

\bibitem{BaldomaGG22}
I.~Baldom\'a, M.~Giralt, and M.~Guardia.
\newblock Breakdown of homoclinic orbits to {$L_3$} in the {RPC}3{BP} ({I}).
  {C}omplex singularities and the inner equation.
\newblock {\em Adv. Math.}, 408:Paper No. 108562, 64, 2022.

\bibitem{BaldomaGG23}
I.~Baldom\'a, M.~Giralt, and M.~Guardia.
\newblock Breakdown of homoclinic orbits to {$L_3$} in the {RPC}3{BP} ({II}).
  {A}n asymptotic formula.
\newblock {\em Adv. Math.}, 430:Paper No. 109218, 72, 2023.

\bibitem{BaldomaS06}
I.~Baldom{\'a} and T.~M. Seara.
\newblock Breakdown of heteroclinic orbits for some analytic unfoldings of the
  {H}opf-zero singularity.
\newblock {\em J. Nonlinear Sci.}, 16(6):543--582, 2006.

\bibitem{BaldomaS08}
I.~Baldom{\'a} and T.~M. Seara.
\newblock The inner equation for generic analytic unfoldings of the {H}opf-zero
  singularity.
\newblock {\em Discrete Contin. Dyn. Syst. Ser. B}, 10(2-3):323--347, 2008.

\bibitem{Chapman1}
S.~J. Chapman, J.~R. King, J.~R. Ockendon, and K.~L. Adams.
\newblock Exponential asymptotics and {S}tokes lines in nonlinear ordinary
  differential equations.
\newblock {\em Proc. Roy. Soc. Lond. A.}, 454:2733--2755, 1998.

\bibitem{ChenP}
J.~Chen and D.~E. Pelinovsky.
\newblock Periodic travelling waves of the modified kdv equation and rogue
  waves on the periodic background.
\newblock {\em J. Nonlinear Sci.}, 29:2797--2843, 2019.

\bibitem{Deconinck}
R.~Creedon, B.~Deconinck, and O.~Trichtchenko.
\newblock High-frequency instabilities of the kawahara equation: a perturbative
  approach.
\newblock {\em SIAM J. Appl. Dynam. Systems}, 20:1571--1595, 2021.

\bibitem{Daalhuis1}
A.~B.~O. Daalhuis, S.~J. Chapman, J.~R. King, J.~R. Ockendon, and R.~H. Tew.
\newblock Stokes phenomenon and matched asymptotic expansions.
\newblock {\em SIAM J. Appl. Math.}, 55:1469--1483, 1995.

\bibitem{DelshamsR98}
A.~Delshams and R.~Ram\'irez-Ros.
\newblock Exponentially small splitting of separatrices for perturbed
  integrable standard-like maps.
\newblock {\em J. Nonlinear Sci.}, 8(3):317--352, 1998.

\bibitem{DelshamsS92}
A.~Delshams and T.~M. Seara.
\newblock An asymptotic expression for the splitting of separatrices of the
  rapidly forced pendulum.
\newblock {\em Comm. Math. Phys.}, 150(3):433--463, 1992.

\bibitem{Deng2}
G.~Deng and C.~J. Lustri.
\newblock Exponential asymptotics of woodpile chain nanoptera using numerical
  analytic continuation.
\newblock {\em Stud. Appl. Math.}, 150:520--557, 2023.

\bibitem{Deng1}
G.~Deng, C.~J. Lustri, and M.~A. Porter.
\newblock Nanoptera in weakly nonlinear woodpile chains and diatomic granular
  chains.
\newblock {\em SIAM J. Appl. Dyn. Syst.}, 20:2412--2449, 2021.

\bibitem{Faver1}
T.~E. Faver and H.~J. Hupkes.
\newblock Micropteron traveling waves in diatomic fermi-pasta-ulam-tsingou
  lattices under the equal mass limit.
\newblock {\em Physica D}, 410:132538, 50, 2020.

\bibitem{Tim}
T.~E. Faver and H.~J. Hupkes.
\newblock Micropterons, nanopterons and solitary wave solutions to the diatomic
  fermi–pasta–ulam–tsingou problem.
\newblock {\em PDEs in Appl. Math.}, 4:100128 (14 pages), 2021.

\bibitem{Faver2}
T.~E. Faver and H.~J. Hupkes.
\newblock Mass and spring dimer fermi-pasta-ulam-tsingou nanopterons with
  exponentially small, nonvanishing ripples.
\newblock {\em Stud. Appl. Math.}, 150:1046--1153, 2023.

\bibitem{Simo}
V.~Gelfreich and C.~Simo.
\newblock High-precision computations of divergent asymptotic series and
  homoclinic phenomena.
\newblock {\em Discrete Contin. Dyn. Syst. B}, 10:511--536, 2008.

\bibitem{Gelfreich99}
V.~G. Gelfreich.
\newblock A proof of the exponentially small transversality of the separatrices
  for the standard map.
\newblock {\em Comm. Math. Phys.}, 201(1):155--216, 1999.

\bibitem{Gelfreich00}
V.~G. Gelfreich.
\newblock Separatrix splitting for a high-frequency perturbation of the
  pendulum.
\newblock {\em Russ. J. Math. Phys.}, 7(1):48--71, 2000.

\bibitem{Gel1}
V.~G. Gelfreich, V.~F. Lazutkin, and M.~B. Tabanov.
\newblock Exponentially small splittings in hamiltonian systems.
\newblock {\em Chaos}, 1:137--142, 1991.

\bibitem{GomideGSZ22}
O.~M.~L. Gomide, M.~Guardia, T.~M. Seara, and C.~Zeng.
\newblock On small breathers of nonlinear {K}lein-{G}ordon equations via
  exponentially small homoclinic splitting.
\newblock Available at \url{https://arxiv.org/abs/2107.14566}, 2022.

\bibitem{GrimshawJoshi}
R.~Grimshaw and N.~Joshi.
\newblock Weakly nonlocal solitary waves in a singularly perturbed korteweg--de
  vries equation.
\newblock {\em SIAM J. Appl. Math.}, 55:124--135, 1995.

\bibitem{GOS10}
M.~Guardia, C.~Oliv{\'e}, and T.~M. Seara.
\newblock Exponentially small splitting for the pendulum: a classical problem
  revisited.
\newblock {\em Journal of nonlinear science}, 20(5):595--685, 2010.

\bibitem{Sprenger}
M.~A. Hoefer, N.~F. Smyth, and P.~Sprenger.
\newblock Modulation theory solution for nonlinearly resonant, fifth-order
  korteweg–de vries, nonclassical, traveling dispersive shock waves.
\newblock {\em Stud. Appl. Math.}, 142:219--240, 2019.

\bibitem{Kawahara}
T.~Kawahara.
\newblock Oscillatory solitary waves in dispersive media.
\newblock {\em J. Phys. Soc. Japan}, 33:260--264, 1972.

\bibitem{Kruskal}
M.~D. Kruskal and H.~Segur.
\newblock Asymptotics beyond all orders in a model of crystal growth.
\newblock {\em Stud. Appl. Math.}, 85:129--181, 1991.

\bibitem{Lazutkin84russian}
V.~F. Lazutkin.
\newblock Splitting of separatrices for the {C}hirikov standard map.
\newblock VINITI 6372/82, 1984.
\newblock Preprint (Russian).

\bibitem{LeP}
U.~Le and D.~E. Pelinovsky.
\newblock Periodic waves of the modified kdv equation as minimizers of a new
  variational problem.
\newblock {\em SIAM J. Appl. Dynam. Syst.}, 21:2518--2534, 2022.

\bibitem{Lombardi}
E.~Lombardi.
\newblock Non-persistence of homoclinic connections for perturbed integrable
  reversible systems.
\newblock {\em J. Dyn. Diff. Eqns}, 11:124--208, 1999.

\bibitem{Lustri1}
C.~J. Lustri.
\newblock Nanoptera and stokes curves in the 2-periodic
  fermi-pasta-ulam-tsingou equation.
\newblock {\em Physica D}, 402:132239, 13, 2020.

\bibitem{Porter}
C.~J. Lustri and M.~A. Porter.
\newblock Nanoptera in a period-2 toda chain.
\newblock {\em SIAM J. Appl. Dyn. Syst.}, 17:1182–1212, 2018.

\bibitem{MartinSS11a}
P.~Mart\'{\i}n, D.~Sauzin, and T.~M. Seara.
\newblock Exponentially small splitting of separatrices in the perturbed
  {M}c{M}illan map.
\newblock {\em Discrete Contin. Dyn. Syst.}, 31(2):301--372, 2011.

\bibitem{MartinSS11b}
P.~Mart\'{\i}n, D.~Sauzin, and T.~M. Seara.
\newblock Resurgence of inner solutions for perturbations of the {M}c{M}illan
  map.
\newblock {\em Discrete Contin. Dyn. Syst.}, 31(1):165--207, 2011.

\bibitem{Lustri2}
A.~J. Moston-Duggan, M.~A. Porter, and C.~J. Lustri.
\newblock Nanoptera in higher-order nonlinear schr\"{o}dinger equations:
  effects of discretization.
\newblock {\em J. Nonlinear Sci.}, 33(12):47 pages, 2023.

\bibitem{MucaP}
A.~Mucalica and D.~E. Pelinovsky.
\newblock Dark breathers on a snoidal wave background in the defocusing mkdv
  equation.
\newblock {\em Lett. Math. Phys.}, 114:100 (28 pages), 2024.

\bibitem{POI90}
H.~Poincar{\'e}.
\newblock Sur le probl{\`e}me des trois corps et les {\'e}quations de la
  dynamique.
\newblock {\em Acta mathematica}, 13(1):A3--A270, 1890.

\bibitem{PGR}
Y.~Pomeau, A.~Ramani, and B.~Grammaticos.
\newblock Structural stability of the korteweg--de vries solitons under a
  singular perturbation.
\newblock {\em Physica D}, 31:127--134, 1988.

\bibitem{Sauzin01}
D.~Sauzin.
\newblock A new method for measuring the splitting of invariant manifolds.
\newblock {\em Ann. Sci. \'Ecole Norm. Sup. (4)}, 34(2):159--221, 2001.

\bibitem{Bridges}
P.~Sprenger, T.~J. Bridges, and M.~Shearer.
\newblock Traveling wave solutions of the kawahara equation joining distinct
  periodic waves.
\newblock {\em J. Nonlinear Sci.}, 33:79 (39 pages), 2023.

\bibitem{Tovbis1}
A.~Tovbis.
\newblock Breaking homoclinic connections for a singularly perturbed
  differential equation and the stokes phenomenon.
\newblock {\em Stud. Appl. Math.}, 104:353--386, 2000.

\bibitem{Tovbis2}
A.~Tovbis and D.~Pelinovsky.
\newblock Exact conditions for existence of homoclinic orbits in the
  fifth-order kdv model.
\newblock {\em Nonlinearity}, 19:2277--2312, 2006.

\bibitem{Treshev97}
D.~Treschev.
\newblock Separatrix splitting for a pendulum with rapidly oscillating
  suspension point.
\newblock {\em Russ. J. Math. Phys.}, 5(1):63--98, 1997.

\bibitem{Vain}
A.~Vainchtein, Y.~Starosvetsky, D.~Wright, and R.~Perline.
\newblock Solitary waves in diatomic chains.
\newblock {\em Phys. Rev. E}, 93:042210 (12 pages), 2016.

\end{thebibliography}
\bibliographystyle{abbrv}

\end{document}